\def\DefineSymbol#1#2{\newcommand{#1}{{\mathrm {#2}}}}
\def\DefineCategory#1#2{\newcommand{#1}{{\mathrm {#2}}}}
\theoremstyle{plain}
	\newtheorem{theorem}{Theorem}[section]
	\newtheorem{lemma}[theorem]{Lemma}
	\newtheorem{proposition}[theorem]{Proposition}
	\newtheorem{corollary}[theorem]{Corollary}
	\newtheorem{conjecture}[theorem]{Conjecture}
\theoremstyle{definition}
	\newtheorem{definition}[theorem]{Definition}
	\newtheorem{lemma and definition}[theorem]{Lemma and Definition}
	\newtheorem{example}[theorem]{Example}
	\newtheorem{question}[theorem]{Question}
	\newtheorem{construction}[theorem]{Construction}
	\newtheorem{notation}[theorem]{Notation}
    \newtheorem{claim}{Claim}
\theoremstyle{remark}
	\newtheorem{remark}[theorem]{Remark}
	\numberwithin{equation}{section}
\DefineSymbol{\pr}{pr}
\DefineSymbol{\id}{id}
\DefineSymbol{\const}{const}
\DefineSymbol{\op}{op}
\DefineSymbol{\diag}{diag}
\DefineSymbol{\proet}{pro\acute{e}t}
\DefineSymbol{\cond}{cond}
\DefineSymbol{\cont}{cont}
\DefineSymbol{\conti}{conti}
\DefineSymbol{\Cond}{Cond}
\DefineSymbol{\dCond}{dCond}
\DefineSymbol{\disc}{disc}
\DefineSymbol{\Tot}{Tot}
\DefineSymbol{\triv}{triv}
\DefineSymbol{\Bun}{Bun}
\DefineSymbol{\adic}{adic}
\DefineSymbol{\rig}{rig}
\DefineSymbol{\nc}{nc}
\DefineSymbol{\nuc}{nuc}
\DefineSymbol{\cpt}{cpt}
\DefineSymbol{\an}{an}
\DefineSymbol{\alg}{alg}
\DefineSymbol{\la}{la}
\DefineSymbol{\rla}{rla}
\DefineSymbol{\dep}{dep}
\DefineSymbol{\red}{red}
\DefineSymbol{\ab}{ab}
\DeclareMathOperator{\Hom}{Hom}
\DeclareMathOperator{\Ker}{Ker}
\let\Im\relax
\DeclareMathOperator{\Im}{Im}
\DeclareMathOperator{\Aut}{Aut}
\DeclareMathOperator{\Gal}{Gal}
\DeclareMathOperator{\cofib}{cofib}
\DeclareMathOperator{\fib}{fib}
\DeclareMathOperator{\Spec}{Spec}
\DeclareMathOperator{\AnSpec}{AnSpec}
\DeclareMathOperator{\Spa}{Spa}
\DeclareMathOperator{\Spf}{Spf}
\DeclareMathOperator{\VB}{VB}
\DeclareMathOperator{\intHom}{\underline{Hom}}
\DeclareMathOperator*{\Rvarprojlim}{\mathit{R}\varprojlim}
\newcommand{\lra}{\longrightarrow}
\newcommand{\heart}{\heartsuit}
\newcommand{\hotimes}{\mathbin{\hat{\otimes}}}
\DefineCategory{\Set}{Set}
\DefineCategory{\Ab}{Ab}
\DefineCategory{\Ring}{Ring}
\DefineCategory{\Mod}{Mod}
\DefineCategory{\Rep}{Rep}
\DefineCategory{\coMod}{coMod}
\DefineCategory{\Alg}{Alg}
\DefineCategory{\Ch}{Ch}
\DefineCategory{\Mon}{Mon}
\DefineCategory{\CMon}{CMon}
\DefineCategory{\PCoh}{PCoh}
\DefineCategory{\Perf}{Perf}
\DefineCategory{\FP}{FP}
\DefineCategory{\Aff}{Aff}
\DefineCategory{\cAff}{cAff}
\DefineCategory{\AnRing}{AnRing}
\DefineCategory{\Ani}{Ani}
\DefineCategory{\CAlg}{CAlg}
\DefineCategory{\AlgAff}{AlgAff}
\DefineCategory{\AlgAffPair}{AlgAffPair}
\DefineCategory{\AlgAnSp}{AlgAnSp}
\DefineCategory{\AnAdic}{AnAdic}
\renewcommand{\Bbb}{\mathbb{B}}
\newcommand{\Ebb}{\mathbb{E}}
\newcommand{\Gbb}{\mathbb{G}}
\newcommand{\Lbb}{\mathbb{L}}
\newcommand{\Nbb}{\mathbb{N}}
\newcommand{\Pbb}{\mathbb{P}}
\newcommand{\Qbb}{\mathbb{Q}}
\newcommand{\Zbb}{\mathbb{Z}}
\newcommand{\Acal}{\mathcal{A}}
\newcommand{\Bcal}{\mathcal{B}}
\newcommand{\Ccal}{\mathcal{C}}
\newcommand{\Dcal}{\mathcal{D}}
\newcommand{\Ncal}{\mathcal{N}}
\newcommand{\Ocal}{\mathcal{O}}
\newcommand{\Pcal}{\mathcal{P}}
\newcommand{\Rcal}{\mathcal{R}}
\newcommand{\Wcal}{\mathcal{W}}
\newcommand{\Afrak}{\mathfrak{A}}
\newcommand{\Xfrak}{\mathfrak{X}}
\newcommand{\mfrak}{\mathfrak{m}}
\newcommand{\nfrak}{\mathfrak{n}}
\newcommand{\pfrak}{\mathfrak{p}}
\renewcommand{\tilde}{\widetilde}
\renewcommand{\hat}{\widehat}
\renewcommand{\bar}{\overline}
\begin{document}

\title{$(\varphi,\Gamma)$-modules over relatively discrete algebras}
\author{Yutaro Mikami}
\date{\today}
\address{Graduate School of Mathematical Sciences, University of Tokyo, 3-8-1 Komaba, Meguro-ku, Tokyo 153-8914, Japan}
\email{y-mikmi@g.ecc.u-tokyo.ac.jp}
\subjclass{primary 11F80, secondary 11S25}
\begin{abstract}
In this paper, we study $(\varphi,\Gamma)$-modules over rings which are ``combinations of discrete algebras and affinoid $\Qbb_p$-algebras'', and prove basic results: the existence of a fully faithful functor from the category of Galois representations, the deperfection of $(\varphi,\Gamma)$-modules over perfect period rings, the dualizability of the cohomology of $(\varphi,\Gamma)$-modules, and the classification of $(\varphi,\Gamma)$-modules of rank $1$.
This work is motivated by the categorical $p$-adic local Langlands correspondence for locally analytic representations, as proposed by Emerton-Gee-Hellmann, and the $GL_1$ case, as formulated and proved by Rodrigues Jacinto-Rodr\'{\i}guez Camargo.
\end{abstract}
\maketitle

\setcounter{tocdepth}{1}

\tableofcontents

\section*{Introduction}
\subsection{Background}
Let $K$ be a finite extension of $\Qbb_p$.
The \textit{$p$-adic local Langlands correspondence} (for $GL_n(K)$) is a correspondence between unitary Banach representations of $GL_n(K)$ and $p$-adic representations of the absolute Galois group $G_K=\Gal(\bar{K}/K)$ of $K$.
When $GL_n(K)=GL_2(\Qbb_p)$, then the correspondence was constructed by Colmez in \cite{Col10}.
A key ingredient of the construction is the theory of $(\varphi,\Gamma_K)$-modules.
\'{E}tale $(\varphi,\Gamma_K)$-modules were introduced by Fontaine in \cite{Fontaine90}, and he proved that the category of \'{e}tale $(\varphi,\Gamma_K)$-modules is equivalent to the category of $p$-adic representations of $G_K$.
Berger introduced $(\varphi,\Gamma_K)$-modules over the Robba ring $\Rcal_K$ in \cite{Ber02} building on works of Cherbonnier-Colmez and Kedlaya \cite{CC98, Ked04}.
The category of $(\varphi,\Gamma_K)$-modules over $\Rcal_K$ contains the category of \'{e}tale $(\varphi,\Gamma_K)$-modules (or equivalently the category of $p$-adic representations of $G_K$) as a full subcategory.
An irreducible $p$-adic representation of $G_K$ might be reducible in the category of $(\varphi,\Gamma_K)$-modules over $\Rcal_K$. 
Indeed there are many such representations (e.g., trianguline representations), and these representations play an important role in the $p$-adic local Langlands correspondence for $GL_2(\Qbb_p)$.
It is natural to ask whether there is a correspondence between $(\varphi,\Gamma_K)$-modules over $\Rcal_K$ of rank $n$ and some representations of $GL_n(K)$.
In fact, Colmez constructed a correspondence between $(\varphi,\Gamma_{\Qbb_p})$-modules over $\Rcal_{\Qbb_p}$ of rank $2$ and locally analytic representations of $GL_2(\Qbb_p)$ in \cite{Col16}, which we call the \textit{$p$-adic local Langlands correspondence for locally analytic representations}.

Recently, Emerton-Gee-Hellmann proposed a \textit{categorical $p$-adic local Langlands correspondence} in \cite{EGH23}.
We explain the categorical $p$-adic local Langlands correspondence for locally analytic representations.
To formulate it, we use a rigid analytic moduli stack $\Xfrak_{n,K}$ of $(\varphi,\Gamma_K)$-modules over the Robba ring of rank $n$, which is called the \textit{(analytic) Emerton-Gee stack}.
Let $\Rep_{\square}^{\la}(GL_n(K))$ denote the stable $\infty$-category of locally analytic representations of $GL_n(K)$ defined in \cite[Definition 3.2.1]{RJRC23}, and let $\Dcal(\Xfrak_{n,K})$ denote the stable $\infty$-category of solid quasi-coherent complexes on $\Xfrak_{n,K}$.

\begin{conjecture}[{\cite[Conjecture 6.2.4, Remark 6.2.9 (d)]{EGH23}}]
There exists an exact functor
$$\Afrak_{GL_n(K)}^{\rig} \colon \Rep_{\square}^{\la}(GL_n(K)) \to \Dcal(\Xfrak_{n,K})$$
satisfying many good properties.
\end{conjecture}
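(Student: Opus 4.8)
We sketch a strategy by which such a functor should be produced; the results of the present paper are intended as foundational input toward it rather than as a resolution of the conjecture, so what follows is a roadmap together with an indication of where the main difficulty lies.

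The plan is to realize $\Afrak_{GL_n(K)}^{\rig}$ as a ``derived internal Hom into a universal representation''. Concretely, one wants a $GL_n(K)$-equivariant object $\Pi^{\mathrm{univ}}$ over the Emerton--Gee stack $\Xfrak_n$ such that $\Afrak_{GL_n(K)}^{\rig}(V) := R\intHom_{GL_n(K)}(V,\Pi^{\mathrm{univ}})$ lands in $\Dcal_{\bs}(\Xfrak_n)$ and, fibrewise over a point corresponding to a Galois representation (equivalently an \'etale $(\varphi,\Gamma)$-module), recovers the expected $\Hom$-complex against the representation produced by a Colmez-type construction. To build $\Pi^{\mathrm{univ}}$ one needs a \emph{relative} Montr\'eal functor: over the \'etale locus one feeds the universal Galois representation into the $GL_n(K)$-construction, but a locally analytic representation --- hence the target sheaf --- necessarily mixes a ``rigid analytic'' part, governed by families of $(\varphi,\Gamma)$-modules over affinoid $\Qbb_p$-algebras, with a smooth/discrete part, where the $(\varphi,\Gamma)$-module degenerates to an object over a discrete coefficient ring. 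This hybrid situation is exactly what the relatively discrete $(\varphi,\Gamma)$-module formalism of this paper is designed to accommodate: the fully faithful embedding of Galois representations into $(\varphi,\Gamma)$-modules over relatively discrete algebras lets one propagate the universal Galois representation across the whole of $\Xfrak_n$, including its non-rigid strata, while the deperfection result allows one to carry this out uniformly starting from perfect period rings, where the construction is most flexible.

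First I would treat the abelian case: for $GL_1(K)=K^\times$ the desired functor is that of Rodrigues Jacinto--Rodr\'{\i}guez Camargo, which identifies $\Rep^{\la}(K^\times)$ with quasi-coherent sheaves on the character stack $\Xfrak_1$ essentially by a Fourier transform, so the recursion is grounded. Next I would propagate this along parabolic induction: for a parabolic $P\subset GL_n$ with Levi $L$ there should be a correspondence of analytic stacks $\Xfrak_L \xleftarrow{q} \Xfrak_P \xrightarrow{p} \Xfrak_n$, with $\Xfrak_P$ a moduli of suitably filtered $(\varphi,\Gamma)$-modules, and one would demand a compatibility $\Afrak_{GL_n(K)}^{\rig}(\Ind_{P(K)}^{GL_n(K)}\sigma)\simeq p_{*}q^{*}\Afrak_{L}^{\rig}(\sigma)$. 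The real content here is that $p$ is ``proper enough'' for $p_{*}$ to preserve $\Dcal_{\bs}$ and to behave finitely --- precisely the sort of statement that the dualizability of the cohomology of $(\varphi,\Gamma)$-modules proved in this paper is meant to supply in the relative setting. Exactness of $\Afrak_{GL_n(K)}^{\rig}$ would then reduce to exactness of $R\intHom_{GL_n(K)}(-,\Pi^{\mathrm{univ}})$, i.e.\ to sufficient flatness/dualizability of $\Pi^{\mathrm{univ}}$, again an application of the dualizability results; compatibility with the cohomology of $(\varphi,\Gamma)$-modules and with Emerton's locally analytic Jacquet functor would be verified on the generating family of (parabolically induced) representations and extended by colimits and finite limits.

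The hardest part will be the construction of $\Pi^{\mathrm{univ}}$ itself --- equivalently, a relative Montr\'eal functor --- for $GL_n(K)$ with $n\ge 3$ or $K\ne\Qbb_p$: outside $GL_2(\Qbb_p)$ there is presently no recipe producing the $GL_n(K)$-representation attached to a family of $(\varphi,\Gamma)$-modules, and the representation theory of $GL_n(K)$ is not exhausted by objects assembled from principal series in a way compatible with the stratification of $\Xfrak_n$, so even \emph{defining} the functor on a set of generators and checking that it annihilates the expected relations (intertwining operators, Orlik--Strauch/BGG-type resolutions on the representation side matched against Cousin-type complexes for $\Xfrak_n$ on the geometric side) is out of reach at the moment. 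For this reason the present paper only records the conjecture, and instead develops the relatively discrete $(\varphi,\Gamma)$-module formalism together with its basic properties --- full faithfulness from Galois representations, deperfection over perfect period rings, and dualizability of cohomology --- as the coefficient-theoretic foundation on which any such functor must ultimately be built.
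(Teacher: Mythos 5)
This statement is a conjecture cited verbatim from \cite[Conjecture 6.2.4, Remark 6.2.8(d)]{EGH23}; the paper does not prove it and explicitly treats it as motivation for the foundational work carried out here (relatively discrete coefficient rings, deperfection, dualizability of $(\varphi,\Gamma)$-cohomology). You correctly identify this, and your submission is not a proof but a roadmap, which is the appropriate posture for an open conjecture. Your sketch is a reasonable account of the expected shape of $\Afrak_{GL_n(K)}^{\rig}$ (a relative Montr\'eal-type construction producing a universal coefficient object, with the abelian case grounded in Rodrigues Jacinto--Rodr\'{\i}guez Camargo and parabolic induction matched against a correspondence of stacks), and you are right to flag the absence of any construction of the $GL_n(K)$-representation attached to a $(\varphi,\Gamma)$-module beyond $GL_2(\Qbb_p)$ as the essential obstruction. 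One caveat worth recording: the paper actually points in a slightly different direction than a direct attack on $\Xfrak_n$ as stated, because \cite[Remark 6.2.8(c)]{EGH23} and \cite{RJRC23} indicate that the functor to $\Dcal_{\bs}(\Xfrak_n)$ is \emph{not} expected to be fully faithful, and the paper's Question~\ref{ques:moduli interpretation} and Expectation~\ref{conj:moduli interpretation of X1} suggest replacing $\Xfrak_n$ by a modified moduli space of $(\varphi,\Gamma)$-modules with relatively discrete coefficients; a roadmap aimed at the original $\Xfrak_n$ without this modification would likely fail to recover the desired fully faithfulness even in the $n=1$ case. Since the statement is a conjecture and no proof is given or expected, there is nothing further to compare.
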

\begin{remark}
More precisely, it seems necessary to restrict to the subcategories consisting of objects satisfying suitable finiteness conditions, but we will not pursue this in this paper.
\end{remark}

However, it was pointed out in \cite[Remark 6.2.9 (c)]{EGH23} that the functor is not expected to be fully faithful.
When $n=1$, this problem was resolved by Rodrigues Jacinto-Rodr\'{\i}guez Camargo in \cite{RJRC23} by using a modification of $\Xfrak_{1,K}$.
We explain it.
Let $K_0$ be the maximal unramified extension of $\Qbb_p$ in $K$.
Every $(\varphi,\Gamma_K)$-module of rank $1$ over the Robba ring $\Rcal_{K,A}$ on an affinoid $K_0$-algebra $A$ is given by a continuous character $K^{\times} \to A^{\times}$ up to a twist by a line bundle on $\Spa(A)$ (\cite[Theorem 6.2.14]{KPX14}).
Therefore, we obtain a simple description of $\Xfrak_{1,K}$
$$\Xfrak_{1,K}\cong[(\Wcal\times \Gbb_m^{\an})/\Gbb_m^{\an}]$$
with trivial $\Gbb_m^{\an}$-action, where $\Wcal$ is the rigid analytic moduli space of continuous characters of $\Ocal_K^{\times}$, and where $\Gbb_m^{\an}$ is the rigid analytic multiplicative group.
They used a modification $\Xfrak_{1,K}^{\mathrm{mod}}$ of $\Xfrak_{1,K}$ which is given by 
$$\Xfrak_{1,K}^{\mathrm{mod}}=[(\Wcal\times \Gbb_m^{\alg})/\Gbb_m^{\alg}],$$
where $\Gbb_m^{\alg}=\AnSpec (K_0[T^{\pm 1}],\Ocal_{K_0})_{\square}$ is an analytic space in the sense of Clausen-Scholze \cite{AG}, and where we regard $\Wcal$ as an analytic space.
\begin{theorem}[{\cite[Theorem 4.4.4]{RJRC23}}]
     There is a natural equivalence of stable $\infty$-categories
     $$\prod_{\Zbb}\Rep_{\square}^{\la}(K^{\times})\overset{\sim}{\to}\Dcal(\Xfrak_{1,K}^{\mathrm{mod}}).$$
\end{theorem}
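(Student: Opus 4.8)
The plan is to strip the two ``free'' directions off $\Xfrak_1^{\mathrm{mod}}=[(\Wcal\times\Gbb_m^{\alg})/\Gbb_m^{\alg}]$ one at a time. Because the $\Gbb_m^{\alg}$-action is trivial, a solid quasi-coherent complex on $\Xfrak_1^{\mathrm{mod}}$ is a solid quasi-coherent complex on $\Wcal\times\Gbb_m^{\alg}$ together with a comodule structure over $\Ocal(\Gbb_m^{\alg})$, and the crucial feature of the modification is that $\Ocal(\Gbb_m^{\alg})=(\Qbb_p[T^{\pm1}],\Zbb_p)_{\bs}$ is \emph{honestly} the group Hopf algebra $\Qbb_p[\Zbb]$. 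Comodules over a group algebra are nothing but $\Zbb$-graded objects, so
$$\Dcal_{\bs}(\Xfrak_1^{\mathrm{mod}})\;\simeq\;\prod_{\Zbb}\Dcal_{\bs}(\Wcal\times\Gbb_m^{\alg}).$$
Equivalently, $\Xfrak_1^{\mathrm{mod}}\cong(\Wcal\times\Gbb_m^{\alg})\times B\Gbb_m^{\alg}$ and $\Dcal_{\bs}(B\Gbb_m^{\alg})\simeq\coMod_{\Qbb_p[\Zbb]}(\Dcal_{\bs}(\Qbb_p))\simeq\prod_{\Zbb}\Dcal_{\bs}(\Qbb_p)$; this is where the factor $\prod_{\Zbb}$ originates, and it is exactly the step that \emph{fails} over $\Gbb_m^{\an}$, where $\Ocal(\Gbb_m^{\an})$ is a ring of Laurent series rather than the group algebra. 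It therefore remains to construct a natural equivalence $\Dcal_{\bs}(\Wcal\times\Gbb_m^{\alg})\simeq\Rep^{\la}(K^\times)$.

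To do this I would realize both sides as modules over one and the same solid $\Qbb_p$-algebra. On the geometric side, $\Gbb_m^{\alg}$ is affine and $\Wcal$ is quasi-Stein, so $\Dcal_{\bs}(\Wcal\times\Gbb_m^{\alg})$ is the $\infty$-category of modules over its ring of global sections $\Ocal(\Wcal)\hotimes_{\Qbb_p}\Qbb_p[T^{\pm1}]$. On the representation-theoretic side, fix a uniformizer $\pi$ of $K$; the normalized valuation gives a splitting $K^\times\cong\Ocal_K^\times\times\pi^{\Zbb}$, whence $D^{\la}(K^\times)\cong D^{\la}(\Ocal_K^\times)\hotimes_{\Qbb_p}\Qbb_p[\pi^{\Zbb}]$ and $\Rep^{\la}(K^\times)\simeq\Mod_{D^{\la}(K^\times)}(\Dcal_{\bs}(\Qbb_p))$ — using that $\Rep^{\la}$ of the discrete factor $\Zbb$ is just $\Qbb_p[\Zbb]$-modules and that $\Rep^{\la}$ of a product of groups is the tensor product of module $\infty$-categories. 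The Amice--Schneider--Teitelbaum (Fourier) description of weight space identifies $\Ocal(\Wcal)\cong D^{\la}(\Ocal_K^\times)$ as solid $\Qbb_p$-algebras, and $\Qbb_p[T^{\pm1}]\cong\Qbb_p[\pi^{\Zbb}]$ once $\pi$ is chosen; combining, $\Ocal(\Wcal)\hotimes_{\Qbb_p}\Qbb_p[T^{\pm1}]\simeq D^{\la}(K^\times)$, hence $\Dcal_{\bs}(\Wcal\times\Gbb_m^{\alg})\simeq\Rep^{\la}(K^\times)$, and assembling with the first paragraph yields $\prod_{\Zbb}\Rep^{\la}(K^\times)\simeq\Dcal_{\bs}(\Xfrak_1^{\mathrm{mod}})$.

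Finally one must see that this is a \emph{natural} equivalence: the composite should be identified with the natural functor underlying the $GL_1$ case of the (modified) Emerton--Gee--Hellmann correspondence, built from the classification of rank-one $(\varphi,\Gamma)$-modules over the Robba ring by continuous characters of $K^\times$, the $n$-th factor of $\prod_{\Zbb}$ corresponding to $\Zbb$-degree $n$ for the $B\Gbb_m^{\alg}$-grading. Although the splitting used a uniformizer $\pi$, a different choice changes the equivalence only by a compatible autoequivalence, so the statement is natural in the intended sense.

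The one genuinely non-formal ingredient — and where I expect most of the work to lie — is the pair of analytic inputs in the middle paragraph: that the quasi-Stein space $\Wcal$ satisfies $\Dcal_{\bs}(\Wcal)\simeq\Mod_{\Ocal(\Wcal)}(\Dcal_{\bs}(\Qbb_p))$ (a ``quasi-Stein spaces are affine-like'' statement in the Clausen--Scholze formalism, together with the matching of solid structures), and that the classical isomorphism $\Ocal(\Wcal)\cong D^{\la}(\Ocal_K^\times)$ promotes to an isomorphism of solid (analytic) rings, so that it actually induces an equivalence of module $\infty$-categories. One also has to keep every completed tensor product $\hotimes$ solid and check $\Mod_A\hotimes\Mod_B\simeq\Mod_{A\hotimes B}$ for the rings at hand; this is harmless here because $\Ocal(\Wcal)$ is nuclear Fréchet--Stein and $\Qbb_p[T^{\pm1}]$ is discrete and algebraic, but it is exactly the kind of compatibility that breaks down if one replaces $\Gbb_m^{\alg}$ by $\Gbb_m^{\an}$.
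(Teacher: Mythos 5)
The paper does not prove this theorem: it is quoted verbatim from \cite[Theorem 4.4.4]{RJRC23} in the introduction as motivation, and the only elaboration offered is the sentence immediately following, namely that the left-hand side can be read as $\Dcal_{\bs}$ of the analytic stack $\Bun_{K^{\times}}^{\la}=[\Zbb/K^{\times,\la}]$. There is therefore no ``paper's own proof'' against which to compare your attempt; I can only assess it on its own terms and against what the paper signals about \cite{RJRC23}.

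On those terms, your sketch is structurally consistent with that hint and with the strategy of \cite{RJRC23}: split $\Xfrak_1^{\mathrm{mod}}\cong(\Wcal\times\Gbb_m^{\alg})\times B\Gbb_m^{\alg}$ using triviality of the action; compute $\Dcal_{\bs}(B\Gbb_m^{\alg})\simeq\prod_{\Zbb}\Dcal_{\bs}(\Qbb_p)$ from the group-algebra reading $\Qbb_p[T^{\pm1}]=\Qbb_p[\Zbb]$, which is exactly what the replacement $\Gbb_m^{\an}\rightsquigarrow\Gbb_m^{\alg}$ buys; and identify $\Dcal_{\bs}(\Wcal\times\Gbb_m^{\alg})\simeq\Mod_{\Ocal(\Wcal)\hotimes\Qbb_p[T^{\pm1}]}(\Dcal_{\bs}(\Qbb_p))\simeq\Mod_{D^{\la}(K^\times)}(\Dcal_{\bs}(\Qbb_p))\simeq\Rep^{\la}(K^\times)$. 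You have also correctly located the non-formal content: the solid-affineness of the quasi-Stein space $\Wcal$ and of its product with the non-Fredholm factor $\Gbb_m^{\alg}$, the promotion of the Amice/Fourier isomorphism $\Ocal(\Wcal)\cong D^{\la}(\Ocal_K^\times)$ to the analytic-ring level, and the identification $\Rep^{\la}(K^\times)\simeq\Mod_{D^{\la}(K^\times)}(\Dcal_{\bs}(\Qbb_p))$ for the non-compact group $K^\times$ (including the K\"{u}nneth statement splitting off the discrete $\Zbb$-factor). Those inputs are what carry the argument and are established in \cite{RJRC22,RJRC23}; to turn your plan into a proof you would need to cite or reprove them precisely. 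Finally, to make ``natural'' honest you should say what the functor actually is — the one induced by assigning to a locally analytic character $\delta\colon K^\times\to A^\times$ the rank-$1$ $(\varphi,\Gamma)$-module $\Rcal_A(\delta)$ as in \cite[Theorem 6.2.14]{KPX14}, twisted by the $\Zbb$-grading coming from $B\Gbb_m^{\alg}$ — rather than observing post hoc that the two sides come out abstractly equivalent.
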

We note that the left-hand side can be regarded as the stable $\infty$-category of quasi-coherent complexes on the analytic stack $\Bun_{K^{\times}}^{\la}=\coprod_{\Zbb}[\ast/K^{\times,\la}]$.
The modification of $\Xfrak_{1,K}$ depends on the explicit presentation of $\Xfrak_{1,K}$, so it is natural to ask the following question.
\begin{question}\label{ques:moduli interpretation}
Is there a moduli interpretation of $\Xfrak_{1,K}^{\mathrm{mod}}$?
\end{question}
A natural candidate is a moduli stack of $(\varphi,\Gamma_K)$-modules with more general coefficients such as $A[T^{\pm 1}]$ where $A$ is an affinoid $K_0$-algebra.
In this paper, we consider $(\varphi,\Gamma_K)$-modules with more general coefficients as above.

\subsection{Statements of the main results}
Let us now describe what is carried in this paper.

\subsubsection*{Comparison of some definitions of $(\varphi,\Gamma)$-modules}
We use four kinds of families of $p$-adic period rings (cf.\ \cite{Ber08B-pair}), and we do not use the Robba ring (i.e., ``limit of $p$-adic period rings'') (Remark \ref{rem:the reason why family}).
The first one is $\tilde{B}_{\bar{K}}=\{\tilde{B}_{\bar{K}}^{I}\}_{I}$ which is a family of ring of functions on rational open subsets of $\Spa(W(\Ocal_{\hat{\bar{K}}}^{\flat}))\setminus \{p[p^{\flat}]=0\}$.
It has a Frobenius action $\varphi$ and a continuous $G_K$-action.
By replacing $\bar{K}$ with $K_{\infty}=K(\zeta_{p^{\infty}})$, we obtain $\tilde{B}_{K_{\infty}}=\{\tilde{B}_{K_{\infty}}^{I}\}_{I}$, which has a Frobenius action $\varphi$ and a continuous $\Gamma_K=\Gal(K_{\infty}/K)$-action. 
Since $\Gamma_K$ is a $p$-adic Lie group, we obtain the subrings $B_{K,\infty}=\{\tilde{B}_{K_{\infty}}^{I,\la}\}_{I}$ of $\Gamma_K$-locally analytic vectors, which was studied in \cite{Ber16}.
The final one is a deperfection $B_{K}=\{B_{K}^{I}\}_{I}$ of $\tilde{B}_{K_{\infty}}$.
Then we can define a notion of \textit{$(\varphi,G_K)$-modules over $\tilde{B}_{\bar{K}}$} (resp.\ \textit{$(\varphi,\Gamma_K)$-modules over $\tilde{B}_{K_{\infty}}$}, $B_{K,\infty}$, $B_{K}$) as a family of finite projective $\tilde{B}_{\bar{K}}^{I}$-modules (resp.\ $\tilde{B}_{K_{\infty}}^{I}$-modules, $B_{K,\infty}^{I}$-modules, $B_K^{I}$-modules) $M=\{M^{I}\}_{I}$ with $\varphi$-actions and $G_K$-actions (resp.\ $\Gamma_K$-actions) (Definition \ref{def:perfect module}, Definition \ref{def:family of phi Gamma module perfect la}, Definition \ref{def:family of phi Gamma module}).
By the work of Berger, Fargues-Fontaine, Kedlaya, and Porat, the categories of these modules are canonically equivalent (cf.\ \cite[Theorem 5.1.1, Theorem 5.1.5]{EGH23}).
We generalize it for more general coefficient rings.
Let $A$ be an algebraic-affinoid $\Qbb_{p,\square}$-algebra (e.g., $A=\Qbb_p[T^{\pm 1}]$), see Definition \ref{def:algebraic-affinoid algebra}.
By replacing $\tilde{B}_{\bar{K}}^{I}$ with $\tilde{B}_{\bar{K},A}^{I}=\tilde{B}_{\bar{K}}^{I}\otimes_{\Qbb_{p,\square}}A$, we can define a notion of \textit{$(\varphi,G_K)$-modules over $\tilde{B}_{\bar{K},A}$}.
Similarly, we can define a notion of \textit{$(\varphi,\Gamma_K)$-modules over $\tilde{B}_{K_{\infty},A}$} (resp.\ $B_{K,\infty,A}$, $B_{K,A}$).

Then we can show the following theorem by a standard argument.

\begin{theorem}[{Theorem \ref{thm:Galois representation fully faithful functor}}]
    There exists a natural fully faithful functor from the category of continuous $G_K$-representations on finite projective $A$-modules to the category of $(\varphi,G_K)$-modules over $\tilde{B}_{\bar{K},A}$.
\end{theorem}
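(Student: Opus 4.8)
The plan is to realise the functor as the evident base change to the period family $\tilde B_{\bar K,A}$ and to deduce full faithfulness from a computation of $\varphi$- and $G_K$-invariants; this is the ``standard argument'' alluded to above, so the only genuinely new input is the bookkeeping forced by the more general coefficient ring $A$. Concretely, given a continuous $G_K$-representation $V$ on a finite projective $A$-module, I would set $\Dcal(V)\coloneqq\{\tilde B^{[r,s]}_{\bar K}\otimes_{\Qbb_{p,\bs}}V\}_{r,s}$. Since $\tilde B^{[r,s]}_{\bar K}\otimes_{\Qbb_{p,\bs}}V\cong\tilde B^{[r,s]}_{\bar K,A}\otimes_A V$ and $V$ is finite projective over $A$, each term is finite projective over $\tilde B^{[r,s]}_{\bar K,A}$; the Frobenius and transition structure maps are induced from those of $\tilde B_{\bar K}$ acting on the left factor, and $G_K$ acts diagonally. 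The point to verify is that this is a $(\varphi,G_K)$-module in the sense of Definition \ref{def:perfect module}, i.e.\ that the diagonal $G_K$-action on each $\tilde B^{[r,s]}_{\bar K,A}\otimes_A V$ is continuous; this follows from continuity of the actions on $V$ and on $\tilde B^{[r,s]}_{\bar K}$ together with the formalism of solid tensor products, and it is here (essentially the only place) that one uses that $A$ is relatively discrete and finitely generated over an affinoid $\Qbb_p$-algebra (Definition \ref{def:relatively discrete}). Functoriality in $V$ is immediate, so $\Dcal$ is a well-defined natural functor.

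To prove full faithfulness I would reduce to a computation of invariants. Fix $V,W$, set $N\coloneqq\intHom_A(V,W)$ — a finite projective $A$-module with the diagonal continuous $G_K$-action, so that $N^{G_K}=\Hom_{G_K}(V,W)$ — and note that, because $V$ is finite projective over $A$, for every interval $[r,s]$ there is a natural identification $\intHom_{\tilde B^{[r,s]}_{\bar K,A}}\bigl(\tilde B^{[r,s]}_{\bar K,A}\otimes_A V,\ \tilde B^{[r,s]}_{\bar K,A}\otimes_A W\bigr)\cong\tilde B^{[r,s]}_{\bar K,A}\otimes_A N$, compatible with $\varphi$, with $G_K$, and with the transition maps. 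Hence a morphism $\Dcal(V)\to\Dcal(W)$ of $(\varphi,G_K)$-modules is precisely a $\varphi$-invariant, $G_K$-invariant compatible section of the family $\{\tilde B^{[r,s]}_{\bar K}\otimes_{\Qbb_{p,\bs}}N\}_{r,s}$ (using $\tilde B^{[r,s]}_{\bar K,A}\otimes_A N\cong\tilde B^{[r,s]}_{\bar K}\otimes_{\Qbb_{p,\bs}}N$), and it remains to identify the module of such sections with $N$, $G_K$-equivariantly.

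The only substantive point is this identification. For $N=A=\Qbb_p$ it is the classical fact that the $\varphi$-invariant global functions on $\Spa(W(\Ocal_{\hat{\bar{K}}}^{\flat}))\setminus\{p[p^{\flat}]=0\}$ are exactly $\Qbb_p$ — equivalently $W(\Ocal_{\hat{\bar{K}}}^{\flat})[1/p]^{\varphi=1}=\Qbb_p$, reassembled from the annuli $\tilde B^{[r,s]}_{\bar K}$ — and one in fact wants the stronger statement that $R\Gamma$ of the structure sheaf of the associated (relative) Fargues--Fontaine curve is $A$ in degree $0$ and vanishes in degree $1$. Granting this, the case of a general $N$ (and a general $A$) follows by applying $-\otimes_{\Qbb_{p,\bs}}N$, which is formal once the degree-one vanishing is in hand; taking $G_K$-invariants then gives $\Hom_{(\varphi,G_K)}(\Dcal(V),\Dcal(W))\cong N^{G_K}=\Hom_{G_K}(V,W)$, which is full faithfulness. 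I expect this cohomological input, together with its compatibility with the coefficient ring $A$, to be the main obstacle: it must be extracted from the work of Berger, Fargues--Fontaine and Kedlaya cited in the introduction and transported to the relatively discrete setting, after which everything else in the argument is purely formal.
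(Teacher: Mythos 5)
Your proposal follows essentially the same route as the paper's proof: reduce via the internal-hom formalism to identifying the $\varphi$-invariant compatible sections of $N\otimes_{\Qbb_{p,\bs}}\tilde{B}_{\bar{K}}$ with $N$, express those sections as $\Phi$-fixed points on a single annulus, and deduce the identification from the classical computation over $\Qbb_p$ by reducing first to $V=A$ (finite projectivity) and then to $A=\Qbb_p$ (flatness of $A$ over $\Qbb_{p,\bs}$), citing \cite[Proposition 4.1.1]{FF18}. The one cosmetic discrepancy is that you invoke $H^1$-vanishing of the relative Fargues--Fontaine curve to push the coefficients through, whereas the paper only needs left-exactness of $0\to\Qbb_p\to\tilde{B}_{\bar{K}}^{[t,pt]}\xrightarrow{\Phi-1}\tilde{B}_{\bar{K}}^{[t,t]}$ together with flatness of $N$ over $\Qbb_{p,\bs}$; both are valid since the $H^1$-vanishing does hold.
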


The following is one of the main theorems.
\begin{theorem}[{Theorem \ref{thm:FF curve descent}, Theorem \ref{thm:deperfection}}]
    The following categories are canonically equivalent.
    \begin{enumerate}
        \item The category of $(\varphi,G_K)$-modules over $\tilde{B}_{\bar{K},A}$.
        \item The category of $(\varphi,\Gamma_K)$-modules over $\tilde{B}_{K_{\infty},A}$.
        \item The category of $(\varphi,\Gamma_K)$-modules over $B_{K,\infty,A}$.
        \item The category of $(\varphi,\Gamma_K)$-modules over $B_{K,A}$.
    \end{enumerate}
\end{theorem}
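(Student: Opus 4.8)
The plan is to establish the two equivalences $(1)\simeq(2)$ and $(2)\simeq(3)$ separately, in each case reducing to the known classical statement over $\tilde{B}_{\bar K}$, $\tilde{B}_{K_\infty}$, $B_K$ (the case $A=\Qbb_p$, due to Berger, Fargues--Fontaine, Kedlaya) by a descent/base-change argument that exploits the special structure of a relatively discrete finitely generated algebra $A$. The key point is that $A$ is built from an affinoid $\Qbb_p$-algebra by adjoining finitely many variables with a discrete (polynomial) topology, so that $\tilde{B}_{\bar K,A}^{[r,s]} = \tilde{B}_{\bar K}^{[r,s]}\hat\otimes_{\Qbb_{p,\bs}}A$ behaves like a base change that commutes with all the relevant limits, completed tensor products, and Galois/Frobenius descent operations; in particular the functors and comparison maps appearing in the classical proof are all linear over $A$ and hence survive the base change.

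For $(1)\Leftrightarrow(2)$, I would follow the Fargues--Fontaine curve approach as packaged in Theorem \ref{thm:FF curve descent}: a $(\varphi,G_K)$-module over $\tilde{B}_{\bar K,A}$ amounts to a $G_K$-equivariant family of vector bundles on the (relative, $A$-twisted) Fargues--Fontaine curve, and the equivalence with $(\varphi,\Gamma_K)$-modules over $\tilde{B}_{K_\infty,A}$ comes from descent along $K_\infty/K$ together with the fact that taking $H_K=\Gal(\bar K/K_\infty)$-invariants is an equivalence onto the appropriate category (this is the analogue of the Cherbonnier--Colmez / Fargues--Fontaine descent). The functor in one direction is $\Mcal\mapsto \Mcal^{H_K}$ (suitably interpreted at each radius $[r,s]$); the inverse is $\mathcal{N}\mapsto \tilde{B}_{\bar K,A}\hat\otimes_{\tilde{B}_{K_\infty,A}}\mathcal{N}$. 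Full faithfulness and essential surjectivity reduce, by a standard dévissage over $A$, to the classical statement; the only thing to check is that the finite projectivity and the $\varphi$-module structure are preserved, which follows because finite projectivity descends along the faithfully flat map $\tilde{B}_{K_\infty,A}^{[r,s]}\to\tilde{B}_{\bar K,A}^{[r,s]}$ exactly as in the $A=\Qbb_p$ case.

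For $(2)\Leftrightarrow(3)$ (the deperfection, Theorem \ref{thm:deperfection}), the strategy is Berger's / Kedlaya's: construct the functor $B_{K,A}$-module $\mapsto \tilde{B}_{K_\infty,A}$-module by base change along $B_K^{[r,s]}\to\tilde{B}_{K_\infty}^{[r,s]}$ (tensored up over $A$), and conversely produce a canonical $B_{K,A}$-lattice inside a given $(\varphi,\Gamma_K)$-module over $\tilde{B}_{K_\infty,A}$ by the usual ``overconvergence/descent of Frobenius'' argument: one uses the $\varphi$-action to descend from the perfect period ring to its imperfect subring, showing the natural map from the deperfected module tensored up is an isomorphism. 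The heart is a fixed-point / approximation argument (a contraction-mapping estimate on the operator $\varphi$, or equivalently the vanishing of a certain $\varphi$-cohomology over $\tilde{B}_{K_\infty,A}^{[r,s]}/B_{K,A}^{[r,s]}$) that produces the unique $\varphi$-stable deperfected model. Since $A$ carries the discrete topology over its affinoid base, the $t$-adic/$p$-adic estimates underlying this argument are uniform in $A$: concretely, $\tilde{B}_{K_\infty,A}^{[r,s]}$ is (topologically) $\bigoplus$ or $\prod$ of copies of $\tilde{B}_{K_\infty,A_0}^{[r,s]}$ indexed by monomials in the discrete variables, and $\varphi$ acts ``diagonally enough'' that the classical estimate applies termwise.

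The main obstacle I expect is precisely this last point: making the deperfection argument work uniformly over the relatively discrete algebra $A$, i.e.\ controlling the completed tensor products $\tilde{B}_{K_\infty}^{[r,s]}\hat\otimes_{\Qbb_{p,\bs}}A$ and $B_K^{[r,s]}\hat\otimes_{\Qbb_{p,\bs}}A$ well enough that (a) they remain ``period rings'' with the expected ring-theoretic properties (Noetherianity issues aside, one needs the relevant flatness and the behaviour of finite projective modules), and (b) the convergence of the Frobenius-descent procedure is not destroyed by the solid tensor product. I would handle this by isolating a clean base-change lemma — stated at the level of the families $\{B_K^{[r,s]}\}$, $\{\tilde B_{K_\infty}^{[r,s]}\}$ and $A$ relatively discrete — asserting that $-\hat\otimes_{\Qbb_{p,\bs}}A$ commutes with the formation of the comparison isomorphisms and with taking $\varphi$-invariants, and then quoting the classical $A=\Qbb_p$ results verbatim. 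The descent step $(1)\Leftrightarrow(2)$ is comparatively routine once faithfully flat descent for finite projective modules is available in this solid setting; the deperfection step is where the real work lies.
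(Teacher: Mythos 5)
Your overall skeleton — split the statement into $(1)\Leftrightarrow(2)$ and $(2)\Leftrightarrow(3)$, reduce each to the classical $A=\Qbb_p$ case by a descent/base-change argument — matches the paper, and your identification of the functors (take $H_K$-invariants for $(1)\to(2)$; base-change along $B_{K,A}\to\tilde{B}_{K_\infty,A}$ for $(3)\to(2)$) is correct. But you have treated as routine the two places where the paper's real work lies.

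For $(1)\Leftrightarrow(2)$, you write that finite projectivity descends along $\tilde{B}_{K_\infty,A}^{[r,s]}\to\tilde{B}_{\bar K,A}^{[r,s]}$ ``exactly as in the $A=\Qbb_p$ case.'' That is precisely what fails. When $A$ is Banach, $(\tilde{B}_{K_\infty,A}^{[r,s]})_{\bs}$ is Fredholm (Definition \ref{def:Fredholm}), so dualizable objects are automatically perfect complexes and the standard descent argument identifies the descended object as finite projective. For $A$ relatively discrete (e.g.\ $A=\Qbb_p[T^{\pm1}]$), the analytic ring $(\tilde{B}_{K_\infty,A}^{[r,s]})_{\bs}$ is \emph{not} Fredholm, and dualizable does not imply perfect. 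The paper needs a new criterion — Theorem \ref{thm:finite projective non-Fredholm}, proved via the Jacobson-Tate hypothesis, localization at maximal ideals of $A(\ast)$, and a Krull-intersection argument using the compact projective generator $B_{\bs}[S]\hookrightarrow\prod_I B$ from Lemma \ref{lem:compact projective object and prod} — and then Theorem \ref{thm:descent of finite projective modules} to conclude. Your proposal gives no mechanism to recover finite projectivity of $\Gamma(H_K, N)$ without this.

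For $(2)\Leftrightarrow(3)$, your plan is a Frobenius contraction-mapping/Tate-Sen argument claimed to apply ``termwise'' because $\tilde{B}_{K_\infty,A}^{[r,s]}$ decomposes into copies indexed by monomials in the discrete variables of $A$. This is the naive approach the paper explicitly rules out: the Tate-Sen axioms and the associated estimates require a Banach module structure and a $\Gamma_K$-stable lattice, which a finite projective module over $\tilde{B}_{K_\infty,A}^{[r,s]}$ does not carry when $A$ is not Banach; moreover the semilinear $\Gamma_K$-action and the finite projectivity are not respected by the monomial decomposition, so the estimates are not ``uniform in $A$'' in any usable sense. The paper's route (Theorem \ref{thm:locally analytic descent}) is entirely different: replace Frobenius descent by descent to the subring $B_{K,\infty,A}^{[r,s]}=\tilde{B}_{K_\infty,A}^{[r,s],\la}$ of $\Gamma_K$-locally analytic vectors (Proposition \ref{prop:la vector of perfect witt ring}, after Berger and Porat), apply Tate-Sen only in the Banach base case, and extend to general $A$ by (i) reducing to $A(\ast)$ an integral domain via a prime filtration (Lemma \ref{lem:reduction to red}), (ii) injecting $A$ into a Banach algebra $A'$ to prove $M^{\la}\otimes_{B_{K,\infty,A}^{[r,s]}}\tilde{B}_{K_\infty,A}^{[r,s]}\to M$ is injective (Lemma \ref{lem:injection to Banach}, Proposition \ref{prop:la descent inj}), and (iii) a separate colimit/compactness argument, exploiting that $(\tilde{B}_{K_\infty,B}^{[r,s]})_{\bs}^t[\Gamma_K]$ is compact projective, to prove surjectivity (Proposition \ref{prop:la descent surj}). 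None of this is visible in your proposal, and the direct estimate you describe would not close the argument.

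One small slip: you write ``descent along $K_\infty/K$'' for $(1)\Leftrightarrow(2)$; the relevant descent is along $\bar K/K_\infty$ with group $H_K$ (your later invocation of $H_K$-invariants is the correct statement).
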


We note that this theorem is well-known to experts when $A$ is an affinoid $\Qbb_p$-algebra (cf.\ \cite[5.1]{EGH23}).
We sketch the proof when $A$ is not an affinoid $\Qbb_p$-algebra.
First, we explain the categorical equivalence of the categories (1) and (2).
It basically follows from a descent along $\tilde{B}_{K_{\infty},A}^{I} \to \tilde{B}_{\bar{K},A}^{I}$, which is a ``profinite Galois cover with Galois group $H_K=\Gal(\bar{K}/K_{\infty})$''. 
We use the following theorem to prove that for $M\in \Dcal((\tilde{B}_{K_{\infty},A}^{I},\Zbb_p)_{\square})$, if $M\otimes_{(\tilde{B}_{K_{\infty},A}^{I},\Zbb_p)_{\square}}^{\Lbb}\tilde{B}_{\bar{K},A}^{I}$ is finite projective over $\tilde{B}_{\bar{K},A}^{I}$, then $M$ is finite projective over $\tilde{B}_{K_{\infty},A}^{I}$.
We note that it is not clear because $(\tilde{B}_{K_{\infty},A}^{I},\Zbb_p)_{\square}$ is not Fredholm.

\begin{theorem}[{Theorem \ref{thm:finite projective non-Fredholm}}]
    Let $A$ be a Jacobson-Tate algebra (e.g., $\tilde{B}_{K_{\infty}}^{I}$), and $B$ be a (condensed) animated relatively discrete $A$-algebra.
    Then $M\in \Dcal((B,\Zbb_p)_{\square})$ is a finite projective $B$-module if and only if it satisfies the following conditions:
    \begin{enumerate}
        \item The object $M$ is dualizable.
        \item The objects $M$ and $M^{\vee}=R\intHom_{B}(M,B)$ belong to $\Dcal(B_{\square})^{\leq 0}$.
    \end{enumerate}
\end{theorem}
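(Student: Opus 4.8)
The plan is the following; one direction is formal. If $M$ is finite projective then it is a retract of some $B^{n}$, hence dualizable, and $M^{\vee}$ is a retract of $(B^{\vee})^{n}=B^{n}$, hence again finite projective; in particular both $M$ and $M^{\vee}$ lie in the heart of the $t$-structure on $\Dcal(B_{\bs})$, a fortiori in $\Dcal(B_{\bs})^{\leq 0}$. For the converse the goal is to deduce from the two hypotheses that $M$ is concentrated in degree $0$ over $B$ and perfect; a perfect $B$-module concentrated in degree $0$ is finite projective, which then finishes.

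The first step is to pass to residue fields at closed points. Let $x$ be a closed point of $\Spec\pi_{0}B$. Since $B$ is relatively discrete over the Jacobson--Tate algebra $A$, the point $x$ lies over a closed point of $\Spec\pi_{0}A$ whose residue field is a Tate field, and $\kappa(x)$ is a finite (discrete) extension of it, hence itself a complete Tate field to which $B\to\kappa(x)$ is continuous; in particular $\kappa(x)$ is naturally a solid $B$-algebra. The base change $-\otimes_{B}^{\Lbb}\kappa(x)\colon\Dcal(B_{\bs})\to\Dcal(\kappa(x)_{\bs})$ is symmetric monoidal and preserves $\Dcal(-)^{\leq 0}$, so $M_{x}:=M\otimes_{B}^{\Lbb}\kappa(x)$ is dualizable with dual $(M^{\vee})_{x}$, and both $M_{x}$ and $M_{x}^{\vee}$ lie in $\Dcal(\kappa(x)_{\bs})^{\leq 0}$. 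Over a Tate field the dualizable objects of the solid derived category are the perfect ones, i.e.\ finite direct sums of shifts of $\kappa(x)$; requiring that both $M_{x}$ and its dual have no cohomology in positive degrees forces all shifts to be trivial, so $M_{x}$ is a finite-dimensional $\kappa(x)$-vector space placed in degree $0$.

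It then remains to integrate this pointwise information back to $B$, and this is the main point, since $B$ is not Fredholm and one cannot simply invoke ``dualizable $\Rightarrow$ perfect, resolve, and descend''. I would proceed in two moves. First, use the relatively discrete presentation $B=A\otimes_{\Qbb_{p,\bs}}R$ with $R$ a finitely generated discrete algebra --- reducing if necessary $A$ further to an affinoid $\Qbb_{p}$-algebra sitting inside it --- to descend the finiteness forced by dualizability to a setting where the theory of solid affinoid and Banach algebras is available, and thereby learn both that $M$ is perfect over $B$ and that each $H^{i}(M)$ is finitely generated over $B$ in a sense for which Nakayama's lemma applies at closed points. Second, run the base-change spectral sequence against the density of closed points (the Jacobson property): arguing downward from the lowest degree $j$ in which $M$ has nonzero cohomology, one has $H^{j}(M_{x})\cong H^{j}(M)\otimes_{\pi_{0}B}\kappa(x)$ with no contribution from lower degrees, so its vanishing at every $x$, together with finite generation and the Jacobson property, forces $H^{j}(M)=0$; since moreover $M\in\Dcal(B_{\bs})^{\leq 0}$ bounds the cohomology above by degree $0$, this pins $M$ to degree $0$. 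Then $M$ is a perfect $B$-module concentrated in degree $0$, hence finite projective.

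The step I expect to be the genuine obstacle is the first move of the last paragraph: establishing, for a non-Noetherian and non-Fredholm relatively discrete $B$, that dualizability really does give the finiteness needed to run Nakayama's lemma at closed points. My expectation is to handle it by the indicated descent along $B=A\otimes_{\Qbb_{p,\bs}}R$ and then along $A$ to an affinoid $\Qbb_{p}$-algebra, where dualizable solid modules are perfect with finitely generated cohomology and vanishing is detected by maximal ideals, and then to transport the conclusion back up the presentation, base change along relatively discrete maps being exact and preserving the relevant finiteness. This is precisely the role of the Jacobson--Tate and relatively discrete hypotheses: they are tailored to make that descent work in the absence of the Fredholm property.
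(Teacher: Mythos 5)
Your one direction and the observation that $M\otimes_B^{\Lbb}\kappa(x)$ (or, in the paper's normalization, $M\otimes_{B_{\bs}}^{\Lbb}(B/\mfrak)_{\bs}$) must be a finite free module concentrated in degree $0$ are both correct and match the paper's starting move, which likewise exploits the Jacobson--Tate hypothesis to identify $B(\ast)/\mfrak$ with a complete non-archimedean field whose solid structure is Fredholm. Where your proposal has a genuine gap is the ``integration'' step, and you yourself flag it: you propose to descend along $B=A\otimes_{\Qbb_{p,\bs}}R$ to ``a setting where the theory of solid affinoid and Banach algebras is available'' in order to learn that $M$ is perfect with finitely generated cohomology, but there is no such descent. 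The non-Fredholm issue is intrinsic to $B$ and cannot be shed by changing the affinoid base: dualizable objects over $B_{\bs}$ simply need not be perfect, and the restriction of $M$ to $A_{\bs}$ (or to any Fredholm subring) sees only a coarse shadow of $M$ as a $B$-module and cannot witness perfectness over $B$. Consequently your second move cannot run either --- the base-change spectral sequence argument at closed points presupposes boundedness and a finite-generation hypothesis for $H^{j}(M)$ over $\pi_0 B$ that your first move failed to secure.

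The paper's actual argument never tries to establish perfectness of $M$ directly. Instead, working at a single maximal ideal $\mfrak$ after reducing to $B(\ast)$ noetherian local, it lifts the isomorphism $M\otimes_{B_{\bs}}^{\Lbb}(B/\mfrak)_{\bs}\cong(B/\mfrak)^n$ to a split map $B^n\to M$ (using hypothesis (2) to control the cofiber), so $M\cong L\oplus B^n$ with $L\otimes_{B_{\bs}}B/\mfrak=0$, and then proves $L=0$ by a Nakayama-type argument. The crucial inputs for killing $L$ are: $L$ is static (from (2)), $L$ is a \emph{compact projective object} of $\Mod_{B_{\bs}}$ (because $R\intHom_B(L,-)=L^\vee\otimes_{B_{\bs}}^{\Lbb}(-)$ stays in degrees $\leq 0$, again from (1)+(2)), hence a retract of some $B_{\bs}[S]$; and the technical Lemma \ref{lem:compact projective object and prod} giving an injection $B_{\bs}[S]\hookrightarrow\prod_I B$. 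Combining this with Krull's intersection theorem for the discrete local ring $B(\ast)$ and Lemma \ref{lem:condensification functor}(3), one finds $L\hookrightarrow\prod_n(L\otimes_{B_{\bs}}B/\mfrak^n)=0$. Only after this local freeness is proved at every $\mfrak$ does the paper conclude, by a finite open cover and a limit argument, that $M$ is relatively discrete and hence perfect and hence finite projective. So the decisive idea you are missing is precisely the replacement of ``show $M$ perfect'' (unavailable here) by ``show the complementary summand $L$ vanishes via compact projectivity and a product-embedding Nakayama argument.''
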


Next, we explain the categorical equivalence of the categories (2), (3), and (4).
When $A$ is an affinoid $\Qbb_p$-algebra, then it can be proved by the Tate-Sen methods.
However, it cannot be applied naively when $A$ is not an affinoid $\Qbb_p$-algebra.
To resolve this problem, we use locally analytic vectors for $\Gamma_K$-actions.
This idea is motivated by \cite{Ber16,Por24}.
Based on this idea, we can reduce to the case when $A$ is an affinoid $\Qbb_p$-algebra. 

\subsubsection*{Dualizability of cohomology of $(\varphi,\Gamma_K)$-modules}
We explain the dualizability of the cohomology of $(\varphi,\Gamma_K)$-modules.
Our argument is essentially similar to the one in \cite{Bel24}.
First, we define a cohomology theory for 
\begin{itemize}
    \item $(\varphi,G_K)$-modules over $\tilde{B}_{\bar{K},A}$, and
    \item $(\varphi,\Gamma_K)$-modules over $\tilde{B}_{K_{\infty},A}$, $B_{K,\infty,A}$, and $B_{K,A}$
\end{itemize} 
by using an analog of the Herr complexes introduced by Herr in \cite{Herr98}, see Definition \ref{def:phi cohomology perfect} and Definition \ref{def:phi cohomology imperfect}.
Then we prove that these cohomology are equivalent by using an analog of \cite[Lemma 3.7]{Bel24}.
As a corollary of the proof, we obtain a simple presentation of the $(\varphi,\Gamma_K)$-cohomology $R\Gamma_{\varphi,\Gamma_K}(N)$ for a $(\varphi,\Gamma_K)$-module $N=\{N^{I}\}$ over $B_{K,A}$ as follows (Corollary \ref{cor:phi Gamma cohomology simple presentation}):
$$R\Gamma_{\varphi,\Gamma_K}(N)\simeq R\Gamma(\Gamma_K,\fib(\Phi-1\colon N^{[p^{-k},p^{-l}]}\to N^{[p^{-k},p^{-l-1}]})),$$
which is an analog of \cite[Proposition 3.8]{Bel24}.
By using this, we obtain the following theorem.
\begin{theorem}[{Theorem \ref{thm:dualizability}}]
    Let $N$ be a $(\varphi,\Gamma_K)$-module over $B_{K,A}$.
    Then the $(\varphi,\Gamma_K)$-cohomology $R\Gamma_{\varphi,\Gamma_K}(N)$ of $N$ is a dualizable object in $\Dcal(A_{\square})$.
\end{theorem}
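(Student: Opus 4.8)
The plan is to deduce dualizability directly from the explicit description of $R\Gamma_{\varphi,\Gamma_K}(\Ncal)$ in Corollary \ref{cor:phi Gamma cohomology simple presentation}, which reduces the statement to the perfectness over $A$ of a single ``two-radius'' complex. When $A$ is an affinoid $\Qbb_p$-algebra this perfectness is the finiteness of $(\varphi,\Gamma_K)$-cohomology in families; the new content is to propagate it to relatively discrete coefficients, and I expect that propagation to be the main obstacle.

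\emph{Reduction.} By Corollary \ref{cor:phi Gamma cohomology simple presentation} there are integers $k\gg l\gg 0$ with
$$R\Gamma_{\varphi,\Gamma_K}(\Ncal)\ \simeq\ R\Gamma\bigl(\Gamma_K,\ C\bigr),\qquad C:=\fib\bigl(\Phi-1\colon N^{[p^{-k},p^{-l}]}\to N^{[p^{-k},p^{-l-1}]}\bigr).$$
Since $\Gamma_K$ is a compact $p$-adic Lie group of dimension one, the trivial module $A$ admits a finite free resolution over the solid Iwasawa algebra $A[\Gamma_K]_{\bs}$: after splitting off the prime-to-$p$ torsion subgroup $\Delta\subseteq\Gamma_K$ (whose invariants over the $\Qbb_p$-algebra $A$ form an exact retract) one is reduced to $\Gamma_K/\Delta\cong\Zbb_p$, for which $A[\Zbb_p]_{\bs}\cong A[[T]]_{\bs}$ and $A$ is resolved by $[A[[T]]_{\bs}\xrightarrow{\,T\,}A[[T]]_{\bs}]$. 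Hence $R\Gamma(\Gamma_K,-)=R\Hom_{A[\Gamma_K]_{\bs}}(A,-)$ is built from finite limits, shifts and retracts, so it carries an object of $\Dcal(A[\Gamma_K]_{\bs})$ whose underlying object of $\Dcal(A_{\bs})$ is perfect (equivalently dualizable) to a perfect, hence dualizable, object of $\Dcal(A_{\bs})$. It therefore suffices to prove that $C$ is a perfect complex of $A$-modules.

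\emph{The affinoid case.} Suppose $A$ is an affinoid $\Qbb_p$-algebra. Then $N^{[p^{-k},p^{-l}]}$ and $N^{[p^{-k},p^{-l-1}]}$ are finite projective over the affinoid $A$-algebras $B_{K,A}^{[p^{-k},p^{-l}]}$ and $B_{K,A}^{[p^{-k},p^{-l-1}]}$ attached to a strictly nested pair of closed annuli, hence are orthonormalizable Banach modules over $A$ and are flat over $A$. The two-radius presentation is designed so that, up to the comparison between the $\varphi$- and $\psi$-versions of the Herr complex (the analog of \cite[Lemma 3.7]{Bel24}), the differential $\Phi-1$ is the difference of an isomorphism and a nuclear $A$-linear operator; by the Fredholm (Riesz) theory of nuclear operators over an affinoid base, its kernel and cokernel are finite $A$-modules, so $C\simeq[\,N^{[p^{-k},p^{-l}]}\xrightarrow{\Phi-1}N^{[p^{-k},p^{-l-1}]}\,]$ has finite cohomology and bounded Tor-amplitude over $A$, i.e.\ is perfect. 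This is the finiteness of cohomology of arithmetic families of $(\varphi,\Gamma_K)$-modules; cf.\ \cite{KPX14} and \cite[\S3]{Bel24}.

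\emph{Relatively discrete coefficients, and the obstacle.} For general $A$, which is relatively discrete finitely generated over some affinoid $\Qbb_p$-algebra $A_0$ (Definition \ref{def:relatively discrete}), one still has $B_{K,A}^{[r,s]}=B_K^{[r,s]}\otimes_{\Qbb_p,\bs}A$, so each $N^{[r,s]}$ is a retract of $(B_K^{[r,s]})^{\oplus n}\otimes_{\Qbb_p,\bs}A$, an ``$A$-Banach module'', and $\Phi$ is the composite of a $B_{K,A}^{[r,s]}$-linear (bounded) map with the Frobenius-and-restriction map of the underlying period rings. The latter is nuclear over $\Qbb_p$, and the mechanism is that relative discreteness of $A$ over $A_0$ presents $A$ as a filtered colimit along split $A_0$-linear injections of finite free $A_0$-modules, so that $-\otimes_{\Qbb_p,\bs}A$ preserves nuclearity: a convergent expansion $\sum_i\lambda_i\langle\phi_i,-\rangle\,m_i$ with $\lambda_i\to0$ remains such over $A$. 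Hence $\Phi-1$ is again the difference of an isomorphism and a nuclear $A$-linear operator, its kernel and cokernel are finite $A$-modules by the Fredholm theory over $A$, and $C$ is perfect over $A$, which by the Reduction step proves the theorem. The delicate point is exactly here: for relatively discrete $A$ the rings $B_{K,A}^{[r,s]}$ are ``non-Fredholm'' in the sense underlying Theorem \ref{thm:finite projective non-Fredholm}, so the affinoid-style finiteness (Kiehl/Serre, \cite{KPX14}) is not available off the shelf; one must isolate precisely which compactness of the operators entering the Herr complex is inherited by $-\otimes_{\Qbb_p,\bs}A$, and establish the required Fredholm statement together with the $\varphi$-versus-$\psi$ comparison (the analog of \cite[Lemma 3.7]{Bel24}) using only the structure theory of relatively discrete algebras and the Jacobson--Tate formalism of Theorem \ref{thm:finite projective non-Fredholm}, rather than functional-analytic input special to affinoid algebras.
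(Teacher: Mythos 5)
There is a genuine gap, and you have correctly identified its location: your reduction is to ``$C$ is a perfect complex over $A$,'' which you then try to obtain from Fredholm/Riesz theory of compact operators, and you acknowledge that this theory is exactly what is unavailable when $A$ is relatively discrete (non-Fredholm). Your parenthetical ``perfect (equivalently dualizable)'' conflates two notions that the paper is at pains to keep apart: over a non-Fredholm analytic ring, dualizable objects need not be perfect complexes (Definition~\ref{def:Fredholm}), and the theorem only asserts dualizability. Aiming at perfectness is strictly stronger than needed and, as far as the paper's techniques go, currently out of reach precisely for the coefficient rings at issue. So the proposal does not close.

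The paper's own proof sidesteps the Fredholm question entirely. It proves dualizability directly via Lemma~\ref{lem:dualizable compact nuclear} (dualizable $=$ compact and nuclear), using a sandwich argument you do not have. The key new input is the ring $B_K^{(r,s)}$ of Section~3, a ``half-open annulus'' variant that is, as a $\Qbb_{p,\bs}$-module, $(\prod_{n\in\Zbb}\Ocal_{K_0^{\prime}})[1/p]$ and hence \emph{compact} in $\Dcal(\Qbb_{p,\bs})$ --- unlike $B_K^{[r,s]}$, which is only nuclear. Squeezing $B_K^{(r,s)}$ between two nested closed intervals produces a factorization
$$R\Gamma_{\varphi,\Gamma_K}(\Ncal)\to R\Gamma\bigl(\Gamma_K,\fib(\Phi-1\colon N^{(p^{-k-1},p^{-k+2})}\to N^{(p^{-k-1},p^{-k+1})})\bigr)\to R\Gamma_{\varphi,\Gamma_K}(\Ncal)$$
whose composite is an isomorphism (Corollary~\ref{cor:phi Gamma cohomology simple presentation}); the middle term is compact by Proposition~\ref{prop:compact nuclear group cohomology}, so $R\Gamma_{\varphi,\Gamma_K}(\Ncal)$ is a retract of a compact object, hence compact. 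Nuclearity follows from the closed-interval presentation plus Proposition~\ref{prop:compact nuclear group cohomology}. Your reduction step (Iwasawa-theoretic finite resolution of the trivial module, so that group cohomology preserves dualizability/compactness/nuclearity) is essentially the content of Proposition~\ref{prop:compact nuclear group cohomology} and is fine; what is missing is the replacement of ``Fredholm theory $\Rightarrow$ perfect'' by ``retract of a compact object $+$ nuclear $\Rightarrow$ dualizable,'' and the construction of the compact ring $B_K^{(r,s)}$ that makes the retract visible.
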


\begin{remark}
    It is natural to ask whether Tate local duality as stated in \cite[Theorem 4.4.5]{KPX14} holds when $A$ is an algebraic-affinoid $\Qbb_{p,\square}$-algebra.
    However, the argument used in loc. cit., reducing to the case that $A$ is a finite extension of $\Qbb_p$, does not work here, since Lemma 4.1.6 in loc. cit. is not true in this setting; see Example \ref{ex:nonzero pointwise zero}.
    Tate local duality has been proved in \cite{Mikami25} using 6-functor formalism on analytic stacks, but we do not pursue this approach in the present paper.
\end{remark}

\subsubsection*{The classification of $(\varphi,\Gamma_K)$-modules of rank $1$}
Let $K_0$ be the maximal unramified extension of $\Qbb_p$ in $K$.
We prove the following classification result (under a certain mild assumption), which provides an affirmative answer to Question \ref{ques:moduli interpretation}.
\begin{theorem}[{Theorem \ref{thm:classification rank 1}}]
    Let $A$ be an algebraic-affinoid $K_{0,\square}$-algebra, and $M$ be a $(\varphi,\Gamma_K)$-module over $\tilde{B}_{K,A}$ of rank $1$ satisfying a certain freeness condition (see Theorem \ref{thm:classification rank 1} for details).
    Then there exists a character $\delta \colon K^{\times} \to A^{\times}$ and a finite projective $A$-module $L$ of rank $1$ such that $M\cong \tilde{B}_{K,A}(\delta)\otimes_{A}L$, where $\tilde{B}_{K,A}(\delta)$ is the $(\varphi,\Gamma_K)$-module obtained from the character $\delta$ (cf.\ Definition \ref{def:character type}).
\end{theorem}

\begin{remark}
    The freeness condition in the above theorem can be removed by using the $p$-adic monodromy theorem (i.e., ``de Rham'' implies ``potentially semistable'', \cite{Mikami25m}), but we will not pursue this in the present paper.
\end{remark}

We outline the proof of this theorem.
The basic strategy is to reduce to the case where $A$ is an affinoid $K_0$-algebra.
First, we reduce to the case $A=R_f=R[1/f]$, where $R$ is an affinoid $K_0$-algebra and $f\in R$.
The slogan of the proof is that \emph{the action of $\Gamma_K$ is analytic, while the action of $\varphi$ is algebraic.}
Based on this slogan, we prove that after suitably modifying the action of $\varphi$, a $(\varphi,\Gamma_K)$-module over $\tilde{B}_{K,R_f}$ of rank $1$ descends to one over $\tilde{B}_{K,R}$.
Then, the theorem follows from the classification result in \cite{KPX14}.

\subsection{Outline of the paper}
This paper is organized as follows.
In the former part of Section 1, we discuss finite projective modules over non-Fredholm analytic animated rings. 
In the latter part of Section 1, we collect some auxiliary results about group cohomology and locally analytic representations in the context of condensed mathematics.
In Section 2, we define algebraic-affinoid (analytic) $\Qbb_{p,\square}$-algebra (e.g., $\Qbb_p[T^{\pm 1}]$), and prove basic properties of them.
In the first part of Section 3, we compare $(\varphi,G_K)$-modules and $(\varphi,\Gamma_K)$-modules over perfect period rings.
In the second part of Section 3, we compare $(\varphi,\Gamma_K)$-modules over perfect period rings and imperfect period rings.
In Section 4, we define and compare the cohomology of $(\varphi,\Gamma_K)$-modules over perfect period rings and imperfect period rings.
Then we prove the dualizability of this cohomology.
In Section 5, we classify $(\varphi,\Gamma_K)$-modules of rank $1$.
In Appendix, we give examples illustrating the difference between dualizable complexes and perfect complexes.
\subsection{Convention}

\begin{itemize}
\item
All rings, including condensed ones, are assumed unital and commutative.
\item 
In contrast to \cite{And21, Mann22}, we use the term \textit{ring} to refer to an \textit{ordinary ring} (not an animated ring). 
Moreover, we use the symbol $-\otimes-$ to refer to a \textit{non-derived tensor product} and use the symbol $-\otimes^{\Lbb}-$ to refer to a \textit{derived tensor product}.
Similarly, we adopt analogous notation for Hom and limit.
\item
We use the terms \textit{f-adic ring} and \textit{affinoid pair} rather than \textit{Huber ring} and \textit{Huber pair}.
\item For an f-adic ring $A$, we denote the ring of power-bounded elements of $A$ by $A^{\circ}$.
\item For a complete non-archimedean field $K$, we use the term \textit{affinoid $K$-algebra} to refer to a \textit{topological $K$-algebra topologically of finite type over $K$}.
\item
We denote the simplex category by $\Delta$, which is the full subcategory of the category of totally ordered sets consisting of the totally ordered sets $[n]=\{0,\ldots,n\}$ for all $n \geq 0$. 
Moreover, for every $m \geq 0$, we denote the full subcategory of $\Delta$ consisting of $[n]$ for all $0 \leq n \leq m$ by $\Delta_{\leq m}$.
\item Throughout this paper, all radii $r$ and $s$ are assumed to be rational numbers.
\end{itemize}

\subsection{Convention and notation about condensed mathematics}
In this paper, we use condensed mathematics.
We summarize the notations and conventions related to condensed mathematics.

\begin{itemize}
\item We often identify a compactly generated topological set, ring, group, etc. $X$ whose points are closed (i.e., $X$ is $T1$) with a condensed set, ring, group, etc. $\underline{X}$ associated to $X$. 
    It is justified by \cite[Proposition 1.7]{CM}.
    If there is no room for confusion, we simply write $X$ for $\underline{X}$.
\item For a condensed object $X$, we denote the underlying object of $X$ by $X(\ast)$, and denote the underlying discrete condensed object of $X$ by $X_{\disc}$. 
    Then $X_{\disc}$ is the condensed object associated with $X(\ast)$ with the discrete topology.
    We can naturally identify these objects, however, we distinguish between them to avoid confusion.
\item For a condensed set $X$, we simply write $x\in X$ to mean $x\in X(\ast)$.
\item 
    In contrast to \cite{Mann22}, we use the term \textit{ring} to refer to an \textit{ordinary ring} (not a condensed animated ring). 
    Sometimes we use the term \textit{discrete ring} (resp.\ \textit{discrete animated ring}) to refer to an ordinary ring (resp.\ animated ring) in order to emphasize that it is not a condensed one. We also use the term \textit{static ring} (resp.\ \textit{static analytic ring})  to refer to an ordinary ring (resp.\ analytic ring) in order to emphasize that it is not an animated one.
    \item We use the terms ``analytic animated ring'' and ``uncompleted analytic animated ring'' according to \cite{Mann22}.
    \item For an uncompleted analytic animated ring $\Acal$, we denote the underlying condensed animated ring of $\Acal$ by $\underline{\Acal}$.
    \item For an uncompleted analytic animated ring $\Acal$, an object $M\in \Dcal(\underline{\Acal})$ is said to be \textit{$\Acal$-complete} if it lies in $\Dcal(\Acal)$.    
    \item Let $\Zbb_{\square}$ and $\Zbb_{p,\square}$ denote the analytic rings defined in \cite[Example 7.3]{CM}. 
    Moreover, for a usual ring $A$, let $(A,A)_{\square}$ denote the analytic ring defined in \cite[Definition 2.9.1]{Mann22}.
    We note that the analytic ring structure of $\Zbb_{p,\square}$ is induced from $\Zbb_{\square}$.
    \item For a condensed animated ring $A$ and for a morphism of usual rings $B\to \pi_0A(\ast)$, let $(A,B)_{\square}$ denote the condensed animated ring $A$ with the induced analytic ring structure from $(B,B)_{\square}$.
    For details, see \cite[Definition 2.1]{Mikami23}.
    When $B=\Zbb$, then we simply write $A_{\square}$ for $(A,\Zbb)_{\square}$.
    \item For a profinite set $S$ and an object $M\in \Dcal(\Zbb_{\square})$, we write $C(S,M)=R\intHom_{\Zbb}(\Zbb_{\square}[S],M)$. 
    If $M$ is static then $C(S,M)$ is static, since $\Zbb_{\square}[S]$ is a projective $\Zbb_{\square}$-module.
    If $M$ is a $\Zbb_{\square}$-module associated to a compactly generated Hausdorff $\Zbb$-module $N$ (that is, $\underline{N}=M$), then $C(S,M)$ is a $\Zbb_{\square}$-module associated to the module $C(S,N)$ of continuous functions $S\to N$ endowed with the compact-open topology.
    \item For an analytic animated ring $\Rcal$, let $\Dcal^{\nuc}(\Rcal)$ denote the full subcategory of $\Dcal(\Rcal)$ spanned by nuclear objects.
\end{itemize}

\subsection{Notation}
Let $K$ be a finite extension of $\Qbb_p$.
We fix an algebraic closure $\bar{K}$ and its completion $C$.
Let $K_{\infty}=K(\zeta_{p^{\infty}})$ be the cyclotomic extension of $K$, and we write $\Gamma_K=\Gal(K_{\infty}/K)$, $H_K=\Gal(\bar{K}/K_{\infty})$, and $G_K=\Gal(\bar{K}/K)$ for the Galois group of $K_{\infty}/K$, $\bar{K}/K_{\infty}$, and $\bar{K}/K$.
We write $\chi \colon G_K \to \Zbb_p^{\times}$ for the $p$-adic cyclotomic character of $K$.
Let $W_K$ be the Weil group of $K$.
By using the local class field theory, we identify $W_K^{\ab}$ with $K^{\times}$ where a uniformizer of $K^{\times}$ corresponds to a geometric Frobenius.

\subsection*{Acknowledgements}
The author is grateful to Yoichi Mieda for his support during the studies of the author.
A part of this work was carried out during the author's stay at the University of M\"{u}nster under the Mathematics M\"{u}nster programme for Visiting Doctoral Researchers, and the author thanks Eugen Hellmann for his hospitality and support during the stay.
This work was supported by JSPS KAKENHI Grant Number JP23KJ0693.

\section{Preliminaries on condensed mathematics}
In this section, we recall and prove some facts about condensed mathematics which will be used later.
For the foundations of condensed mathematics, see \cite{CM, AG, CC, And21, Mann22, RJRC22}.
\subsection{Faithfully flat descent of finite projective modules}
In this subsection, we prove faithfully flat descent of finite projective modules in the condensed setting.
In this paper, we primarily deal with analytic ring structures induced from $\Zbb_{\square}$, so it is not difficult to prove descent of quasi-coherent complexes.
However, since the analytic animated rings which we consider are not necessarily Fredholm (see Definition \ref{def:Fredholm}), it is difficult to characterize finite projective modules in the stable $\infty$-category of quasi-coherent complexes.
We carry it out for special non-Fredholm analytic animated rings.

First, we define finite projective modules.
Let $\Acal$ be an analytic animated ring.
\begin{definition}
    A \textit{finite projective $\Acal$-module} is an object $M\in \Dcal(\Acal)$ which is a direct summand of $\underline{\Acal}^n$ for some $n\geq 0$.
\end{definition}

\begin{definition}\label{def:condensification functor}
    Let $A$ be a condensed animated ring, and let $A(\ast)$ denote the underlying animated ring of $A$. 
    Then we have a functor $$\Cond_{A}\colon \Dcal(A(\ast)) \to \Dcal(A);\; M \to A\otimes_{A_{\disc}}^{\Lbb} \underline{M},$$
    where $\underline{M}$ is the condensed object associated to $M$ (see Convention).
    We call it the \textit{condensification functor}.
    By the same argument as in \cite[Theorem 5.9]{And21}, we can show that it is fully faithful.
    If $A$ is the underlying condensed animated ring of $\Acal$, then we can show that the image of the functor $\Cond_{\underline{\Acal}}$ is contained in $\Dcal(\Acal)$.
    Therefore, we obtain a fully faithful functor
    $$\Cond_{\Acal}\coloneqq\Cond_{\underline{\Acal}}\colon \Dcal(\underline{\Acal}(\ast)) \to \Dcal(\Acal),$$
    which is also called the condensification functor.
\end{definition}

\begin{remark}\label{rem:cond functor}
    By the proof of \cite[Theorem 5.9]{And21}, we get $\Cond_{\Acal}(M)(\ast)\cong M$.
\end{remark}

\begin{definition}\label{def:relatively discrete}
    \begin{enumerate}
        \item An object $M \in \Dcal(\Acal)$ is \textit{relatively discrete} (over $\Acal$) if $M$ lies in the essential image of the condensification functor $\Cond_{\Acal}$.
        \item A (condensed) animated $\Acal$-algebra $B$ is \textit{relatively discrete} (over $\Acal$) if it is relatively discrete as an object of $\Dcal(\Acal)$.
        \item An object $M \in \Dcal(\Acal)$ is a \textit{perfect complex} (of $\Acal$-modules) if $M$ lies in the essential image of $\Perf(\underline{\Acal}(\ast))$ under the condensification functor $\Cond_{\Acal}$, where $\Perf(\underline{\Acal}(\ast))$ is the category of perfect complexes of $\underline{\Acal}(\ast)$-modules.
    \end{enumerate}
\end{definition}

\begin{remark}
    Since the definition of being relatively discrete does not depend on the analytic ring structure of $\Acal$, we also use the terminology ``relatively discrete over $\underline{\Acal}$''.
\end{remark}

\begin{remark}\label{rem:rd extension}
Since the condensification functor is fully faithful, for $M,N\in \Dcal(A(\ast))$, we have $\Hom_{A(\ast)}(M,N[1])\cong \Hom_{A}(\Cond_{A}(M),\Cond_{A}(N)[1])$.
Therefore, the class of relatively discrete objects is closed under extensions in $\Dcal(\Acal)$.
\end{remark}

\begin{remark}
    Let $\Pcal$ be a property of $\underline{\Acal}(\ast)$-modules (resp.\ $\underline{\Acal}(\ast)$-algebras).
    Then the property $\Pcal$ can naturally be defined for relatively discrete modules (resp.\ algebras) over $\Acal$ as well.
    For example, suppose that $\Acal$ is static.
    Then a relatively discrete and finitely generated $\Acal$-module means a relatively discrete $\Acal$-module whose underlying $\Acal(\ast)$-module is finitely generated.
    When it is clear from the context that the modules (resp.\ algebras) are relatively discrete, we simply omit the phrase “relatively discrete.”
\end{remark}

\begin{remark}\label{rem:nuclear}
    It is easy to show that $M\in \Dcal(\Acal)$ is a perfect complex if and only if it lies in the full subcategory generated under retracts, shifts, and finite colimits by $\underline{\Acal}$. 
    Similarly, $M\in \Dcal(\Acal)$ is relatively discrete if and only if it lies in the full subcategory generated under retracts, shifts, and small colimits by $\underline{\Acal}$. 
    In particular, if $M\in \Dcal(\Acal)$ is relatively discrete, then it is nuclear by \cite[Proposition 13.13]{CM}.
\end{remark}

\begin{remark}
    For a perfect complex $M\in \Dcal(\Acal)$, the dual $M^{\vee}=R\intHom_{\Acal}(M,\underline{\Acal})$ is also a perfect complex, and there is an equivalence
    \begin{align*}
        M^{\vee}(\ast)\simeq R\Hom_{\underline{\Acal}(\ast)}(M(\ast),\underline{\Acal}(\ast)).
    \end{align*}
\end{remark}

\begin{example}
    A polynomial ring $\underline{\Qbb_p}[T]$ over the condensed ring $\underline{\Qbb_p}$ is relatively discrete over $\Qbb_{p,\square}$.
\end{example}

\begin{lemma}\label{lem:finite projective module criterion}
    Let $M\in \Dcal(\Acal)$ be a perfect complex.
    Then $M$ is a finite projective module if and only if both $M$ and $M^{\vee}=R\intHom_{\Acal}(M,\underline{\Acal})$ lie in $\Dcal(\Acal)^{\leq 0}$.
\end{lemma}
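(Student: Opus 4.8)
The forward implication is immediate: if $M$ is a retract of $\underline{\Acal}^{n}$, then $M\in\Dcal(\Acal)^{\leq 0}$ because $\underline{\Acal}^{n}$ is, and $M^{\vee}$ is a retract of $(\underline{\Acal}^{n})^{\vee}\cong\underline{\Acal}^{n}$, hence also lies in $\Dcal(\Acal)^{\leq 0}$. So it remains to prove the converse, and the idea is to transport the statement along the condensification functor to the discrete animated ring $R\coloneqq\underline{\Acal}(\ast)$, where the analogous characterization of finite projective modules among perfect complexes is classical.

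Since $M$ is a perfect complex it lies in the essential image of $\Cond_{\Acal}$; writing $M=\Cond_{\Acal}(M_{0})$ with $M_{0}\in\Perf(R)$, Remark \ref{rem:cond functor} gives $M_{0}\cong M(\ast)$. Being perfect, $M$ is dualizable in $\Dcal(\Acal)$ (dualizable objects are closed under retracts, shifts and finite colimits, and $\underline{\Acal}$ is a dualizable unit), so $M^{\vee}=R\intHom_{\Acal}(M,\underline{\Acal})$ is genuinely the monoidal dual of $M$; likewise $M_{0}^{\vee}=R\Hom_{R}(M_{0},R)$ is the dual of $M_{0}$ in $\Dcal(R)$. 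The functor $\Cond_{\Acal}$ is the composite of the discretification $\Dcal(R)\to\Dcal(\underline{\Acal}_{\disc})$ with base change along $\underline{\Acal}_{\disc}\to\underline{\Acal}$ (landing in $\Acal$-complete objects), hence symmetric monoidal, so it carries $M_{0}^{\vee}$ to $M^{\vee}$; combined with Remark \ref{rem:cond functor} this yields $M^{\vee}(\ast)\cong M_{0}^{\vee}$.

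Next I would use that evaluation at the point, $(-)(\ast)\colon\Dcal(\underline{\Acal})\to\Dcal(R)$, is t-exact: on condensed abelian groups it is exact, the one-point set being extremally disconnected, and the t-structures on the relevant module categories are created from the one on condensed abelian groups; moreover $\Dcal(\Acal)^{\leq 0}\subseteq\Dcal(\underline{\Acal})^{\leq 0}$. Feeding the hypotheses $M,M^{\vee}\in\Dcal(\Acal)^{\leq 0}$ through this gives $M_{0}\in\Dcal(R)^{\leq 0}$ and $M_{0}^{\vee}\in\Dcal(R)^{\leq 0}$: that is, $M_{0}$ is a perfect complex of $R$-modules which is connective and has connective dual. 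By the standard characterization of finite projective modules among perfect complexes over a connective (animated) ring, checked fibrewise over the residue fields $\kappa$ of $\pi_{0}R$ — where $M_{0}\otimes_{R}\kappa$ is a bounded complex of $\kappa$-vector spaces which is connective with connective dual, hence concentrated in degree $0$, so $M_{0}$ has Tor-amplitude $[0,0]$ — the module $M_{0}$ is finite projective; that is, there are maps $i\colon M_{0}\to R^{n}$ and $p\colon R^{n}\to M_{0}$ with $p\circ i\simeq\id$.

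Finally, applying the additive functor $\Cond_{\Acal}$, which sends $R=\underline{\Acal}(\ast)$ to $\underline{\Acal}$, turns this retract datum into one exhibiting $M=\Cond_{\Acal}(M_{0})$ as a direct summand of $\underline{\Acal}^{n}$; thus $M$ is a finite projective $\Acal$-module, as desired. The only point that really needs care is the compatibility of $\Cond_{\Acal}$ with duality in the second paragraph — that the internal Hom $R\intHom_{\Acal}(M,\underline{\Acal})$ computed in $\Dcal(\Acal)$ agrees, for perfect $M$, with the condensification of $R\Hom_{R}(M_{0},R)$ — which follows formally once one knows $\Cond_{\Acal}$ is symmetric monoidal and $M$ is dualizable; everything else is bookkeeping with t-exactness.
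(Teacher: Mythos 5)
Your proof is correct, but it takes a genuinely different route from the paper's. The paper argues entirely internally to $\Dcal(\Acal)$: pick $f\colon\underline{\Acal}^{n}\to M$ with $\cofib(f)\in\Dcal(\Acal)^{\leq-1}$ (possible since $M$ is connective and perfect), observe that $R\intHom_{\Acal}(M,\cofib(f))\simeq M^{\vee}\otimes_{\Acal}^{\Lbb}\cofib(f)\in\Dcal(\Acal)^{\leq-1}$ because $M^{\vee}$ is connective, and then lift $\id\in H^{0}(R\intHom_{\Acal}(M,M))$ along the resulting fiber sequence to obtain the retraction $R\intHom_{\Acal}(M,\underline{\Acal}^{n})\ni g$ with $f\circ g\simeq\id$. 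No reduction to discrete rings, no fibrewise check. Your argument instead transports everything to $R=\underline{\Acal}(\ast)$ via $\Cond_{\Acal}$, uses that $\Cond_{\Acal}$ is symmetric monoidal and that $(-)(\ast)$ is $t$-exact to conclude that $M_{0}$ and $M_{0}^{\vee}$ are both connective perfect complexes over the animated ring $R$, invokes the classical (fibrewise, or Tor-amplitude) characterization of finite projective modules, and pushes the retraction datum back along $\Cond_{\Acal}$. Both proofs are sound; the paper's is shorter and self-contained, avoiding the compatibilities you rightly flag as the delicate point (symmetric monoidality of $\Cond_{\Acal}$ into $\Dcal(\Acal)$ with its completed tensor, $t$-exactness of evaluation, preservation of duals), while yours has the advantage of reducing cleanly to a well-known statement over ordinary animated rings once those compatibilities are granted. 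It is worth noting that the paper's lifting trick is essentially the same one hiding inside the ``classical'' fact you cite, so the two arguments are closer in spirit than they first appear — the difference is primarily in whether you package it as a direct cofiber computation in $\Dcal(\Acal)$ or as a Tor-amplitude/residue-field check after descent to $R$.
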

\begin{proof}
    The only if direction is clear, so we prove the if direction.
    Since $M$ is a perfect complex and $M$ belongs to $\Dcal(\Acal)^{\leq 0}$, we can take a morphism $f\colon \underline{\Acal}^n \to M$ such that $\cofib(f)\in \Dcal(\Acal)^{\leq -1}$.
    Then we have a fiber sequence
    $$R\intHom_{\Acal}(M,\underline{\Acal}^n) \to R\intHom_{\Acal}(M,M) \to R\intHom_{\Acal}(M,\cofib(f)).$$
    Since $M$ is a perfect complex and $M^{\vee}$ belongs to $\Dcal(\Acal)^{\leq 0}$, we have $$R\intHom_{\Acal}(M,\cofib(f))\simeq M^{\vee}\otimes_{\Acal}^{\Lbb} \cofib(f)\in \Dcal(\Acal)^{\leq -1}.$$
    Therefore, $\id_M \in H^0(R\intHom_{\Acal}(M,M))$ has a lift to $H^0(R\intHom_{\Acal}(M,\underline{\Acal}^n))$, which proves the lemma.
\end{proof}

\begin{lemma}\label{lem:finite projective module criterion 2}
    Let $M\in \Dcal(\Acal)$ be a bounded above object.
    Then $M\in \Dcal(\Acal)$ is a finite projective $\Acal$-module if and only if $M\otimes_{\Acal}^{\Lbb}\pi_0(\Acal)\in \Dcal(\pi_0(\Acal))$ is a finite projective $\pi_0(\Acal)$-module.
\end{lemma}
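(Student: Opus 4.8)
The ``only if'' direction is immediate: the functor $-\otimes^{\Lbb}_{\Acal}\pi_0(\Acal)$ carries a direct summand of $\underline{\Acal}^{\,n}$ to a direct summand of $\underline{\pi_0(\Acal)}^{\,n}$. For the converse, set $N\coloneqq M\otimes^{\Lbb}_{\Acal}\pi_0(\Acal)$. Since $-\otimes^{\Lbb}_{\Acal}\pi_0(\Acal)$ is right $t$-exact and induces an isomorphism $H^a(M\otimes^{\Lbb}_{\Acal}\pi_0(\Acal))\cong H^a(M)$ on the top cohomology of an object $M\in\Dcal(\Acal)^{\leq a}$, and $N$ is concentrated in degree $0$, one first deduces $M\in\Dcal(\Acal)^{\leq 0}$, i.e.\ $M$ is connective with $\pi_0(M)\cong N$.

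The plan is now to produce an \emph{explicit} finite projective $\Acal$-module $M'$ together with an equivalence $M'\otimes^{\Lbb}_{\Acal}\pi_0(\Acal)\simeq N$, and then to promote this to an equivalence $M'\simeq M$. For the first step: by full faithfulness of $\Cond_{\pi_0(\Acal)}$ the module $N$ is relatively discrete, $N\simeq\Cond_{\pi_0(\Acal)}(N_0)$ for a finite projective module $N_0$ over the ordinary (animated) ring $\pi_0(\underline{\Acal}(\ast))$. By the standard lifting of finite projective modules along $\underline{\Acal}(\ast)\to\pi_0(\underline{\Acal}(\ast))$ over connective rings, $N_0$ is the reduction of a finite projective $\underline{\Acal}(\ast)$-module $M_0$; set $M'\coloneqq\Cond_{\Acal}(M_0)$. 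Then $M'$ is finite projective over $\Acal$, and — using that $\Cond$ is compatible with base change — $M'\otimes^{\Lbb}_{\Acal}\pi_0(\Acal)\simeq\Cond_{\pi_0(\Acal)}\bigl(M_0\otimes^{\Lbb}_{\underline{\Acal}(\ast)}\pi_0(\underline{\Acal}(\ast))\bigr)\simeq\Cond_{\pi_0(\Acal)}(N_0)\simeq N$.

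For the second step I would construct a map $\phi\colon M'\to M$ in $\Dcal(\Acal)$ reducing to $\id_N$. Since $M'$ is finite projective, hence dualizable, $R\intHom_{\Acal}(M',M)\simeq M'^{\vee}\otimes^{\Lbb}_{\Acal}M$ is connective (as $M'^{\vee}$ is finite projective and $M$ is connective), and base change commutes with $R\intHom$ out of a dualizable object, so $R\intHom_{\Acal}(M',M)\otimes^{\Lbb}_{\Acal}\pi_0(\Acal)\simeq R\intHom_{\pi_0(\Acal)}(N,N)$, which is concentrated in degree $0$. As $-\otimes^{\Lbb}_{\Acal}\pi_0(\Acal)$ preserves $\pi_0$ of connective objects, the induced map $\Hom_{\Dcal(\Acal)}(M',M)\to\Hom_{\Dcal(\pi_0(\Acal))}(N,N)$ is a bijection; let $\phi$ be the preimage of $\id_N$. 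Then $\cofib(\phi)$ is bounded above and $\cofib(\phi)\otimes^{\Lbb}_{\Acal}\pi_0(\Acal)\simeq\cofib(\id_N)=0$; a derived Nakayama argument (a bounded-above $X$ with $X\otimes^{\Lbb}_{\Acal}\pi_0(\Acal)=0$ vanishes, because $-\otimes^{\Lbb}_{\Acal}\pi_0(\Acal)$ is an isomorphism on top cohomology, so the upper bound on $X$ can be improved indefinitely) forces $\cofib(\phi)=0$. Hence $\phi$ is an equivalence and $M\simeq M'$ is finite projective over $\Acal$.

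The genuinely technical point — and the thing I expect to be the main obstacle — is the behaviour of the completed base change $-\otimes^{\Lbb}_{\Acal}\pi_0(\Acal)$ with respect to the $t$-structure: its right $t$-exactness, and that it induces an isomorphism on the top cohomology of a bounded-above object (equivalently, it preserves $\pi_0$ of connective objects and is compatible with the relevant truncations). This is what makes both the reduction to the connective case and the concluding derived Nakayama step work, and for the analytic animated rings considered here it rests on a (mild but necessary) compatibility of the completion functor with truncations; the boundedness hypothesis on $M$ is essential here, since without it the statement fails. Everything else — the lifting of finite projective modules over connective discrete rings, full faithfulness and base-change compatibility of $\Cond$, and the behaviour of $R\intHom$ out of a dualizable object — is standard or already recorded.
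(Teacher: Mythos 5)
Your proof is correct, but it follows a genuinely different path from the paper's. The paper never lifts $N=M\otimes^{\Lbb}_{\Acal}\pi_0(\Acal)$ itself to a finite projective $\Acal$-module; instead it lifts a \emph{surjection} $\pi_0(\underline{\Acal})^n\to N$ to a map $f\colon\underline{\Acal}^n\to M$ (possible because $\cofib(M\to N)\in\Dcal(\Acal)^{\leq -1}$), sets $C=\cofib(f)\in\Dcal(\Acal)^{\leq-1}$, and then shows $R\intHom_{\Acal}(M,C)\in\Dcal(\Acal)^{\leq-1}$ directly by a Postnikov-tower argument on $C$: each $H^{-m}(C)$ is a $\pi_0(\Acal)$-module, so $R\intHom_{\Acal}(M,H^{-m}(C)[m])\simeq R\intHom_{\pi_0(\Acal)}(N,H^{-m}(C)[m])$ sits in a single degree, and taking the limit over the tower $\{\tau^{\geq -m}C\}$ gives the bound, from which $\id_M$ lifts to a retraction $M\to\underline{\Acal}^n$. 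You instead manufacture a candidate finite projective $M'$ by pulling $N$ back through $\Cond_{\pi_0(\Acal)}$ to an ordinary finite projective $\pi_0(\underline{\Acal}(\ast))$-module, lifting it over the connective ring $\underline{\Acal}(\ast)$ by the standard deformation-theoretic result, condensifying, then lifting $\id_N$ to $\phi\colon M'\to M$ and killing $\cofib(\phi)$ by derived Nakayama. Your route is conceptually clean but imports the lifting theorem for projectives over connective animated rings plus the compatibilities of $\Cond$ with base change; the paper's route is longer on the page but entirely self-contained, needing only the Postnikov-convergence argument. Both hinge on the same shared facts about right $t$-exactness of $-\otimes^{\Lbb}_{\Acal}\pi_0(\Acal)$ and its behaviour on top cohomology, which you correctly flag. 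One small imprecision: you write that $N$ is relatively discrete ``by full faithfulness of $\Cond_{\pi_0(\Acal)}$'' — full faithfulness alone does not give this; what you need (and what is true) is that the essential image of $\Cond_{\pi_0(\Acal)}$ contains $\underline{\pi_0(\Acal)}$ and is closed under retracts (because $\Cond$ is a colimit-preserving fully faithful functor), so it contains every finite projective $\pi_0(\Acal)$-module.
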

\begin{proof}
    The only if direction is clear, so we prove the if direction.
    First, we prove that $M$ is connective.
    We assume that there exists $i>0$ such that $H^i(M)\neq 0$.
    Since $M$ is bounded above, we can take the largest such $i$.
    Then we have $H^i(M)=H^i(M\otimes_{\Acal}^{\Lbb}\pi_0(\Acal))$.
    Since the right-hand side is zero, it is a contradiction.
    Let us take a surjection $\pi_0(\underline{\Acal})^n \to M\otimes_{\Acal}^{\Lbb}\pi_0(\Acal)$.
    Since the cofiber of the morphism $M\to M\otimes_{\Acal}^{\Lbb}\pi_0(\Acal)$ belongs to $\Dcal(\Acal)^{\leq -1}$, we can take a lift $\underline{\Acal}^n\to M$ of $\underline{\Acal}^n\to \pi_0(\underline{\Acal})^n \to M\otimes_{\Acal}^{\Lbb}\pi_0(\Acal)$.
    We set $N=\cofib(\underline{\Acal}^n\to M)\in \Dcal(\Acal)^{\leq -1}$.
    Then we have a fiber sequence
    $$R\intHom_{\Acal}(M,\underline{\Acal}^n) \to R\intHom_{\Acal}(M,M) \to R\intHom_{\Acal}(M,N).$$
    It is enough to show $R\intHom_{\Acal}(M,N)\in \Dcal(\Acal)^{\leq -1}$.
    For $m\geq 1$, $$R\intHom_{\Acal}(M,H^{-m}(N)[m])\cong R\intHom_{\pi_0(\Acal)}(M\otimes_{\Acal}^{\Lbb}\pi_0(\Acal),H^{-m}(N)[m])$$ is concentrated in cohomological degree $-m$.
    Hence $R\intHom_{\Acal}(M,\tau^{\geq -m}N)$ lies in $\Dcal(\Acal)^{[-m,-1]}$, and $\{R\intHom_{\Acal}(M,\tau^{\geq -m}N)\}_m$ is a Postnikov tower.
    Therefore, we obtain $$R\intHom_{\Acal}(M,N)\cong R\varprojlim_{m}R\intHom_{\Acal}(M,\tau^{\geq -m}N)\in \Dcal(\Acal)^{\leq -1}.$$
\end{proof}

On the other hand, by the general theory of closed symmetric monoidal stable $\infty$-categories, we have a notion of dualizable objects.

\begin{definition}\label{def:dualizable}
    An object $M \in \Dcal(\Acal)$ is \textit{dualizable} if there exists an object $M^{\prime}\in\Dcal(\Acal)$ with morphisms $\alpha \colon \underline{\Acal}\to M^{\prime}\otimes_{\Acal}^{\Lbb}M$ and $\beta \colon M\otimes_{\Acal}^{\Lbb}M^{\prime} \to\underline{\Acal}$ such that the compositions
    \begin{align*}
        &M\overset{\id\otimes\alpha}{\longrightarrow} M\otimes_{\Acal}^{\Lbb}M^{\prime}\otimes_{\Acal}^{\Lbb}M\overset{\beta\otimes\id}{\longrightarrow}M, \\
        &M^{\prime}\overset{\alpha\otimes\id}{\longrightarrow} M^{\prime}\otimes_{\Acal}^{\Lbb}M\otimes_{\Acal}^{\Lbb}M^{\prime}\overset{\id\otimes\beta}{\longrightarrow}M^{\prime}
    \end{align*}
    are the identity.
\end{definition}

\begin{remark}
    If $M$ is dualizable, then $M^{\prime}$ in the definition is unique and it is equivalent to $R\intHom_{\Acal}(M,\underline{\Acal})$.
\end{remark}

\begin{lemma}[{\cite[Proposition 9.3]{CC}}]\label{lem:dualizable compact nuclear}
    An object $M \in \Dcal(\Acal)$ is dualizable if and only if it is compact and nuclear as an object of $\Dcal(\Acal)$.
\end{lemma}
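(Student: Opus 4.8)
The plan is to prove the two implications separately: dualizable $\Rightarrow$ compact and nuclear is formal, while the converse carries the content. This is exactly \cite[Proposition 9.3]{CC}, and I would argue as follows. Suppose first that $M$ is dualizable, with dual $M^{\vee}=R\intHom_{\Acal}(M,\underline{\Acal})$, so that in particular $M\simeq M^{\vee\vee}$. For every $P\in\Dcal(\Acal)$ the canonical comparison map
$$R\intHom_{\Acal}(P,\underline{\Acal})\otimes_{\Acal}^{\Lbb}M\longrightarrow R\intHom_{\Acal}(P,M)$$
is an equivalence: both sides corepresent the functor $X\mapsto \Hom_{\Dcal(\Acal)}(X\otimes_{\Acal}^{\Lbb}P\otimes_{\Acal}^{\Lbb}M^{\vee},\underline{\Acal})$, by the tensor--Hom adjunction together with $M\simeq M^{\vee\vee}$. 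Specializing to compact $P$ gives one of the standard characterizations of nuclearity, so $M$ is nuclear. For compactness, $\Hom_{\Dcal(\Acal)}(M,-)\simeq\Hom_{\Dcal(\Acal)}(\underline{\Acal},M^{\vee}\otimes_{\Acal}^{\Lbb}-)$; since $-\otimes_{\Acal}^{\Lbb}M^{\vee}$ preserves all colimits and $\underline{\Acal}$ is a compact object of $\Dcal(\Acal)$ (for the analytic animated rings relevant in this paper, $\underline{\Acal}$ itself is among the compact generators), the composite preserves colimits, so $M$ is compact.

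Conversely, assume $M$ is compact and nuclear. Nuclearity lets us write $M\simeq\colim_{i}P_{i}$ as a filtered colimit of compact objects in which each structure map $P_{i}\to M$ is trace-class (this is visible from the expression of $M$ as a filtered colimit of basic nuclear objects, using that trace-class maps form a two-sided ideal). Because $M$ is compact, $\id_{M}\in\pi_{0}\Hom_{\Dcal(\Acal)}(M,M)=\colim_{i}\pi_{0}\Hom_{\Dcal(\Acal)}(M,P_{i})$ factors through one stage, so there exist a compact $P$, a trace-class map $g\colon P\to M$, and a map $s\colon M\to P$ with $g\circ s=\id_{M}$. Since trace-class maps form an ideal, $\id_{M}=g\circ s$ is itself trace-class. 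It then suffices to show that an object whose identity is trace-class is dualizable: a trace-class lift $c\in\pi_{0}\Hom_{\Dcal(\Acal)}(\underline{\Acal},M^{\vee}\otimes_{\Acal}^{\Lbb}M)$ of $\id_{M}$ along the canonical map $M^{\vee}\otimes_{\Acal}^{\Lbb}M\to R\intHom_{\Acal}(M,M)$ provides a coevaluation, which paired with the tautological evaluation $M\otimes_{\Acal}^{\Lbb}M^{\vee}\to\underline{\Acal}$ exhibits $M$ as dualizable once the triangle identities are checked.

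The main obstacle is this last verification. One triangle identity, that $M\xrightarrow{\id\otimes c}M\otimes_{\Acal}^{\Lbb}M^{\vee}\otimes_{\Acal}^{\Lbb}M\xrightarrow{\mathrm{ev}\otimes\id}M$ equals $\id_{M}$, reduces after unwinding the definition of $\mathrm{ev}$ to the defining property that $c$ lifts $\id_{M}$. The other, that $M^{\vee}\xrightarrow{c\otimes\id}M^{\vee}\otimes_{\Acal}^{\Lbb}M\otimes_{\Acal}^{\Lbb}M^{\vee}\xrightarrow{\id\otimes\mathrm{ev}}M^{\vee}$ equals $\id_{M^{\vee}}$, is less formal and is where I expect the real work: I would deduce it by a Yoneda argument, testing against maps $X\to M^{\vee}$, i.e. against maps $X\otimes_{\Acal}^{\Lbb}M\to\underline{\Acal}$, and tracing through the construction of $c$. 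A minor point used above is that $\underline{\Acal}$ is compact in $\Dcal(\Acal)$ for the analytic animated rings of this paper, which is immediate from the description of their compact generators.
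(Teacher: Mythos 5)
The paper does not give its own proof of this lemma; it simply cites \cite[Proposition 9.3]{CC}. Your argument is essentially the standard proof of that statement and almost certainly the one in the cited source. The forward direction is correct: dualizability identifies $R\intHom_{\Acal}(P,\underline{\Acal})\otimes_{\Acal}^{\Lbb}M$ with $R\intHom_{\Acal}(P,M)$ for all $P$, hence in particular for compact $P$, which is nuclearity, while $\Hom(M,-)\simeq\Hom(\underline{\Acal},M^{\vee}\otimes_{\Acal}^{\Lbb}-)$ together with compactness of $\underline{\Acal}$ gives compactness of $M$. The converse is also right: compactness lets $\id_{M}$ factor through one stage of a trace-class presentation furnished by nuclearity, so $\id_{M}$ is itself trace-class, and a witness $c\colon\underline{\Acal}\to M^{\vee}\otimes_{\Acal}^{\Lbb}M$ serves as coevaluation. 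The one place to tighten is the second triangle identity, which you leave somewhat open: it does not require a separate construction, since after testing against a map $X\to M^{\vee}$, i.e. against $X\otimes_{\Acal}^{\Lbb}M\to\underline{\Acal}$, and commuting tensor factors, the identity $(\id\otimes\mathrm{ev})\circ(c\otimes\id)=\id_{M^{\vee}}$ reduces precisely to an instance of the already-established $(\mathrm{ev}\otimes\id)\circ(\id\otimes c)=\id_{M}$ tensored with $\underline{\Acal}$. With that observation made explicit, your proof is complete and matches the intended one.
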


\begin{corollary}\label{cor:dualizable nuclear}
    The inclusion $\Dcal^{\nuc}(\Acal)\subset \Dcal(\Acal)$ induces a categorical equivalence between the full subcategories of dualizable objects in $\Dcal^{\nuc}(\Acal)$ and $\Dcal(\Acal)$.
\end{corollary}

In general, every perfect complex of $\Dcal(\Acal)$ is dualizable, however, the converse is not necessarily true.

\begin{definition}[{\cite[Definition 9.7]{CC}}]\label{def:Fredholm}
    The analytic animated ring $\Acal$ is \textit{Fredholm} if any dualizable object of $\Dcal(\Acal)$ is relatively discrete.
\end{definition}

\begin{remark}
    Let $\Acal$ be a Fredholm analytic animated ring.
    Then every dualizable object of $\Dcal(\Acal)$ is a perfect complex (\cite[Proposition 9.2]{CC}).
\end{remark}

\begin{example}
    \begin{enumerate}
        \item Let $A$ be a (discrete) animated ring, $A^+\subset \pi_0(A)$ is an integrally closed subring.
        Then the analytic animated ring $(A,A^+)_{\square}$ is Fredholm, which follows from \cite[Proposition 2.9.7 (i)]{Mann22}.
        \item Let $(A,A^+)$ be a complete Tate affinoid pair.
        Then the analytic ring $(A,A^+)_{\square}$ is Fredholm, which follows from \cite[Theorem 5.50]{And21}.
        We note that this theorem is true without the assumption that $A$ is sheafy.
        \item Let $\Qbb_{p}[T]$ denote a polynomial ring over the condensed ring $\Qbb_p$.
        Then the analytic ring $\Qbb_p[T]_{\square}$ is not Fredholm, see also Appendix \ref{appendix:nonFredholm}.
    \end{enumerate}
\end{example}

\begin{definition}
    Let $f \colon \Acal \to \Bcal$ be a morphism of analytic animated rings.
    Then we say that $f$ satisfies \textit{$\ast$-descent} if the natural functor
    $$\Dcal(\Acal) \to \varprojlim_{[n]\in \Delta}\Dcal(\Bcal^n)$$ 
    is an equivalence of $\infty$-categories, where $\Bcal^{\bullet}$ is the \v{C}ech nerve of $\Acal \to \Bcal$.
    We also say that $f$ satisfies \textit{universal $\ast$-descent} if for any morphism $\Acal\to \Ccal$ of analytic animated rings, $\Ccal\to \Ccal\otimes_{\Acal}^{\Lbb} \Bcal$ satisfies $\ast$-descent.
\end{definition}

The following proposition is proved in \cite[VI.1]{GRLLC24}.
For the reader's convenience, we provide a proof here.

\begin{proposition}\label{prop:descent finite projective Fredholm}
    Let $f \colon \Acal \to \Bcal$ be a morphism of analytic animated rings which satisfies $\ast$-descent.
     \begin{enumerate}
        \item For $M\in \Dcal(\Acal)$, if $M\otimes_{\Acal}^{\Lbb} \Bcal$ is a dualizable $\Bcal$-module, then $M$ is also a dualizable $\Acal$-module.
        \item For $M\in \Dcal(\Acal)$, if $M\otimes_{\Acal}^{\Lbb} \Bcal$ is a finite projective $\Bcal$-module and $M$ is relatively discrete in $\Dcal(\Acal)$, then $M$ is also a finite projective $\Acal$-module.
    \end{enumerate}
\end{proposition}

\begin{remark}
   If $\Acal$ is Fredholm, then by (1), the condition in (2) that $M$ is relatively discrete in $\Dcal(\Acal)$ is automatic from the assumption that $M\otimes_{\Acal}^{\Lbb} \Bcal$ is a finite projective $\Bcal$-module.
\end{remark}
\begin{proof}
    Part (1) follows from the same argument as in \cite[Proposition 6.12]{Mann22b}.
    Let us prove Part (2).
    Since $M$ is relatively discrete in $\Dcal(\Acal)$, $M$ is a perfect complex of $\Acal$-modules.
    In particular, $M$ is bounded above, so we may assume that $\Acal$ is static by Lemma \ref{lem:finite projective module criterion 2}.
    We set $A=\underline{\Acal}(\ast)$, which is a classical ring.
    Let us prove that $M(\ast)$ lies in $\Dcal(A)^{\leq 0}$.
    We assume that there exists $i\geq 1$ such that $H^i(M(\ast))\neq 0$, and we take the largest such $i$.
    Then there exists a maximal ideal $\mfrak\subset A$ such that $H^i(M(\ast)\otimes_{A}^{\Lbb}A/\mfrak)\neq 0$.
    We define an analytic ring $\Acal/\mfrak$ as $\Cond_{\Acal}(A/\mfrak)$ with the induced analytic ring structure from $\Acal$.
    We set $\Bcal/\mfrak=\Acal/\mfrak\otimes_{\Acal}^{\Lbb} \Bcal$.
    Since $\Acal \to \Acal/\mfrak$ is steady by \cite[Proposition 13.9]{AG}, there is an equivalence
    $$\underline{\Bcal/\mfrak}\simeq \underline{\Acal/\mfrak}\otimes_{\Acal}^{\Lbb} \Bcal.$$
    Therefore, $\Bcal/\mfrak$ is not equivalent to zero since $\Acal\to\Bcal$ satisfies $\ast$-descent.
    Since $A/\mfrak$ is a field, $H^i(M(\ast)\otimes_{A}^{\Lbb}A/\mfrak)[-i]$ is a direct summand of $M(\ast)\otimes_{A}^{\Lbb}A/\mfrak$ and $H^i(M(\ast)\otimes_{A}^{\Lbb}A/\mfrak)$ is a nonzero finite free $A/\mfrak$-module.
    Therefore, $H^i(M\otimes_{\Acal}^{\Lbb}\Acal/\mfrak)[-i]$ is a direct summand of $M\otimes_{\Acal}^{\Lbb}\Acal/\mfrak$ and $H^i(M\otimes_{\Acal}^{\Lbb}\Acal/\mfrak)$ is isomorphic to $(\underline{\Acal/\mfrak})^n$ for some $n>0$.
    We have an equivalence
    $$(M\otimes_{\Acal}^{\Lbb}\Bcal)\otimes_{\Bcal}^{\Lbb} \Bcal/\mfrak\simeq (M\otimes_{\Acal}^{\Lbb}\Acal/\mfrak)\otimes_{\Acal/\mfrak}^{\Lbb} \Bcal/\mfrak.$$
    From the above argument, we get $H^i((M\otimes_{\Acal}^{\Lbb}\Acal/\mfrak)\otimes_{\Acal/\mfrak}^{\Lbb} \Bcal/\mfrak)\neq 0$.
    On the other hand, since $M\otimes_{\Acal}^{\Lbb}\Bcal$ is a finite projective $\Bcal$-module, $(M\otimes_{\Acal}^{\Lbb}\Bcal)\otimes_{\Bcal}^{\Lbb} \Bcal/\mfrak$ is also a finite projective $\Bcal/\mfrak$-module.
    It is a contradiction.

    By applying the same argument to the dual $M^{\vee}=R\intHom_{\Acal}(M,\underline{\Acal})$, we find that $M^{\vee}(\ast)=M(\ast)^{\vee}=R\Hom_A(M(\ast),A)$ is also lies in $\Dcal(A)^{\leq 0}$.
    Therefore, by Lemma \ref{lem:finite projective module criterion}, $M(\ast)$ is a finite projective $A$-module, and $M$ is also a finite projective $\Acal$-module.
\end{proof}

Next, we consider descent of finite projective modules over non-Fredholm analytic animated rings for special cases.

\begin{lemma}\label{lem:condensification functor}
    Let $A$ be a noetherian complete Tate ring, and we regard it as a condensed ring.
    \begin{enumerate}
        \item The condensification functor $\Cond_{A}\colon \Dcal(A(\ast)) \to \Dcal(A_{\square})$ is $t$-exact.
        \item Let $M$ be a (static) $A(\ast)$-module, and $S$ be an extremally disconnected set.
        Then $f \in \Cond_A(M)(S)$ is equal to $0$ if and only if $f(s)=0$ for all $s\in S$, where $f(s)\in M$ is the image of $f$ under the morphism $\Cond_A(M)(S)\to \Cond_A(M)(\{s\})=M$. 
        \item Let $\{M_i\}_{i\in I}$ be a family of (static) $A(\ast)$-modules.
        Then the natural morphism $\Cond_A\left(\prod_{i\in I}M_i\right) \to \prod_{i\in I} \Cond_A(M_i)$ is injective.
    \end{enumerate}
\end{lemma}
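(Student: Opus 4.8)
The plan is to treat the three parts essentially independently, reducing everything to the concrete description of $\Cond_A$ on static modules. The key input is that for a noetherian complete Tate ring $A$ (hence in particular $A$ is a complete Tate ring whose analytic structure is Fredholm), the condensification functor on static modules can be computed explicitly: by Remark \ref{rem:cond functor} we have $\Cond_A(M)(\ast)\cong M$, and more importantly $\Cond_A(M)$ is the sheafification (on extremally disconnected sets) of the presheaf $S\mapsto \Cond_A(M)(\ast)\otimes_{\Zbb[\ast]}\Zbb[S]$, i.e. $\Cond_A(M)(S)$ is computed via the standard formula $\Cond_A(M)(S)=\Cond_{\Zbb_{\bs}}(M)(S)$ up to the $A$-completion, which for a Tate ring is given by locally constant functions on $S$ valued in $M$ with a suitable topology. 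Concretely, writing $S=\varprojlim_j S_j$ as a cofiltered limit of finite sets, one has $\Cond_A(M)(S)=\colim_j \Map(S_j,M)$ when $M$ is discrete-ish, but in general one must be slightly careful; the point is that an element of $\Cond_A(M)(S)$ is determined by, and determines, a compatible system of $M$-valued functions.

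For part (1), I would argue that $\Cond_A$ is exact as a functor between stable $\infty$-categories (it is a base change $A\otimes^{\Lbb}_{A_{\disc}}(-)$ followed by passing into $\Dcal(A_{\bs})$, both of which preserve colimits and finite limits), so it suffices to check it carries $\Mod_{A(\ast)}^{\heartsuit}$ into $\Dcal(A_{\bs})^{\heartsuit}$, and preserves injections among static modules. Right-exactness is automatic from the tensor description; for left-exactness one uses that $A_{\bs}$ is the analytic ring attached to a noetherian Tate ring, so $A\otimes^{\Lbb}_{A_{\disc}}\underline{M}$ has no higher homotopy when $M$ is static — this is where noetherianness enters, via flatness of the completion $\Zbb_{\bs}[S]$-style resolutions, cf. the computation of $\Cond$ in \cite[Theorem 5.9]{And21} and \cite[\S2.9]{Mann22}. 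So (1) reduces to: $\Cond_A$ of a static module is static, which follows because the derived functors of the condensification agree with the underived ones for a noetherian complete Tate ring.

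For part (2), the statement is a concrete form of the fact that $\Cond_A(M)$, as a condensed set, has ``enough points'': an extremally disconnected $S$ is a cofiltered limit of finite discrete sets $S_j$, and $\{s\}\hookrightarrow S$ induces on sections $\Cond_A(M)(S)\to M$. Using the explicit description of sections as (a completion of) $M$-valued locally constant functions on $S$, a section $f$ is literally such a function (or a limit thereof), and it vanishes iff all of its values $f(s)$ do; injectivity of $\prod_{s\in S}(\text{ev}_s)$ is then immediate. The only subtlety is the completion/limit aspect, which is handled by noting that the topology on $M$ (Tate, hence Hausdorff) is separated, so a compatible system of functions vanishing at every point vanishes. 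Part (3) is then formal from (1) and (2): given $\{M_i\}_{i\in I}$, the natural map $\Cond_A(\prod_i M_i)\to\prod_i\Cond_A(M_i)$ is a map of static $A_{\bs}$-modules by (1), and to check injectivity it suffices, again by the ``enough points'' principle, to evaluate on extremally disconnected $S$ and then at each point $s\in S$; by (2) an element of the source is zero iff all its pointwise values in $\prod_i M_i$ are zero, and the pointwise value of its image in $\prod_i\Cond_A(M_i)(S)$ at $s$ is the same element of $\prod_i M_i$, so injectivity follows.

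The main obstacle I expect is part (1): pinning down precisely why $\Cond_A$ sends static modules to static modules for a noetherian complete Tate ring, i.e. the vanishing of the higher derived tensor $A\otimes^{\Lbb}_{A_{\disc}}\underline{M}$ in positive degrees. This is the only place the hypotheses ``noetherian'' and ``complete Tate'' are genuinely used; everything else is soft manipulation with sections on extremally disconnected sets. One clean way to handle it is to reduce to $M$ a countably-presented module (using that $\Cond$ commutes with filtered colimits and that every module is a filtered colimit of such), present it via a two-term resolution by free modules, and compute directly that the condensification of a free module is $A$-flat in $\Dcal(A_{\bs})$, so that the resolution stays exact after $\Cond_A$. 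Alternatively one can cite the relevant flatness statement from \cite[\S2.9]{Mann22} directly and keep the proof short.
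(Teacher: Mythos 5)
Part (3) agrees with the paper's argument. But there is a genuine gap in part (2). You describe $\Cond_A(M)(S)$ as ``(a completion of) $M$-valued locally constant functions on $S$'' and invoke ``the topology on $M$ (Tate, hence Hausdorff)''; but $M$ is an arbitrary $A(\ast)$-module, not a Tate ring, and in general carries no natural topology at all. The identification of $\Cond_A(M)$ with the condensed module attached to a topological module (via \cite[Lemma 5.23]{And21}, so that sections over $S$ become honest continuous $M$-valued functions) is valid only when $M$ is finitely presented. The paper supplies the missing reduction: write $M=\varinjlim_\lambda M_\lambda$ as a filtered colimit of finitely presented submodules, use part (1) to see each $\Cond_A(M_\lambda)\hookrightarrow\Cond_A(M)$ is injective, and use compactness of $\Zbb_{\bs}[S]$ to factor any $f\in\Cond_A(M)(S)$ through some $\Cond_A(M_\lambda)(S)$. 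Only after this reduction is the ``soft manipulation with sections'' you propose available, and without it the claim that pointwise values determine $f$ has no content for general $M$.

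For part (1) your argument is murkier than the paper's. The suggestion to ``present $M$ via a two-term resolution by free modules'' fails in general — a countably presented module over a noetherian ring need not have projective dimension $1$ — and, more seriously, verifying that ``the resolution stays exact after $\Cond_A$'' would presuppose the very $t$-exactness you want to prove. The paper instead reduces via filtered colimits to $M$ finitely presented, uses noetherianness of $A(\ast)$ to obtain a resolution of $M$ by finite free modules (pseudocoherence), and then cites \cite[Theorem 5.9]{And21} to conclude $\Cond_A(M)$ is static. Your fallback of citing the relevant condensification results directly, rather than re-deriving flatness, is essentially what the paper does and is the safer route.
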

\begin{proof}
    First, we prove (1).
    It is enough to show that for any finitely presented $A(\ast)$-module $M \in \Mod_{A(\ast)}$, $\Cond_{A}(M)$ is static.
    Since $A(\ast)$ is noetherian, $M$ is quasi-isomorphic to a complex of finite free $A(\ast)$-modules.
    Therefore, $\Cond_{A}(M)$ is static by \cite[Theorem 5.9]{And21}.

    Next, we prove (2). 
    We can write $M$ as a filtered colimit $M=\varinjlim_{\lambda\in \Lambda} M_{\lambda}$ of finitely presented submodules $M_{\lambda}\subset M$.
    Then we have $\Cond_A(M)\cong \varinjlim_{\lambda\in \Lambda} \Cond_A(M_{\lambda})$ and $\Cond_A(M_{\lambda}) \to \Cond_A(M)$ is injective by (1).
    Moreover, there exists $\lambda\in \Lambda$ such that $f\in \Cond_A(M)(S)$ belongs to the image of $\Cond_A(M_{\lambda})(S) \hookrightarrow \Cond_A(M)(S)$.
    Therefore, we may assume that $M$ is finitely presented.
    We endow $M$ with the natural topology. 
    Then $\Cond_A(M)$ is the condensed module associated to $M$ by \cite[Lemma 5.23]{And21}, so $M(S)$ is isomorphic to the module of continuous functions on $S$ with values in $M$, which proves (2).

    Finally, we prove (3).
    By (2), it is enough to show that $$\left(\Cond_A\left(\prod_{i\in I}M_i\right)\right)(\ast) \to \left(\prod_{i\in I} \Cond_A(M_i)\right)(\ast)=\prod_{i\in I}(\Cond_A(M_i)(\ast))$$ is injective.
    It is an isomorphism by Remark \ref{rem:cond functor}.
\end{proof}

\begin{lemma}\label{lem:rd sub}
	Let $R$ be a noetherian complete Tate ring, $A$ be a relatively discrete $R$-algebra, and $M$ be a relatively discrete $A$-module.
	Then an $A$-submodule $N\subset M$ is also relatively discrete over $A$.
\end{lemma}
\begin{proof}
	We note that an $A$-module $L$ is relatively discrete over $A$ if and only if it is relatively discrete over $R$.
	Let us prove the natural morphism
	$$f\colon \Cond_A(N(\ast))\to N$$
	is an isomorphism.
	Since the composition
	$$\Cond_A(N(\ast))\overset{f}{\to} N\to M$$
	is injective by Lemma \ref{lem:condensification functor} (1), $f$ is also injective.
	Let us prove that $f$ is surjective.
	By Lemma \ref{lem:condensification functor} (2), for a relatively discrete $A$-module $L$, an extremally disconnected set $S$ and $x\in L(S)$, if $x(s)=0$ for any $s\in S$, then $x=0$.
	By applying this claim to $L=M/\Cond_A(N(\ast))$, we get the claim.
\end{proof}

\begin{remark}
Let $R$, $A$, and $M$ be as in Lemma \ref{lem:rd sub}.
From this lemma, an $A$-submodule $N\subset M$ corresponds bijectively to an $A(\ast)$-submodule $N(\ast)\subset M(\ast)$.
We will often make use of this correspondence implicitly.
For example, we identify the set of ideals of $A(\ast)$ and $A$, and use the same notation when this causes no ambiguity.
\end{remark}

\begin{lemma}\label{lem:compact projective object and prod}
    Let $A$ be a noetherian complete Tate ring, and $B$ be a relatively discrete $A$-algebra.
    Let $S$ be an extremally disconnected set, and we take a set $I$ such that $\Zbb_{\square}[S]\cong\prod_I \Zbb$.
    Then $B_{\square}[S]$, which is equal to $B[S]\otimes_{\Zbb}^{\Lbb} \Zbb_{\square}$ by definition, is static and isomorphic to $B \otimes_{\Zbb_{\square}}^{\Lbb} \Zbb_{\square}[S]$.
    Moreover, the natural morphism $$B_{\square}[S]\cong B \otimes_{\Zbb_{\square}}^{\Lbb} \Zbb_{\square}[S]\cong B \otimes_{\Zbb_{\square}}^{\Lbb} \prod_I \Zbb \to \prod_I B$$ is injective.
\end{lemma}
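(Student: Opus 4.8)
The plan is to reduce everything to the structure of $B$ as a relatively discrete $A$-algebra. Write $B = \Cond_A(B_0)$ for an ordinary $A(\ast)$-algebra $B_0 = B(\ast)$; since $B$ is static and relatively discrete, Lemma \ref{lem:condensification functor}(1) tells us $\Cond_A$ is $t$-exact, so we may freely pass between $B_0$ and $B$. First I would recall that $\Zbb_{\bs}[S]$ is a projective $\Zbb_{\bs}$-module (it is the $\Zbb_{\bs}$-linear dual of $C(S,\Zbb)$, or directly: for $S$ extremally disconnected $\Zbb_{\bs}[S] \cong \prod_I \Zbb$ is a compact projective generator), so $- \otimes_{\Zbb_{\bs}}^{\Lbb} \Zbb_{\bs}[S]$ is exact and preserves static objects. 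Hence $B \otimes_{\Zbb_{\bs}}^{\Lbb} \Zbb_{\bs}[S]$ is static. To identify it with $B_{\bs}[S] = B[S] \otimes_{\Zbb}^{\Lbb} \Zbb_{\bs}$, one runs through the definitions: $B[S]$ means $B \otimes_{\Zbb}^{\Lbb} \Zbb[S]$ in $\Dcal(\Zbb)$ (here $\Zbb[S]$ is the free condensed abelian group on $S$), and then applying $- \otimes_{\Zbb}^{\Lbb}\Zbb_{\bs}$ and using $\Zbb_{\bs}[S] = \Zbb[S]\otimes_{\Zbb}^{\Lbb}\Zbb_{\bs}$ together with base change gives the associativity isomorphism $B[S]\otimes_{\Zbb}^{\Lbb}\Zbb_{\bs} \cong B \otimes_{\Zbb_{\bs}}^{\Lbb}\Zbb_{\bs}[S]$. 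Since $\Zbb_{\bs}[S]$ is flat over $\Zbb_{\bs}$ and $B$ is static, the Tor-spectral sequence collapses and $B_{\bs}[S]$ is static.

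For the injectivity statement, the key input is Lemma \ref{lem:condensification functor}(3): the natural map $\Cond_A\big(\prod_I B_0\big) \to \prod_I \Cond_A(B_0) = \prod_I B$ is injective. So it would suffice to identify $B \otimes_{\Zbb_{\bs}}^{\Lbb} \prod_I \Zbb$ with $\Cond_A\big(\prod_I B_0\big)$ compatibly with the evident maps to $\prod_I B$. Here I would exploit that $\prod_I \Zbb = \Zbb_{\bs}[S]$ is already an $A_{\bs}$-module in a canonical way (it is $\Cond_A$ applied to $\prod_I A_0$, again by flatness and $t$-exactness of $\Cond_A$, using that $\Zbb_{\bs}[S] \cong A_{\bs} \otimes_{\Zbb_{\bs}}^{\Lbb}\Zbb_{\bs}[S]$? — no: rather $A_{\bs}[S] = A \otimes_{\Zbb_\bs}^{\Lbb}\Zbb_\bs[S]$ and one checks $A_{\bs}[S] \cong \Cond_A(\prod_I A_0)$ by Lemma \ref{lem:condensification functor} applied to the finite free modules appearing in a resolution, as in the proof of part (1) there). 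Then $B \otimes_{\Zbb_{\bs}}^{\Lbb}\Zbb_{\bs}[S] \cong B \otimes_{A_{\bs}}^{\Lbb}A_{\bs}[S] \cong \Cond_A(B_0) \otimes_{A_{\bs}}^{\Lbb}\Cond_A\big(\textstyle\prod_I A_0\big)$, and since $\Cond_A$ is symmetric monoidal on its image (it is a fully faithful monoidal functor by Definition \ref{def:condensification functor} and \cite[Theorem 5.9]{And21}) this is $\Cond_A\big(B_0 \otimes_{A_0}\prod_I A_0\big)$, which receives a natural map to $\Cond_A\big(\prod_I B_0\big)$; composing with Lemma \ref{lem:condensification functor}(3) and checking the composite is the asserted map on underlying objects (via Remark \ref{rem:cond functor}) finishes the argument.

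The main obstacle I expect is the bookkeeping in the last paragraph: making sure the monoidal structure is tracked correctly through $\Cond_A$ when one tensor factor is the infinite product $\prod_I A_0$, which is \emph{not} finitely presented, so one cannot invoke $t$-exactness of $\Cond_A$ on it directly. The clean way around this is to avoid claiming $\Cond_A\big(B_0 \otimes_{A_0}\prod_I A_0\big) \cong B\otimes_{A_{\bs}}^{\Lbb}A_{\bs}[S]$ as an equality of derived objects, and instead argue only at the level of underlying abelian groups via Remark \ref{rem:cond functor}: compute $\big(B \otimes_{\Zbb_{\bs}}^{\Lbb}\Zbb_{\bs}[S]\big)(\ast)$ using that $\Zbb_{\bs}[S](\ast) = \prod_I \Zbb$ and that $B_{\bs}[S]$ is static, identify it with $B_0 \otimes_{A_0}\prod_I A_0$ (or more carefully with $\big(\Cond_A(\prod_I B_0)\big)(\ast) = \prod_I B_0$ after a flatness check), and then inject into $\prod_I B_0 = \big(\prod_I B\big)(\ast)$ by Lemma \ref{lem:condensification functor}(3). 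Finally, since $B_{\bs}[S]$ is static and a map of static condensed modules is injective iff it is injective on all extremally disconnected $S'$, and by Lemma \ref{lem:condensification functor}(2) this can be tested on points, injectivity on underlying objects upgrades to injectivity of the condensed map. $\qed$
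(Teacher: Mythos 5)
Your first paragraph rests on a false step: you assert that because $\Zbb_{\bs}[S]\cong\prod_I\Zbb$ is a compact projective object of $\Mod_{\Zbb_{\bs}}$, the functor $-\otimes_{\Zbb_{\bs}}^{\Lbb}\Zbb_{\bs}[S]$ is exact and preserves static objects. In the solid setting, compact projective does \emph{not} imply flat for the (completed) solid tensor product; the paper flags this explicitly in the remark immediately following the lemma, citing \cite[Lemma 2.9.35]{Mann22}. Being projective only says $\intHom_{\Zbb_{\bs}}(\Zbb_{\bs}[S],-)$ is exact; it says nothing about $\Zbb_{\bs}[S]\otimes_{\Zbb_{\bs}}^{\Lbb}-$. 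So the Tor spectral sequence does not collapse for free, and the staticness of $B\otimes_{\Zbb_{\bs}}^{\Lbb}\Zbb_{\bs}[S]$ is precisely the nontrivial content of the lemma, not a formal consequence of projectivity.

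This gap also propagates into the second and third paragraphs: without flatness, the identification $A_{\bs}[S]\cong\Cond_A(\prod_I A_0)$ that you need is not established, and the monoidal manipulations through $\Cond_A$ tacitly use that the relevant derived tensors are concentrated in degree zero, which is exactly what has not been shown. The paper avoids all of this by a different reduction: after the identification $B[S]\otimes_{\Zbb}^{\Lbb}\Zbb_{\bs}\cong B\otimes_{\Zbb_{\bs}}^{\Lbb}\Zbb_{\bs}[S]$ (as in \cite[Lemma 3.25]{And21}), it suffices to treat a relatively discrete static $A$-module $M$, write $M(\ast)$ as a filtered colimit of \emph{finitely presented} $A(\ast)$-submodules (using that filtered colimits commute with $-\otimes_{\Zbb_{\bs}}^{\Lbb}\prod_I\Zbb$ and with products on the noetherian side), and then appeal to the argument of \cite[Theorem 3.27]{And21} for the finitely presented case, where the Banach/noetherian structure of $A$ gives the required control of the completed product. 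Your plan does not engage with this finiteness reduction, which is the actual mechanism replacing the flatness you assumed. I would recommend rewriting the first paragraph to drop the flatness claim entirely and instead set up the filtered-colimit reduction to finitely presented $M(\ast)$.
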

\begin{proof}
    We can show $B[S]\otimes_{\Zbb}^{\Lbb} \Zbb_{\square}\cong B \otimes_{\Zbb_{\square}}^{\Lbb} \Zbb_{\square}[S]$ by the same argument as in \cite[Lemma 3.25]{And21}.
    Therefore, it is enough to show that for a relatively discrete (static) $A$-module $M$, $M\otimes_{\Zbb_{\square}}^{\Lbb}\prod_I\Zbb$ is static and $$M\otimes_{\Zbb_{\square}}\prod_I \Zbb\to \prod_I M$$ is injective.
    Since $M(\ast)$ can be written as a filtered colimit of finitely presented $A(\ast)$-submodules, we may assume that $M(\ast)$ is finitely presented.
    Then the same argument as in \cite[Theorem 3.27]{And21} works well.
\end{proof}
\begin{remark}
    We note that $\prod_I\Zbb$ is a compact projective object of $\Mod_{\Zbb_{\square}}$, but it is not necessarily flat (\cite[Lemma 2.9.35]{Mann22}).
    Therefore, it is not clear that $M\otimes_{\Zbb_{\square}}^{\Lbb}\prod_I\Zbb$ is static.
\end{remark}

\begin{definition}[{\cite[Definition 3.1]{Lou17}}]\label{def:Jacobson-Tate}
    Let $A$ be a strongly noetherian complete Tate ring.
    We say that $A$ is a Jacobson-Tate ring if it satisfies the following conditions:
    \begin{enumerate}
        \item For any maximal ideal $\mfrak \subset A$, $A/\mfrak$ is a complete non-archimedean field.
        \item For any $A$-algebra $B$ topologically of finite type and any maximal ideal $\nfrak$ of $B$, the restriction ideal of $\nfrak$ to $A$ is also a maximal ideal of $A$.
    \end{enumerate}
\end{definition}

\begin{example}
An affinoid algebra over a complete non-archimedean field is a Jacobson-Tate ring.
\end{example}

\begin{theorem}\label{thm:finite projective non-Fredholm}
    Let $A$ be a Jacobson-Tate algebra, and $B$ be an animated relatively discrete $A$-algebra.
    Then $M\in \Dcal(B_{\square})$ is a finite projective $B_{\square}$-module if and only if it satisfies the following conditions:
    \begin{enumerate}
        \item $M$ is dualizable.
        \item Both $M$ and $M^{\vee}=R\intHom_{B}(M,B)$ belong to $\Dcal(B_{\square})^{\leq 0}$.
    \end{enumerate}
\end{theorem}

\begin{proof}
    The only if direction is clear, so it suffices to prove the if direction.
    We may assume that $B$ is static by Lemma \ref{lem:finite projective module criterion 2}.
    By using a limit argument (\cite[Lemma 2.7.4]{Mann22}), we may assume that $B$ is of finite type over $A$.
    Let $\mfrak$ be a maximal ideal of $B$, and $\nfrak\subset A$ be the restriction ideal of $\mfrak$ to $A$.
    For $f \in B$, we write $B_f=B[1/f]$, which is also relatively discrete over $A$.
    We write $B_{\mfrak}=\varinjlim_{f\in B\setminus \mfrak}B_{f}$, which is also relatively discrete over $A$.
    For the time being, we assume that $M\otimes_{B_{\square}}^{\Lbb}B_{\mfrak \square}$ a finite projective $B_{\mfrak\square}$-module.
    Then, by \cite[Lemma 2.7.4]{Mann22}, there exists $f \in B\setminus \mfrak$ such that $M\otimes_{B_{\square}}^{\Lbb}B_{f\square}$ is a finite projective $B_{f\square}$-module.
    Therefore, there exists an open cover $\Spec(B(\ast))=\bigcup_{i=1}^n D(f_i)$ such that $M\otimes_{B_{\square}}^{\Lbb}B_{f_i\square}$ is a finite projective $B_{f_i\square}$-module.
    Since $M$ can be written as a finite limit of $M\otimes_{B_{\square}}^{\Lbb}B_{f_{i_1}\cdots f_{i_m}\square}$, $M$ is relatively discrete over $B_{\square}$. 
    Therefore, $M$ is a perfect complex over $B_{\square}$ by \cite[Proposition 9.2]{CC}, and $M$ is a finite projective $B_{\square}$-module by Lemma \ref{lem:finite projective module criterion}.

    Let us prove that $M\otimes_{B_{\square}}^{\Lbb}B_{\mfrak\square}$ is a finite projective $B_{\mfrak\square}$-module.
    Since $A$ is a Jacobson ring by \cite[Proposition 3.3(3)]{Lou17}, $\nfrak$ is a maximal ideal of $A$.
    Moreover, $B/\mfrak$ is relatively discrete over $A/\nfrak$, and is a finite extension of $A/\nfrak$.
    Therefore, $B/\mfrak$ is the condensed ring associated with a complete non-archimedean field.
    Hence, we may assume that $B$ is a noetherian local ring with a maximal ideal $\mfrak$ such that $B/\mfrak$ is a condensed ring associated with a complete non-archimedean field.
    Since $(B/\mfrak)_{\square}$ is Fredholm, $M\otimes_{B_{\square}}^{\Lbb}(B/\mfrak)_{\square}$ is a finite free $(B/\mfrak)_{\square}$-module.
    We take an isomorphism $M\otimes_{B_{\square}}^{\Lbb}(B/\mfrak)_{\square} \overset{\sim}{\to} (B/\mfrak)^n$.
    We have an equivalence
    $$R\intHom_{B}(M,\cofib(B^n\to (B/\mfrak)^n))\simeq M^{\vee}\otimes_{B_{\square}}^{\Lbb}\cofib(B^n\to (B/\mfrak)^n)$$
    from the condition (1), and it belongs to $\Dcal(B_{\square})^{\leq -1}$ from the condition (2).
    Hence, the morphism
    $$M\to M\otimes_{B_{\square}}^{\Lbb}(B/\mfrak)_{\square} \to (B/\mfrak)^n$$ has a lift $M\to B^n$, which has a retraction.
    Therefore, there exists a $B_{\square}$-module $L$ such that $M\cong L\oplus B^n$ and $L\otimes_{B_{\square}}^{\Lbb}(B/\mfrak)_{\square}=L\otimes_{B_{\square}}^{\Lbb}B/\mfrak=0$.
    Moreover, $L$ satisfies the conditions (1) and (2) in the theorem.
    We show $L=0$.
    Since $L$ is compact and connective, there exists an extremally disconnected set $S$ and a morphism $B_{\square}[S]\to L$ such that $B_{\square}[S]\to H^0(L)$ is surjective, where we note that $B_{\square}[S]$ is static by Lemma \ref{lem:compact projective object and prod}.
    For any $N\in \Dcal(B_{\square})^{\leq 0}$, $R\intHom_{B}(L,N)=L^{\vee}\otimes_{B_{\square}}^{\Lbb}N \in \Dcal(B_{\square})^{\leq 0}$ by the conditions (1) and (2).
    Therefore, the morphism $B_{\square}[S]\to L$ admits a section.
    In particular, $L$ is static, and by Lemma \ref{lem:compact projective object and prod} we obtain an injection $L\to \prod_I B$ for a set $I$ such that $\Zbb_{\square}[S]\cong\prod_I \Zbb$.
    Moreover, we have $L\otimes_{B_{\square}}B/\mfrak=0$, and by induction we obtain $L\otimes_{B_{\square}}B/\mfrak^n=0$ for $n\geq 1$.
    Since $B(\ast)$ is noetherian, the natural morphism $B(\ast)\to \prod_{n}B(\ast)/\mfrak^n$ is injective by Krull's intersection theorem. 
    Therefore, $B=\Cond_A(B(\ast)) \to \Cond_A(\prod_{n}B(\ast)/\mfrak^n)$ is injective, and 
    $$B\to \Cond_A\left(\prod_{n}B(\ast)/\mfrak^n\right)\to \prod_n \Cond_A\left(B(\ast)/\mfrak^n \right)=\prod_n B/\mfrak^n$$ 
    is also injective by Lemma \ref{lem:condensification functor}, where we note $B/\mfrak^n = \Cond_A(B(\ast)/\mfrak^n)$.
    We have the following commutative diagram:
    $$
    \xymatrix{
        L\ar[r]\ar@{^{(}->}[d] &\prod_n \left(L\otimes_{B_{\square}}B/\mfrak^n\right)\ar[d]\\
        \prod_I B \ar@{^{(}->}[r] & \prod_n \left(\prod_I B/\mfrak^n\right).
            }
    $$
    From the above, the morphism $L\to \prod_n \left(L\otimes_{B_{\square}}B/\mfrak^n\right)=0$ is injective, and therefore we obtain $L=0$.
\end{proof}

\begin{definition}
    Let $\Acal$ be an analytic animated ring.
    An object $M\in \Dcal(\Acal)$ is \textit{flat over $\Acal$} if $M$ is bounded above and for every static object $N\in \Dcal(\Acal)^{\heart}$, $M\otimes_{\Acal}^{\Lbb}N$ is also static.
    Moreover $M\in \Dcal(\Acal)$ is \textit{faithfully flat over $\Acal$} if $M$ is flat and the functor $M\otimes_{\Acal}^{\Lbb}- \colon \Dcal(\Acal) \to \Dcal(\Acal)$ is conservative.
\end{definition}

We prove the following lemmas for later use.

\begin{lemma}\label{lem:faithfully flat quotient}
    Let $\Acal$ be an analytic animated ring, and $B$ be an animated $\Acal$-algebra.
    Then $B$ is faithfully flat over $\Acal$ if and only if $\cofib(\underline{\Acal}\to B)\in \Dcal(\Acal)$ is flat over $\Acal$.
\end{lemma}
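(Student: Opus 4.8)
The plan is to transpose to this setting the classical fact that a flat algebra $B$ over a ring $A$ is faithfully flat precisely when the quotient $B/A$ is flat over $A$. Write $C=\cofib(\underline{\Acal}\to B)$. Since $\underline{\Acal}$ is connective and $B$, being an animated $\underline{\Acal}$-algebra, is connective as an $\underline{\Acal}$-module, the cofibre $C$ is connective, i.e. $C\in\Dcal(\Acal)^{\leq 0}$; this is already half of the assertion ``$C$ is flat over $\Acal$'', so in both directions only the statement that $C\otimes_{\Acal}^{\Lbb}N$ is static for every static $N$ remains at issue. Throughout I would use the standard fact that if an object of $\Dcal(\Acal)$ is flat over $\Acal$, then tensoring with it is $t$-exact, so in particular it preserves $\Dcal(\Acal)^{\heart}$ and commutes with the functors $H^{j}$; over a static $\Acal$ this is just exactness of tensoring with a flat module, and it is part of the basic theory of flatness in general.

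For the implication ``$C$ flat $\Rightarrow$ $B$ faithfully flat'', I would first fix a static $N$ and consider the fibre sequence $N\to B\otimes_{\Acal}^{\Lbb}N\to C\otimes_{\Acal}^{\Lbb}N$ obtained by tensoring $\underline{\Acal}\to B\to C$ with $N$. Its outer terms are static, so the long exact sequence forces $B\otimes_{\Acal}^{\Lbb}N$ to be static; together with connectivity of $B$ this gives that $B$ is flat. For faithfulness, the same fibre sequence shows that the boundary term $H^{-1}(C\otimes_{\Acal}^{\Lbb}N)$ vanishes (as $C\otimes_{\Acal}^{\Lbb}N$ is static), so $N\to B\otimes_{\Acal}^{\Lbb}N$ is injective on $H^{0}$; in particular $B\otimes_{\Acal}^{\Lbb}N\neq 0$ whenever $N\neq 0$. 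Since $B$ is now known to be flat, $H^{j}(B\otimes_{\Acal}^{\Lbb}M)\cong H^{j}(M)\otimes_{\Acal}B$ for all $M$ and $j$, so $B\otimes_{\Acal}^{\Lbb}M=0$ forces $H^{j}(M)=0$ for every $j$, i.e. $M=0$; hence $B\otimes_{\Acal}^{\Lbb}-$ is conservative and $B$ is faithfully flat.

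For the converse, assume $B$ is faithfully flat; then $B$ is flat and $C$ is connective, so I only need $C\otimes_{\Acal}^{\Lbb}N$ to be static for static $N$. Now $C\otimes_{\Acal}^{\Lbb}N=\cofib(N\to B\otimes_{\Acal}^{\Lbb}N)$ is the cofibre of a morphism of static modules, so $H^{j}(C\otimes_{\Acal}^{\Lbb}N)=0$ for $j\neq -1,0$ automatically, and it suffices to show $H^{-1}(C\otimes_{\Acal}^{\Lbb}N)=\ker(N\to B\otimes_{\Acal}^{\Lbb}N)$ vanishes, i.e. that a faithfully flat map is universally injective. To see this I would tensor $N\to B\otimes_{\Acal}^{\Lbb}N$ with $B$ on the left: the resulting morphism $B\otimes_{\Acal}^{\Lbb}N\to B\otimes_{\Acal}^{\Lbb}B\otimes_{\Acal}^{\Lbb}N$ admits a retraction given by multiplication on the first two factors, hence is a split monomorphism, hence injective. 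Since $B$ is flat, $-\otimes_{\Acal}^{\Lbb}B$ is exact and commutes with kernels, so $\ker(N\to B\otimes_{\Acal}^{\Lbb}N)\otimes_{\Acal}^{\Lbb}B=0$, and conservativity of $B\otimes_{\Acal}^{\Lbb}-$ yields $\ker(N\to B\otimes_{\Acal}^{\Lbb}N)=0$, as needed.

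The part requiring the most care is the reduction of conservativity of $B\otimes_{\Acal}^{\Lbb}-$ on all of $\Dcal(\Acal)$ to its non-vanishing on nonzero static modules: this is precisely where $t$-exactness of tensoring with the flat object $B$ is used, and I would quote this rather than reprove it, since for the static analytic rings appearing in this paper flat objects are automatically static flat modules and the statement is classical.
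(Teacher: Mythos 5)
Your proof is correct and follows essentially the same route as the paper's: in both directions it is the cofiber sequence $N \to B\otimes^{\Lbb}_{\Acal} N \to C\otimes^{\Lbb}_{\Acal} N$ that drives the argument, conservativity is reduced to static objects using flatness of $B$, and the ``only if'' direction hinges on the retraction of $B\to B\otimes^{\Lbb}_{\Acal} B$. The only cosmetic differences are that you observe $C$ is connective rather than merely bounded above, and you spell out the $t$-exactness reduction of conservativity to the heart, which the paper leaves implicit.
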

\begin{proof}
    We write $B/\underline{\Acal}=\cofib(\underline{\Acal}\to B)$.
    First, we prove the if direction.
    We take a static object $N\in \Dcal(\Acal)^{\heart}$.
    Then we have a fiber sequence
    \begin{align}\label{eq4}
        N \to N\otimes_{\Acal}^{\Lbb} B \to N\otimes_{\Acal}^{\Lbb}B/\underline{\Acal}.
    \end{align}
    Since $N$ and $N\otimes_{\Acal}^{\Lbb}B/\underline{\Acal}$ are static, $N\otimes_{\Acal}^{\Lbb} B$ is also static.
    Therefore, $B$ is flat over $\Acal$.
    We prove faithfulness.
    Since $B$ is flat over $\Acal$, it is enough to show that for every static object $N\in \Dcal(\Acal)^{\heart}$, $N\otimes_{\Acal}^{\Lbb}B=0$ implies $N=0$.
    It follows from the fiber sequence (\ref{eq4}), since $H^{-1}(N\otimes_{\Acal}^{\Lbb}B/\underline{\Acal})=0$ for a static object $N\in \Dcal(\Acal)^{\heart}$.

    Next, we prove the only if direction.
    It is clear that $B/\underline{\Acal}$ is bounded above.
    We take a static object $N\in \Dcal(\Acal)^{\heart}$.
    From the fiber sequence (\ref{eq4}), it is enough to show that $N \to N\otimes_{\Acal}^{\Lbb} B$ is injective.
    Since $B$ is faithfully flat over $\Acal$, it reduces to showing that $N\otimes_{\Acal}^{\Lbb} B \to N\otimes_{\Acal}^{\Lbb} B\otimes_{\Acal}^{\Lbb} B$ is injective.
    It follows from the fact that $B\to B\otimes_{\Acal}^{\Lbb} B$ has a retraction.
\end{proof}

\begin{lemma}\label{lem:faithfully flat colimit}
    Let $\Acal_0\to \Acal_1 \to \cdots \to \Acal_n \to\cdots $ be a diagram of analytic animated rings such that the analytic animated ring structure on $\Acal_n$ is induced from $\Acal_0$ for $n\geq 0$.
    We write $\Acal_{\infty}=\varinjlim_n \Acal_n$.
    \begin{enumerate}
        \item Let $M$ be an object of $\Dcal(\Acal_{\infty})$.
        If $M$ is flat over $\Acal_n$ for all $n$, then $M$ is also flat over $\Acal_{\infty}$.
        \item Let $B$ be an animated $\Acal_{\infty}$-algebra.
        If $B$ is faithfully flat over $\Acal_n$ for all $n$, then $B$ is also faithfully flat over $\Acal_{\infty}$.
    \end{enumerate}
\end{lemma}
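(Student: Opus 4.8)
The plan is to reduce both assertions to the single statement that $-\otimes_{\Acal_{\infty}}^{\Lbb}-$ is a filtered colimit of the functors $-\otimes_{\Acal_n}^{\Lbb}-$. Because the analytic ring structure on each $\Acal_n$, hence on $\Acal_{\infty}$, is induced from $\Acal_0$, one has $\underline{\Acal_{\infty}}\simeq\varinjlim_n\underline{\Acal_n}$ in $\Dcal(\Acal_0)$ and $\Dcal(\Acal_{\infty})\simeq\Mod_{\underline{\Acal_{\infty}}}(\Dcal(\Acal_0))$. Substituting $\underline{\Acal_{\infty}}\simeq\varinjlim_n\underline{\Acal_n}$ into the two-sided bar resolution over $\Acal_0$ computing $M\otimes_{\Acal_{\infty}}^{\Lbb}N$, and using that finite tensor powers over $\Acal_0$ and geometric realizations commute with filtered colimits, I would obtain for all $M,N\in\Dcal(\Acal_{\infty})$ a natural equivalence
$$M\otimes_{\Acal_{\infty}}^{\Lbb}N\;\simeq\;\varinjlim_n\bigl(M\otimes_{\Acal_n}^{\Lbb}N\bigr),$$
where on the right $M$ and $N$ denote the restrictions along $\Acal_n\to\Acal_{\infty}$; these lie in $\Dcal(\Acal_n)$ because $\Acal_n$-, $\Acal_{\infty}$- and $\Acal_0$-completeness all coincide.

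Granting this, part~(1) follows quickly: restriction along $\Acal_n\to\Acal_{\infty}$ is $t$-exact, as it leaves the underlying condensed complex unchanged and preserves completeness; so for $N\in\Dcal(\Acal_{\infty})^{\heart}$ the restriction is static in $\Dcal(\Acal_n)$, hence $M\otimes_{\Acal_n}^{\Lbb}N$ is static by flatness of $M$ over $\Acal_n$; a filtered colimit of static objects is static, and $M$ is bounded above since it is flat over $\Acal_0$, so $M$ is flat over $\Acal_{\infty}$. For part~(2), by Lemma~\ref{lem:faithfully flat quotient} it suffices to show that $B/\underline{\Acal_{\infty}}\coloneqq\cofib(\underline{\Acal_{\infty}}\to B)$ is flat over $\Acal_{\infty}$, and by part~(1) it is enough to show this over each $\Acal_n$. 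Fix $n$. Since cofibers commute with filtered colimits, $B/\underline{\Acal_{\infty}}\simeq\varinjlim_{m\ge n}\bigl(B/\underline{\Acal_m}\bigr)$, and the terms $B/\underline{\Acal_m}$ lie in $\Dcal(\Acal_n)^{\le 0}$; as flatness over $\Acal_n$ is preserved by filtered colimits of such objects, I am reduced to showing each $B/\underline{\Acal_m}$ with $m\ge n$ is flat over $\Acal_n$. By Lemma~\ref{lem:faithfully flat quotient}, $B/\underline{\Acal_m}$ is flat over $\Acal_m$; and since $X\otimes_{\Acal_n}^{\Lbb}N\simeq X\otimes_{\Acal_m}^{\Lbb}\bigl(\underline{\Acal_m}\otimes_{\Acal_n}^{\Lbb}N\bigr)$ for an $\Acal_m$-module $X$, flatness over $\Acal_m$ transfers to flatness over $\Acal_n$ once $\underline{\Acal_m}$ is known to be flat over $\Acal_n$.

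Thus the crux is to prove that $\underline{\Acal_m}$ is flat over $\Acal_n$ for $m\ge n$, and here the faithfulness of $B$ enters through the following observation, valid for any analytic animated ring $\Acal$ and any faithfully flat $\Acal$-algebra $B$: if $X\in\Dcal(\Acal)$ is bounded above and $B\otimes_{\Acal}^{\Lbb}X$ is static, then $X$ is static. Indeed, flatness of $B$ gives $H^i\bigl(B\otimes_{\Acal}^{\Lbb}X\bigr)\simeq B\otimes_{\Acal}H^i(X)$ for all $i$ (a truncation d\'evissage, using that $B\otimes_{\Acal}^{\Lbb}(-)$ is exact on bounded complexes), whence $B\otimes_{\Acal}H^i(X)=0$ for $i\ne 0$ and therefore $H^i(X)=0$ for $i\ne 0$ by conservativity of $B\otimes_{\Acal}^{\Lbb}(-)$. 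Applying this with $\Acal=\Acal_m$ and $X=\underline{\Acal_m}\otimes_{\Acal_n}^{\Lbb}N$ for a static $N\in\Dcal(\Acal_n)^{\heart}$ — note that $X$ is bounded above and that $B\otimes_{\Acal_m}^{\Lbb}X\simeq B\otimes_{\Acal_n}^{\Lbb}N$ is static because $B$ is flat over $\Acal_n$ — shows that $X$ is static, i.e.\ $\underline{\Acal_m}$ is flat over $\Acal_n$. This closes the chain of reductions.

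I expect the main obstacle to lie in the foundational first step rather than in these reductions: one must verify, inside the condensed formalism, that $\underline{\Acal_{\infty}}=\varinjlim_n\underline{\Acal_n}$ with no completion correction, that filtered colimits of static (equivalently $\Acal_0$-complete) objects remain static, and that $\Dcal(\Acal_{\infty})\simeq\Mod_{\underline{\Acal_{\infty}}}(\Dcal(\Acal_0))$, so that the bar resolution and the interchange of colimits used above are legitimate. All of this should follow from the fact that every $\Acal_n$ carries the analytic structure induced from the fixed $\Zbb_{\bs}$-algebra $\Acal_0$, but the verification deserves care.
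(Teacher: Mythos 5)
Your argument is correct, but for part~(2) it takes a genuinely different and longer route than the paper. Part~(1) is the same in spirit: both rest on the filtered-colimit formula $N\otimes_{\Acal_{\infty}}^{\Lbb}M\simeq\varinjlim_n N\otimes_{\Acal_n}^{\Lbb}M$, which you justify via a bar resolution and the paper simply asserts. For part~(2), after the common reduction via Lemma~\ref{lem:faithfully flat quotient}, you invoke part~(1), write $B/\underline{\Acal_\infty}\simeq\varinjlim_{m\geq n}B/\underline{\Acal_m}$, and then reduce to showing each $B/\underline{\Acal_m}$ is flat over $\Acal_n$ for $m\geq n$; this forces you to prove the auxiliary fact that $\underline{\Acal_m}$ is flat over $\Acal_n$, which you extract from the simultaneous faithful flatness of $B$ over $\Acal_n$ and $\Acal_m$ via the observation that a faithfully flat algebra detects staticity of bounded-above complexes. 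The paper avoids all of this with a diagonalization: it expands $N\otimes_{\Acal_\infty}^{\Lbb}\underline{B}/\underline{\Acal_\infty}\simeq\varinjlim_n\varinjlim_{m\geq n}N\otimes_{\Acal_n}^{\Lbb}\underline{B}/\underline{\Acal_m}$, restricts to the cofinal diagonal $m=n$, and applies Lemma~\ref{lem:faithfully flat quotient} termwise to conclude each $N\otimes_{\Acal_n}^{\Lbb}\underline{B}/\underline{\Acal_n}$ is static — no transitivity of flatness, no flatness of the transition maps $\Acal_n\to\Acal_m$, and no descent of staticity are needed. What your route buys is the extra conclusion, worth recording as a corollary, that the transition maps $\Acal_n\to\Acal_m$ are automatically flat whenever such a faithfully flat $B$ exists; what it costs is roughly twice the length and an additional lemma. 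Your closing worry about the foundational colimit formula applies equally to both proofs and is resolved by the standing hypothesis that all analytic structures are induced from $\Acal_0$, so $\Dcal(\Acal_n)$ and $\Dcal(\Acal_\infty)$ are module categories over $\Dcal(\Acal_0)$ and the formula follows as you describe.
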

\begin{proof}
    For every static object $N\in \Dcal(\Acal)^{\heart}$, we have $N\otimes_{\Acal_{\infty}}^{\Lbb}M\simeq \varinjlim_{n} N\otimes_{\Acal_{n}}^{\Lbb}M$, which proves (1).
    We prove (2).
    By Lemma \ref{lem:faithfully flat quotient}, it is enough to show that for a static object $N\in \Dcal(\Acal_{\infty})^{\heart}$, $N\otimes_{\Acal_{\infty}}^{\Lbb} \underline{\Bcal}/\underline{\Acal_{\infty}}$ is static, where $\underline{\Bcal}/\underline{\Acal_{\infty}}=\cofib(\underline{\Acal_{\infty}}\to\underline{\Bcal})$.
    We have the following equivalences:
    \begin{align*}
        N\otimes_{\Acal_{\infty}}^{\Lbb} \underline{\Bcal}/\underline{\Acal_{\infty}}&\simeq \varinjlim_{n} N\otimes_{\Acal_{n}}^{\Lbb} \underline{\Bcal}/\underline{\Acal_{\infty}}\\
        &\simeq \varinjlim_{n} \varinjlim_{m\geq n} N\otimes_{\Acal_{n}}^{\Lbb} \underline{\Bcal}/\underline{\Acal_{m}}\\
        &\simeq \varinjlim_{n} N\otimes_{\Acal_{n}}^{\Lbb} \underline{\Bcal}/\underline{\Acal_{n}}.
    \end{align*}
    By Lemma \ref{lem:faithfully flat quotient}, $\underline{\Bcal}/\underline{\Acal_{n}}$ is flat over $\Acal_n$, so we find that $ N\otimes_{\Acal_{\infty}}^{\Lbb} \underline{\Bcal}/\underline{\Acal_{\infty}}$ is static.
\end{proof}

\begin{proposition}\label{prop:ff descent}
    Let $\Acal$ be an analytic animated ring, and $B$ be an animated faithfully flat $\Acal$-algebra. 
    Let $\Bcal$ denote the condensed animated ring $B$ with the analytic ring structure induced from $\Acal$.
    Then $\Acal\to\Bcal$ satisfies $\ast$-descent.
\end{proposition}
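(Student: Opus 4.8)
The plan is to realize $\ast$-descent along $f\colon\Acal\to\Bcal$ as comonadicity of the pullback functor and then to invoke the dual Barr--Beck--Lurie theorem, exactly as in the classical proof of faithfully flat descent for module $\infty$-categories; the only thing to check is that the condensed/analytic setting introduces nothing genuinely new here, which is guaranteed by the hypothesis that the analytic structure on $\Bcal$, hence on every term of the \v{C}ech nerve, is induced from $\Acal$.

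Concretely, let $f^{*}=\underline{\Bcal}\otimes_{\Acal}^{\Lbb}(-)\colon\Dcal(\Acal)\to\Dcal(\Bcal)$ be the pullback and $f_{*}\colon\Dcal(\Bcal)\to\Dcal(\Acal)$ the restriction of scalars. Because $\Bcal$ carries the analytic ring structure induced from $\Acal$, one has $\Dcal(\Bcal)=\Mod_{\underline{\Bcal}}(\Dcal(\Acal))$, and more generally $\Dcal(\Bcal_{n})=\Mod_{\underline{\Bcal_{n}}}(\Dcal(\Acal))$ with $\underline{\Bcal_{n}}\simeq\underline{\Bcal}^{\otimes_{\Acal}^{\Lbb}(n+1)}$; there is no discrepancy between the analytic and the underlying derived tensor product here, because these objects already lie in $\Dcal(\Acal)$, which is stable under the tensor product. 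Hence $f_{*}$ is the forgetful functor of a module $\infty$-category: it is conservative and preserves all limits and colimits, and the cosimplicial $\infty$-category $[n]\mapsto\Dcal(\Bcal_{n})$ satisfies the Beck--Chevalley condition automatically (the usual base-change compatibility of extension and restriction of scalars along the \v{C}ech nerve of a ring map). By the standard comonadic descent formalism (as in Lurie's \emph{Higher Algebra}), its limit is then identified with the $\infty$-category of comodules over the comonad $f^{*}f_{*}$ on $\Dcal(\Bcal)$, compatibly with the comparison functor out of $\Dcal(\Acal)$, so it remains to prove that $f^{*}$ is comonadic. By the dual Barr--Beck--Lurie theorem this amounts to two points: (i) $f^{*}$ is conservative, which is precisely the hypothesis that $\Bcal$ is faithfully flat over $\Acal$; and (ii) $f^{*}$ preserves totalizations of $f^{*}$-split cosimplicial objects. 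For (ii) I would run the usual argument: writing $\underline{\Bcal}/\underline{\Acal}=\cofib(\underline{\Acal}\to\underline{\Bcal})$, which is flat over $\Acal$ by Lemma \ref{lem:faithfully flat quotient}, the relevant totalizations are computed by cobar-type towers whose graded pieces are built from the tensor powers $(\underline{\Bcal}/\underline{\Acal})^{\otimes_{\Acal}^{\Lbb}k}$, and the flatness of these pieces is exactly what allows the left adjoint $f^{*}$ to commute with the totalization; Lemma \ref{lem:faithfully flat colimit} is available if one wants to first reduce to a more manageable $\Bcal$.

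I expect step (ii) to be the main obstacle: a left adjoint such as $f^{*}$ has no reason to commute with a totalization, so the flatness hypothesis (beyond mere faithfulness) is genuinely needed there, and the point requiring care is that the convergence of the cobar tower goes through verbatim in $\Dcal(\Acal)$ — this is exactly where the lemmas established above, notably Lemma \ref{lem:faithfully flat quotient}, are brought to bear. Everything else — the adjunction $f^{*}\dashv f_{*}$, the conservativity of $f^{*}$, and the Beck--Chevalley condition — is formal once one knows that the analytic structures in sight are all induced from $\Acal$.
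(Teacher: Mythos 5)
Your overall strategy --- $\ast$-descent via comonadicity of $f^{*}$ and the dual Barr--Beck--Lurie theorem --- is the correct one, and your observations that $\Dcal(\Bcal_n)$ is simply the $\infty$-category of modules over $\underline{\Bcal}^{\otimes_{\Acal}^{\Lbb}(n+1)}$ internal to $\Dcal(\Acal)$ and that the Beck--Chevalley condition therefore holds are exactly where the hypothesis that the analytic ring structure on $\Bcal$ is induced from $\Acal$ enters; this is also the point the paper flags when noting that the steadiness assumption in \cite[Theorem 2.8.18]{Mann22} may be dropped. The paper itself gives no details beyond that citation, so your sketch is in effect an attempt to unwind the argument the citation invokes.

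The gap is in step (ii). As a secondary point, the dual Barr--Beck--Lurie condition concerns \emph{all} $f^{*}$-split cosimplicial objects, not just the cobar resolutions, whereas your description of the graded pieces as powers of $\underline{\Bcal}/\underline{\Acal}$ is specific to the cobar resolutions (one can reorganize the argument to stay within cobar resolutions by proving fully faithfulness and essential surjectivity of the comparison functor directly, but that is not what you wrote). The main problem, though, is the assertion that flatness of $\underline{\Bcal}/\underline{\Acal}$ is ``exactly what allows the left adjoint $f^{*}$ to commute with the totalization.'' Flatness does two things: it places the successive fibers of the tower of partial totalizations $\Tot^{n}$ of the cobar resolution of $M$, which are shifts of $(\underline{\Bcal}/\underline{\Acal})^{\otimes_{\Acal}^{\Lbb} k}\otimes_{\Acal}^{\Lbb} M$, in progressively higher cohomological degree, and it makes the classical Amitsur complex in the heart of the $t$-structure acyclic by ordinary faithfully flat descent. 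But to deduce that $M\to\Tot$ is an equivalence one must still know that the tower converges, which requires a Postnikov-tower reduction to bounded modules, or left-completeness of the $t$-structure on $\Dcal(\Acal)$, or a comparable argument. Neither Lemma~\ref{lem:faithfully flat quotient} nor Lemma~\ref{lem:faithfully flat colimit} supplies that completeness or boundedness input; they give only the flatness input. This convergence step is precisely the substance of the cited proof in Mann's thesis and is what your sketch leaves unproved.
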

\begin{proof}
    It follows from the proof of \cite[Theorem 2.8.18(i)]{Mann22}.
    We note that the assumption used in loc. cit. that $\Acal\to\Bcal$ is steady is not necessary because the analytic ring structure of $\Bcal$ is induced from $\Acal$.
\end{proof}

\begin{theorem}\label{thm:descent of finite projective modules}
    Let $A$ be a Jacobson-Tate algebra, and $B$ be a (condensed) animated relatively discrete $A$-algebra.
    Let $C$ be an animated faithfully flat $B_{\square}$-algebra.
    For $M\in \Dcal(B_{\square})$, if $M\otimes_{B_{\square}}^{\Lbb}C_{\square}$ is a finite projective $C_{\square}$-module, then $M$ is also a finite projective $B_{\square}$-module.
\end{theorem}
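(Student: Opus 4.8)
The plan is to verify the intrinsic characterization of finite projective modules given in Theorem \ref{thm:finite projective non-Fredholm}, descending each of its two conditions along the faithfully flat morphism $B_{\bs}\to C_{\bs}$. By Proposition \ref{prop:ff descent} this morphism satisfies $\ast$-descent, so writing $\Ccal_{\bullet}$ for its \v{C}ech nerve we have a symmetric monoidal equivalence $\Dcal(B_{\bs})\overset{\sim}{\longrightarrow}\varprojlim_{[n]\in\Delta}\Dcal(\Ccal_n)$ under which $M$ corresponds to the compatible family $\{M\otimes_{B_{\bs}}^{\Lbb}\Ccal_n\}_n$; moreover, since $\Ccal_n$ is the $(n+1)$-fold tensor power of $C_{\bs}$ over $B_{\bs}$, each term $M\otimes_{B_{\bs}}^{\Lbb}\Ccal_n$ is the base change of the $C_{\bs}$-module $M\otimes_{B_{\bs}}^{\Lbb}C_{\bs}$ along the symmetric monoidal, colimit-preserving functor $\Dcal(C_{\bs})\to\Dcal(\Ccal_n)$.

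First I would handle dualizability. The object $M\otimes_{B_{\bs}}^{\Lbb}C_{\bs}$ is finite projective, hence dualizable, and dualizability is preserved by symmetric monoidal base change, so each $M\otimes_{B_{\bs}}^{\Lbb}\Ccal_n$ is dualizable. In a limit of symmetric monoidal $\infty$-categories along symmetric monoidal transition functors, an object is dualizable precisely when all of its images are dualizable, the dual being built termwise (the transition functors carry duals, units, and counits to the corresponding data). Hence $M$ is dualizable in $\Dcal(B_{\bs})$; in particular $M^{\vee}=R\intHom_{B}(M,B)$ is the genuine dual, is itself dualizable, and its formation commutes with base change, so $M^{\vee}\otimes_{B_{\bs}}^{\Lbb}C_{\bs}\simeq (M\otimes_{B_{\bs}}^{\Lbb}C_{\bs})^{\vee}$.

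Next I would descend the connectivity conditions. Since $C$ is an animated, hence connective, $B_{\bs}$-algebra which is flat, the functor $-\otimes_{B_{\bs}}^{\Lbb}C_{\bs}$ is $t$-exact, and as $C$ is moreover faithfully flat this functor is conservative; a short Postnikov-tower argument then shows it reflects connectivity, i.e. $N\in\Dcal(B_{\bs})^{\leq 0}$ if and only if $N\otimes_{B_{\bs}}^{\Lbb}C_{\bs}\in\Dcal(C_{\bs})^{\leq 0}$. Applying this with $N=M$ and with $N=M^{\vee}$, whose base changes along $B_{\bs}\to C_{\bs}$ are $M\otimes_{B_{\bs}}^{\Lbb}C_{\bs}$ and $(M\otimes_{B_{\bs}}^{\Lbb}C_{\bs})^{\vee}$ — both in $\Dcal(C_{\bs})^{\leq 0}$ because $M\otimes_{B_{\bs}}^{\Lbb}C_{\bs}$ is finite projective over $C_{\bs}$ — we conclude $M,M^{\vee}\in\Dcal(B_{\bs})^{\leq 0}$. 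Since the hypotheses on $A$ and $B$ are exactly those of Theorem \ref{thm:finite projective non-Fredholm}, that theorem now shows $M$ is a finite projective $B_{\bs}$-module. (Note one cannot descend ``finite projective'' more naively as ``retract of a free module'', since the splitting of $M\otimes_{B_{\bs}}^{\Lbb}C_{\bs}$ need not descend; the intrinsic criterion is what makes the argument go through.)

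The main obstacle is the descent of dualizability: one must check carefully that the $\ast$-descent equivalence is symmetric monoidal, that each \v{C}ech term is genuinely the base change of $C_{\bs}$ along a symmetric monoidal colimit-preserving functor (so that $M\otimes_{B_{\bs}}^{\Lbb}\Ccal_n$ really is dualizable there), and that ``dualizable in the limit $=$ termwise dualizable'' — formal, but worth stating precisely — together with the compatibility $M^{\vee}\otimes_{B_{\bs}}^{\Lbb}C_{\bs}\simeq(M\otimes_{B_{\bs}}^{\Lbb}C_{\bs})^{\vee}$, which uses that $M$ is already known to be dualizable. The $t$-exactness and reflection of connectivity for a faithfully flat animated algebra in the analytic setting is the secondary point requiring care, though it follows routinely from the definition of flatness above together with conservativity.
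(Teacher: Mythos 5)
Your proof follows the same route as the paper's. The paper's proof of the dualizability step is compressed into a single citation (it invokes Proposition~\ref{prop:ff descent} together with ``the same argument as in \cite[Proposition 6.12]{Mann22b}''), and what you write out — $\ast$-descent gives $\Dcal(B_{\bs})\simeq\varprojlim_{[n]\in\Delta}\Dcal(\Ccal_n)$, each \v{C}ech term of $M$ is dualizable as a base change of the finite projective $M\otimes_{B_{\bs}}^{\Lbb}C_{\bs}$, and dualizability can be detected termwise in a limit of symmetric monoidal $\infty$-categories — is precisely the content of that cited argument. The remaining steps (using faithful flatness of $C$ and $t$-exactness of $-\otimes_{B_{\bs}}^{\Lbb}C_{\bs}$ to descend the coconnectivity of $M$ and of $M^{\vee}$, then invoking Theorem~\ref{thm:finite projective non-Fredholm}) are exactly the paper's. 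The only thing you spend more words on than necessary is the ``limit of symmetric monoidal categories'' part, which the paper outsources; the content is the same.
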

\begin{proof}
    By Proposition \ref{prop:ff descent} and Proposition \ref{prop:descent finite projective Fredholm} (1), $M$ is dualizable over $B_{\square}$.
    Moreover $M$ belongs to $\Dcal(B_{\square})^{\leq 0}$ since $C$ is faithfully flat over $B_{\square}$ and $M\otimes_{B_{\square}}^{\Lbb}C\cong M\otimes_{B_{\square}}^{\Lbb}C_{\square}$ belongs to $\Dcal(C_{\square})^{\leq 0}$.
    By applying the same argument to $M^{\vee}$, we find that $M^{\vee}$ also belongs to $\Dcal(B_{\square})^{\leq 0}$.
    Therefore, the claim follows from Theorem \ref{thm:finite projective non-Fredholm}.
\end{proof}

We provide examples of (faithfully) flat modules.
\begin{example}\label{ex:flat module}
    \begin{enumerate}
        \item Let $V$ be a Banach $\Qbb_p$-module in the usual sense.
        Then the associated condensed $\Qbb_{p,\square}$-module $\underline{V}$ is flat over $\Qbb_{p,\square}$.
        It follows from \cite[Lemma 3.21]{RJRC22}.
        \item For every profinite set $S$, $\Qbb_{p,\square}[S]$ is flat over $\Qbb_{p,\square}$ by \cite[Lemma 3.21]{RJRC22}.
        In particular, for a static $\Qbb_{p,\square}$-algebra $A$, $$A_{\square}[S]= A[S]\otimes_{\Qbb_{p}}^{\Lbb} \Qbb_{p,\square}\cong A\otimes_{\Qbb_{p,\square}}^{\Lbb} \Qbb_{p,\square}[S]\cong A\otimes_{\Qbb_{p,\square}} \Qbb_{p,\square}[S]$$ is static, where the second isomorphism follows from the same argument as in the proof of \cite[Lemma 3.25]{And21}.
        \item Let $A$ be a noetherian Banach $\Qbb_p$-algebra.
        We regard $A$ as a condensed ring, and let $V$ be a relatively discrete (static) $A_{\square}$-module.
        Then $V$ is flat over $\Qbb_{p,\square}$.
        In fact, $V$ can be written as a filtered colimit of relatively discrete and finitely presented $A_{\square}$-modules.
        Since every relatively discrete and finitely presented $A_{\square}$-module is associated with a Banach $\Qbb_p$-module, it is flat over $\Qbb_{p,\square}$.
        \item Let $A$ be a Banach $\Qbb_p$-algebra and $I$ be a set.
        We take a ring of definition $A_0$ of $A$, and we define $\hat{\bigoplus_I}A$ as $\hat{\bigoplus_I}A=(\bigoplus_I A_0)^{\wedge}_p [1/p]$, where $(-)^{\wedge}_p$ stands for the $p$-adic completion.
        Then it is flat over $A_{\square}$.
        It follows from (1) and the isomorphism $\hat{\bigoplus_I}A \cong \left(\hat{\bigoplus_I}\Qbb_p\right)\otimes_{\Qbb_{p,\square}}^{\Lbb} A$, which can be proved by using \cite[Proposition 2.12.10]{Mann22} (cf.\ the proof of \cite[Proposition 2.14]{Mikami23}).
    \end{enumerate} 
\end{example}


\subsection{Continuous actions and group cohomology}
In this subsection, we explain ``continuous action'' and ``group cohomology'' in the context of condensed mathematics.
Let $G$ be a locally profinite group, and let $A$ be a (static) $\Qbb_{p,\square}$-algebra.
Then $A_{\square}[G]=\varinjlim_{S\subset G}A_{\square}[S]$, where the colimit is taken over all compact open subsets $S$ of $G$, has a natural (non-commutative) condensed ring structure defined from the group structure of $G$.
\begin{lemma}
    Let $M$ be a (static) $A_{\square}$-module.
    Then an $A_{\square}[G]$-module structure on $M$ is equivalent to an $A$-linear action $\underline{G}\times M \to M$ of the condensed group $\underline{G}$ associated to $G$ on the $A$-module $M$.
\end{lemma}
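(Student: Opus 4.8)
The plan is to recognise both sides as repackagings of a single datum through the universal property of the free--module functor $S\mapsto A_{\bs}[S]$ from condensed sets to $\Mod_{A_{\bs}}$. Recall that this functor is left adjoint to the forgetful functor, that the adjunction is internal --- for a condensed set $S$ and an $A_{\bs}$-module $P$ there is a natural isomorphism $\intHom_{A_{\bs}}(A_{\bs}[S],P)\cong\intHom(S,P)$, the right-hand side being the internal mapping object of condensed sets --- and that it is symmetric monoidal, i.e. $A_{\bs}[S\times T]\cong A_{\bs}[S]\otimes_{A_{\bs}}A_{\bs}[T]$; for profinite $S,T$ these reduce to the corresponding facts about $\Zbb_{\bs}$ by base change along $\Zbb_{\bs}\to A_{\bs}$, which I would cite from \cite{CM,And21}. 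Applying the internal adjunction together with the tensor--hom adjunction in $\Mod_{A_{\bs}}$ to a static $A_{\bs}$-module $M$ gives
$$\intHom_{A_{\bs}}\bigl(A_{\bs}[G]\otimes_{A_{\bs}}M,\,M\bigr)\;\cong\;\intHom_{A_{\bs}}\bigl(A_{\bs}[G],\,\Escr_M\bigr)\;\cong\;\intHom\bigl(\underline{G},\,\Escr_M\bigr),$$
where $\Escr_M\coloneqq\intHom_{A_{\bs}}(M,M)$ is the internal endomorphism condensed ring of $M$. Taking sections over $\ast$, an $A_{\bs}$-bilinear action map $\mu\colon A_{\bs}[G]\otimes_{A_{\bs}}M\to M$ corresponds exactly to a morphism of condensed sets $\rho\colon\underline{G}\to\Escr_M$, equivalently, by composing with the evaluation $\Escr_M\times M\to M$, to a morphism $a\colon\underline{G}\times M\to M$ of condensed sets whose adjoint $\underline{G}\to\intHom(M,M)$ factors through $\Escr_M$; the last condition is precisely the requirement that $a$ be $A$-linear in the $M$-variable.

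It then remains to match the multiplicative data. By construction the condensed ring structure on $A_{\bs}[G]$ is the image under $A_{\bs}[-]$ of the group structure of $\underline{G}$: the unit is $A_{\bs}[\{e\}\hookrightarrow\underline{G}]$ and, under $A_{\bs}[G]\otimes_{A_{\bs}}A_{\bs}[G]\cong A_{\bs}[\underline{G}\times\underline{G}]$, the multiplication is $A_{\bs}$ applied to $m\colon\underline{G}\times\underline{G}\to\underline{G}$. Chasing this through the naturality of the two adjunctions, one gets: $\mu$ is unital iff $\rho(e)=\id_M$ iff $a(e,-)=\id_M$; and $\mu$ is associative iff $\rho\circ m$ equals the composite $\underline{G}\times\underline{G}\xrightarrow{\rho\times\rho}\Escr_M\times\Escr_M\to\Escr_M$ iff $a(gh,m)=a(g,a(h,m))$ for all $g,h,m$. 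Hence $\mu$ is an $A_{\bs}[G]$-module structure on $M$ exactly when $\rho$ is a homomorphism of condensed monoids, exactly when $a$ is an action of the condensed monoid $\underline{G}$ on $M$ by $A$-linear endomorphisms; and since $\underline{G}$ is a group, any such $\rho$ automatically lands in the condensed group $\underline{\Aut}_{A_{\bs}}(M)=\Escr_M^{\times}$. The two assignments $\mu\mapsto a$ and $a\mapsto\mu$ are mutually inverse by the uniqueness half of the universal property of $A_{\bs}[G]$, and the same bookkeeping shows morphisms correspond, which upgrades the bijection to the equivalence asserted in the lemma.

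The argument is formal, so there is no real ``hard part'': everything rests on the two structural inputs recalled in the first paragraph --- that $A_{\bs}[G]$ is genuinely the free $A_{\bs}$-module on the \emph{condensed} set $\underline{G}$ with the functorially induced ring structure (this is what makes continuity automatic and is the crux of why the equivalence holds at all), and that $A_{\bs}[-]$ is symmetric monoidal with an internal left adjoint. I would state these at the outset, referring to \cite{CM,And21,Mann22} for $\Zbb_{\bs}$ and deducing the $A_{\bs}$-case by base change, and then carry out the diagram chase above; no delicate estimate or limit argument is needed here.
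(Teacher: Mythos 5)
Your proof is correct and takes essentially the same approach as the paper, which simply defers to \cite[Lemma 4.19]{RJRC22}; that reference establishes the analogous equivalence by exactly the formal argument you give, namely combining the internal adjunction $\intHom_{A_{\bs}}(A_{\bs}[S],-)\cong\intHom(S,-)$ with the symmetric monoidality $A_{\bs}[S]\otimes_{A_{\bs}}A_{\bs}[T]\cong A_{\bs}[S\times T]$ and then matching unit and associativity on both sides. One place where your writeup is a little loose and could be tightened if you were to turn this into a full proof: the step identifying $\intHom_{A_{\bs}}(M,M)$ inside $\intHom_{\Cond}(M,M)$ (so that "factors through $\Escr_M$" genuinely encodes $A$-linearity in the $M$-variable) is not quite a containment of condensed sets on the nose, but rather an identification of $\Hom_{A_{\bs}}(A_{\bs}[T]\otimes_{A_{\bs}}M,M)$ with the set of condensed maps $T\times M\to M$ that are $A$-linear pointwise in $T$; this is standard but worth stating explicitly since it is the precise content of the lemma. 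The remark that $\rho$ lands in $\Escr_M^{\times}$ because $\underline{G}$ is a group is correct but not needed for the equivalence.
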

\begin{proof}
    It can be proved in the same way as in \cite[Lemma 4.19]{RJRC22}.
\end{proof}

\begin{remark}
    We assume that $A$ is a Banach $\Qbb_p$-algebra.
    Let $M$ be a Banach $A$-module.
    Then a continuous $A$-linear $G$-action on $M$ in the usual sense (i.e., a continuous morphism $G\times M\to M$ satisfying usual conditions) is equivalent to an $A$-linear action $\underline{G}\times \underline{M} \to \underline{M}$ of the condensed group $\underline{G}$ on the condensed $\underline{A}$-module $\underline{M}$.
\end{remark}

\begin{definition}
    We regard $A_{\square}[G]$ as an $\Ebb_1$-algebra object in $\Dcal(\Qbb_{p,\square})$.
    We define $\Rep_{A_{\square}}(G)$ as the $\infty$-category of left $A_{\square}[G]$-module objects in $\Dcal(\Qbb_{p,\square})$.
    When $A=\Qbb_p$, then we simply write $\Rep_{\square}(G)$ for $\Rep_{\Qbb_{p,\square}}(G)$.
    Let $M$ be an object of $\Dcal(A_{\square})$.
    A \textit{continuous $A$-linear $G$-action} on $M$ is a lift of $M$ along the forgetful functor $\Rep_{A_{\square}}(G)\to \Dcal(A_{\square})$.
    We often refer to an object of $\Rep_{A_{\square}}(G)$ as an object of $\Dcal(A_{\square})$ with a continuous $A$-linear $G$-action.
    If there is no room for confusion, we often omit ``$A$-linear'' from our terminology.
\end{definition}

\begin{remark}
    The $\infty$-category $\Rep_{\square}(G)$ becomes a symmetric monoidal $\infty$-category by defining the tensor product of $V,W\in \Rep_{\square}(G)$ as $V\otimes_{\Qbb_{p,\square}}^{\Lbb}W$ with the diagonal $G$-action.
    Then, $A$ with the trivial $G$-action can be regarded as an $\Ebb_{\infty}$-algebra object in $\Rep_{\square}(G)$.
    Moreover, there is a natural equivalence of $\infty$-categories
    $$\Mod_{A}(\Rep_{\square}(G))\cong \Rep_{A_{\square}}(G),$$
    where $\Mod_{A}(\Rep_{\square}(G))$ is the $\infty$-category of $A$-module objects in $\Rep_{\square}(G)$.
\end{remark}

\begin{definition}[{\cite[Definition 5.1]{RJRC22}}]
    Let $M$ be an object of $\Dcal(A_{\square})$ with a continuous $G$-action.
    We define the \textit{continuous group cohomology} $R\Gamma(G,M)$ of $M$ as $R\Gamma(G,M)=R\intHom_{A_{\square}[G]}(A,M)\in\Dcal(A_{\square})$, where we regard $A$ as an object of $\Rep_{A_{\square}}(G)$ by the trivial action. 
    If $M$ is static, then we write 
    $$M^G=\Gamma(G,M)=H^0(R\Gamma(G,M))=\intHom_{A_{\square}[G]}(A,M)$$ 
    for the submodule of $G$-invariant elements.
\end{definition}

\begin{remark}
    We assume that $A$ is a Banach $\Qbb_p$-algebra.
    Let $M$ be a Banach $A$-module with a continuous $G$-action in the usual sense.
    By \cite[Lemma 5.2(1)]{RJRC22}, the condensed continuous group cohomology $H^i(R\Gamma(G,M))$ defined above is isomorphic to the condensed $A_{\square}$-module associated to the usual continuous cohomology $H^i_{\cont}(G,M)$ defined from the complex
    $$0\to M\to C(G,M) \to C(G^2,M)\to\cdots.$$
\end{remark}

\begin{proposition}\label{prop:compact nuclear group cohomology}
We assume that $G$ is a compact $p$-adic Lie group over $\Qbb_p$.
Let $M$ be an object of $\Dcal(A_{\square})$ with a continuous $G$-action.
If $M$ is compact (resp.\ nuclear) as an object of $\Dcal(A_{\square})$, then $R\Gamma(G,M)$ is also compact (resp.\ nuclear) as an object of $\Dcal(A_{\square})$.
\end{proposition}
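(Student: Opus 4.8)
The plan is to reduce the statement to the single assertion that the trivial module $A$ is a perfect complex of $A_{\bs}[G]$-modules; the rest is then formal. Granting that $A$ admits a bounded resolution $P_{\bullet}\overset{\sim}{\to}A$ by finite free $A_{\bs}[G]$-modules, one has $R\intHom_{A_{\bs}[G]}(A_{\bs}[G],M)\simeq M$ in $\Dcal(A_{\bs})$, and since $R\intHom_{A_{\bs}[G]}(-,M)$ is exact this gives
\[
R\Gamma(G,M)=R\intHom_{A_{\bs}[G]}(A,M)\simeq \Tot\!\big(R\intHom_{A_{\bs}[G]}(P_{\bullet},M)\big),
\]
a finite totalization of shifted finite direct sums of $M$. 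Hence $R\Gamma(G,M)$ lies in the thick subcategory of $\Dcal(A_{\bs})$ generated by $M$. As the compact objects of $\Dcal(A_{\bs})$ form a thick subcategory, and the nuclear objects of $\Dcal(A_{\bs})$ are closed under finite (co)limits, shifts and retracts (see \cite[\S 9]{CC}), it follows that $R\Gamma(G,M)$ is compact, resp. nuclear, whenever $M$ is.

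To establish that $A$ is a perfect $A_{\bs}[G]$-module I would first take $G$ to be a uniform pro-$p$ group of dimension $d$. Then the solid group algebra $\Zbb_{p,\bs}[G]$ is the Iwasawa algebra $\Zbb_{p}[[G]]$, which is noetherian and local and over which the trivial module $\Zbb_{p}$ has a finite free resolution of Koszul type (Lazard). Since $\Qbb_{p,\bs}[G]\cong\Zbb_{p}[[G]][1/p]$ is flat over $\Zbb_{p}[[G]]$ and, by Example \ref{ex:flat module}(2), $A_{\bs}[G]\cong A\otimes_{\Qbb_{p,\bs}}^{\Lbb}\Qbb_{p,\bs}[G]$ is a flat base change of $A$ along $\Qbb_{p,\bs}\to A$, applying $A\otimes_{\Qbb_{p,\bs}}^{\Lbb}(-)[1/p]$ to Lazard's resolution yields a finite free resolution of $A$ over $A_{\bs}[G]$. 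For an arbitrary compact $p$-adic Lie group $G$ I would choose an open normal subgroup $N\trianglelefteq G$ that is a uniform pro-$p$ group; such an $N$ exists by the structure theory of $p$-adic Lie groups. The coset decomposition $G=\bigsqcup_{i}g_{i}N$ exhibits $A_{\bs}[G]$ as a finite free left $A_{\bs}[N]$-module, whence $A_{\bs}[G]\otimes_{A_{\bs}[N]}^{\Lbb}A\cong A_{\bs}[G]/(n-1:n\in N)\cong A_{\bs}[G/N]$ is perfect over $A_{\bs}[G]$ (base change of a perfect $A_{\bs}[N]$-module along $A_{\bs}[N]\to A_{\bs}[G]$, using the uniform case). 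Because $G/N$ is finite and $|G/N|$ is invertible in $A$ (as $A$ is a $\Qbb_{p,\bs}$-algebra), the averaging idempotent $e=|G/N|^{-1}\sum_{\bar g\in G/N}\bar g$ realizes the trivial module as the direct summand $A_{\bs}[G/N]\cdot e$, so $A$ is perfect over $A_{\bs}[G/N]$; composing the two ring maps, $A$ is perfect over $A_{\bs}[G]$. (Alternatively one could appeal to the fact that $\Qbb_{p}[[G]]=\Zbb_{p}[[G]][1/p]$ has finite global dimension for every compact $p$-adic Lie group $G$.)

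The main obstacle is the perfectness of the trivial module, i.e. the production of the finite free resolution of $A$ over $A_{\bs}[G]$. The underlying inputs are classical --- finiteness of the cohomological dimension of torsion-free $p$-adic Lie groups, Lazard's Koszul resolution, invertibility of the order of a finite quotient --- but they have to be carried through the solid formalism: one must check the identifications $\Zbb_{p,\bs}[G]=\Zbb_{p}[[G]]$ and $A_{\bs}[G]\simeq A\otimes_{\Qbb_{p,\bs}}^{\Lbb}\Zbb_{p}[[G]][1/p]$, verify that the relevant base changes are flat so that Lazard's resolution stays a resolution after $\otimes_{\Qbb_{p,\bs}}^{\Lbb}A$ and inverting $p$, and confirm that the coset decomposition $A_{\bs}[G]=\bigoplus_{i}g_{i}A_{\bs}[N]$ behaves as in the finite case for a profinite $G$. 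Once $A$ is known to be a perfect $A_{\bs}[G]$-module, the first paragraph finishes the proof.
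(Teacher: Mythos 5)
Your argument is correct and is essentially the same as the paper's: both hinge on the Lazard--Serre finite free resolution for a uniform pro-$p$ subgroup and the averaging idempotent for the finite quotient, combined with the closure of compact/nuclear objects under finite (co)limits, shifts and retracts. The packaging differs slightly: the paper splits the cohomology functor as $R\Gamma(G,M)\simeq R\Gamma(G/G_0,R\Gamma(G_0,M))$ and handles each stage directly, whereas you split the ring morphism $A_{\bs}[G]\to A$ through $A_{\bs}[G/N]$ and phrase everything as perfectness of the trivial module. For the uniform case the paper simply cites the solid Lazard--Serre resolution (\cite[Theorem 5.7]{RJRC22}), which already lives in $\Dcal(A_{\bs}[G])$; you instead re-derive it from the classical Iwasawa--algebra resolution plus flat base change, which works but adds the verification burden you flag in your last paragraph (the identification $\Zbb_{p,\bs}[G]\cong\Zbb_p[[G]]$, flatness of $\Zbb_p[[G]]\to A_{\bs}[G]$, and the solid coset decomposition); citing the existing solid version short-circuits all of that.
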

\begin{proof}
    We note that $G$ has an open normal uniform pro-$p$ subgroup $G_0$ and that $R\Gamma(G,M)$ is equivalent to $R\Gamma(G/G_0,R\Gamma(G_0,M))$.
    Therefore we may assume that $G$ is a uniform pro-$p$ group or a finite group.
    First, we assume that $G$ is a uniform pro-$p$ group of dimension $d$.
    Then we have the Lazard-Serre projective resolution 
    $$0\to A_{\square}[G]^{\binom{d}{d}} \to \cdots \to A_{\square}[G]^{\binom{d}{i}}\to \cdots \to A_{\square}[G]^{\binom{d}{0}}\to 0$$
    of $A$ with the trivial $G$-action by \cite[Theorem 5.7]{RJRC22}.
    Therefore, $R\Gamma(G,M)$ can be written as a finite limit of $M$.
    Since compact objects (resp.\ nuclear objects) of $\Dcal(A_{\square})$ are stable under finite limits, it proves the proposition.

    Next, we assume that $G$ is a finite group.
    Then $\Qbb_p$ with the trivial $G$-action is a direct summand of $\Qbb_{p,\square}[G]$.
    Therefore, $A$ with the trivial $G$-action is a direct summand of $A_{\square}[G]$, so $R\Gamma(G,M)$ is a direct summand of $M$.
    Since compact objects (resp.\ nuclear objects) of $\Dcal(A_{\square})$ are stable under direct summands, it proves the proposition.
\end{proof}

\begin{lemma}\label{lem:fixed part and tensor products}
    We assume that $G$ is a compact $p$-adic Lie group over $\Qbb_p$.
    Let $V$ be an object of $\Dcal(A_{\square})$ with a continuous $G$-action, and $W$ be an object of $\Dcal(A_{\square})$.
    The (condensed) group $G$ acts on $V\otimes_{A_{\square}}^{\Lbb}W$ by $g\otimes \id$ for $g\in G$.
    Then we have a natural equivalence 
    $$R\Gamma(G,V\otimes_{A_{\square}}^{\Lbb}W)\simeq R\Gamma(G,V)\otimes_{A_{\square}}^{\Lbb}W.$$
    In particular, if $V$ is static and $W$ is static and flat over $A_{\square}$, then we have a natural isomorphism
    $$\Gamma(G,V\otimes_{A_{\square}}W)\cong \Gamma(G,V)\otimes_{A_{\square}}W.$$
\end{lemma}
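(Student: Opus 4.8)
The plan is to realize $R\Gamma(G,-)=R\intHom_{A_{\bs}[G]}(A,-)$ as a functor corepresented by a \emph{perfect} $A_{\bs}[G]$-module, and then to conclude by a projection formula. Concretely, I would first prove that the trivial module $A\in\Dcal_{\bs}(A_{\bs}[G])$ is a perfect $A_{\bs}[G]$-module. As in the proof of Proposition \ref{prop:compact nuclear group cohomology}, choose an open normal uniform pro-$p$ subgroup $G_0\subseteq G$. If $G$ itself is uniform pro-$p$ of dimension $d$, the Lazard--Serre resolution of \cite[Theorem 5.7]{RJRC22} is a finite resolution of $A$ by finite free $A_{\bs}[G]$-modules, so $A$ is perfect. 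In general $A_{\bs}[G]$ is finite free over $A_{\bs}[G_0]$, so tensoring the Lazard--Serre resolution for $G_0$ up along $A_{\bs}[G_0]\to A_{\bs}[G]$ exhibits $A_{\bs}[G]\otimes_{A_{\bs}[G_0]}A\cong A_{\bs}[G/G_0]$ (with $G$ acting by left translation) as a perfect $A_{\bs}[G]$-module; and since $\Qbb_p[G/G_0]$ is semisimple (Maschke, characteristic $0$), one has $A_{\bs}[G/G_0]\cong A\oplus(\text{rest})$ as $A_{\bs}[G]$-modules, so the trivial module $A$ is a retract of a perfect module, hence perfect.

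Next I would establish the projection formula. For $P\in\Dcal_{\bs}(A_{\bs}[G])$ and $M\in\Dcal_{\bs}(A_{\bs}[G])$ there is a natural map
$$R\intHom_{A_{\bs}[G]}(P,M)\otimes_{A_{\bs}}^{\Lbb}W\longrightarrow R\intHom_{A_{\bs}[G]}\bigl(P,M\otimes_{A_{\bs}}^{\Lbb}W\bigr),$$
where $M\otimes_{A_{\bs}}^{\Lbb}W$ carries the $A_{\bs}[G]$-structure coming from $M$ (equivalently, from the action $g\otimes\id$). Both sides are exact contravariant functors of $P$ (using that $-\otimes_{A_{\bs}}^{\Lbb}W$ is exact and that in a stable category finite limits agree with finite colimits), and the map is an equivalence for $P=A_{\bs}[G]$, both sides then being $M\otimes_{A_{\bs}}^{\Lbb}W$. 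Since $A$ is built from $A_{\bs}[G]$ by finitely many shifts, finite colimits and retracts by the previous step, the map is an equivalence for $P=A$. Taking $M=V$ gives the desired equivalence $R\Gamma(G,V\otimes_{A_{\bs}}^{\Lbb}W)\simeq R\Gamma(G,V)\otimes_{A_{\bs}}^{\Lbb}W$, and it is visibly natural.

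For the final ``in particular'' assertion: if $V$ is static and $W$ is static and flat over $A_{\bs}$, then $V\otimes_{A_{\bs}}^{\Lbb}W=V\otimes_{A_{\bs}}W$ is static, and $R\Gamma(G,V)$ is a bounded coconnective complex (compact $p$-adic Lie groups have finite cohomological dimension). Applying the exact functor $-\otimes_{A_{\bs}}^{\Lbb}W$ to the truncation triangle $\Gamma(G,V)\to R\Gamma(G,V)\to\tau^{\geq 1}R\Gamma(G,V)$ and using flatness of $W$ (so the last term stays in degrees $\geq 1$), taking $H^0$ identifies $H^0\bigl(R\Gamma(G,V)\otimes_{A_{\bs}}^{\Lbb}W\bigr)$ with $\Gamma(G,V)\otimes_{A_{\bs}}W$; comparing with $H^0$ of the equivalence of the previous step yields $\Gamma(G,V\otimes_{A_{\bs}}W)\cong\Gamma(G,V)\otimes_{A_{\bs}}W$.

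I expect the only genuine work to be the perfectness statement for an arbitrary compact $p$-adic Lie group $G$ — i.e. the bookkeeping that transports the Lazard--Serre resolution along $A_{\bs}[G_0]\to A_{\bs}[G]$ and then splits off the trivial summand. The projection formula is then formal, and the static consequence is a routine $t$-structure argument.
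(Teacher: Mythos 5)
Your proof is correct and rests on the same two ingredients as the paper's (the Lazard--Serre resolution for a uniform pro-$p$ open normal subgroup $G_0$, and the Maschke/averaging splitting of the trivial representation for the finite quotient $G/G_0$); you merely package these into the single statement that the trivial $A_{\bs}[G]$-module $A$ is perfect and then invoke a projection formula, whereas the paper reduces via $R\Gamma(G,-)\simeq R\Gamma(G/G_0,R\Gamma(G_0,-))$ and checks tensor compatibility separately in each layer. The two organizations are equivalent in substance, so this is essentially the paper's argument.
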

\begin{proof}
    For the same reason as in Proposition \ref{prop:compact nuclear group cohomology}, we may assume that $G$ is a uniform pro-$p$ group or a finite group.
    If $G$ is a uniform pro-$p$ group, then the claim easily follows from the Lazard-Serre resolution \cite[Theorem 5.7]{RJRC22}.  
    If $G$ is a finite group, then the claim follows from the fact that $A$ with the trivial $G$-action is a direct summand of $A_{\square}[G]$
\end{proof}

We also explain semilinear $G$-actions in the condensed setting.
We endow $A$ with an action $\rho \colon G\times A \to A$ of the condensed group $G$ by $\Qbb_{p,\square}$-algebra automorphisms.
Then we define the twisted group ring structure on $A_{\square}[G]$, and let $A_{\square}^t[G]$ denote this ring to distinguish it from the usual group ring $A_{\square}[G]$.
Roughly speaking, the twisted group ring structure is given by $ga=\rho(g, a)g$ for $g\in G$ and $a\in A$.

\begin{lemma}\label{lem:semilinear action and twisted group ring}
    Let $M$ be a (static) $A_{\square}$-module.
    Then an $A_{\square}^t[G]$-module structure on $M$ is equivalent to an $A$-semilinear action $\underline{G}\times M \to M$ of the condensed group $G$ on the $A$-module $M$.
\end{lemma}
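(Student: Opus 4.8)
The plan is to unwind both sides of the claimed equivalence and match the data, following the untwisted case \cite[Lemma 4.19]{RJRC22} with the semilinearity datum $\rho$ inserted in the appropriate places. The first step is to record the structure of $A_{\bs}^t[G]$. As a \emph{left} $A_{\bs}$-module it is simply $A_{\bs}[G]=A_{\bs}\otimes_{\Zbb_{\bs}}\Zbb_{\bs}[G]$, and its (non-commutative) ring structure is the unique solid-continuous multiplication whose restriction to generators is given by $[g]\cdot a=\rho(g,a)\cdot[g]$ and $[g]\cdot[h]=[gh]$ for $g,h\in G$ and $a\in A$. Existence and uniqueness of such a multiplication hold because $\Zbb_{\bs}[\underline{G}]$ is the free solid $\Zbb_{\bs}$-module on the condensed set $\underline{G}$, so that maps out of the relevant solid tensor products are determined by, and may be freely prescribed on, their restrictions to products of copies of $\underline{G}$ and $\underline{A}$; the ring axioms then reduce to identities on generators, which follow from the action identities satisfied by $\rho$. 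Restriction of scalars along $A_{\bs}\hookrightarrow A_{\bs}^t[G]$ shows that in either description the underlying $A_{\bs}$-module $M$ is unchanged, so only the $G$-part of the structure varies.

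Next I would identify this remaining datum. A left $A_{\bs}^t[G]$-module structure on $M$ lifting its $A_{\bs}$-module structure is the same as the ``$\Zbb_{\bs}[G]$-part'' $\Zbb_{\bs}[G]\otimes_{\Zbb_{\bs}}M\to M$ of the action map $A_{\bs}^t[G]\otimes_{\Zbb_{\bs}}M\to M$. Since $\Zbb_{\bs}[\underline{G}]$ is the free solid module on $\underline{G}$, by the tensor--Hom adjunction this is exactly a morphism of condensed sets $\underline{G}\to\underline{\End}_{\Zbb}(M)$, i.e.\ a condensed action of $\underline{G}$ on $M$ by additive endomorphisms (note: not $A_{\bs}$-linear ones). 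Imposing the module axioms: the relations $[g][h]=[gh]$ and $[e]=1$ say that this morphism is a homomorphism of condensed monoids, hence lands in $\underline{\Aut}(M)$ and defines an action of the condensed group $\underline{G}$ on $M$; and the twisting relation $[g]\cdot a=\rho(g,a)\cdot[g]$, evaluated against $M$, becomes the identity $g\cdot(am)=\rho(g,a)\,(g\cdot m)$ for $g\in G$, $a\in A$, $m\in M$, which is precisely $A$-semilinearity of the action $\underline{G}\times M\to M$ with respect to $\rho$. Conversely, starting from an $A$-semilinear action of $\underline{G}$ on $M$, the same universal property produces an action map $A_{\bs}^t[G]\otimes_{\Zbb_{\bs}}M\to M$, and associativity and unitality are again checked on generators and follow from the two displayed relations; the two constructions are visibly mutually inverse.

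The only genuinely technical point---and the part I expect to be the (mild) obstacle---is the passage back and forth between maps defined on the condensed sets $\underline{G}$, $\underline{G}\times\underline{G}$ and solid-continuous maps on $A_{\bs}^t[G]$ and $A_{\bs}^t[G]\otimes_{\Zbb_{\bs}}M$, together with the assertion that the algebra and module identities may be tested after restriction to these generating subsets. This is exactly the formal input already used in \cite[Lemma 4.19]{RJRC22}, and it rests only on $\Zbb_{\bs}[\underline{G}]$ being the free solid $\Zbb_{\bs}$-module on $\underline{G}$, on $M$ being solid over $A_{\bs}$, and on solid tensor products commuting with colimits; no new idea is required beyond carrying the twist $\rho$ through the bookkeeping.
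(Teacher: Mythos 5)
Your proposal is correct and follows essentially the same route as the paper, whose proof is simply the reference to \cite[Lemma 4.19]{RJRC22}; you have unwound the details of that argument while carrying the twist $\rho$ through the generators-and-relations bookkeeping, exactly as the paper intends.
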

\begin{proof}
    It can be proved in the same way as in \cite[Lemma 4.19]{RJRC22}.
\end{proof}

\begin{definition}
    We regard $A_{\square}^t[G]$ as an $\Ebb_1$-algebra object in $\Dcal(\Qbb_{p,\square})$.
    We define $\Rep_{A_{\square}^t}(G)$ as the $\infty$-category of left $A_{\square}^t[G]$-module objects in $\Dcal(\Qbb_{p,\square})$.
    Let $M$ be an object of $\Dcal(A_{\square})$.
    A continuous $A$-semilinear $G$-action on $M$ is a lift of $M$ along the forgetful functor $\Rep_{A_{\square}^t}(G)\to \Dcal(A_{\square})$.
    We often refer to an object $\Rep_{A_{\square}^t}(G)$ as an object of $\Dcal(A_{\square})$ with a continuous $A$-semilinear $G$-action.
\end{definition}

\begin{remark}
    We can regard $A$ as an $\Ebb_{\infty}$-algebra object in $\Rep_{\square}(G)$.
    Then the $\infty$-category $\Rep_{A_{\square}^t}(G)$ is equivalent to the $\infty$-category $\Mod_A(\Rep_{\square}(G))$ of $A$-module objects in $\Rep_{\square}(G)$.
    Let us prove this.
    Let $F_1\colon \Dcal(\Qbb_{p,\square})\to \Rep_{\square}(G)$ be the scalar extension functor along $\Qbb_p\to \Qbb_{p,\square}[G]$, and $F_2\colon \Rep_{\square}(G)\to \Mod_A(\Rep_{\square}(G))$ be the scalar extension functor along the morphism $\Qbb_p\to A$ of $\Ebb_{\infty}$-algebras in $\Rep_{\square}(G)$.
    Then, there is a $\Dcal(\Qbb_{p,\square})$-linear adjunction
    $$F=F_2\circ F_1\colon \Dcal(\Qbb_{p,\square})\rightleftarrows \Mod_A(\Rep_{\square}(G))\colon G,$$
    where $G$ is the forgetful functor.
    Since $G$ is conservative and preserves all small colimits, this adjunction is monadic by the Barr-Beck-Lurie theorem \cite[Theorem 4.7.3.5]{HA}.
    By an explicit calculation, the associated $\Ebb_1$-algebra object $G\circ F(\Qbb_p)$ in $\Dcal(\Qbb_{p,\square})$ is isomorphic to $A^t_{\square}[G]$.
    Since both functors $F$ and $G$ are $\Dcal(\Qbb_{p,\square})$-linear, the associated monad $G\circ F$ is given as $A^t_{\square}[G]\otimes_{\Qbb_{p,\square}}^{\Lbb}-\colon \Dcal(\Qbb_{p,\square})\to \Dcal(\Qbb_{p,\square})$ by \cite[Lemma A.4.7]{Mann22}.
    Therefore, we get an equivalence of $\infty$-categories $\Mod_A(\Rep_{\square}(G))\simeq \Rep_{A_{\square}^t}(G)$.
\end{remark}

From now on, we assume that $A$ is nuclear as a $\Qbb_{p,\square}$-module and that $G$ is compact.
Then the continuous $G$-action $\Qbb_{p,\square}[G]\otimes_{\Qbb_{p,\square}} A \to A$ corresponds to $$\rho\colon A \to C(G,A)\cong C(G,\Qbb_{p})\otimes_{\Qbb_{p,\square}}A\cong C(G,\Qbb_{p})\otimes_{\Qbb_{p,\square}}^{\Lbb}A$$ by adjunction, where we note that $C(G,\Qbb_{p})$ is flat over $\Qbb_{p,\square}$ (Example \ref{ex:flat module}), and it gives an action of analytic spaces in the sense of Clausen-Scholze
$$\AnSpec(C(G,\Qbb_p)_{\square})\times \AnSpec A_{\square} \to \AnSpec A_{\square}.$$
Then we get a solid $\Dcal$-stack $[\AnSpec A_{\square}/\AnSpec(C(G,\Qbb_p)_{\square})]$ (see \cite[Definition 3.2.7]{RC24} for the definition of solid $\Dcal$-stacks). 
The symmetric monoidal $\infty$-category $\Dcal([\AnSpec A_{\square}/\AnSpec(C(G,\Qbb_p)_{\square})])$ is described as
\begin{align*}
    \Dcal([\AnSpec A_{\square}/\AnSpec(C(G,\Qbb_p)_{\square})])\cong \varprojlim_{[n]\in \Delta}\Dcal(\AnSpec C(G^n,\Qbb_p)_{\square}\times \AnSpec A_{\square}),
\end{align*}
where $G^n$ is the $n$-fold product of $G$. 
This $\infty$-category is not necessarily equivalent to $\Rep_{A_{\square}^t}(G)$.
It is observed from the fact that for an object $M\in \Dcal(\Qbb_{p,\square})$, the natural morphism $$C(G,\Qbb_{p})\otimes_{\Qbb_{p,\square}}^{\Lbb}M \to R\intHom_{\Qbb_p}(\Qbb_{p,\square}[G],M)$$ is not necessarily an equivalence.
Therefore, we expect that there is an equivalence of $\infty$-categories 
$$\Dcal([\AnSpec A_{\square}/\AnSpec(C(G,\Qbb_p)_{\square})])^{\nuc}\simeq \Rep_{A_{\square}^t}(G)^{\nuc},$$
where $(-)^{\nuc}$ stands for the subcategory consisting of objects that are nuclear as objects of $\Dcal(A_{\square})$ (equivalently, nuclear as objects of $\Dcal(\Qbb_{p,\square})$ (\cite[Lemma 2.9]{Mikami23})).
For our purpose, it is enough to consider static modules, so we prove the above equivalence for the subcategories of static objects and compare their cohomology.

\begin{lemma}\label{lem:stacky definition}
    Let $\Rep_{A_{\square}^t}(G)^{\nuc,\heart}$ denote the subcategory of $\Rep_{A_{\square}^t}(G)$ consisting of nuclear and static objects, and $\Dcal([\AnSpec A_{\square}/\AnSpec(C(G,\Qbb_p)_{\square})])^{\nuc,\heart}$ denote the subcategory of $\Dcal([\AnSpec A_{\square}/\AnSpec(C(G,\Qbb_p)_{\square})])$ consisting of objects which are static and nuclear as objects of $\Dcal(A_{\square})$.
    Then we have an equivalence of symmetric monoidal categories $$\Rep_{A_{\square}^t}(G)^{\nuc,\heart} \simeq \Dcal([\AnSpec A_{\square}/\AnSpec(C(G,\Qbb_p)_{\square})])^{\nuc,\heart}.$$
\end{lemma}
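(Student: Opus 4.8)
The plan is to unwind both sides of the claimed equivalence into purely module-theoretic data and match them. On the one hand, by Lemma~\ref{lem:semilinear action and twisted group ring}, an object of $\Dcal_{\bs}(A^t_{\bs}[G])^{\nuc,\heart}$ is a static nuclear $A_{\bs}$-module $M$ together with an $A$-semilinear continuous $\underline{G}$-action $\underline{G}\times M\to M$. On the other hand, an object of $\Dcal([\AnSpec A_{\bs}/\AnSpec(C(G,\Qbb_p)_{\bs})])^{\nuc,\heart}$ is, by the bar/\v{C}ech description of quasi-coherent sheaves on a quotient stack, a static nuclear $A_{\bs}$-module $M$ equipped with a coaction $\rho_M\colon M\to M\otimes_{A_{\bs}}^{\Lbb}C(G,\Qbb_p)_A$ satisfying the usual coassociativity and counit identities (here $C(G,\Qbb_p)_A=C(G,\Qbb_p)\otimes_{\Qbb_{p,\bs}}^{\Lbb}A$, and I must use the semilinear structure coming from $\rho$ on $A$ to make this a comodule over the relevant Hopf-algebroid-type object, not a plain Hopf algebra). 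So the content of the lemma is: for static nuclear $M$, an $A$-semilinear continuous $G$-action is the same as such a $C(G,\Qbb_p)_A$-comodule structure.

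\textbf{Key steps.} First I would reduce everything to the underlying $\Qbb_{p,\bs}$-modules and use the nuclearity hypothesis on $A$ (assumed from ``From now on'') together with the flatness of $C(G,\Qbb_p)$ over $\Qbb_{p,\bs}$ (Example~\ref{ex:flat module}) to identify $M\otimes_{A_{\bs}}^{\Lbb}C(G,\Qbb_p)_A\simeq M\otimes_{A_{\bs}}C(G,\Qbb_p)_A$ and, crucially, to identify this with $C(G,M)=R\intHom_{\Qbb_p}(\Qbb_{p,\bs}[G],M)$ when $M$ is nuclear over $\Qbb_{p,\bs}$ --- this is exactly the place where nuclearity of $M$ is used, since for general $M$ the comparison map $C(G,\Qbb_p)\otimes^{\Lbb}M\to R\intHom_{\Qbb_p}(\Qbb_{p,\bs}[G],M)$ is not an isomorphism (as remarked in the paragraph before Lemma~\ref{lem:stacky definition}). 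Second, under this identification a coaction $M\to C(G,M)$ is, by the tensor--hom adjunction, the same as a map $\Qbb_{p,\bs}[G]\otimes_{\Qbb_{p,\bs}}M\to M$, i.e.\ exactly an action of the condensed group $\underline{G}$; I would then check that coassociativity of $\rho_M$ translates into the action axiom and that the twist by $\rho$ on $A$ in the stack picture matches the semilinearity condition in $A^t_{\bs}[G]$. Third, I would verify this correspondence is functorial and compatible with morphisms in both directions, giving an equivalence of categories; nuclearity is preserved on both sides by construction, so the equivalence restricts correctly.

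\textbf{Main obstacle.} The delicate point is the comparison $M\otimes_{A_{\bs}}C(G,\Qbb_p)_A\simeq C(G,M)$ and, more precisely, establishing it at the level of the \emph{cosimplicial/bar} diagrams so that not just objects but the whole comonad (hence comodule categories) match up --- one needs that $C(G^{\bullet+1},\Qbb_p)$ stays flat and that tensoring a nuclear $M$ against it commutes with the relevant limits defining $R\intHom_{\Qbb_p}(\Qbb_{p,\bs}[G^{\bullet+1}],M)$. For static $M$ and profinite $G$ this holds because $\Qbb_{p,\bs}[G]$ is a projective $\Qbb_{p,\bs}$-module (so $C(G,M)$ is static, as recorded in the Convention on $C(S,M)$) and nuclearity lets one pass $M\otimes(-)$ through the relevant product/limit; but assembling these compatibilities carefully --- rather than just on $\pi_0$ --- is where the real work lies. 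Everything else (the adjunction rephrasing, the axiom-matching, functoriality) is formal once this identification is in hand, so I would state it as a lemma and then the equivalence follows by comparing Eilenberg--Moore categories for the two (now identified) comonads.
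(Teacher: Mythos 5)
Your plan is essentially the paper's argument; the resolution of your flagged obstacle is that flatness of each $A_{\bs}\to C(G^n,A)_{\bs}$ places the static objects on the stack into a $\Delta$-indexed limit of the $1$-categories $\Mod_{C(G^n,A)_{\bs}}$, and \cite[Proposition 4.3.5]{DAG8} then reduces that limit to $\Delta_{\leq 2}$. This immediately presents the data as a single map $\rho_M\colon M\to C(G,M)$ (via the nuclear identification you describe) satisfying exactly three explicit conditions, which the tensor--hom adjunction matches to the semilinear-action axioms, so no comparison of $\infty$-categorical comonads or Eilenberg--Moore categories is actually required.
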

\begin{proof}
To simplify the notation, we write 
$$\Dcal(A_{\square}/\underline{G})=\Dcal([\AnSpec A_{\square}/\AnSpec(C(G,\Qbb_p)_{\square})]).$$
The $G$-action on $A$ gives a cosimplicial ring $\{C(G^n,A)\}_{[n]\in \Delta}$.
For a morphism $[0]\to [n]$ in $\Delta$, the corresponding morphism $A_{\square}\to C(G^n,A)_{\square}$ is flat, so the subcategory $\Dcal(A_{\square}/\underline{G})^{\heart}$ of $\Dcal(A_{\square}/\underline{G})$ consisting of objects which are static as objects of $\Dcal(A_{\square})$ is equivalent to the symmetric monoidal $\infty$-category $\varprojlim_{[n]\in\Delta} \Dcal({C(G^n,A)_{\square}})^{\heart}$, where we note $C(G^n,A)\cong C(G^n,\Qbb_{p})\otimes_{\Qbb_{p,\square}}^{\Lbb}A$.
This limit is equivalent to the limit in the $2$-category of symmetric monoidal categories, so it is equivalent to $\varprojlim_{[n]\in\Delta_{\leq 2}} \Dcal({C(G^n,A)_{\square}})^{\heart}$ by \cite[Proposition 4.3.5]{DAG8}.
Therefore, the symmetric monoidal category
$\Dcal(A_{\square}/\underline{G})^{\nuc,\heart}$ is equivalent to a symmetric monoidal category of (static) nuclear $A_{\square}$-modules $M$ with $\rho_M\colon M\to C(G,\Qbb_p)\otimes_{\Qbb_{p,\square}}M\cong C(G,M)$ satisfying the following conditions:
\begin{enumerate}
    \item The morphism $\rho_M$ is $A$-linear, where the $A$-module structure on $C(G,M)$ is given by $\rho\colon A\to C(G,A)$.
    \item The diagram
    $$
    \xymatrix{
        M\ar[r]^-{\rho_M}\ar[d]^-{\rho_M} & C(G,M)\ar[d]\\
        C(G,M)\ar[r] &C(G^2,M)
    }
    $$
    is commutative, where the right vertical morphism is induced by the multiplication morphism $G^2=G\times G \to G$ and the lower horizontal morphism is defined by $\phi\mapsto ((g,h)\mapsto g\phi(h))$.
    \item The composition morphism $M \overset{\rho_M}{\longrightarrow} C(G,M) \overset{e_1}{\longrightarrow}M$ is the identity, where $e_1 \colon C(G,M)\to M$ is the evaluation morphism at $1$.
\end{enumerate} 
A tensor product of $(M,\rho_M)$ and $(N,\rho_N)$ is given by
    \begin{align*}
        M\otimes_{\Qbb_{p,\square}}N \overset{\rho_M\otimes\rho_N}{\longrightarrow} &
        C(G,\Qbb_p)\otimes_{\Qbb_{p,\square}}M\otimes_{\Qbb_{p,\square}}C(G,\Qbb_p)\otimes_{\Qbb_{p,\square}}N \\
        \longrightarrow &C(G,\Qbb_p)\otimes_{\Qbb_{p,\square}}M\otimes_{\Qbb_{p,\square}}N,
    \end{align*}
where the second morphism is induced by the multiplication morphism 
$$C(G,\Qbb_p)\otimes_{\Qbb_{p,\square}}C(G,\Qbb_p)\to C(G,\Qbb_p).$$
A morphism $\rho_M \colon M \to C(G,M)$ corresponds to $\sigma_M\colon G\times M\to M$ by adjunction, and it is easy to show that $\rho_M$ satisfies the conditions (1), (2), and (3) above if and only if $\sigma_M$ is a continuous $A$-semilinear $G$-action.
Therefore, we obtain an equivalence of categories
$$\Rep_{A_{\square}^t}(G)^{\nuc,\heart} \simeq \Dcal(A_{\square}/\underline{G})^{\nuc,\heart}.$$
It is easy to check that this equivalence upgrades to an equivalence of symmetric monoidal categories.
\end{proof}
\begin{remark}
    If $G$ is a compact $p$-adic Lie group, then we can prove that there is an equivalence of symmetric monoidal $\infty$-categories 
    $$\Rep_{A_{\square}^t}(G)^{\nuc}\simeq\Dcal([\AnSpec A_{\square}/\AnSpec(C(G,\Qbb_p)_{\square})])^{\nuc}$$
    by the similar argument as in \cite[Proposition 2.46]{Mikami25}.
    We omit the details.
\end{remark}

\begin{corollary}\label{cor:stacky cohomology comparison}
    Let $M$ be a nuclear static $A_{\square}$-module equipped with a continuous $A$-semilinear $G$-action, and $\{M_n\}_{[n]\in\Delta}\in \Dcal([\AnSpec A_{\square}/\AnSpec(C(G,\Qbb_p)_{\square})])^{\nuc,\heart}$ be the object corresponding to $M$ under the equivalence in Lemma \ref{lem:stacky definition}.
    Then we have an equivalence $R\Gamma(G,M)\cong \Rvarprojlim_{[n]\in \Delta}M_n$
    in $\Dcal(\Qbb_{p,\square})$.
\end{corollary}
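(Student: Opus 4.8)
The plan is to identify both sides with the totalization of the same cosimplicial object and then invoke the standard bar-resolution computation of continuous group cohomology. Recall that by Lemma \ref{lem:stacky definition} the object $\{M_n\}_{[n]\in\Delta}$ is the descent datum attached to the cosimplicial $A_{\bs}$-module $[n]\mapsto C(G^n,\Qbb_p)\otimes_{\Qbb_{p,\bs}}^{\Lbb}M\cong C(G^n,M)$, so that $\Rvarprojlim_{[n]\in\Delta}M_n$ is by definition the totalization of the cosimplicial complex
\begin{align*}
M\to C(G,M)\to C(G^2,M)\to\cdots,
\end{align*}
where the coface maps are the usual ones appearing in the bar complex (using the $G$-action on $M$ for the outermost face and the multiplication $G^{n+1}\to G^{n}$ for the inner faces, as spelled out in conditions (1)--(3) of the proof of Lemma \ref{lem:stacky definition}). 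Thus the whole content is to show that this totalization computes $R\Gamma(G,M)=R\intHom_{A_{\bs}[G]}(A,M)$.

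First I would construct, for each $n$, an $A_{\bs}$-linear (indeed $A_{\bs}[G]$-linear for the diagonal/left $G$-action) identification $C(G^{n},M)\cong R\intHom_{A_{\bs}[G]}\bigl(A_{\bs}[G^{n+1}],M\bigr)$, where $A_{\bs}[G^{n+1}]$ carries the diagonal left $G$-action; this is the $\otimes$-$\intHom$ adjunction combined with the isomorphism $A_{\bs}[G^{n+1}]\cong A_{\bs}[G]\otimes_{A_{\bs}}A_{\bs}[G^{n}]$ of left $A_{\bs}[G]$-modules and the nuclearity hypothesis on $M$ (and on $A$) which, via Example \ref{ex:flat module} and the flatness of $C(G^{n},\Qbb_p)$, lets us pass freely between $C(G^n,\Qbb_p)\otimes_{\Qbb_{p,\bs}}^{\Lbb}M$, $C(G^n,\Qbb_p)\otimes_{\Qbb_{p,\bs}}M$ and $C(G^n,M)$. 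Matching the coface maps on the two sides, the cosimplicial object $[n]\mapsto C(G^n,M)$ is therefore identified with $R\intHom_{A_{\bs}[G]}(B_\bullet,M)$ applied to the simplicial $A_{\bs}[G]$-module $B_\bullet=A_{\bs}[G^{\bullet+1}]$, i.e. the standard (homogeneous) bar resolution of the trivial module $A$. Hence $\Rvarprojlim_{[n]\in\Delta}M_n\simeq R\intHom_{A_{\bs}[G]}(|B_\bullet|,M)$.

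It then remains to show that $|B_\bullet|=\colim_{[n]^{\op}\in\Delta^{\op}}A_{\bs}[G^{n+1}]\simeq A$ as objects of $\Dcal_{\bs}(A_{\bs}[G])$, i.e. that the augmented bar complex is a resolution; equivalently that $C(G^{\bullet},M)$ computes $R\Gamma(G,M)$. For $G$ a profinite group this is the condensed analogue of the classical statement, and I would reduce it exactly as in Proposition \ref{prop:compact nuclear group cohomology}: by writing $R\Gamma(G,-)=R\Gamma(G/G_0,R\Gamma(G_0,-))$ for an open normal uniform pro-$p$ subgroup $G_0$ — and checking the bar complex is compatible with this iteration — one reduces to $G$ finite, where $A$ is a direct summand of $A_{\bs}[G]$ and the bar resolution is split, or to $G$ uniform pro-$p$, where one can instead compare with the Lazard--Serre resolution of \cite[Theorem 5.7]{RJRC22}. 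Alternatively, and more directly, the contracting homotopy $s_n\colon A_{\bs}[G^{n+1}]\to A_{\bs}[G^{n+2}]$, $(g_0,\dots,g_n)\mapsto(1,g_0,\dots,g_n)$, is a well-defined morphism of condensed modules exhibiting the augmented simplicial object as having a splitting after forgetting the $G$-action, which suffices since $R\intHom_{A_{\bs}[G]}(-,M)$ sees only the underlying $A_{\bs}$-module structure on the resolution up to the extra $G$-equivariance built into $\intHom_{A_{\bs}[G]}$.

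The main obstacle I anticipate is the bookkeeping in the first step: making the identification $C(G^{n},M)\cong R\intHom_{A_{\bs}[G]}(A_{\bs}[G^{n+1}],M)$ functorial in $[n]\in\Delta$, so that \emph{all} coface and codegeneracy maps match on the nose (not merely levelwise isomorphisms), and verifying that the ``$\rho_M$ on the outer face / multiplication on the inner faces'' description from Lemma \ref{lem:stacky definition} is precisely the bar differential transported through the adjunction. Once the cosimplicial objects are matched functorially, the acyclicity of the bar resolution in the condensed setting is routine via the explicit contracting homotopy (or the reduction argument above), and the equivalence $R\Gamma(G,M)\cong\Rvarprojlim_{[n]\in\Delta}M_n$ follows.
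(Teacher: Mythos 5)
Your proof is correct and follows the same route as the paper: both identify $\Rvarprojlim_{[n]\in\Delta}M_n$ with the totalization of the cochain complex $M\to C(G,M)\to C(G^2,M)\to\cdots$ and then compute $R\Gamma(G,M)$ via the bar resolution, with nuclearity of $M$ used to pass from $R\intHom$ out of $\Qbb_{p,\bs}[G^n]$ (resp.\ $A_{\bs}[G^{n+1}]$) to the tensor description $C(G^n,\Qbb_p)\otimes_{\Qbb_{p,\bs}}^{\Lbb}M$. The paper's proof is a three-sentence sketch of exactly this; you have simply made the adjunction bookkeeping and the acyclicity of the bar resolution explicit.
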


\begin{proof}
    We have the Bar resolution $\{\Qbb_{p,\square}[G^{n+1}]\}_{[n]\in\Delta}$ and there is an equivalence 
    $$R\Gamma(G,M)\simeq \varprojlim_{[n]\in\Delta}R\intHom_{\Qbb_{p,\square}[G]}(\Qbb_{p,\square}[G^{n+1}],M).$$
    Since $M$ is nuclear, the cosimplicial object $\{R\intHom_{\Qbb_{p,\square}[G]}(\Qbb_{p,\square}[G^{n+1}],M)\}_{[n]\in \Delta}$ is equivalent to $\{M_n\}_{[n]\in\Delta}$.
\end{proof}

For later use, we prove lemmas about semilinear representations on relatively discrete $A$-modules.
We assume that $A$ is a noetherian Banach $\Qbb_p$-algebra, and continue to assume that $G$ is compact.


\begin{proposition}\label{prop:relatively discrete action filtered colimit}
Let $M$ be a relatively discrete (static) $A$-module with a continuous $A$-semilinear $G$-action.
Then there exists a directed family $\{M_{\lambda}\}$ of finitely presented $A$-submodules which are stable under the $G$-action such that $M=\varinjlim_{\lambda}M_{\lambda}$.
\end{proposition}
\begin{proof}
Since $A$ is noetherian, $M$ can be written as a filtered colimit $M=\varinjlim_{\lambda} M_{\lambda}$ of finitely presented $A$-submodules $M_{\lambda}$ of $M$.
It is enough to show that for each $\lambda$, $M_{\lambda}$ is contained in some finitely presented $A$-submodule which is stable under the $G$-action.
Since $M_{\lambda}$ is a finitely presented $A$-module and $A^t_{\square}[G]$ is a compact left $A_{\square}$-module, $A_{\square}^t[G] \otimes_{A_{\square}} M_{\lambda}$ is also a compact (left) $A_{\square}$-module (in the category of static $A_{\square}$-modules).
Therefore, the image of 
$$A_{\square}^t[G] \otimes_{A_{\square}} M_{\lambda} \to A_{\square}^t[G] \otimes_{A_{\square}} M \to M$$
is contained in $M_{\mu}$ for some $\mu\geq \lambda$, where $A_{\square}^t[G]$ is a twisted group ring and 
$$A_{\square}^t[G] \otimes_{A_{\square}} M \to M$$
is the morphism associated to the continuous semilinear $\Gamma_K$-action (Lemma \ref{lem:semilinear action and twisted group ring}).
We set $$M_{\lambda}^{\prime}=\Im(A_{\square}^t[G] \otimes_{A_{\square}} M_{\lambda} \to M)\subset M_{\mu},$$ 
which is a $\Gamma_K$-stable submodule.
By Lemma \ref{lem:rd sub}, $M_{\lambda}^{\prime}$ is relatively discrete over $A$ and finitely presented.
\end{proof}

Finally, we prove a classical result of the extension of an automorphism to the action of $\Zbb_p$.
We assume $G=\Zbb_p$.
To avoid confusion, we write $\gamma\in G$ for $1\in \Zbb_p=G$.
Let $G_n$ denote the open subgroup topologically generated by $\gamma^{p^n}$ (i.e., $G_n=p^n\Zbb_p$).
Let $A$ be a Banach $\Qbb_p$-algebra with a continuous $G$-action.
Then, we can take a ring of definition $A_0\subset A$ which is stable under the $G$-action.
\begin{proposition}\label{prop:extension of automorphism to the group action}
    Let $n\geq 1$ be an integer.
    For $\alpha=1+(p\alpha_{ij})_{ij} \in 1+pM_n(A_0)$, it defines the $\gamma$-semilinear automorphism 
    $f_{\alpha}\colon A^n \to A^n$
    which is given by $f_{\alpha}(e_i)=\alpha_{1i}e_1+\cdots+\alpha_{ni}e_n$ where $e_1,\ldots, e_n$ is the standard basis of $A^n$.
    Then, it (uniquely) extends to a continuous semilinear $G$-action on $A^n$.
\end{proposition}
\begin{proof}
    It is enough to show that for every $m\geq 0$, the automorphism
    $f_{\alpha}\colon A_0^n/p^m \to A_0^n/p^m$
    extends to a smooth $G$-action. 
    There exists an open subgroup $G_r\subset G$ such that the images of $\alpha_{ij}$ in $A_0/p^m$ are fixed by $G_r$.
    Since $G$ is abelian, for every $g \in G$, the image of $g(\alpha_{i,j})$ in $A_0/p^m$ is also fixed by $G_r$.
    Let $B\subset A_0/p^m$ be the subalgebra generated by $g(\alpha_{i,j})$.
    Then the $G_r$-action on $B$ is trivial.
    Therefore, the morphism $f_{\alpha}^{p^r} \colon B^n \to B^n$ is $B$-linear. 
    Moreover, it corresponds to a matrix in $1+pM_n(B)$, so $f_{\alpha}^{p^{r+m-1}} \colon B^n \to B^n$ is the identity.
    Therefore, $f_{\alpha} \colon B^n \to B^n$ defines an action of $G/G_{r+m-1}$.
    In particular, it extends to a smooth action of $G$.
    Since the action of $G$ on $A_0/p^m$ is smooth, $f_{\alpha}\colon A_0^n/p^m \to A_0^n/p^m$ also extends to a smooth action of $G$.
\end{proof}

\begin{corollary}\label{cor:free module lift of action}
    Let $\Gamma$ be a compact open subgroup of $\Zbb_p^{\times}$, and $A$ be a noetherian Banach $\Qbb_p$-algebra with a continuous $\Gamma$-action.
    Let $M$ be a finitely presented $A$-module with a continuous semilinear $\Gamma$-action, where we endow $M$ with the natural topology.
    Then there is a finite free $A$-module $L$ with a continuous semilinear $\Gamma$-action and a $\Gamma$-equivariant $A$-linear surjection $L\to M$. 
\end{corollary}
\begin{proof}
    For $m$ sufficiently large, $1+p^m\Zbb_p\subset \Zbb_p^{\times}$ is contained in $\Gamma$, and let $\Gamma_m$ denote this compact open subgroup.
    We write $\gamma_m$ for $1+p^m\in \Gamma_m=1+p^m\Zbb_p$ to avoid confusion.
    We note that $\Gamma_m\cong \Zbb_p$ and $\gamma_m$ is a topological generator of $\Gamma_m$.
    We choose a ring of definition $A_0\subset A$ which is stable under the $\Gamma$-action, and choose a generator $x_1,\ldots,x_n\in M$ of $M$ as an $A$-module.
    Then for $m$ sufficiently large, there exists $\alpha=(\alpha_{ij})_{ij}\in 1+pM_n(A_0)$ such that $\gamma_m(x_i)=\alpha_{1i}x_1+\cdots+\alpha_{ni}x_n.$
    By Proposition \ref{prop:extension of automorphism to the group action}, $\alpha$ defines a continuous semilinear $\Gamma_m$-action on $A^n$ and the natural morphism $A^n \to M;\;e_i\mapsto x_i$ is a $\Gamma_m$-equivariant $A$-linear surjection.
    We set $L=A^{t}_{\square}[\Gamma]\otimes_{A^{t}_{\square}[\Gamma_m]}A^n$, where $A^{t}_{\square}[\Gamma]$ and $A^{t}_{\square}[\Gamma_m]$ are the twisted group rings.
    Then there is a $\Gamma$-equivariant $A$-linear surjection $L\to M$.
    Since $A^{t}_{\square}[\Gamma]$ is a finite free right $A^{t}_{\square}[\Gamma_m]$-module, $L$ is finite free as an $A$-module.
\end{proof}


\subsection{Locally analytic representations}
In this subsection, we prove some auxiliary results about locally analytic representations.
We use the theory of solid locally analytic representations introduced in \cite{RJRC22,RJRC23}.
For the foundation of solid locally analytic representations, see loc. cit.

In this subsection, let $G$ be a compact $p$-adic Lie group over $\Qbb_p$, and $G_0 \subset G$ be a compact open subgroup equipped with an integer-valued, saturated $p$-valuation.
Let $G_n$ denote the group $(G_0)^{p^n}$ for $n\geq 0$.
Then we can regard $G_n$ as a rigid analytic variety $\Gbb_n$ over $\Spa\Qbb_p$, and let $C^{\an}(\Gbb_n,\Qbb_p)$ denote the Banach $\Qbb_p$-algebra of analytic functions on $\Gbb_n$.
It has a natural continuous $G_n^2=G_n \times G_n$-action defined by left translations and right translations.
We define a rigid analytic variety $\Gbb^n=\coprod_{gG_n\in G/G_n} g \Gbb_n$.
Let $C^{n}(G,\Qbb_p)$ denote the Banach $\Qbb_p$-algebra $\Ocal(\Gbb^n)$ of rigid analytic functions on $\Gbb^n$. 
It also has a natural continuous $G^2=G \times G$-action defined by left translations and right translations.
For details, see \cite[4.1]{RJRC22}. 

\begin{notation}
    Let $n\in \Zbb_{>0}$ be a positive integer, and $N$ be an object of $\Dcal(\Qbb_{p,\square})$ with a continuous $G^n$-action.
    For a finite set $I\subset \{1,2,\ldots,n\}$, we have a group homomorphism $\iota_I\colon G\to G^n$ defined by $\iota_I(g)_j=g$ if $j\in I$ and $\iota_I(g)_j=1$ if $j\notin I$.
    By using $\iota_I$, we can define a continuous action of $N$, and we write it  as $N_{\star_{I}}$.
    For example, let $M$ be an object of $\Dcal(\Qbb_{p,\square})$ with a continuous $G^2$-action.
    Then $C(G,M)$ has a natural continuous $G^4$-action which is given by $$((g_1,g_2,g_3,g_4)\cdot f)(x)=(g_3,g_4)\cdot f(g_1^{-1}xg_2).$$
    In this case, the action on $C(G,M)_{\star_1}$ is given by $(g\cdot f)(x)=f(g^{-1}x),$ the action on $C(G,M)_{\star_{1,3}}$ is given by $(g\cdot f)(x)=(g,1)\cdot f(g^{-1}x),$ and the action on $C(G,M)_{\star_{1,3,4}}$ is given by $(g\cdot f)(x)=(g,g)\cdot f(g^{-1}x).$ 
    \end{notation}

\begin{definition}[{\cite[Definition 4.32, Definition 4.40]{RJRC22}}]
    Let $V$ be an object of $\Dcal(\Qbb_{p,\square})$ with a continuous $G$-action.
    \begin{enumerate}
        \item We write $C^{\bar{n}}(G,V)=C^{n}(G,\Qbb_p)\otimes_{\Qbb_{p,\square}}^{\Lbb}V$, which has the natural $G^3$-action.
        We have a natural $G^3$-equivariant morphism 
        \begin{align*}
        C^{\bar{n}}(G,V)&=C^{n}(G,\Qbb_p)\otimes_{\Qbb_{p,\square}}^{\Lbb}V\\
        &\to C(G,\Qbb_p)\otimes_{\Qbb_{p,\square}}^{\Lbb}V\\
        &\to C(G,V).
        \end{align*}
        \item 
        We define the space of \textit{derived $G_n$-analytic vectors} of $V$ as
        $$V^{G_n\mathchar`-R\an}=R\Gamma(G,C^{\bar{n}}(G,V)_{\star_{1,3}}),$$
        which has a $G$-action induced by the $\star_2$-action on $C^{\bar{n}}(G,V)$.
        We have a natural $G$-equivariant morphism 
        \begin{align*}
            V^{G_n\mathchar`-R\an}&=R\Gamma(G,C^{\bar{n}}(G,V)_{\star_{1,3}})\\
            &\to R\Gamma(G,C(G,V)_{\star_{1,3}})\simeq V,
        \end{align*}
        where the last equivalence follows from \cite[Proposition 4.25]{RJRC22}.
        We say $V$ is \textit{derived $G_n$-analytic} if the morphism $V^{G_n\mathchar`-R\an}\to V$ is an equivalence.
        If $V$ is static, then we define the space of \textit{(non-derived) $G_n$-analytic vectors} of $V$ as $$V^{G_n\mathchar`-\an}=H^0(V^{G_n\mathchar`-R\an})=\Gamma(G,C^{\bar{n}}(G,V)_{\star_{1,3}}).$$
        We say $V$ is \textit{non-derived $G_n$-analytic} if the morphism $V^{G_n\mathchar`-\an}\to V$ is an equivalence.
         \item We have a natural morphism $V^{G_n\mathchar`-\an}\to V^{G_{n+1}\mathchar`-\an}$.
         We define the space of \textit{derived $G$-locally analytic vectors} of $V$ as
         $$V^{G\mathchar`-R\la}=\varinjlim_n V^{G_n\mathchar`-R\an}\in \Rep_{\square}(G).$$
         We have a natural $G$-equivariant morphism $V^{G\mathchar`-R\la}\to V$.
         We say $V$ is \textit{$G$-locally analytic} if the morphism $V^{G\mathchar`-R\la}\to V$ is an equivalence.
         If $V$ is static, then we define the space of \textit{$G$-locally analytic vectors} of $V$ as $V^{G\mathchar`-\la}=H^0(V^{G\mathchar`-R\la})$.
    \end{enumerate}
\end{definition}

\begin{remark}
We can regard $C^{\bar{n}}(G,V)$ as a rigid analytic functions with values in $V$ on the canonical compactification $\bar{\Gbb^n}$ of $\Gbb^n$ in the sense of Huber.
\end{remark}

\begin{remark}
    We note that our definition of derived $G_n$-analytic vectors is slightly different from the definition in \cite[Definition 4.32]{RJRC22}.
    There, they use $$(C^{n}(G,\Qbb_p),C^{n}(G,\Qbb_p)^{\circ})_{\square}\otimes_{\Qbb_{p,\square}}^{\Lbb}V$$ instead of $C^{n}(G,\Qbb_p)\otimes_{\Qbb_{p,\square}}^{\Lbb}V$.
    If $V$ is nuclear as an object of $\Dcal(\Qbb_{p,\square})$ then these are equivalent, so our definition and the definition in loc. cit. coincide.

    By \cite[Lemma 2.1.9]{RJRC23}, our definition of derived locally analytic vectors and the definition of them in \cite{RJRC22} coincide.
\end{remark}

\begin{remark}
    Let $V$ be a static object of $\Dcal(\Qbb_{p,\square})$ with a continuous $G$-action.
    By \cite[Proposition 3.2.5]{RJRC23}, $V$ is $G$-locally analytic if and only if $V^{G\mathchar`-\la}\to V$ is an isomorphism.
    It is the reason why we do not use the term ``derived $G$-locally analytic''.
\end{remark}

\begin{remark}
    We have a natural $G$-equivariant equivalence $$R\intHom_{\Qbb_{p,\square}[G_n]}(\Qbb_{p,\square}[G],(C^{\an}(\Gbb_n,\Qbb_p)\otimes_{\Qbb_{p,\square}}^{\Lbb}V)_{\star_{1,3}})\simeq (C^{n}(G,\Qbb_p)\otimes_{\Qbb_{p,\square}}^{\Lbb}V)_{\star_{1,3}},$$
    where the left $\Qbb_{p,\square}[G_n]$-module structure on $\Qbb_{p,\square}[G]$ is given by $g\cdot x\coloneqq xg^{-1}$ for $g\in G_n$ and $x \in \Qbb_{p,\square}[G]$.
    Therefore, we have a natural $G_n$-equivalent equivalence $$V^{G_n\mathchar`-R\an}\simeq R\Gamma(G_n, (C^{\an}(\Gbb_n,\Qbb_p)\otimes_{\Qbb_{p,\square}}^{\Lbb}V)_{\star_{1,3}}).$$
\end{remark}

\begin{lemma}\label{lem:an is sub}
    Let $V$ be a flat nuclear $\Qbb_{p,\square}$-module with a continuous $G$-action.
    Then the natural morphism $V^{G_n\mathchar`-\an}\to V$ is injective.
    By using this, we regard $V^{G_n\mathchar`-\an}$ as a submodule of $V$.
\end{lemma}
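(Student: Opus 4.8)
The plan is to unwind the definition of $V^{G_0-\an}$ and reduce the injectivity to a statement about a fiber sequence obtained by comparing the analytic induced module $C^{\an}(\Gbb^0,\Qbb_p)\otimes_{\Qbb_{p,\bs}}^{\Lbb}V$ with the continuous one $C(G,V)$. Recall that by definition $V^{G_0-\an}=H^0\bigl(R\Gamma(G,C^{\an}(\Gbb^0,V)_{\star_{1,3}})\bigr)$ and the natural morphism to $V$ factors through $R\Gamma(G,C^{\an}(\Gbb^0,V)_{\star_{1,3}})\to R\Gamma(G,C(G,V)_{\star_{1,3}})\simeq V$, where the last equivalence is \cite[Proposition 4.25]{RJRC22}. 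So the first step is to identify the fiber $F$ of the $G^3$-equivariant morphism $C^{\an}(\Gbb^0,\Qbb_p)\otimes_{\Qbb_{p,\bs}}^{\Lbb}V \to C(G,V)$; since $V$ is flat over $\Qbb_{p,\bs}$ we have $C^{\an}(\Gbb^0,\Qbb_p)\otimes_{\Qbb_{p,\bs}}^{\Lbb}V\simeq C^{\an}(\Gbb^0,\Qbb_p)\otimes_{\Qbb_{p,\bs}}V$ and $C(G,V)\simeq C(G,\Qbb_p)\otimes_{\Qbb_{p,\bs}}^{\Lbb}V$ (using that $C(G,\Qbb_p)$ is flat, Example \ref{ex:flat module}), so $F\simeq \fib\bigl(C^{\an}(\Gbb^0,\Qbb_p)\to C(G,\Qbb_p)\bigr)\otimes_{\Qbb_{p,\bs}}^{\Lbb}V$, and the point is that the map of Banach spaces $C^{\an}(\Gbb^0,\Qbb_p)\hookrightarrow C(G,\Qbb_p)$ is a closed injection (analytic functions inside continuous functions), hence $F$ is concentrated in degree $0$ — it is the (condensed module associated to the) Banach space of continuous-but-not-analytic functions, placed in cohomological degree $0$.

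The second step is to feed this into the long exact sequence of $R\Gamma(G,-)$ applied with the $\star_{1,3}$-action. We obtain an exact triangle $R\Gamma(G,F_{\star_{1,3}})\to V^{G_0-R\an}\to V$, and taking $H^{0}$ gives an exact sequence $H^{-1}(V)\to H^{0}(R\Gamma(G,F_{\star_{1,3}}))\to V^{G_0-\an}\to V$. Since $V$ is static, $H^{-1}(V)=0$, so $V^{G_0-\an}\to V$ is injective precisely when $H^{0}(R\Gamma(G,F_{\star_{1,3}}))=0$, i.e. $\Gamma(G,F_{\star_{1,3}})=0$: there are no nonzero $G$-invariant vectors in $F$ for the $\star_{1,3}$-action. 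Concretely, unwinding the $\star_{1,3}$-action, a $G$-invariant element of $F=C(G,V)/C^{\an}(\Gbb^0,V)$ is the class of a function $f\colon G\to V$ with $(g\cdot f)(x)=g\cdot f(g^{-1}x)$ differing from an analytic function for every $g$; but feeding $x=1$ shows $f(g)=g\cdot f(1)$, so $f$ is the orbit map of $v=f(1)\in V$, which is automatically \emph{continuous}, and one must see it is analytic modulo nothing — actually the cleanest route is: the composite $V\xrightarrow{\ \rho\ } C(G,V)$ given by $v\mapsto (g\mapsto g\cdot v)$ is a section (up to the $\star_{1,3}$-identification) of evaluation at $1$, and its image in $F$ is the image of the $G$-invariants, so vanishing of $\Gamma(G,F_{\star_{1,3}})$ amounts to the statement that the orbit map of every $v$ lands in the \emph{analytic} functions after the relevant identification only when $v\in V^{G_0-\an}$; rather than argue this by hand I would instead invoke that $\Gamma(G,-)$ of the triangle, combined with flatness of $V$ (Lemma \ref{lem:fixed part and tensor products}), reduces the computation to $\Gamma(G,(C(G,\Qbb_p)/C^{\an}(\Gbb^0,\Qbb_p))_{\star_{1}})\otimes_{\Qbb_{p,\bs}}V$, and the $G$-invariants of the quotient for the left-translation action already vanish because the left-translation-invariant continuous functions are the constants, which are analytic.

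So the cleanest organization is: (i) use flatness of $V$ (and of $C(G,\Qbb_p)$, $C^{\an}(\Gbb^0,\Qbb_p)$) to pull $V$ out of all the tensor products and out of $\Gamma(G,-)$ via Lemma \ref{lem:fixed part and tensor products}, reducing to the case $V=\Qbb_p$; (ii) observe the fiber $\fib(C^{\an}(\Gbb^0,\Qbb_p)\to C(G,\Qbb_p))$ is static (closed embedding of Banach spaces) so there is no $H^{-1}$-contribution; (iii) compute that the left-translation $G$-invariants of $C(G,\Qbb_p)/C^{\an}(\Gbb^0,\Qbb_p)$ vanish since constants are analytic; (iv) conclude $V^{G_0-\an}=\Gamma(G,C^{\an}(\Gbb^0,V)_{\star_{1,3}})\hookrightarrow \Gamma(G,C(G,V)_{\star_{1,3}})=V$. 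I expect the main obstacle to be step (ii)/(i) bookkeeping: making precise that "$\fib$ of a closed injection of Banach $\Qbb_p$-modules is static and flat" so that tensoring with $V$ and then applying $R\Gamma(G,-)$ really does commute and produces no spurious $H^{-1}$; once $V$ is flat and the relevant modules are flat this is routine, but it is the step where the hypotheses "flat" and "nuclear" on $V$ are genuinely used, and it must be stated carefully.
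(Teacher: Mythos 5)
Your overall plan — reduce injectivity to a statement about the map $C^{\an}(G_0,\Qbb_p)\otimes V\to C(G_0,\Qbb_p)\otimes V$ using flatness and nuclearity of $V$ — is the right one, and it is exactly what the paper does. But the execution around the fiber sequence and the $\star_{1,3}$-action has real problems.

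First, the degree bookkeeping is off by one. For an injective morphism of static objects $A\to B$, it is the \emph{cofiber} that is static; the \emph{fiber} is $\Coker[-1]$, concentrated in cohomological degree $1$. So your claim that $F=\fib\bigl(C^{\an}\to C\bigr)$ "is concentrated in degree $0$" is wrong. Ironically, with the corrected degree your argument would become immediate: $F\in\Dcal^{\geq 1}$ forces $R\Gamma(G,F)\in\Dcal^{\geq 1}$ (since $R\Gamma(G,-)$ is left $t$-exact), hence $H^0(R\Gamma(G,F_{\star_{1,3}}))=0$ with no computation of invariants of the quotient at all. Your step (iii), which tries to compute $\Gamma$ of the quotient, is both unnecessary and flawed — see below. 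Also, you do not need the map $C^{\an}(G_0,\Qbb_p)\to C(G_0,\Qbb_p)$ to be a \emph{closed} embedding of Banach spaces (and I do not think it is one); injectivity suffices.

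Second, and more seriously, the untwisting in your steps (i) and (iii) is circular. You want to replace $C^{\an}(\Gbb^0,V)_{\star_{1,3}}$ or $(C(G,V)/C^{\an}(\Gbb^0,V))_{\star_{1,3}}$ by a tensor product of a purely left-translation module with $V$ carrying no action, so that Lemma \ref{lem:fixed part and tensor products} applies. But the isomorphism $C^{\an}(G_0,V)_{\star_1}\simeq C^{\an}(G_0,V)_{\star_{1,3}}$ requires the orbit map of $V$ to land in analytic functions (Proposition \ref{lem:orbit morphism locally analytic} assumes $V$ is $G_0$-analytic). Asserting this here presupposes exactly what the lemma is meant to help us decide. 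The untwisting \emph{does} hold for $C(G,V)$ (this is \cite[Proposition 4.25]{RJRC22}), but not, a priori, for the analytic or quotient pieces.

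The paper's argument sidesteps both issues: one has $V^{G_0-\an}=\Gamma(G_0,C^{\an}(G_0,V)_{\star_{1,3}})$ and $V\cong\Gamma(G_0,C(G_0,V)_{\star_{1,3}})$, so the morphism in question is $\Gamma(G_0,-)$ applied to $C^{\an}(G_0,V)\to C(G_0,V)$; that map is injective because by nuclearity of $V$ it is $\bigl(C^{\an}(G_0,\Qbb_p)\hookrightarrow C(G_0,\Qbb_p)\bigr)\otimes_{\Qbb_{p,\bs}}V$ and $V$ is flat; and $\Gamma(G_0,-)$ (taking $H^0$ of $R\Gamma$) is left exact, so it preserves injections — with no need to identify or untwist any fiber, cofiber, or quotient.
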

\begin{proof}
    By definition, $V^{G_n\mathchar`-\an}$ is equal to $C^{\bar{n}}(G,V)^{G}$.
    On the other hand, we have an isomorphism $V\cong C(G,V)^{G}$ by \cite[Proposition 4.25]{RJRC22}.
    Therefore, it is enough to show that $f\colon C^{\bar{n}}(G,V)\to C(G,V)$ is injective.
    Since $V$ is a nuclear $\Qbb_{p,\square}$-module, the morphism $f$ is isomorphic to $C^{n}(G,\Qbb_{p})\otimes_{\Qbb_{p,\square}}V \to C(G,\Qbb_{p})\otimes_{\Qbb_{p,\square}}V$.
    It is injective since the morphism $C^{n}(G,\Qbb_{p})\to C(G,\Qbb_{p})$ is injective and $V$ is a flat $\Qbb_{p,\square}$-module.
\end{proof}

\begin{remark}
    We do not know whether the above lemma is true for a general static $\Qbb_{p,\square}$-module.
\end{remark}

\begin{proposition}\label{lem:orbit morphism locally analytic}
    Let $V$ be a flat $\Qbb_{p,\square}$-module with a continuous $G$-action.
    We assume that $V$ is non-derived $G_n$-analytic.
    Then we have a $G$-equivariant isomorphism $$C^{\bar{n}}(G,V)_{\star_1} \overset{\sim}{\to} C^{\bar{n}}(G,V)_{\star_{1,3}}.$$
\end{proposition}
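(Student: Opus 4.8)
The plan is to write down an explicit ``shearing'' isomorphism: on a section $f$, viewed as an analytic $V$-valued function $x\mapsto f(x)$, it should be $(cf)(x)=x\cdot f(x)$, with inverse $(c^{-1}h)(x)=x^{-1}\cdot h(x)$. On such formulas everything is immediate: $c$ is $G_0$-equivariant from the $\star_1$-action to the $\star_{1,3}$-action because
\[
c(g\cdot_{\star_1}f)(x)=x\cdot f(g^{-1}x)=g\cdot\bigl((g^{-1}x)\cdot f(g^{-1}x)\bigr)=\bigl(g\cdot_{\star_{1,3}}(cf)\bigr)(x),
\]
and $c\circ c^{-1}=c^{-1}\circ c=\id$ because $x\cdot(x^{-1}\cdot v)=v$. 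So the content is to realize these formulas as honest morphisms in $\Dcal(\Qbb_{p,\bs})$.

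First I would produce the ``orbit coaction''. Since $V$ is $G_0$-(locally) analytic, the tautological inclusion $V=V^{G_0-\an}=\Gamma(G_0,C^{\an}(G_0,V)_{\star_{1,3}})\hookrightarrow C^{\an}(G_0,V)$ is a morphism $\rho_V\colon V\to C^{\an}(G_0,V)$, $\rho_V(v)=(g\mapsto g\cdot v)$; this is where the hypothesis enters. Using flatness of $V$ over $\Qbb_{p,\bs}$ and the identification $C^{\an}(G_0,C^{\an}(G_0,V))\cong C^{\an}(G_0\times G_0,V)$ (which rests on $C^{\an}(G_0,\Qbb_p)\otimes_{\Qbb_{p,\bs}}^{\Lbb}C^{\an}(G_0,\Qbb_p)\cong C^{\an}(G_0\times G_0,\Qbb_p)$, valid since $C^{\an}(G_0,\Qbb_p)$ is a flat, indeed nuclear, $\Qbb_{p,\bs}$-module), tensoring $\rho_V$ with $C^{\an}(G_0,\Qbb_p)$ gives $C^{\an}(G_0,V)\to C^{\an}(G_0\times G_0,V)$, and composing with the pullback $\Delta^{*}$ along the diagonal $\Delta\colon G_0\hookrightarrow G_0\times G_0$ — a morphism of rigid analytic varieties, hence inducing a continuous $\Qbb_p$-algebra map on analytic functions, which one tensors with $V$ by flatness — yields $c\coloneqq\Delta^{*}\circ(\id\otimes\rho_V)$. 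For the inverse, precompose $\rho_V$ with the pullback along the inversion automorphism $\iota\colon G_0\xrightarrow{\sim}G_0$, $x\mapsto x^{-1}$, to get $\rho_V^{\iota}\colon V\to C^{\an}(G_0,V)$, $v\mapsto(g\mapsto g^{-1}\cdot v)$, and set $c^{-1}\coloneqq\Delta^{*}\circ(\id\otimes\rho_V^{\iota})$.

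It then remains to check that $c$ and $c^{-1}$ are mutually inverse and that $c$ intertwines the two actions. By flatness of $V$ all objects in sight are associated to topological $\Qbb_p$-vector spaces and the maps just constructed are the continuous maps given by the pointwise formulas above, so the two verifications reduce to the pointwise computations already noted; alternatively one identifies $\rho_V$ with the coaction attached to the $G_0$-action on $V$ and deduces the statements from the counit and coassociativity axioms together with the compatibilities of $\Delta$ and $\iota$ with the group law. The main obstacle is the first step: producing $\rho_V$ with values in the \emph{analytic} functions $C^{\an}(G_0,V)$ — this is exactly the force of the analyticity hypothesis — and, if one keeps the literal ``$G_0$-locally analytic'' assumption, descending the statement from a small enough analyticity level $C^{\an}(G_n,-)$ (using that $G_n\trianglelefteq G_0$ and $C^{\an}(G_0,V)=\varinjlim_n C^{\an}(G_0,V^{G_n-R\an})$ is compatible with shearing) back to $C^{\an}(G_0,-)$; once $\rho_V$ is available, the remaining manipulations are formal consequences of flatness of $V$ over $\Qbb_{p,\bs}$.
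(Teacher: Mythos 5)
Your construction is essentially the paper's: the shearing isomorphism is built from the orbit coaction $\Ocal_V$ (your $\rho_V$) supplied by analyticity, multiplied into $C^{\an}(G_0,V)$ via the diagonal on $C^{\an}(G_0,\Qbb_p)$, and the inverse is obtained by precomposing the coaction with the group inversion $\iota$; the paper's $\alpha$ and $\beta$ are exactly your $c$ and $c^{-1}$. Two cautions, though. First, the ``flatness implies the objects are topological vector spaces, so check pointwise'' shortcut is not a morphism-level argument in the condensed setting unless you actually produce topological models; your coassociativity/counit alternative is the safe version, and is morally what the paper does (it reduces $\beta\circ\alpha=\id$ to the constant sections $V\subset C^{\an}(G_0,V)$ and then uses $C^{\an}(G_0,\Qbb_p)$-linearity of the shearing maps). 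Second, you correctly noticed that producing $\rho_V$ with target $C^{\an}(G_0,V)$ really requires $V^{G_0-\an}\to V$ to be an isomorphism, i.e.\ $G_0$-\emph{analyticity} rather than the stated $G_0$-locally analyticity; the paper's own proof begins with this stronger statement, and the proposition is later invoked only under a $G_0$-analytic hypothesis. However, your suggested ``descend from level $G_n$'' workaround does not repair the statement with the weaker hypothesis: the orbit map of a $G_n$-analytic module $V^{G_n-\an}$ has target $C^{\an}(\Gbb^n,\Qbb_p)\otimes_{\Qbb_{p,\bs}}V$, which strictly contains $C^{\an}(G_0,\Qbb_p)\otimes_{\Qbb_{p,\bs}}V$, so the level-$n$ shearing maps do not restrict to $C^{\an}(G_0,V)$ and the colimit over $n$ does not yield a $G_0$-level shearing isomorphism. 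The correct fix is to strengthen the hypothesis to $G_0$-analyticity, not to try to weaken the conclusion.
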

\begin{proof}
    Since $V^{G_n\mathchar`-\an}\to V$ is an isomorphism, we have an orbit morphism
    $$\Ocal_V\colon V\simeq V^{G_n\mathchar`-\an}\to C^{n}(G,\Qbb_p)\otimes_{\Qbb_{p,\square}}V,$$
    which is $G^2$-equivariant where the action on the left-hand side is defined via $G^2\to G;\; (g,h)\mapsto h$ and the action on the right-hand side is given by the product of the $\star_{1,3}$-action and $\star_{2}$-action (we denote it by $(C^{n}(G,\Qbb_p)\otimes_{\Qbb_{p,\square}}V)_{\star_{1,3},\star_2}$. For other cases, we use similar notations).
    Therefore, we obtain a $G^2$-equivariant $C^{n}(G,\Qbb_p)$-linear morphism 
    $$\alpha\colon (C^{n}(G,\Qbb_p)\otimes_{\Qbb_{p,\square}}V)_{\star_1,\star_{2,3}} \to (C^{n}(G,\Qbb_p)\otimes_{\Qbb_{p,\square}}V)_{\star_{1,3},\star_2}.$$
    We construct the inverse of it.
    The morphism $G\to G;\; g \mapsto g^{-1}$ induces a $G^2$-equivariant morphism $\iota\colon C^{n}(G,\Qbb_p)_{\star_1,\star_2}\to C^{n}(G,\Qbb_p)_{\star_2,\star_1}$.
    Then we obtain a $G^2$-equivariant morphism 
    $$V \overset{\Ocal_V}{\longrightarrow} (C^{n}(G,\Qbb_p)\otimes_{\Qbb_{p,\square}}V)_{\star_2,\star_{1,3}} \overset{\iota\otimes\id}{\longrightarrow} (C^{n}(G,\Qbb_p)\otimes_{\Qbb_{p,\square}}V)_{\star_1,\star_{2,3}},$$
    where the action on the left-hand side is defined via $G^2\to G;\; (g,h)\mapsto g$.
    Therefore, it induces a $G^2$-equivariant $C^{n}(G,\Qbb_p)$-linear morphism 
    $$\beta\colon (C^{n}(G,\Qbb_p)\otimes_{\Qbb_{p,\square}}V)_{\star_{1,3},\star_{2}} \to (C^{n}(G,\Qbb_p)\otimes_{\Qbb_{p,\square}}V)_{\star_{1},\star_{2,3}}.$$
    We prove $\beta\circ\alpha=\id$.
    By taking $G$-invariant submodules for the $\star_1$-action and evaluating at $1\in G$, we obtain the following commutative diagram:
    $$
    \xymatrix{
        V\ar[r]\ar[d] &V\ar[d]\\
        (C^{n}(G,\Qbb_p)\otimes_{\Qbb_{p,\square}}V)_{\star_{1}}\ar[r]^-{\beta\circ\alpha}\ar[d] &(C^{n}(G,\Qbb_p)\otimes_{\Qbb_{p,\square}}V)_{\star_{1}}\ar[d]\\
        V\ar[r] &V,
    }
    $$
    where the compositions of vertical morphisms are the identity morphisms and the lower horizontal morphism is also the identity.
    Therefore, the upper horizontal morphism is also the identity.
    The equation $\beta\circ\alpha=\id$ follows from it.
    We can show $\alpha\circ\beta=\id$ by the same argument.
\end{proof}

\begin{proposition}\label{prop:analytic cohomology and tensor product}
    Let $V$ be a (static) $\Qbb_{p,\square}$-module with a continuous $G$-action, and $W$ be a (static) flat $\Qbb_{p,\square}$-module with a continuous $G$-action.
    We assume that the $G$-action on $W$ is non-derived $G_n$-analytic.
    Then we have a natural isomorphism 
    $$(V\otimes_{\Qbb_{p,\square}}W)^{G_n\mathchar`-\an} \simeq V^{G_n\mathchar`-\an}\otimes_{\Qbb_{p,\square}}W.$$
\end{proposition}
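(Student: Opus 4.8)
The plan is to reduce the statement to three ingredients already in place: the description of $G_0$-analytic vectors as a space of invariants, the orbit-map isomorphism of Proposition~\ref{lem:orbit morphism locally analytic}, and the compatibility of taking $G_0$-invariants with tensoring by a flat module carrying the trivial action (Lemma~\ref{lem:fixed part and tensor products}). Write $D=C^{\an}(G_0,\Qbb_p)$, which is flat over $\Qbb_{p,\bs}$ by Example~\ref{ex:flat module}. For any static $\Qbb_{p,\bs}$-module $M$ with a continuous $G$-action, the $n=0$ case of the remark preceding Lemma~\ref{lem:an is sub} identifies $M^{G_0-\an}$ with $(D\otimes_{\Qbb_{p,\bs}}M)^{G_0}$, where $G_0$ acts through the $\star_{1,3}$-action, that is, by left translation on the $D$-factor together with the given action on $M$; the relevant derived tensor product is already static because $D$ is flat and $M$ is static. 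Applying this with $M=V\otimes_{\Qbb_{p,\bs}}W$ endowed with its diagonal $G$-action, and using the symmetry of the tensor product, I get a natural isomorphism
\[
(V\otimes_{\Qbb_{p,\bs}}W)^{G_0-\an}\;\simeq\;\bigl(D\otimes_{\Qbb_{p,\bs}}V\otimes_{\Qbb_{p,\bs}}W\bigr)^{G_0},
\]
where $G_0$ acts diagonally by left translation on $D$, the given action on $V$, and the given action on $W$.

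Next I would trivialize the action on the $W$-factor. Since $W$ is $G_0$-analytic, the orbit-morphism argument in the proof of Proposition~\ref{lem:orbit morphism locally analytic} applies to $W$ --- that argument only uses that $W^{G_0-\an}\to W$ is an isomorphism --- and produces a $G_0$-equivariant isomorphism $(D\otimes_{\Qbb_{p,\bs}}W)_{\star_{1,3}}\simeq(D\otimes_{\Qbb_{p,\bs}}W)_{\star_1}$, the target carrying the action by left translation on $D$ alone. Tensoring this isomorphism with $\id_V$ over $\Qbb_{p,\bs}$ yields a $G_0$-equivariant isomorphism between $D\otimes_{\Qbb_{p,\bs}}V\otimes_{\Qbb_{p,\bs}}W$ with the diagonal action above and the same module with the action in which $G_0$ acts only on the $D\otimes_{\Qbb_{p,\bs}}V$-factor and trivially on $W$. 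Passing to $G_0$-invariants and then applying Lemma~\ref{lem:fixed part and tensor products} with $D\otimes_{\Qbb_{p,\bs}}V$ (static, since $D$ is flat and $V$ is static) as the module with an action and the flat module $W$ with the trivial action, I obtain
\[
\bigl(D\otimes_{\Qbb_{p,\bs}}V\otimes_{\Qbb_{p,\bs}}W\bigr)^{G_0}\;\simeq\;\bigl(D\otimes_{\Qbb_{p,\bs}}V\bigr)^{G_0}\otimes_{\Qbb_{p,\bs}}W\;=\;V^{G_0-\an}\otimes_{\Qbb_{p,\bs}}W.
\]
Composing the two displayed isomorphisms gives the assertion, and since each step is functorial the resulting isomorphism is natural.

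I do not expect a deep obstacle; the one point requiring real care is the bookkeeping of the various $\star_I$-actions in the middle step. Specifically, one must check that tensoring the orbit-map isomorphism with $\id_V$ is $G_0$-equivariant for precisely the pair of diagonal actions claimed --- so that it is exactly the $W$-coordinate of the diagonal action that is trivialized, while the unique left-translation copy on $D$ and the given action on $V$ are left untouched. Once that is set up correctly, Lemma~\ref{lem:fixed part and tensor products} and the identification of $(-)^{G_0-\an}$ with $(D\otimes_{\Qbb_{p,\bs}}-)^{G_0}$ finish the proof.
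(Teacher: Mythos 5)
Your proof is correct and follows essentially the same route as the paper's: you identify $(V\otimes W)^{G_0-\an}$ with the $\star_{1,3,4}$-invariants, use the orbit-map isomorphism of Proposition~\ref{lem:orbit morphism locally analytic} (tensored with $\id_V$) to pass to the $\star_{1,3}$-action, and then peel off the flat factor $W$ with Lemma~\ref{lem:fixed part and tensor products}. Your explicit observation that the argument of Proposition~\ref{lem:orbit morphism locally analytic} only needs $W^{G_0-\an}\to W$ to be an isomorphism is a helpful clarification, but the underlying reasoning is the one the paper uses.
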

\begin{proof}
    It follows from the following isomorphisms:
    \begin{align*}
        (V\otimes_{\Qbb_{p,\square}}W)^{G_n\mathchar`-\an}
        &\simeq \Gamma(G,(C^{n}(G,\Qbb_p)\otimes_{\Qbb_{p,\square}}V\otimes_{\Qbb_{p,\square}}W)_{\star_{1,3,4}})\\
        &\simeq \Gamma(G,(C^{n}(G,\Qbb_p)\otimes_{\Qbb_{p,\square}}V\otimes_{\Qbb_{p,\square}}W)_{\star_{1,3}})\\
        &\simeq \Gamma(G,(C^{n}(G,\Qbb_p)\otimes_{\Qbb_{p,\square}}V_{\star_{1,3}}))\otimes_{\Qbb_{p,\square}}W\\
        &\simeq V^{G_n\mathchar`-\an}\otimes_{\Qbb_{p,\square}}W,
    \end{align*}
    where the second isomorphism follows from Proposition \ref{lem:orbit morphism locally analytic}, and the third isomorphism follows from Lemma \ref{lem:fixed part and tensor products}.
\end{proof}

\begin{proposition}\label{prop:an functor and tensor product commute}
    Let $A$ be a (static) flat $\Qbb_{p,\square}$-algebra with a continuous $G$-action, and $V$, $W$ be (static) $A_{\square}$-modules with continuous semilinear $G$-actions.
    We assume that the $G$-actions on $A$ and $W$ are non-derived $G_n$-analytic and $W$ is flat over $A_{\square}$.
    Then we have an isomorphism
    $$(V\otimes_{A_{\square}}W)^{G_n\mathchar`-\an} \cong V^{G_n\mathchar`-\an}\otimes_{A_{\square}}W.$$
\end{proposition}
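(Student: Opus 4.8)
The plan is to reduce to the linear statement of Proposition~\ref{prop:analytic cohomology and tensor product} by means of the two-sided bar construction. Since $W$ is flat over $A_{\bs}$, the relative tensor product is underived and
$$V\otimes_{A_{\bs}}W\;\simeq\;\big|\,[n]\mapsto B_n\,\big|,\qquad B_n:=V\otimes_{\Qbb_{p,\bs}}A^{\otimes_{\Qbb_{p,\bs}}n}\otimes_{\Qbb_{p,\bs}}W,$$
where each $B_n$ carries the diagonal continuous $G$-action (semilinear on the $V$- and $W$-factors, by the ring action on the $A$-factors). First I would check that $Z_n:=A^{\otimes_{\Qbb_{p,\bs}}n}\otimes_{\Qbb_{p,\bs}}W$ is flat over $\Qbb_{p,\bs}$ (using that $A$ is flat over $\Qbb_{p,\bs}$ and that $W$, being flat over $A_{\bs}$, is flat over $\Qbb_{p,\bs}$) and that its $G$-action is $G_0$-analytic, the latter by induction on $n$ applying Proposition~\ref{prop:analytic cohomology and tensor product} with first variable $A$ and second variable $Z_{n-1}$. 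Then Proposition~\ref{prop:analytic cohomology and tensor product}, applied with second variable $Z_n$, gives $B_n^{G_0-\an}\cong V^{G_0-\an}\otimes_{\Qbb_{p,\bs}}Z_n$; the same proposition with first variable $V$ and second variable $A$ shows that the multiplication $A\otimes_{\Qbb_{p,\bs}}V\to V$ carries $A\otimes_{\Qbb_{p,\bs}}V^{G_0-\an}$ into $V^{G_0-\an}$, so $V^{G_0-\an}$ is an $A_{\bs}$-submodule of $V$ and the objects $V^{G_0-\an}\otimes_{\Qbb_{p,\bs}}Z_n$ are precisely the terms of the bar construction computing $V^{G_0-\an}\otimes_{A_{\bs}}W$.

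It then remains to see that $(-)^{G_0-\an}$ commutes with the geometric realization $|B_\bullet|$. I would argue this at the derived level, with $(-)^{G_0-R\an}=R\Gamma\big(G_0,(C^{\an}(G_0,\Qbb_p)\otimes_{\Qbb_{p,\bs}}^{\Lbb}(-))_{\star_{1,3}}\big)$ and the derived refinement of Proposition~\ref{prop:analytic cohomology and tensor product} obtained by running its proof with $R\Gamma$ in place of $\Gamma$ (which only uses Lemma~\ref{lem:orbit morphism locally analytic} and the derived form of Lemma~\ref{lem:fixed part and tensor products}). Since $C^{\an}(G_0,\Qbb_p)\otimes_{\Qbb_{p,\bs}}^{\Lbb}(-)$ preserves colimits and the relabelling $(-)_{\star}$ is exact, applying these to $B_\bullet$ and invoking the orbit-morphism isomorphism of Lemma~\ref{lem:orbit morphism locally analytic} for the flat $G_0$-analytic modules $Z_n$ (tensored with the $V$-factor) identifies the simplicial object $[n]\mapsto (C^{\an}(G_0,\Qbb_p)\otimes_{\Qbb_{p,\bs}}^{\Lbb}B_n)_{\star_{1,3}}$ $G_0$-equivariantly with a simplicial object of the form $[n]\mapsto (C^{\an}(G_0,\Qbb_p)\otimes_{\Qbb_{p,\bs}}^{\Lbb}V)_{\star_{1,3}}\otimes_{\Qbb_{p,\bs}}^{\Lbb}Z_n$, on which $G_0$ acts only through the first factor. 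By Lemma~\ref{lem:fixed part and tensor products}, $R\Gamma(G_0,-)$ is then computed termwise by applying $(-)\otimes_{\Qbb_{p,\bs}}^{\Lbb}Z_n$ to $V^{G_0-R\an}$, and since the $G_0$-action is disjoint from the variable over which the realization is formed, $R\Gamma(G_0,-)$ commutes with that realization. This yields $(V\otimes_{A_{\bs}}W)^{G_0-R\an}\simeq V^{G_0-R\an}\otimes_{A_{\bs}}^{\Lbb}W$; taking $H^0$ and using that $W$ is flat over $A_{\bs}$, so that $(-)\otimes_{A_{\bs}}W$ is exact, gives the claimed isomorphism $(V\otimes_{A_{\bs}}W)^{G_0-\an}\cong V^{G_0-\an}\otimes_{A_{\bs}}W$.

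The main obstacle is making the previous paragraph's ``identification'' precise: the orbit-morphism untwisting of the $Z_n$-factor does not merely delete the $G_0$-action on $Z_n$, it simultaneously twists the relevant $A$-module structures by the coaction $\rho\colon A\to C^{\an}(G_0,\Qbb_p)\otimes_{\Qbb_{p,\bs}}^{\Lbb}A$ arising from $G_0$-analyticity of $A$, and the outer face maps $d_0,d_n$ of $B_\bullet$ (given by the $A$-module structures on $V$ and on $W$) mix the $V$-factor with the $A$-factors, so naturality of Proposition~\ref{prop:analytic cohomology and tensor product} and Lemma~\ref{lem:orbit morphism locally analytic} in each variable separately is not by itself enough. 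I expect to handle this by proving a semilinear analogue of Lemma~\ref{lem:orbit morphism locally analytic} — a $G_0$-equivariant trivialization of $C^{\an}(G_0,\Qbb_p)\otimes_{\Qbb_{p,\bs}}^{\Lbb}W$ as a (twisted) $C^{\an}(G_0,A)$-module — and checking that it is compatible with the bar differentials; the verification reduces to the cocycle identity satisfied by $\rho$, and is the one place where the hypothesis that $A$ itself is $G_0$-analytic is genuinely used.
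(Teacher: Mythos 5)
Your route is genuinely different from the paper's: you reduce the $A$-relative statement to the $\Qbb_p$-linear one (Proposition~\ref{prop:analytic cohomology and tensor product}) via the two-sided bar construction $B_\bullet=V\otimes_{\Qbb_{p,\bs}}A^{\otimes\bullet}\otimes_{\Qbb_{p,\bs}}W$, whereas the paper works directly with $V\otimes_{A_{\bs}}W$ and never forms a bar resolution. The paper's proof: by base change, $(C^{\an}(G_0,\Qbb_p)\otimes(V\otimes_{A_{\bs}}W))_{\star_{1,3}}\cong C^{\an}(G_0,V)_{\star_{1,3}}\otimes_{C^{\an}(G_0,A)_{\bs\star_{1,3}}}C^{\an}(G_0,W)_{\star_{1,3}}$ $G$-equivariantly; then Lemma~\ref{lem:orbit morphism locally analytic} is applied to $A$ and $W$ to untwist them, yielding $C^{\an}(G_0,W)_{\star_{1,3}}\cong C^{\an}(G_0,A)_{\star_{1,3}}\otimes_{A^{\triv}_{\bs}}W^{\triv}$ with $A^{\triv}$ mapping in via the orbit morphism $a\mapsto(g\mapsto ga)$; substituting and applying Lemma~\ref{lem:fixed part and tensor products} then finishes in one line. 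The $V$-factor is never touched, and the whole argument lives over $C^{\an}(G_0,A)$ rather than being decomposed along $\Qbb_p$.

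The difficulty you flag in your last paragraph is exactly where this breaks down as written, and you do not close it. The ``semilinear analogue of Lemma~\ref{lem:orbit morphism locally analytic}'' you say you expect to prove — a $G_0$-equivariant trivialization of $C^{\an}(G_0,W)$ as a twisted $C^{\an}(G_0,A)$-module compatible with the simplicial differentials — is, up to minor repackaging, precisely the isomorphism $C^{\an}(G_0,W)_{\star_{1,3}}\cong C^{\an}(G_0,A)_{\star_{1,3}}\otimes_{A^{\triv}_{\bs}}W^{\triv}$ that constitutes the heart of the paper's proof. Once you have that statement in hand, the bar resolution is superfluous: the paper's two-step argument (base change over $C^{\an}(G_0,A)$, then Lemma~\ref{lem:fixed part and tensor products}) is strictly shorter and avoids the auxiliary questions you raise about commuting $R\Gamma(G_0,-)$ with geometric realization and about compatibility of termwise untwistings with all the face maps. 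So the detour introduces new obligations without removing the core one.

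Two smaller points. First, your appeal to Proposition~\ref{prop:analytic cohomology and tensor product} (with variables $V$ and $A$) to conclude that $V^{G_0-\an}$ is an $A$-submodule of $V$ is circuitous: the isomorphism there is produced by an orbit-morphism untwisting and is not a priori the restriction of the identity of $A\otimes V$, so some care is needed to extract the submodule statement; a direct check (the orbit function of $av$ is $g\mapsto g(a)g(v)$, a product of an analytic function with values in $A$ and one with values in $V$, hence analytic since $A$ is $G_0$-analytic) is cleaner. Second, the claim that $R\Gamma(G_0,-)$ commutes with geometric realization needs the fact that $G_0$ is uniform pro-$p$ so that $R\Gamma(G_0,-)$ is a finite limit (Lazard--Serre), together with uniform boundedness of the simplicial object; this is true in your setting but should be stated.
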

\begin{proof}
    We have a $G$-equivariant isomorphism
    $$(C^{n}(G,\Qbb_p)\otimes_{\Qbb_{p,\square}}(V\otimes_{A_{\square}}W))_{\star_{1,3}}\cong C^{\bar{n}}(G,V)_{\star_{1,3}}\otimes_{C^{\bar{n}}(G,A)_{\square \star_{1,3}}}C^{\bar{n}}(G,W)_{\star_{1,3}}.$$ 
    By Lemma \ref{lem:orbit morphism locally analytic}, we have isomorphisms
    \begin{align*}
        C^{\bar{n}}(G,A)_{\star_{1,3}}&\cong C^{\bar{n}}(G,A)_{\star_{1}},\\
        C^{\bar{n}}(G,W)_{\star_{1,3}}&\cong C^{\bar{n}}(G,W)_{\star_{1}}.
    \end{align*}
    Moreover, we have an isomorphism
    $$C^{\bar{n}}(G,W)_{\star_{1}} \cong C^{\bar{n}}(G,A)_{\star_{1}} \otimes_{A^{\triv}_{\square}} W^{\triv},$$
    where $A^{\triv}$ (resp.\ $W^{\triv}$) is the $\Qbb_{p,\square}$-module $A$ (resp.\ $W$) with the trivial $G$-action.
    Therefore, we obtain an isomorphism
    $$C^{\bar{n}}(G,W)_{\star_{1,3}} \cong C^{\bar{n}}(G,A)_{\star_{1,3}} \otimes_{A^{\triv}_{\square}} W^{\triv},$$
    where $A^{\triv}\to C^{\bar{n}}(G,A)_{\star_{1,3}};\; a \mapsto (g\mapsto ga)$ is the orbit morphism.
    Hence, we obtain an isomorphism
    $$(C^{n}(G,\Qbb_p)\otimes_{\Qbb_{p,\square}}(V\otimes_{A_{\square}}W))_{\star_{1,3}}\cong C^{\bar{n}}(G,V)_{\star_{1,3}}\otimes_{A^{\triv}_{\square}} W^{\triv},$$
    where the $G$-action on the right-hand side is given by $g\otimes \id$ for $g\in G$.
    Therefore, the claim follows from Lemma \ref{lem:fixed part and tensor products}.
\end{proof}

\begin{proposition}\label{prop:la functor and tensor product commute}
    Let $A$ be an animated $\Qbb_{p,\square}$-algebra with a continuous $G$-action, and $V$, $W$ be objects of $\Dcal(A_{\square})$ with continuous semilinear $G$-actions.
    We assume that the $G$-actions on $A$ and $W$ are $G$-locally analytic.
    Then we have an equivalence
    $$(V\otimes_{A_{\square}}^{\Lbb}W)^{G\mathchar`-R\la} \simeq V^{G\mathchar`-R\la}\otimes_{A_{\square}}^{\Lbb}W.$$
\end{proposition}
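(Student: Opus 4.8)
The plan is to reduce the statement to the derived analogue of Proposition~\ref{prop:an functor and tensor product commute}, carried out simultaneously over all analytic radii, and then to quote Lemma~\ref{lem:fixed part and tensor products}.

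Recall that $(-)^{G-R\la}=\colim_n(-)^{G_n-R\an}$ and that $C^{\an}(\Gbb^n,-)=C^{\an}(\Gbb^n,\Qbb_p)\otimes_{\Qbb_{p,\bs}}^{\Lbb}(-)$, where $C^{\an}(\Gbb^n,\Qbb_p)$ is flat over $\Qbb_{p,\bs}$ (Example~\ref{ex:flat module}). For an object $M$ of $\Dcal(\Qbb_{p,\bs})$ with a continuous $G$-action put $C^{\la}(G,M):=\colim_n C^{\an}(\Gbb^n,M)$. Since $G$ is a compact $p$-adic Lie group, $R\Gamma(G,-)$ is computed by a bounded complex of copies of the input and therefore commutes with filtered colimits (cf.\ the proof of Proposition~\ref{prop:compact nuclear group cohomology}), so $M^{G-R\la}\simeq R\Gamma(G,C^{\la}(G,M)_{\star_{1,3}})$. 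Taking $M=V\otimes_{A_\bs}^{\Lbb}W$ and passing the base-change identity $C^{\an}(\Gbb^n,V\otimes_{A_\bs}^\Lbb W)\simeq C^{\an}(\Gbb^n,V)\otimes_{C^{\an}(\Gbb^n,A)_\bs}^\Lbb C^{\an}(\Gbb^n,W)$ to the colimit over $n$, one obtains
$$(V\otimes_{A_\bs}^\Lbb W)^{G-R\la}\simeq R\Gamma(G,\;C^{\la}(G,V)_{\star_{1,3}}\otimes_{C^{\la}(G,A)_{\bs \star_{1,3}}}^\Lbb C^{\la}(G,W)_{\star_{1,3}}).$$

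Next, because $A$ and $W$ are $G$-locally analytic (equivalently $A^{G-R\la}\overset{\sim}{\to}A$ and $W^{G-R\la}\overset{\sim}{\to}W$), the orbit morphisms $A\to C^{\la}(G,A)$ and $W\to C^{\la}(G,W)$ are defined, obtained by passing to the colimit over $n$ from the orbit morphisms of the $G_n$-analytic truncations; exactly as in the proof of Proposition~\ref{prop:an functor and tensor product commute} they yield $G$-equivariant equivalences
$$C^{\la}(G,A)_{\star_{1,3}}\simeq C^{\la}(G,A)_{\star_{1}},\qquad C^{\la}(G,W)_{\star_{1,3}}\simeq C^{\la}(G,W)_{\star_{1}}\simeq C^{\la}(G,A)_{\star_{1}}\otimes_{A^{\triv}_\bs}^\Lbb W^{\triv},$$
where $A^{\triv},W^{\triv}$ denote $A,W$ with trivial $G$-action and the last map is induced by the orbit morphism $A^{\triv}\to C^{\la}(G,A)_{\star_{1,3}}$, $a\mapsto(g\mapsto ga)$. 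Substituting, the module inside $R\Gamma$ becomes $C^{\la}(G,V)_{\star_{1,3}}\otimes_{A^{\triv}_\bs}^\Lbb W^{\triv}$, on which $G$ acts through the first factor only, i.e.\ by $g\otimes\id$; hence Lemma~\ref{lem:fixed part and tensor products} gives
$$(V\otimes_{A_\bs}^\Lbb W)^{G-R\la}\simeq R\Gamma(G,C^{\la}(G,V)_{\star_{1,3}})\otimes_{A_\bs}^\Lbb W\simeq V^{G-R\la}\otimes_{A_\bs}^\Lbb W,$$
which is the claim.

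The main obstacle is establishing the derived, flatness-free orbit-morphism equivalences invoked in the second paragraph. In the static case these are precisely Lemma~\ref{lem:orbit morphism locally analytic} together with the decomposition following it, but there $A$ and $W$ are assumed $G_0$-analytic and flat (over $\Qbb_{p,\bs}$ and over $A_\bs$). Here we work over $C^{\la}(G,\Qbb_p)$, and the only flatness actually used is that of $C^{\an}(\Gbb^n,\Qbb_p)$, hence of $C^{\la}(G,\Qbb_p)$, over $\Qbb_{p,\bs}$ (Example~\ref{ex:flat module}); this is what lets the diagram chase in the proof of Lemma~\ref{lem:orbit morphism locally analytic} survive when all tensor products are derived. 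One must also verify that the $G_n$-analytic truncations assemble compatibly as $n$ grows --- using that $(-)^{G_n-R\an}$ is idempotent and that $A=\colim_n A^{G_n-R\an}$, $W=\colim_n W^{G_n-R\an}$ since $A,W$ are $G$-locally analytic --- and that $C^{\la}(G,A)$ carries a ring structure for which the base-change and orbit-morphism identities above make sense (so that, in particular, Lemma~\ref{lem:fixed part and tensor products} applies with its proof read over animated coefficients). Granting this, the remaining steps are formal.
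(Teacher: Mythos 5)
Your proposal takes a genuinely different route from the paper's and leaves a gap which you yourself flag. The paper disposes of the general case with a short reduction: first cite \cite[Corollary 3.1.15(3)]{RJRC23} for the case $A=\Qbb_p$, then write $V\otimes_{A_\bs}^\Lbb W$ as the geometric realization of the Bar simplicial object $V\otimes_{\Qbb_{p,\bs}}^\Lbb A^{\otimes\bullet}\otimes_{\Qbb_{p,\bs}}^\Lbb W$, use that $(-)^{G-R\la}$ preserves all small colimits (\cite[Proposition 3.1.7]{RJRC23}) to push the functor inside the realization, apply the $\Qbb_p$-case levelwise (each term $A^{\otimes n}\otimes_{\Qbb_{p,\bs}}^\Lbb W$ is $G$-locally analytic because $A$ and $W$ are, again by the $\Qbb_p$-case), and realize again. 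No derived orbit-map identities are needed, and no flatness hypotheses enter.

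Your argument, by contrast, mirrors the proof of the static Proposition~\ref{prop:an functor and tensor product commute} after replacing all tensor products by derived ones and passing to the colimit over radii. The key step is the asserted equivalence $C^{\la}(G,W)_{\star_{1,3}}\simeq C^{\la}(G,A)_{\star_{1,3}}\otimes^{\Lbb}_{A^{\triv}_{\bs}}W^{\triv}$, which requires a derived analogue of Lemma~\ref{lem:orbit morphism locally analytic}. You explicitly call this ``the main obstacle'' and do not prove it; you only assert that the diagram chase ``survives'' and that the remaining verifications are formal. But the paper states and proves Lemma~\ref{lem:orbit morphism locally analytic} only for \emph{static, flat} $\Qbb_{p,\bs}$-modules, and its verification that $\beta\circ\alpha=\id$ proceeds by taking $G_0$-invariants (a truncation) and evaluating at $1\in G_0$, operations whose extension to arbitrary animated objects in $\Dcal_{\bs}(\Qbb_{p,\bs}[G])$ is precisely what needs justification. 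Until that derived orbit-morphism equivalence is actually established (or cited from \cite{RJRC23} with a precise reference), what you have is an incomplete sketch of a plausible alternative strategy rather than a proof. The paper's Bar-resolution reduction sidesteps the issue entirely, which is exactly why it is formulated that way.
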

\begin{proof}
    When $A=\Qbb_p$, the proposition is proved in \cite[Corollary 3.1.15 (3)]{RJRC23}.
    In the general case, the proposition follows from the Bar resolutions and the fact that the functor $(-)^{G\mathchar`-R\la}$ preserves all small colimits (\cite[Proposition 3.1.7]{RJRC23}).
\end{proof}


\section{Algebraic-affinoid analytic algebras}

In this section, we introduce a notion of algebraic-affinoid analytic algebras, and prove some basic properties of them.
Let $L$ be a complete non-archimedean field, and $\pi$ be a pseudo-uniformizer of $L$.

\begin{definition}\label{def:algebraic-affinoid algebra}
	A solid (static) $L_{\square}$-algebra $A$ is called an \textit{algebraic-affinoid $L_{\square}$-algebra} if there is an affinoid $L$-algebra $R$ and a morphism $R\to A$ such that $A$ is relatively discrete and finitely generated algebra over $R$.
\end{definition}

\begin{remark}\label{rem:aff of def sub}
    Let $R,A$ be as above.
    By replacing $R$ with its image in $A$, which is also an affinoid $L$-algebra, we can arrange $R\subset A$.
\end{remark}

\begin{remark}\label{rem:nuclear alg-aff}
    By \cite[Proposition 2.5]{Mikami23}, $R$ is a nuclear $L_{\square}$-module.
    Since $A$ is relatively discrete over $R_{\square}$, it is a nuclear $R_{\square}$-module by Remark \ref{rem:nuclear}. 
    Therefore, by \cite[Lemma 2.9]{Mikami23}, $A$ is a nuclear $L_{\square}$-module.
\end{remark}

\begin{definition}
	An \textit{algebraic-affinoid analytic $L_{\square}$-algebra} is a (static) analytic $L_{\square}$-algebra of the form $(A,A^+)_{\square}$ where $A$ is an algebraic-affinoid $L_{\square}$-algebra and $A^+$ is a subring of $A(\ast)$ such that $A$ is $(A^+,A^+)_{\square}$-complete.
	Let $\AlgAff_{L_{\square}}$ denote the category of algebraic-affinoid analytic $L_{\square}$-algebras.
\end{definition}

\begin{remark}
	By \cite[Proposition 3.34]{And21}, there is a fully faithful functor
	\begin{align*}
		&\{(A,A^+)\mid \text{$(A,A^+)$ is an affinoid pair over $L$ of weakly finite type}\}\\
        &\to \AlgAff_{\Qbb_p};\; (A,A^+)\mapsto (A,A^+)_{\square},
	\end{align*}
	where ``of weakly finite type'' means that $A$ is of topologically finite type over $L$ (there is no condition on $A^+$).
\end{remark}

\begin{remark}\label{rem:every morph steady}
    By the same argument as in \cite[Proposition 2.20]{Mikami23}, every morphism between algebraic-affinoid analytic $L_{\square}$-algebras is steady.
\end{remark}

\begin{definition}\label{def:power-bounded elements}
    Let $A$ be an algebraic-affinoid $L_{\square}$-algebra.
    We define $A^{\circ}(\ast)\subset A(\ast)$ as the subring of $f\in A(\ast)$ such that $A$ is $(\Zbb[T],\Zbb[T])_{\square}$-complete when viewed as a $\Zbb[T]$-module via $T\mapsto f$.
    We set $A^{b}(\ast)=A^{\circ}(\ast)[1/\pi]\subset A(\ast)$.
    Then, we define the condensed subring $A^{b}\subset A$ to be the largest one whose underlying discrete subring is $A^{b}(\ast)$.
\end{definition}

\begin{remark}
    For an extremally disconnected set $S$ and $f\in A(S)$, $f\in A^{b}(S)$ if and only if $f(s)\in A^{b}(\ast)$ for any $s\in S$.
\end{remark}

\begin{lemma}\label{lem:power-bounded elements}
    Let $A$ be an algebraic-affinoid $L_{\square}$-algebra.
    For $f\in A$, $f\in A^{\circ}$ if and only if $L[T]\to A;\; T\mapsto f$ extends to $L\langle T\rangle\to A$.
\end{lemma}
\begin{proof}
    If $f\in A^{\circ}$, then we get a morphism $$L\langle T\rangle=L[T]\otimes_{(\Zbb[T],\Zbb)_{\square}}(\Zbb[T],\Zbb[T])_{\square} \to A;\; T\mapsto f.$$
    On the other hand, we assume that $L[T]\to A;\; T\mapsto f$ extends to $L\langle T\rangle\to A$.
    Since $A$ and $\Ocal_L\langle T\rangle$ are nuclear $\Ocal_{L,\square}$-modules, $A$ is a nuclear $\Ocal_L\langle T\rangle_{\square}$-module by \cite[Lemma 2.9]{Mikami23}.
    Then by \cite[Proposition 2.5, Proposition 2.6]{Mikami23}, $A$ is $(\Zbb[T],\Zbb[T])_{\square}$-complete.
\end{proof}

\begin{corollary}\label{cor:A^b=A}
    Let $A$ be an affinoid $L$-algebra.
    Then $A^{\circ}(\ast)$ defined in Definition \ref{def:power-bounded elements} coincides with the subring of power-bounded elements in the usual sense.
    In particular, we have $A=A^b$.
\end{corollary}

\begin{lemma}\label{lem:A^b is integrally closed}
    Let $A$ be an algebraic-affinoid $L_{\square}$-algebra.
    Then $A^b(\ast)$ is integrally closed in $A(\ast)$.
\end{lemma}
\begin{proof}
    By Proposition \cite[Proposition 3.22]{And21}, $A^{\circ}(\ast)\subset A(\ast)$ is integrally closed.
    Therefore, $A^b(\ast)=A^{\circ}(\ast)[1/\pi]\subset A(\ast)$ is also integrally closed.
\end{proof}

\begin{lemma}\label{lem:bounded nilpotent}
    Let $A$ be an algebraic-affinoid $L_{\square}$-algebra, and $I\subset A$ be a nilpotent ideal.
    Then for $f\in A$, $f\in A^{\circ}(\ast)$ if and only if its image in $(A/I)(\ast)$ lies in $(A/I)^{\circ}(\ast)$.
\end{lemma}
\begin{proof}
    The only if direction follows from Lemma \ref{lem:power-bounded elements}.
    We prove the if direction.
    By induction, we may assume $I^2=0$.
    We regard $A$-modules as $\Zbb[T]$-modules via $T\mapsto f$.
    By hypothesis, $A/I$ is $(\Zbb[T],\Zbb[T])_{\square}$-complete.
    Since $I/I^2=I$ is relatively discrete over $A/I$, $I$ is also $(\Zbb[T],\Zbb[T])_{\square}$-complete.
    Therefore, $A$ is $(\Zbb[T],\Zbb[T])_{\square}$-complete.
\end{proof}

\begin{proposition}\label{prop:A^b integral}
    Let $R$ be an affinoid $L$-algebra, and $A$ be a relatively discrete and finitely generated $R_{\square}$-algebra.
    Then $A^{b}$ is relatively discrete and integral over $R_{\square}$.
    Moreover, the nilradical $\sqrt{(0)}$ of $A$ is contained in $A^b$ and $A^b/\sqrt{(0)}=(A/\sqrt{(0)})^b$ is an affinoid $L$-algebra.
\end{proposition}
\begin{proof}
    Since $A^{b}$ is an $R$-submodule of $A$, $A^{b}$ is relatively discrete over $R$ by Lemma \ref{lem:rd sub}.
    We write $A$ as a filtered colimit $A=\varinjlim_{\lambda} A_{\lambda}$ of finitely generated $R$-submodules of $A$.
    Let $f\in A^{\circ}(\ast)$. 
    Then, we get a morphism
    $$L\otimes_{\Zbb_{\square}}\Zbb[[S]]\to L\langle T\rangle \to A$$
    with $S\mapsto \pi T$ and $T\mapsto f$.
    Since $L\otimes_{\Zbb_{\square}}\Zbb[[S]]$ is a compact $L_{\square}$-module, the above morphism factors through a certain $A_{\lambda}$.
    Therefore, for any $n\geq 0$, $\pi^nf^n\in A_{\lambda}$, and hence $f^n \in A_{\lambda}$.
    Consequently, $f$ is integral over $R(\ast)$, and so $A^{b}(\ast)=A^{\circ}(\ast)[1/\pi]$ is integral over $R(\ast)$, which completes the proof of the first part.
    We prove the second part.
    By Lemma \ref{lem:bounded nilpotent}, we get $\sqrt{(0)}\subset A^b$ and $A^b/\sqrt{(0)}=(A/\sqrt{(0)})^b$.
    Moreover, by Lemma \ref{lem:A^b is integrally closed}, $(A/\sqrt{(0)})^b(\ast)$ is the integral closure of the image of $R(\ast)$ in $A/\sqrt{(0)}(\ast)$. 
    By \cite[Satz 3.3.3]{BKKN67}, $R(\ast)$ is an excellent ring, and hence a Nagata ring.
    Therefore, $(A/\sqrt{(0)})^{b}(\ast)$ is finite over $R(\ast)$.
\end{proof}

\begin{remark}
    We note that $A^b$ is not necessarily an affinoid $L$-algebra.
    For example, if $A=L[S,T]/(T^2)$, then $A^b=L\oplus L[S]T$ is not an affinoid $L$-algebra.
\end{remark}

\begin{definition}
    Let $A$ be an algebraic affinoid $L_{\square}$-algebra.
    An \textit{affinoid $L$-algebra of definition of $A$} is a subring $A'\subset A$ such that $A'$ is an affinoid $L$-algebra and $A$ is relatively discrete and finitely generated over $A'$.
    By Remark \ref{rem:aff of def sub}, there exists at least one affinoid $L$-algebra of definition of $A$.
\end{definition}

\begin{lemma}\label{lem:A^b=A}
    Let $A$ be an algebraic-affinoid $L_{\square}$-algebra.
    If $A=A^b$, then $A$ is an affinoid $L$-algebra.
\end{lemma}
\begin{proof}
    Let $A'\subset A$ be an affinoid $L$-algebra of definition of $A$.
    By Proposition \ref{prop:A^b integral}, $A$ is integral over $A'$.
    Moreover, $A$ is a finitely generated $A'$-algebra.
    Hence $A$ is finite over $A'$, and therefore an affinoid $L$-algebra.
\end{proof}

\begin{lemma}\label{lem:basic properties of aff of def}
    Let $A$ be an algebraic affinoid $L_{\square}$-algebra.
    \begin{enumerate}
        \item Every affinoid $L$-algebra of definition $A'\subset A$ is contained in $A^b$.
        \item Let $B$ be an affinoid $L$-algebra, and $f\colon B \to A$ is a morphism of $L_{\square}$-algebras.
        Then there exists an affinoid $L$-algebra of definition $A'\subset A$  such that $f\colon B\to A$ factors through $A'\subset A$.
        \item Let $A_1,A_2\subset A$ be affinoid $L$-algebras of definition of $A$.
        Then there exists an affinoid $L$-algebra of definition $A_3\subset A$ such that $A_1,A_2$ are contained in $A_3$.
        \item  There is an isomorphism
        $$\varinjlim_{A'\subset A}A'=A^b,$$
        where the colimit is taken over all affinoid $L$-algebras of definition of $A'\subset A$.
    \end{enumerate}
\end{lemma}
\begin{proof}
    For an affinoid $L$-algebra $A'\subset A$ of definition of $A$, by Corollary \ref{cor:A^b=A}, $A'=(A')^b\subset A^b$, which proves (1).
    We prove (2).
    We take an affinoid $L$-algebra of definition $A'\subset A$.
    Let $x_1,\ldots,x_n\in B$ be a set of topological generators of $B$ as an affinoid $L$-algebra, and we set $y_i=f(x_i)\in A^b$.
    By Proposition \ref{prop:A^b integral}, $y_1,\ldots, y_n$ are integral over $A'$.
    Therefore, $A'[y_1,\ldots,y_n]$ is finite over $A'$, and this is also an affinoid $L$-algebra of definition of $A$.
    By construction, $f\colon B\to A$ factors through $A'[y_1,\ldots,y_n]$.
    Parts (3) and (4) can be proved in the same way.
\end{proof}

\begin{lemma}\label{lem:tensor product}
    Let $B\leftarrow A\to C$ be a diagram of algebraic-affinoid $L_{\square}$-algebras.
    Then $B\otimes_{A_{\square}} C$ is also an algebraic-affinoid $L_{\square}$-algebra.
\end{lemma}
\begin{proof}
    By Lemma \ref{lem:basic properties of aff of def} (2), we can take affinoid $L$-algebras of definition $A'\subset A$, $B'\subset B$, and $C'\subset C$ such that the image of $A'$ in $B$ (resp.\ $C$) is contained in $B'$ (resp.\ $C'$).
    First, by the proof of \cite[Proposition 2.14]{Mikami23}, $B'\otimes_{A'_{\square}} C'$ is an affinoid $L$-algebra.
    Since $A$ (resp.\ $B$, $C$) is a relatively discrete and finite generated $A$-algebra (resp.\ $B$-algebra, $C$-algebra), $B\otimes_{A_{\square}} C$ is also a relatively discrete and finite generated $B'\otimes_{A'_{\square}} C'$-algebra.
\end{proof}

\begin{definition}
Let $\Acal$ be an algebraic-affinoid analytic $L_{\square}$-algebra.
We define $$\Acal^+=\Hom_{\AnRing_{\Zbb_{\square}}}(\Zbb[X]_{\square},\Acal),$$
which is a subring of $\underline{\Acal}^{\circ}(\ast)$.
\end{definition}

\begin{definition}
    An \textit{algebraic-affinoid pair} over $L_{\square}$ is a pair $(A,A^+)$, where $A$ is an algebraic-affinoid $L_{\square}$-algebra and $\sqrt{(0)}\subset A^+\subset A^b(\ast)$ such that $A^+/\sqrt{(0)}$ is a ring of integral elements of $A^b(\ast)/\sqrt{(0)}$ in Huber's sense.
    Let $\AlgAffPair_{L_{\square}}$ denote the category of algebraic-affinoid pairs over $L_{\square}$.
\end{definition}

\begin{proposition}\label{prop:ring of integral elements}
    The functor
    \begin{align*}
        \AlgAffPair_{L_{\square}}\to \AlgAff_{L_{\square}};\; (A,A^+)\mapsto (A,A^+)_{\square}
    \end{align*}
    is a categorical equivalence.
    A quasi-inverse is given by
    \begin{align*}
        \AlgAff_{L_{\square}}\to \AlgAffPair_{L_{\square}};\; \Acal\mapsto (\underline{\Acal},\Acal^+).
    \end{align*}
\end{proposition}
\begin{proof}
    Let $\Acal=(A,A^+)_{\square}\in \AlgAff_{L_{\square}}$, where $A$ is $(A^+,A^+)_{\square}$-complete.
    Then, $A^+\subset \Acal^+$, so $\Dcal((A,\Acal^+)_{\square})\subset \Dcal((A,A^+)_{\square})$.
    On the other hand, by the definition of $\Acal^+$, we have $\Dcal((A,A^+)_{\square})\subset \Dcal((A,\Acal^+)_{\square})$.
    Therefore, we get $(A,A^+)_{\square}=(A,\Acal^+)_{\square}$.
    By \cite[Proposition 3.22]{And21}, $\sqrt{(0)}\subset \Acal^+$, where we note that nilpotent elements are integral over $\Zbb$.
    We set $\Acal_{\mathrm{red}}=(A/\sqrt{(0)},\Acal^+/\sqrt{(0)})_{\square}$.
    By \cite[Proposition 12.23]{AG}, $\Acal_{\mathrm{red}}^{+}=\Acal^+/\sqrt{(0)}$ (cf.\ the proof of Lemma \ref{lem:A^+ nuclear}).
    By \cite[Proposition 3.22, Lemma 3.31]{And21}, $(\Acal_{\mathrm{red}})^{+}\subset (A^b/\sqrt{(0)})(\ast)$ is integrally closed and open.
    Therefore, it is a ring of integral elements of $(A^b/\sqrt{(0)})(\ast)$.

    Next, let $(A,A^+)$ be an algebraic-affinoid pair over $L_{\square}$, and we set $\Acal=(A,A^+)_{\square}$.
    Let us prove $(A,A^+)=(A,\Acal^+)$.
    In the same way as in the previous paragraph, we may assume that $A$ is reduced.
    By definition, we have $A^+\subset \Acal^+$. 
    Assume that $\Acal^+ \not\subset A^+$ and take $a\in \Acal^+\setminus A^+$.
    Then, there exists a morphism
    \begin{align*}
        f\colon \prod_{\Nbb} \Zbb[T] \to (A,A^+)_{\square}\otimes_{\Zbb_{\square}} \prod_{\Nbb} \Zbb\cong (A,A^+)_{\square}\otimes_{(\Zbb[T],\Zbb[T])_{\square}} \prod_{\Nbb} \Zbb[T]
    \end{align*}
    induced by $(\Zbb[T],\Zbb[T])_{\square}\to (A,\Acal^+)_{\square}=(A,A^+)_{\square};\; T\mapsto a$.
    We write $A$ as a filtered colimit $A=\varinjlim_{\lambda} A_{\lambda}$ of finitely generated $A^b$-submodules of $A$.
    Then, we have
    \begin{align*}
        (A,A^+)_{\square}\otimes_{\Zbb_{\square}} \prod_{\Nbb}\Zbb \cong \varinjlim_{\lambda}(A_{\lambda}\otimes_{(A^b,A^+)_{\square}}((A^b,A^+)_{\square}\otimes_{\Zbb_{\square}}\prod_{\Nbb}\Zbb)).
    \end{align*}
    Since $\prod_{\Nbb} \Zbb[T]$ is a compact $(\Zbb[T],\Zbb[T])_{\square}$-module, the morphism $f$ factors through $A_{\lambda}\otimes_{(A^b,A^+)_{\square}}((A^b,A^+)_{\square}\otimes_{\Zbb_{\square}}\prod_{\Nbb}\Zbb)$ for some $\lambda$.
    Let $A_{\lambda}^0\subset A_{\lambda}$ be a module of definition of $A_{\lambda}$, i.e., $\{\pi^nA_{\lambda}^0\}_{n\geq 0}$ is a fundamental system of neighborhoods of $0$ in $A_{\lambda}$.
    By the same argument as in \cite[Theorem 3.27]{And21}, we obtain an isomorphism
    \begin{align*}
        A_{\lambda}\otimes_{(A^b,A^+)_{\square}}((A^b,A^+)_{\square}\otimes_{\Zbb_{\square}}\prod_{\Nbb}\Zbb) \cong \varinjlim_{B\subset A^+, M}\prod_{\Nbb} M,
    \end{align*}
    where the colimit is taken over all finitely generated $\Zbb$-subalgebras $B\subset A^+$ and all closed $B$-submodules $M$ of $A_{\lambda}$ such that $M/(M\cap \pi^n A_{\lambda}^0)$ is finitely generated over $B$ for any $n>0$.
    Then, there exists $B$ and $M$ such that $f(1,T,T^2,\ldots)=(1,a,a^2,\ldots)\in \prod_{\Nbb} M$.
    Since $a\in A^b$, we have $(1,a,a^2,\ldots)\in \prod_{\Nbb} (M\cap A^b)$.
    Then, $M\cap A^b$ is quasi-finitely generated over $B$, so the same argument as in \cite[Proposition 3.34]{And21} leads to a contradiction.
\end{proof}

\begin{lemma}\label{lem:A^+ nuclear}
    Let $(A,A^+)$ be an algebraic-affinoid pair over $L_{\square}$.
    Then the inclusion $\Dcal((A,A^+)_{\square})\subset \Dcal(A_{\square})$ induces $\Dcal^{\nuc}((A,A^+)_{\square})= \Dcal^{\nuc}(A_{\square})$
\end{lemma}
\begin{proof}
    It suffices to show that for $M\in \Dcal(A_{\square})$, $M$ is nuclear in $\Dcal(A_{\square})$ if and only if $M$ is $(A,A^+)_{\square}$-complete and nuclear in $\Dcal((A,A^+)_{\square})$.
    By \cite[Lemma 4.5]{And23}, if $M$ is $(A,A^+)_{\square}$-complete and nuclear in $\Dcal((A,A^+)_{\square})$, then $M$ is nuclear in $\Dcal(A_{\square})$.
    On the other hand, assume that $M$ is nuclear in $\Dcal(A_{\square})$.
    If $M$ is $(A,A^+)_{\square}$-complete, then $M\cong M\otimes_{A_{\square}}^{\Lbb} (A,A^+)_{\square}$ is also nuclear in $\Dcal((A,A^+)_{\square})$ by \cite[Proposition 2.3.22 (iii)]{Mann22}.
    Therefore, it suffices to show that $M$ is $(A,A^+)_{\square}$-complete.
    We take $f\in A^+$.
    Then, we get a morphism $(\Ocal_L\langle T\rangle,\Zbb[T])_{\square}\to (A,A^+)_{\square};\; T\mapsto f$.
    By Remark \ref{rem:nuclear alg-aff} and \cite[Lemma 2.9]{Mikami23}, $M$ is nuclear in $\Dcal(\Ocal_L\langle T\rangle_{\square})$.
    Therefore, by \cite[Proposition 2.5, Proposition 2.6]{Mikami23}, $M$ is $(\Ocal_L\langle T\rangle,\Zbb[T])_{\square}$-complete.
    Since this holds for any $f\in A^+$, $M$ is $(A,A^+)_{\square}$-complete.
\end{proof}

\begin{corollary}\label{cor:dualizable indep of A+}
    The inclusion $\Dcal(\Acal)\subset \Dcal(A_{\square})$ induces a categorical equivalence between the categories of dualizable objects in $\Dcal(\Acal)$ and $\Dcal(A_{\square})$.
\end{corollary}
\begin{proof}
    By Lemma \ref{lem:A^+ nuclear}, the inclusion induces $\Dcal^{\nuc}(\Acal)=\Dcal^{\nuc}(A_{\square})$.
    Then, the claim follows from Corollary \ref{cor:dualizable nuclear}.
\end{proof}

\begin{lemma and definition}
    Let $(B,B^+)\leftarrow (A,A^+)\to (C,C^+)$ be a diagram of algebraic-affinoid pairs over $L_{\square}$, and we set $\Acal=(A,A^+)_{\square}$, $\Bcal=(B,B^+)_{\square}$, and $\Ccal=(C,C^+)_{\square}$.
    We define an algebraic-affinoid analytic $L_{\square}$-algebra
    $$\Bcal\otimes_{\Acal} \Ccal=(B\otimes_{A_{\square}}C, \mbox{the image of $B^+\otimes C^+$ in $(B\otimes_{A_{\square}}C)(\ast)$})_{\square}.$$
    Then, it is a pushout of $\Bcal\leftarrow \Acal\to \Ccal$ in the category $\AlgAff_{L_{\square}}.$
\end{lemma and definition}
\begin{proof}
    We write $D^+$ for the image of $B^+\otimes C^+$ in $(B\otimes_{A_{\square}}C)(\ast)$.
    Since $B$ and $C$ are nuclear as $L_{\square}$-modules, they are also nuclear as $A_{\square}$-modules by \cite[Lemma 2.9]{Mikami23}.
    Therefore, $B\otimes_{A_{\square}}^{\Lbb}C$ is also nuclear as an $A_{\square}$-module.
    By using \cite[Lemma 2.9]{Mikami23} again, $B\otimes_{A_{\square}}^{\Lbb}C$ is also nuclear as a $B_{\square}$-module (resp.\ a $C_{\square}$-module).
    By Lemma \ref{lem:A^+ nuclear}, $B\otimes_{A_{\square}}^{\Lbb}C$ is $\Bcal$-complete and $\Ccal$-complete.
    Therefore, $\Bcal\otimes_{\Acal}\Ccal$ defined in the above is isomorphic to $\pi_0(\Bcal\otimes_{\Acal}^{\Lbb}\Ccal)$, which proves the lemma.
\end{proof}

\begin{lemma and definition}
Let $A$ be an algebraic-affinoid $L_{\square}$-algebra, and $f\in A$.
We define $L_{\square}$-algebras
\begin{align*}
    &A\langle T\rangle\coloneqq A\otimes_{\Zbb_{\square}}^{\Lbb} (\Zbb[T],\Zbb[T])_{\square},\\
    &A\langle 1/f\rangle\coloneqq A\otimes_{(\Zbb[T],\Zbb)_{\square}}^{\Lbb}(\Zbb[T,T^{-1}],\Zbb[T^{-1}])_{\square},\\
    &A\langle f/1\rangle\coloneqq A\otimes_{(\Zbb[T],\Zbb)_{\square}}^{\Lbb}(\Zbb[T],\Zbb[T])_{\square},
\end{align*}
where $\Zbb[T]\to A$ is given by $T\mapsto f$.
Then, they are also algebraic-affinoid $L_{\square}$-algebras.
\end{lemma and definition}
\begin{proof}
    If $A$ is an affinoid $L$-algebra, then these rings coincide with the usual ones by \cite[Proposition 3.14, Proposition 4.11]{And21}.
    In general, let $A'\subset A$ be an affinoid $L$-algebra of definition of $A$.
    Then there is an isomorphism
    \begin{align*}
        A\otimes_{\Zbb_{\square}}^{\Lbb} (\Zbb[T],\Zbb[T])_{\square} 
        &\cong A\otimes_{A'_{\square}}^{\Lbb}(A^b\otimes_{\Zbb_{\square}}^{\Lbb} (\Zbb[T],\Zbb[T])_{\square})\\
        &\cong A\otimes_{A'_{\square}}^{\Lbb} A'\langle T\rangle,
    \end{align*}
    which is relatively discrete over $A'\langle T\rangle$.
    Since $A'\langle T\rangle(\ast)$ is flat over $A'(\ast)$, $A\langle T\rangle=A\otimes_{A'_{\square}}^{\Lbb} A'\langle T\rangle$ is static by Lemma \ref{lem:condensification functor} (1).
    Since $A$ is a finitely generated $A'$-algebra, $A\langle T\rangle$ is also a finitely generated $A'\langle T\rangle$-algebra.

    In the same way as in the proof of \cite[Proposition 4.11]{And21}, we get isomorphisms
    \begin{align*}
        &A\langle 1/f \rangle \cong A\langle T\rangle /^{\Lbb} (fT-1),\\
        &A\langle f/1 \rangle \cong A\langle T\rangle /^{\Lbb} (T-f),
    \end{align*}
    where $/^{\Lbb}$ means the derived quotient.
    It suffices to show that both $fT-1$ and $T-f$ are non-zero divisors in $A\langle T\rangle(\ast)$ by Lemma \ref{lem:condensification functor} (1).
    We write $A$ as a filtered colimit $A=\varinjlim_{\lambda} A_{\lambda}$ of finitely generated $A'$-submodules of $A$.
    Then, we have $A\langle T\rangle(\ast)=\varinjlim_{\lambda} A_{\lambda}\langle T\rangle (\ast)$.
    We take $g=\sum_{n=0}^{\infty} a_n T^n \in A\langle T\rangle(\ast)$ such that $(fT-1)g=0$ or $(T-f)g=0$.
    When $(fT-1)g=0$, then by comparing the coefficients starting from the constant term, we obtain $g=0$.
    When $(T-f)g=0$, then we get $a_0f=0$ and $a_i=a_{i+1}f$ for any $i\geq 0$.
    Since $A(\ast)$ is noetherian, there exists $N\geq 0$ such that all $f$-power torsion elements are $f^N$-torsion.
    From these, we obtain $g=0$.
\end{proof}

\begin{definition}
	Let $\Acal=(A,A^+)_{\square}$ be an algebraic-affinoid analytic $L_{\square}$-algebra.
\begin{enumerate}
\item 
We define an algebraic-affinoid analytic $L_{\square}$-algebra $\Acal\langle T\rangle$ by 
$$\Acal\langle T\rangle=(A\langle T\rangle ,A^+[T])_{\square}.$$
Then we have $$\Acal\langle T\rangle\cong \Acal\otimes_{\Zbb_{\square}}^{\Lbb} (\Zbb[T],\Zbb[T])_{\square}.$$ 
\item
For $f\in A$, we define an algebraic-affinoid analytic $L_{\square}$-algebras $\Acal\langle 1/f\rangle$ and $\Acal\langle f/1\rangle$ by 
\begin{align*}
    &\Acal\langle 1/f\rangle=(A\langle 1/f\rangle, A^+[1/f])_{\square},\\
    &\Acal\langle f/1\rangle=(A\langle f/1\rangle, A^+[f])_{\square}.
\end{align*}
Then we have
\begin{align*}
\Acal\langle 1/f\rangle&\cong \Acal\otimes_{(\Zbb[T],\Zbb)_{\square}}^{\Lbb}(\Zbb[T,T^{-1}],\Zbb[T^{-1}])_{\square},\\
\Acal\langle f/1\rangle&\cong \Acal\otimes_{(\Zbb[T],\Zbb)_{\square}}^{\Lbb}(\Zbb[T],\Zbb[T])_{\square},
\end{align*}
where $\Zbb[T]\to A$ is given by $T\mapsto f$.
A morphism of the form $\Acal \to \Acal \langle 1/f \rangle$ or $\Acal \to \Acal\langle f/1 \rangle$ is called a \textit{simple rational localization}.
\item A morphism $\Acal \to \Bcal$ in $\AlgAff_{L_{\square}}$ is called a \textit{rational localization} if it is a finite composition of simple rational localizations.
\item A finite family of rational localizations $\{\Acal\to \Bcal_i\}_{i=1}^n$ in $\AlgAff_{L_{\square}}$ is called a \textit{rational covering} if the family of functors 
$$\{-\otimes_{\Acal} \Bcal_i \colon \Dcal(\Acal)\to \Dcal(\Bcal_i)\}_{i=1}^n$$
is conservative.
\item A morphism $\Acal\to \Bcal$ in $\AlgAff_{L_{\square}}$ is called an \textit{affinoid localization} if 
\begin{itemize}
    \item it is a localization in the sense of \cite[Definition 12.16]{AG}, that is, the forgetful functor $\Dcal(\Bcal)\to \Dcal(\Acal)$ is fully faithful, and
    \item there exists a rational covering $\{\Bcal\to \Bcal_i\}_{i=1}^n$ such that, for each $i$, $\Acal\to \Bcal \to \Bcal_i$ is a rational localization.
\end{itemize}
A finite family of affinoid localizations $\{\Acal\to \Bcal_i\}_{i=1}^n$ in $\AlgAff_{L_{\square}}$ is called an \textit{affinoid covering} if the family of functors 
$$\{-\otimes_{\Acal} \Bcal_i \colon \Dcal(\Acal)\to \Dcal(\Bcal_i)\}_{i=1}^n$$
is conservative.
\end{enumerate}
\end{definition}

\begin{remark}
    Since both 
    $$(\Zbb[T],\Zbb)_{\square}\to (\Zbb[T,T^{-1}],\Zbb[T^{-1}])_{\square}$$ 
    and 
    $$(\Zbb[T],\Zbb)_{\square}\to (\Zbb[T],\Zbb[T])_{\square}$$ 
    are steady localizations, simple rational localizations are also steady localizations.
    Therefore, rational localizations are also steady localizations, and hence, affinoid localizations.
\end{remark}

\begin{example}
	Let $\Acal=(A,A^+)_{\square}$ be an algebraic-affinoid analytic $L_{\square}$-algebra.
	\begin{enumerate}
	\item For $f\in A$, $\{\Acal\to \Acal\langle 1/f\rangle,\Acal\to \Acal\langle f/1\rangle\}$ is a rational covering by the proof of \cite[Lemma 10.3 (ii)]{CM}.
	We call such a covering a \textit{simple Laurent covering}.
	\item For $f\in A$, $\{\Acal\to \Acal\langle 1/f\rangle,\Acal\to \Acal\langle 1/(1-f)\rangle\}$ is a rational covering, which can be proved in the same way as above.
	We call such a covering a \textit{simple balanced covering}.
	\end{enumerate}
\end{example}

\begin{example}
	Let $\Acal=(A,A^+)_{\square}$ be an algebraic-affinoid analytic $L_{\square}$-algebra.
	For $f\in A$, we define an algebraic-affinoid analytic $L_{\square}$-algebra 
	\begin{align*}
		\Acal_f=\Acal[1/f]\coloneqq (A[1/f],A^+)_{\square}\cong \Acal\otimes_{(\Zbb[X],\Zbb)_{\square}}^{\Lbb}(\Zbb[X,X^{-1}],\Zbb)_{\square},
	\end{align*}
	where $\Zbb[X]\to \Acal$ is given by $X\mapsto f$.
	Then $\Acal[1/f]$ is an algebraic-affinoid analytic $L_{\square}$-algebra, but $\Acal \to \Acal_f$ is \textbf{not} a rational localization in general.
\end{example}

\begin{example}
	Let $\Acal=(A,A^+)_{\square}$ be an algebraic-affinoid analytic $L_{\square}$-algebra.
	For $f_0,\ldots,f_n \in A$ such that $(f_0,\ldots,f_n)=A$, we define an algebraic-affinoid analytic $L_{\square}$-algebra 
	\begin{align*}
		&\phantom{{}\coloneqq{}}\Acal\left\langle \frac{f_1,\ldots,f_n}{f_0}\right\rangle \\
        &\coloneqq \left(A[1/f_0]\left\langle \frac{f_1/f_0}{1},\ldots, \frac{f_n/f_0}{1} \right\rangle, A[f_1/f_0,\ldots,f_n/f_0]\right)_{\square}\\
		&\cong \Acal[1/f_0] \otimes_{(\Zbb[X_0^{\pm 1},X_1,\ldots,X_n],\Zbb)_{\square}}^{\Lbb}(\Zbb[X_0^{\pm 1},X_1,\ldots,X_n],\Zbb[X_1/X_0,\ldots,X_n/X_0])_{\square}.
	\end{align*}
	Then $\Acal \to \Acal\left\langle \frac{f_1,\ldots,f_n}{f_0}\right\rangle$ is an affinoid localization, which can be proved as in \cite[Lemma 1.6.12, 1.6.13]{AWS17Ked}.
	Moreover, the family $\left\{\Acal \to \Acal\left\langle \frac{f_0,\ldots,f_n}{f_i}\right\rangle\right\}_{i=0}^n$ is an affinoid covering.
	We call it the \textit{standard affinoid covering defined by $f_0,\ldots,f_n$}.
\end{example}

\begin{example}\label{ex:classical affinoid cover}
	Let $(A,A^+)$ be an affinoid pair over $L$ of weakly finite type, and $\{(A,A^+)\to (B_i,B_i^+)\}_{i=1}^n$ be a family of affinoid localizations in the usual sense.
	Then $\{\Spa(B_i,B_i^+)\to \Spa(A,A^+)\}_{i=1}^n$ is an open covering if and only if $\{(A,A^+)_{\square}\to (B_i,B_i^+)_{\square}\}_{i=1}^n$ is an affinoid covering in $\AlgAff_{L_{\square}}$.
    Indeed, the only if direction follows from \cite[Proposition 4.12 (v)]{And21}, and the if direction follows from \cite[Proposition 2.7.8 (3)]{RC24}.
\end{example}

\begin{lemma}\label{lem:basic properties of affinoid localizations}
    \begin{enumerate}
        \item Let $\{\Acal\to \Bcal_i\}_{i=1}^n$ be an affinoid covering in $\AlgAff_{L_{\square}}$.
        Then $\Acal \to \prod_{i=1}^n \Bcal_i$ satisfies universal $\ast$-descent.
        \item Let $\Acal\to \Bcal$ be an affinoid localization in $\AlgAff_{L_{\square}}$. 
        Then for a morphism $\Acal\to \Ccal$ in $\AlgAff_{L_{\square}}$, $\Ccal\to \Ccal\otimes_{\Acal} \Bcal$ is also an affinoid localization.
        Moreover, the natural morphism $\Ccal\otimes_{\Acal}^{\Lbb} \Bcal\to \Ccal\otimes_{\Acal} \Bcal$ is an isomorphism.
    \end{enumerate}
\end{lemma}
\begin{proof}
    First, let us prove (1).
    Let $\Acal\to \Rcal$ be a morphism of analytic rings.
    We note that each $\Rcal \to \Rcal\otimes_{\Acal}^{\Lbb}\Bcal_i=\Rcal_i$ is a steady localization in the sense of \cite[Definition 12.16]{AG}.
    Therefore, by \cite[Proposition 12.18]{AG}, it suffices to show that 
    $$\{-\otimes_{\Rcal}^{\Lbb}\Rcal_i \colon \Dcal(\Rcal)\to \Dcal(\Rcal_i)\}_{i=1}^n$$
    is conservative.
    This follows from the fact that the forgetful functor $\Dcal(\Rcal)\to \Dcal(\Acal)$ is conservative and that each morphism $\Acal\to \Bcal_i$ is steady.

    Next, let us prove (2).
    If $\Acal\to \Bcal$ is a simple rational localization, then the claim follows from the definition.
    The case where $\Acal\to \Bcal$ is a rational localization follows from the above.
    In general, we take a rational covering $\{\Bcal\to \Bcal_i\}_{i=1}^n$ such that $\Acal\to \Bcal \to \Bcal_i$ is a rational localization for each $i$.
    By (1), $\{\Ccal\otimes_{\Acal}\Bcal\to \Ccal\otimes_{\Acal}\Bcal_i\}_{i=1}^n$ is also a rational covering, so $\Ccal\to \Ccal\otimes_{\Acal} \Bcal$ is also an affinoid localization.
    Moreover, $\Ccal\otimes_{\Acal}^{\Lbb} \Bcal\to \Ccal\otimes_{\Acal} \Bcal$ becomes an isomorphism after applying $-\otimes_{\Bcal}^{\Lbb}\Bcal_i$, and thus this is already an isomorphism by (1).
\end{proof}

\begin{lemma}\label{lem:adic space lcoal}
    Let $\Acal$ be an algebraic-affinoid analytic $L_{\square}$-algebra.
    If there is an affinoid covering $\{\Acal\to (B_i,B_i^+)_{\square}\}_{i=1}^n$ such that each $(B_i,B_i^+)$ is an affinoid pair over $L$ of weakly finite type, then there exists an affinoid pair $(A,A^+)$ over $L$ of weakly finite type such that $\Acal\cong (A,A^+)_{\square}$.
\end{lemma}
\begin{proof}
    Let $A=\underline{\Acal}$. 
    By Lemma \ref{lem:A^b=A}, it suffices to show $A=A^b$.
    We take $f\in A$. By replacing $f$ with $\pi^m f$ for $m$ sufficiently large, we may assume that the image of $f$ in $B_i$ is power-bounded.
    By gluing the morphisms $(L\langle T\rangle,\Zbb)_{\square} \to (B_i,B_i^+)_{\square};\; T\mapsto f$, we get a morphism
    $$(L\langle T\rangle,\Zbb)_{\square} \to \Acal;\; T\mapsto f.$$
    Therefore, we get $f\in A^{\circ}(\ast)\subset A^b(\ast).$
\end{proof}

\begin{lemma}\label{lem:aff op of adic space}
    Let $(A,A^+)$ be an affinoid pair over $L$ of weakly finite type.
    Then every affinoid localization $(A,A^+)_{\square}\to \Bcal$ is of the form $(A,A^+)_{\square}\to (B,B^+)_{\square}$ where $(A,A^+)\to (B,B^+)$ is an affinoid localization in the usual sense.
\end{lemma}
\begin{proof}
    By Lemma \ref{lem:adic space lcoal}, we may assume that $(A,A^+)_{\square}\to \Bcal$ is a simple rational localization.
    In this case, the claim is clear.
\end{proof}

\begin{definition}
    Let $(A,A^+)$ be an algebraic-affinoid pair over $L_{\square}$.
    An \textit{affinoid pair of definition of $(A,A^+)$} is an affinoid pair $(A',A'^+)$ over $L$ of weakly finite type, where $A'$ is an affinoid $L$-algebra of definition of $A$ and $A'^+\subset A^+$ such that $(A,A^+)_{\square}=(A,A'^+)_{\square}$.
\end{definition}

\begin{lemma}\label{lem:basic property of aff pair of def}
    Let $(A,A^+)$ be an algebraic-affinoid pair over $L_{\square}$.
\begin{enumerate}
    \item Let $A'$ be an affinoid $L$-algebra of definition of $A$.
    If the image of $A'$ in $A/\sqrt{(0)}$ coincides with $(A/\sqrt{(0)})^b$, then $(A',A'(\ast)\cap A^+)$ is an affinoid pair of definition of $(A,A^+)$.
    Moreover, there exists at least one affinoid $L$-algebra of definition $A'$ of $A$ satisfying the above condition.
    \item Let $(B,B^+)$ be an affinoid pair over $L$ of weakly finite type, and $f\colon B \to A$ be a morphism of $L_{\square}$-algebras.
    Then there exists an algebraic-affinoid pair of definition $(A',A'^+)$ of $(A,A^+)$ such that $f\colon (B,B^+)\to (A,A^+)$ factors through $(A',A'^+)$.
\end{enumerate}
\end{lemma}
\begin{proof}
    First, we prove (1).
    If the image of $A'$ in $A/\sqrt{(0)}$ coincides with $(A/\sqrt{(0)})^b$, then $A'(\ast)+\sqrt{(0)}=A^b(\ast)$ by Proposition \ref{prop:A^b integral}.
    Therefore, we get $(A'(\ast)\cap A^+)+\sqrt{(0)}=A^+$.
    In particular, $A^+$ is integral over $A'(\ast)\cap A^+$, so $(A,A'(\ast)\cap A^+)_{\square}=(A,A^+)_{\square}$ by \cite[Proposition 3.22]{And21}.
    Next, we prove the existence of $A'$.
    Since $(A/\sqrt{(0)})^b$ is an affinoid $L$-algebra, there exists a surjection 
    $$\bar{f}\colon L\langle T_1,\ldots, T_n\rangle \to (A/\sqrt{(0)})^b.$$ 
    We write $\bar{f}(T_i)=\bar{t}_i\in (A/\sqrt{(0)})^{\circ}(\ast)$.
    By Lemma \ref{lem:bounded nilpotent}, we take a lift $t_i\in A^{\circ}(\ast)$ of $\bar{t}_i$ for each $i$.
    As in the proof of Lemma \ref{lem:power-bounded elements}, we get a morphism 
    $$f\colon L\langle T_1,\ldots, T_n\rangle \to A;\; T_i\mapsto t_i.$$
    Let $A'$ (resp.\ $I$) be the image (resp.\ kernel) of $f$.
    By Lemma \ref{lem:rd sub}, $I$ is relatively discrete over $L\langle T_1,\ldots, T_n\rangle$.
    Therefore, $A'=L\langle T_1,\ldots, T_n\rangle/I$ is relatively discrete and finite over $L\langle T_1,\ldots, T_n\rangle$.
    Hence, $A'$ is an affinoid $L$-algebra.
    By construction, the image of $A'$ in $A/\sqrt{(0)}$ coincides with $(A/\sqrt{(0)})^b$, and $A/\sqrt{(0)}$ is relatively discrete over $A'$.
    Therefore, for any $n\geq 0$, $\sqrt{(0)}^n/\sqrt{(0)}^{n+1}$ is also relatively discrete over $A'$.
    Since the class of relatively discrete objects is closed under extensions by Remark \ref{rem:rd extension}, $A$ is also relatively discrete over $A'$.
    Hence, $A'$ is an affinoid $L$-algebra of definition of $A$, which completes the proof of (1).
    The claim (2) follows from Lemma \ref{lem:basic properties of aff of def} (2).
\end{proof}

\begin{proposition}\label{prop:Zariski open}
Let $\Acal=(A,A^+)_{\square}$ be an algebraic-affinoid analytic $L_{\square}$-algebra.
Then, there is an affinoid covering $\{\Acal\to \Acal_i\}_{i=1}^n$ such that for each $i$, $\Acal_i$ is of the form $(B_i,B_i^+)_{\square}[1/f_i]$ for some affinoid pair $(B_i,B_i^+)$ over $L$ of weakly finite type and $f_i\in B_i$.
\end{proposition}
\begin{proof}
    Let $(A',A'^+)$ be an affinoid pair of definition of $(A,A^+)$, and $f_1,\ldots,f_n\in A$ be a set of generators of $A$ as an $A'$-algebra.
	Then the standard affinoid covering defined by $1,f_1\ldots,f_n$ satisfies the required properties.
\end{proof}

\begin{remark}
The choice of $f_1,\ldots,f_n$ defines an immersion $\Spec A\to \Pbb_{A'}^n$.
We can extend this immersion to $\AnSpec \Acal \to \Pbb^{n,\an}_{(A',A'^+)_{\square}}$, and the affinoid covering in the above proof is the pullback of the standard open covering of $\Pbb^{n,\an}_{(A',A'^+)_{\square}}$. 
\end{remark}

\begin{definition}
\begin{enumerate}
\item By Lemma \ref{lem:basic properties of affinoid localizations}, the family of affinoid coverings define a Grothendieck topology on $\AlgAff_{L_{\square}}^{\op}$.
We call this topology the \textit{analytic topology}.
\item For $\Acal\in \AlgAff_{L_{\square}}$, we write $\AnSpec \Acal$ for the analytic sheaf $\AlgAff_{L_{\square}}\to \Set$ represented by $\Acal$, where we note that the presheaf represented by $\Acal$ is an analytic sheaf by Lemma \ref{lem:basic properties of affinoid localizations} (1) (cf.\ \cite[Lemma 2.4.7 (i)]{Mann22}).
\item Let $\Acal\in \AlgAff_{L_{\square}}$, and let $F$ be an analytic sheaf on $\AlgAff_{L_{\square}}^{\op}$.
We call an injection $F\to \AnSpec \Acal$ an \textit{open immersion} if the morphism
$$\varinjlim_{\AnSpec \Bcal\to F}\AnSpec\Bcal \to F$$
is an isomorphism, where the colimit is taken over all affinoid localizations $\AnSpec\Bcal \to \AnSpec \Acal$ that factors through $F$.
\item Let $F,G$ be analytic sheaves on $\AlgAff_{L_{\square}}^{\op}$.
We call an injection $F\to G$ an \textit{open immersion} if, for any $\Acal\in \AlgAff_{L_{\square}}$ and $\AnSpec\Acal \to G$, the injection $F\times_G \AnSpec \Acal \to \AnSpec \Acal$ is an open immersion.
\item Let $F$ be an analytic sheaf on $\AlgAff_{L_{\square}}^{\op}$.
We call $F$ is an \textit{algebraic-analytic space} over $L_{\square}$ if the morphism
$$\varinjlim_{\AnSpec \Acal\to F}\AnSpec\Acal \to F$$
is an isomorphism, where the colimit is taken over all open immersions $\AnSpec\Acal \to F$.
Let $\AlgAnSp_{L_{\square}}$ denote the category of algebraic-analytic spaces over $L_{\square}$.
\end{enumerate}
\end{definition}

\begin{proposition}\label{prop:adic space to alg-aff space}
	There is a natural fully faithful functor 
	\begin{align*}
	\{X \mid \text{an adic space over $L$ locally of weakly finite type}\}\to \AlgAnSp_{L_{\square}};\; X\mapsto X_{\square}
	\end{align*}
	which is an extension of the functor $\Spa(A,A^+)\mapsto \AnSpec (A,A^+)_{\square}$. 
\end{proposition}
\begin{proof}
    It easily follows from Example \ref{ex:classical affinoid cover} and Lemma \ref{lem:aff op of adic space}.
\end{proof}


\section{$(\varphi,\Gamma)$-modules over perfect period rings and imperfect period rings}

\subsection{$(\varphi,\Gamma)$-modules over perfect period rings}\label{subsection:perfect}
Let $\pi^{\flat}\in \Ocal_{\hat{K}_{\infty}^{\flat}}$ be a pseudo-uniformizer such that $\pi^{\flat\sharp}/p\in \Ocal_{\hat{K}_{\infty}}^{\times}$.
We set 
\begin{align*}
&Y_{\bar{K}}=\Spa(W(\Ocal_{C^{\flat}}))\setminus \{p[\pi^{\flat}]=0\},\\
&Y_{K_{\infty}}=\Spa(W(\Ocal_{\hat{K}_{\infty}^{\flat}}))\setminus \{p[\pi^{\flat}]=0\},
\end{align*}
which are analytic adic spaces over $\Qbb_p$ equipped with a Frobenius automorphism $\varphi$ and a continuous action of $G_K$, respectively $\Gamma_K$.
We also define the Fargues-Fontaine curves $X_{\bar{K}}=Y_{\bar{K}}/\varphi^{\Zbb}$ and $X_{K_{\infty}}=Y_{K_{\infty}}/\varphi^{\Zbb}$, which have a continuous action of $G_K$ and $\Gamma_K$, respectively.

There exist surjective continuous morphisms 
$$\kappa \colon Y_{\bar{K}} \to (0,\infty), \kappa \colon Y_{K_{\infty}} \to (0,\infty)$$ 
defined by 
$$\kappa(x)=\frac{\log \lvert[\pi^{\flat}](\tilde{x})\rvert}{\log \lvert p(\tilde{x})\rvert},$$
where $\tilde{x}$ is the maximal generalization of $x$ (cf.\ \cite[12.2]{SW20}).
For a closed interval $I=[r,s]\subset [0,\infty)$ (such that $r,s\in \Qbb$, $I\neq [0,0]$), let $Y_{\bar{K}}^{I}, Y_{K_{\infty}}^{I}$ denote the interior of the preimages of $I$ under $\kappa$.
We set
\begin{align*}
&\tilde{B}_{\bar{K}}^{I}=\Ocal_{Y_{\bar{K}}}(Y_{\bar{K}}^{I}),\\
&\tilde{B}_{\bar{K}}^{I,+}=\Ocal^{+}_{Y_{\bar{K}}}(Y_{\bar{K}}^{I}),\\
&\tilde{B}_{K_{\infty}}^{I}=\Ocal_{Y_{K_{\infty}}}(Y_{K_{\infty}}^{I}),\\
&\tilde{B}_{K_{\infty}}^{I,+}=\Ocal^{+}_{Y_{K_{\infty}}}(Y_{K_{\infty}}^{I}).
\end{align*}

It is easy to check that the continuous action of $G_K$ (resp.\ $\Gamma_K$) on $Y_{\bar{K}}$ (resp.\ $Y_{K_{\infty}}$) 
induces the action on $\tilde{B}_{\bar{K}}^{I}$
(resp.\ $\tilde{B}_{K_{\infty}}^{I}$),
and the Frobenius automorphism $\varphi \colon Y_{\bar{K}} \to Y_{\bar{K}}$ (resp.\ $\varphi \colon Y_{K_{\infty}} \to Y_{K_{\infty}}$) induces the Frobenius automorphism
$\varphi \colon \tilde{B}_{\bar{K}}^{I} \overset{\sim}{\to} \tilde{B}_{\bar{K}}^{I/p}$  (resp.\ $\varphi \colon \tilde{B}_{K_{\infty}}^{I} \overset{\sim}{\to} \tilde{B}_{K_{\infty}}^{I/p}$), where $I/p=[r/p,s/p]$.

\begin{remark}\label{rem:nullstellensatz FF curve}
    The Banach $\Qbb_p$-algebras $\tilde{B}_{K_{\infty}}^{I}$ and $\tilde{B}_{\bar{K}}^{I}$ defined above are Jacobson-Tate rings (Definition \ref{def:Jacobson-Tate}), which follows from \cite[Theorem 3.7]{KM25}.
    A Banach $\tilde{B}_{K_{\infty}}^{I}$-algebra (resp.\ $\tilde{B}_{\bar{K}}^{I}$-algebra) topologically of finite type is also a Jacobson-Tate ring.
\end{remark}
Let $I\subset (0,\infty)$ be a closed interval, and let $(A,A^+)$ be an algebraic-affinoid pair over $\Qbb_p$ or a complete Tate affinoid pair over $\Qbb_p$.
We set $\Acal=(A,A^+)_{\square}$.
We define analytic $\Acal$-algebras
\begin{align*}
    &\tilde{\Bcal}_{\bar{K},\Acal}^{I}=(\tilde{B}_{\bar{K}}^{I},\tilde{B}_{\bar{K}}^{I,+})_{\square}\otimes_{\Qbb_{p,\square}}^{\Lbb}\Acal,\\
    &\tilde{\Bcal}_{K_{\infty},\Acal}^{I}=(\tilde{B}_{K_{\infty}}^{I},\tilde{B}_{K_{\infty}}^{I,+})_{\square}\otimes_{\Qbb_{p,\square}}^{\Lbb}\Acal.
\end{align*}
We also define $A$-algebras
\begin{align*}
&\tilde{B}_{\bar{K},A}^{I}=\tilde{B}_{\bar{K}}^{I}\otimes_{\Qbb_{p,\square}}A,\\
&\tilde{B}_{K_{\infty},A}^{I}=\tilde{B}_{K_{\infty}}^{I}\otimes_{\Qbb_{p,\square}}A.
\end{align*}

\begin{lemma}\label{lem:underlying algebra}
    The underlying (condensed) ring of $\tilde{\Bcal}_{\bar{K},\Acal}^{I}$ (resp.\ $\tilde{\Bcal}_{K_{\infty},\Acal}^{I}$) is $\tilde{B}_{\bar{K},A}^{I}$ (resp.\ $\tilde{B}_{K_{\infty},A}^{I}$).
    In particular, it only depends on $A$ and does not depend on $A^+$.
\end{lemma}

\begin{proof}
Since both can be proved in the same way, we only prove for $\tilde{\Bcal}_{\bar{K},\Acal}^{I}$.
Since $\Qbb_{p,\square}\to \Acal$ is steady by Remark \ref{rem:every morph steady}, the underlying $\Qbb_{p,\square}$-algebra of $\tilde{\Bcal}_{\bar{K},\Acal}^{I}$ is isomorphic to $\tilde{B}_{\bar{K}}^{I}\otimes_{\Qbb_{p,\square}}^{\Lbb}\Acal$.
Since $\tilde{B}_{\bar{K}}^{I}$ is a nuclear $\Qbb_{p,\square}$-module by \cite[Corollary 3.16]{RJRC22}, $\tilde{B}_{\bar{K}}^{I}\otimes_{\Qbb_{p,\square}}^{\Lbb}A$ is also a nuclear object of $\Dcal(A_{\square})$.
By Lemma \ref{lem:A^+ nuclear}, $\tilde{B}_{\bar{K},A}^{I}$ is already $\Acal$-complete.
In other words, the natural morphism 
$$\tilde{B}_{\bar{K}}^{I}\otimes_{\Qbb_{p,\square}}^{\Lbb}A\to \tilde{B}_{\bar{K}}^{I}\otimes_{\Qbb_{p,\square}}^{\Lbb}\Acal$$
is an isomorphism.
Since $A$ is a flat $\Qbb_{p,\square}$-module (Example \ref{ex:flat module}), we get the claim.
\end{proof}

\begin{remark}
If $A$ is a Banach $\Qbb_p$-algebra, then $\tilde{B}_{\bar{K},A}^{I}$ (resp.\ $\tilde{B}_{K_{\infty},A}^{I}$) is equal to the Banach $\Qbb_p$-algebra $\tilde{B}_{\bar{K}}^{I}\hotimes_{\Qbb_p}A$ (resp.\ $\tilde{B}_{K_{\infty}}^{I}\hotimes_{\Qbb_p}A$).
\end{remark}

\begin{definition}\label{def:perfect module}
\begin{enumerate}
    \item A \textit{$\varphi$-module} $M$ over $\tilde{\Bcal}_{\bar{K},\Acal}$ is a family of finite projective $\tilde{\Bcal}_{\bar{K},\Acal}^{I}$-modules $\{M^{I}\}_{I}$ for each closed interval $I\subset (0,\infty)$ and isomorphisms 
    \begin{align*}
        &\tau_{I,I'}\colon M^{I}\otimes_{\tilde{\Bcal}_{\bar{K},\Acal}^{I}} \tilde{\Bcal}_{\bar{K},\Acal}^{I'} \overset{\sim}{\to} M^{I'},\\
    &\Phi_{I}\colon M^{I}\otimes_{\tilde{\Bcal}_{\bar{K},\Acal}^{I},\varphi}\tilde{\Bcal}_{\bar{K},\Acal}^{I/p}\cong M^{I/p}
    \end{align*}
    for each closed intervals $I' \subset I$, satisfying usual cocycle conditions and $$\Phi_{I'}\circ\varphi^{\ast}\tau_{I,I'}=\tau_{I/p,I'/p} \circ \Phi_{I}.$$
    In the same way, we define a notion of $\varphi$-modules over $\tilde{\Bcal}_{K_{\infty},\Acal}$.
    Let $\VB_{\tilde{\Bcal}_{\bar{K},\Acal}}^{\varphi}$ (resp.\ $\VB_{\tilde{\Bcal}_{K_{\infty},\Acal}}^{\varphi}$) denote the category of $\varphi$-modules over $\tilde{\Bcal}_{\bar{K},\Acal}$ (resp.\ $\tilde{\Bcal}_{K_{\infty},\Acal}$).
    \item A \textit{$(\varphi,G_K)$-module} $M$ over $\tilde{\Bcal}_{\bar{K},\Acal}$ is a family of finite projective $\tilde{\Bcal}_{\bar{K},\Acal}^{I}$-modules $\{M^{I}\}_{I}$ for each closed interval $I\subset (0,\infty)$ with a continuous semilinear $G_K$-action on $M^{I}$ and $G_K$-equivariant isomorphisms 
     \begin{align*}
        &\tau_{I,I'}\colon M^{I}\otimes_{\tilde{\Bcal}_{\bar{K},\Acal}^{I}} \tilde{\Bcal}_{\bar{K},\Acal}^{I'} \overset{\sim}{\to} M^{I'},\\
    &\Phi_{I}\colon M^{I}\otimes_{\tilde{\Bcal}_{\bar{K},\Acal}^{I},\varphi}\tilde{\Bcal}_{\bar{K},\Acal}^{I/p}\cong M^{I/p}
    \end{align*}
    for each closed intervals $I' \subset I$, satisfying the same condition as above.
    In the same way, we define a notion of $(\varphi,\Gamma_K)$-modules over $\tilde{\Bcal}_{K_{\infty},\Acal}$.
    Let $\VB_{\tilde{\Bcal}_{\bar{K},\Acal}}^{\varphi,G_K}$ (resp.\ $\VB_{\tilde{\Bcal}_{K_{\infty},\Acal}}^{\varphi,\Gamma_K}$) denote the category of $(\varphi,G_K)$-modules over $\tilde{\Bcal}_{\bar{K},\Acal}$ (resp.\ $(\varphi,\Gamma_K)$-modules over $\tilde{\Bcal}_{K_{\infty},\Acal}$).
\end{enumerate}
\end{definition}

We present several remarks.
While the following refers only to $(\varphi,G_K)$-modules over $\tilde{B}_{\bar{K},A}$, the same applies to other cases as well.

\begin{remark}\label{rem:A^+ independent}
    The notion of finite projective $\tilde{\Bcal}_{\bar{K},\Acal}^{I}$-modules only depends on $A$ and does not depend on $A^+$. 
    Moreover, for a finite projective $\tilde{\Bcal}_{\bar{K},\Acal}^{I}$-module $M$, there are isomorphisms
    \begin{align*}
        &M^{I}\otimes_{\tilde{\Bcal}_{\bar{K},\Acal}^{I}} \tilde{\Bcal}_{\bar{K},\Acal}^{I'}\cong M^{I}\otimes_{(\tilde{B}_{\bar{K},A}^{I})_{\square}} \tilde{B}_{\bar{K},A}^{I'}
        ,\\
        &M^{I}\otimes_{\tilde{\Bcal}_{\bar{K},\Acal}^{I}} \tilde{\Bcal}_{\bar{K},\Acal}^{I'}\cong M^{I}\otimes_{(\tilde{B}_{\bar{K},A}^{I})_{\square}} \tilde{B}_{\bar{K},A}^{I'}
        .
    \end{align*}
    Therefore, the notion of $\varphi$-modules (resp.\ $(\varphi,G_K)$-modules) over $\tilde{\Bcal}_{\bar{K},\Acal}$ depends only on $A$ and not on $A^+$.
    Thus, a $\varphi$-module (resp.\ $(\varphi,G_K)$-module) defined over $\tilde{\Bcal}_{\bar{K},\Acal}$ will also be referred to as a $\varphi$-module (resp.\ $(\varphi,G_K)$-module) over $\tilde{B}_{\bar{K},A}$.
    In practice, it is often more convenient to use the formulation over $\tilde{\Bcal}_{\bar{K},\Acal}$ when taking covers of $\Acal$ and considering descent, while in other situations the viewpoint over $\tilde{B}_{\bar{K},A}$ is preferable.
\end{remark}

\begin{remark}
    We will often omit $\tau$ from the notation, and call the morphism 
    $$\tau_{I,I'}\colon M^{I}\to M^{I}\otimes_{\tilde{\Bcal}_{\bar{K},\Acal}^{I}} \tilde{\Bcal}_{\bar{K},\Acal}^{I'} \overset{\sim}{\to} M^{I'}$$
    the restriction morphism.
\end{remark}

\begin{remark}
    Let $M=\{M^{I}\}_I $ be a $(\varphi,G_K)$-module over $\tilde{\Bcal}_{\bar{K},\Acal}$.
    Then $$\Phi_{I}\colon M^{I}\otimes_{\tilde{\Bcal}_{\bar{K},\Acal}^{I},\varphi}\tilde{\Bcal}_{\bar{K},\Acal}^{I/p}\cong M^{I/p}$$ defines a $\varphi$-semilinear isomorphism $M^{I}\to M^{I/p}$.
    We also write it as $\Phi_{I}$.
    We often omit $I$ from the notation.
    For $n\in \Zbb$ and $I'\subset I/p^n$, we also denote the composition of $\Phi^n \colon M^{I}\to M^{I/p^n}$ and the restriction morphism $M^{I/p^n} \to M^{I'}$ by $\Phi^n\colon M^{I}\to M^{I'}$.
\end{remark}

\begin{remark}
    We write $X_{\bar{K},\Acal}=X_{\bar{K}}\times_{\AnSpec\Qbb_{p,\square}}\AnSpec \Acal$, where we regard $X$ as an analytic space in the sense of Clausen-Scholze (\cite[Definition 13.2]{AG}).
    Let $M=\{M^{I}\}_{I}$ be a $(\varphi,G_K)$-module over $\tilde{\Bcal}_{\bar{K},\Acal}$.
    Then it defines a $G_K$-equivariant object of $\Dcal(X_{\bar{K},\Acal})$, so we can regard $M$ as a $G_K$-equivariant ``vector bundle'' on $X_{\bar{K},\Acal}$.
    We note that the notion of ``vector bundle'' is very subtle, see the remark below.
\end{remark}

\begin{remark}
We assume that $\Acal$ is an algebraic-affinoid analytic $\Qbb_{p,\square}$-algebra.
Let $M$ be an object of $\Dcal(Y_{\bar{K},\Acal})$, where $Y_{\bar{K},\Acal}=Y_{\bar{K}}\times_{\AnSpec\Qbb_{p,\square}}\AnSpec \Acal$.
In this case, even if the restriction $M^{I_i} \in \Dcal(Y_{\bar{K},\Acal}^{I_i})$ of $M$ to the open subspace $Y_{\bar{K},\Acal}^{I_i} \subset Y_{\bar{K},\Acal}$ is a finite projective $\tilde{\Bcal}_{\bar{K},\Acal}^{I_i}$-module for closed intervals $I_i\subset (0,\infty)$ such that $\cup I_i=(0,\infty)$, we do not know that $M^{I} \in \Dcal(Y_{\bar{K},\Acal}^{I})$ is a finite projective $\tilde{\Bcal}_{\bar{K},\Acal}^{I}$-module for every closed interval $I\subset (0,\infty)$.
A problem is that we do not know whether there is a descent theorem for finite projective modules over non-Fredholm analytic animated rings.
We note that we cannot apply Theorem \ref{thm:descent of finite projective modules}, since the restriction morphism $(\tilde{B}_{\bar{K},A}^{I})_{\square}\to (\tilde{B}_{\bar{K},A}^{I'})_{\square}$ is not flat, that is, the base change functor along it is not $t$-exact.
There is no such a problem if $A$ is a Banach $\Qbb_p$-algebra.
\end{remark}

\begin{remark}\label{rem:restriction of closed intervals}
    \begin{enumerate}
    \item
    The set of closed intervals used in Definition \ref{def:perfect module} can be restricted to a set of closed intervals of the form $[p^{m},p^{n}]$ for $m,n \in \Zbb$,
    since for any closed interval $I\subset (0,\infty)$ there exist $m,n\in\Zbb$ such that $I\subset[p^m,p^n]$.
    \item Let $l$ be any integer.
    Then the set of closed intervals used in Definition \ref{def:perfect module} can be restricted to a set of closed intervals of the form $[p^{m},p^{n}]$ for $m,n \in \Zbb$ such that $m\leq n \leq l$.
    It follows from the same argument as in \cite[Proposition 12.3.5]{SW20}.
    \end{enumerate}
\end{remark}

\begin{remark}\label{rem:tensor and int hom}
    Let $M=\{M^{I}\}_I$ and $N=\{N^{I}\}_I$ be $(\varphi,G_K)$-modules over $\tilde{B}_{\bar{K},A}$.
    Then $$M\otimes N=\{M^{I}\otimes_{\tilde{\Bcal}_{\bar{K},\Acal}^{I}}N^{I}\}_{I}$$ and $$\intHom(M,N)=\{\intHom_{\tilde{\Bcal}_{\bar{K},\Acal}^{I}}(M^{I},N^{I})\}_{I}$$ become $(\varphi,G_K)$-modules over $\tilde{\Bcal}_{\bar{K},\Acal}$ in the natural way. 
    We call it the tensor product (resp.\ internal hom) of $(\varphi,G_K)$-modules over $\tilde{\Bcal}_{\bar{K},\Acal}$.
    It is easy to show that the set $\Hom_{\varphi,G_K}(M,N)$ of morphisms of $(\varphi,G_K)$-modules is isomorphic to $$\Hom_{\varphi,G_K}(\tilde{B}_{\bar{K},A},\intHom(M,N)),$$ where $\tilde{B}_{\bar{K},A}=\{\tilde{B}_{\bar{K},A}^{I}\}_{I}$.
\end{remark}

The main theorem in this subsection is the following.

\begin{theorem}\label{thm:FF curve descent}
    The natural functor
    $$\VB_{\tilde{\Bcal}_{K_{\infty},\Acal}}^{\varphi,\Gamma_K} \to \VB_{\tilde{\Bcal}_{\bar{K},\Acal}}^{\varphi,G_K};\; M=\{M^{I}\}_{I} \mapsto \{M^{I}\otimes_{\tilde{\Bcal}_{K_{\infty},\Acal}^{I}} \tilde{\Bcal}_{\bar{K},\Acal}^{I}\}_{I}$$
    is a categorical equivalence.
    The quasi-inverse functor is given by
    $$\VB_{\tilde{\Bcal}_{\bar{K},\Acal}}^{\varphi,G_K}\to \VB_{\tilde{\Bcal}_{K_{\infty},\Acal}}^{\varphi,\Gamma_K} ;\; N=\{N^{I}\}_{I}\mapsto \{\Gamma(H_K,N^{I})\}_{I}.$$
\end{theorem}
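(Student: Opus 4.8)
The plan is to establish the equivalence by faithfully flat descent along the ``profinite Galois cover'' $\tilde{B}_{K_{\infty},A}^{[r,s]}\to \tilde{B}_{\bar{K},A}^{[r,s]}$ with Galois group $H_K$. First I would observe that $\tilde{B}_{\bar{K}}^{[r,s]}$ is a faithfully flat $\tilde{B}_{K_{\infty}}^{[r,s]}$-algebra (this is classical: it is even a splitting of the pro-finite-étale tower, using that $\tilde{B}_{\bar K}^{[r,s]}$ is the completion of $\varinjlim_L \tilde{B}_{L}^{[r,s]}$ over finite subextensions $L/K_\infty$, each step being finite étale), and that after $-\otimes_{\Qbb_{p,\bs}}A$ it stays faithfully flat by Example~\ref{ex:flat module} and base change. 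The descent datum for the $\star$-topology is then governed by Proposition~\ref{prop:ff descent}, which gives $\ast$-descent for $(\tilde{B}_{K_{\infty},A}^{[r,s]})_{\bs}\to (\tilde{B}_{\bar{K},A}^{[r,s]})_{\bs}$. One then identifies the \v{C}ech nerve of this cover with the ``bar resolution'' computing $H_K$-semilinear descent: because $H_K$ is a profinite group acting continuously, the cosimplicial ring $\{C(H_K^n,\tilde{B}_{\bar{K},A}^{[r,s]})\}$ recovers $\tilde{B}_{\bar{K},A}^{[r,s]}\hotimes_{\tilde{B}_{K_{\infty},A}^{[r,s]}}\cdots$, so a descent datum is exactly a continuous semilinear $H_K$-action on the base change. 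This matches the discussion in Corollary~\ref{cor:stacky cohomology comparison} for the stacky description of semilinear actions.

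Next I would run the argument interval by interval. Fix a closed interval $[r,s]$. Given a $(\varphi,\Gamma_K)$-module $\Ncal=\{N^{[r,s]}\}$ over $\tilde{B}_{\bar{K},A}$ with its $G_K$-action, I set $M^{[r,s]}=\Gamma(H_K,N^{[r,s]})=R\Gamma(H_K,N^{[r,s]})$ — where the key point is that this $R\Gamma$ is concentrated in degree $0$, which follows from $\ast$-descent plus the fact that $N^{[r,s]}$ is finite projective, hence its base change along the cover has no higher descent cohomology. I must then check that $M^{[r,s]}$ is a finite projective $\tilde{B}_{K_{\infty},A}^{[r,s]}$-module. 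This is where Theorem~\ref{thm:finite projective non-Fredholm} (equivalently Theorem~\ref{thm:descent of finite projective modules}) is essential: since $\tilde{B}_{K_{\infty}}^{[r,s]}$ is a Jacobson-Tate ring and $A$ is relatively discrete and finitely generated over an affinoid $\Qbb_p$-algebra, $(\tilde{B}_{K_{\infty},A}^{[r,s]})_{\bs}=(\tilde{B}_{K_{\infty},A}^{[r,s]}, \Zbb_p)_{\bs}$ is of the form $B_{\bs}$ with $B$ a condensed animated relatively discrete algebra over the Jacobson-Tate ring $\tilde{B}_{K_{\infty}}^{[r,s]}$; faithfully flat descent of $M^{[r,s]}\otimes^{\Lbb}\tilde{B}_{\bar{K},A}^{[r,s]}\simeq N^{[r,s]}$ along the cover then gives exactly that $M^{[r,s]}$ is dualizable and that $M^{[r,s]}$ and $(M^{[r,s]})^\vee$ lie in $\Dcal^{\leq 0}$, which is the criterion. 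The residual Frobenius and $\Gamma_K$-structures on $M^{[r,s]}$ are obtained from those on $N^{[r,s]}$ by descent (they commute with the $H_K$-action since $H_K\trianglelefteq G_K$ and $\varphi$ commutes with the Galois action), and the transition isomorphisms $\tau$ descend likewise because $(-)\otimes^{\Lbb}_{(\tilde{B}_{K_{\infty},A}^{[r,s]})_{\bs}}(\tilde{B}_{K_{\infty},A}^{[r^{\prime},s^{\prime}]})_{\bs}$ commutes with the formation of $H_K$-invariants (here finite projectivity of $N^{[r,s]}$ lets one commute $R\Gamma(H_K,-)$ past the base change, cf. Lemma~\ref{lem:fixed part and tensor products} applied to the bar complex). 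Conversely, for $\Mcal$ over $\tilde{B}_{K_{\infty},A}$, the base change $\{M^{[r,s]}\otimes \tilde{B}_{\bar{K},A}^{[r,s]}\}$ is manifestly finite projective with the diagonal $G_K=\Gamma_K\ltimes H_K$ action, where $H_K$ acts only on the $\tilde{B}_{\bar{K},A}^{[r,s]}$-factor; continuity of this action is inherited from continuity of the $G_K$-action on $\tilde{B}_{\bar{K},A}^{[r,s]}$.

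Then I would verify the two functors are quasi-inverse. In one direction, for $\Mcal$ over $\tilde{B}_{K_{\infty},A}$ one needs $\Gamma(H_K, M^{[r,s]}\otimes_{(\tilde{B}_{K_{\infty},A}^{[r,s]})_{\bs}}\tilde{B}_{\bar{K},A}^{[r,s]})\cong M^{[r,s]}$; since $M^{[r,s]}$ is finite projective this reduces by Lemma~\ref{lem:fixed part and tensor products} (with $W=M^{[r,s]}$ flat, $V=\tilde{B}_{\bar{K},A}^{[r,s]}$) to $\Gamma(H_K,\tilde{B}_{\bar{K},A}^{[r,s]})=\tilde{B}_{K_{\infty},A}^{[r,s]}$, which in turn is $(\Gamma(H_K,\tilde{B}_{\bar{K}}^{[r,s]}))\otimes_{\Qbb_{p,\bs}}A=\tilde{B}_{K_{\infty}}^{[r,s]}\otimes_{\Qbb_{p,\bs}}A$ using flatness of $A$ and the classical fact $(\tilde{B}_{\bar K}^{[r,s]})^{H_K}=\tilde{B}_{K_\infty}^{[r,s]}$ (equivalently the acyclicity of $\tilde{B}_{\bar K}^{[r,s]}$ as an $H_K$-representation in positive degree). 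In the other direction, for $\Ncal$ over $\tilde{B}_{\bar{K},A}$ one needs the counit $\Gamma(H_K,N^{[r,s]})\otimes_{(\tilde{B}_{K_{\infty},A}^{[r,s]})_{\bs}}\tilde{B}_{\bar{K},A}^{[r,s]}\to N^{[r,s]}$ to be an isomorphism — this is precisely the descent equivalence coming from Proposition~\ref{prop:ff descent}, applied to the descent datum on $N^{[r,s]}$ furnished by its $H_K$-action, together with the vanishing of higher $H_K$-cohomology established above. Compatibility with $\varphi$, $\Gamma_K$, and $\tau$ is formal once the underlying module statement holds, because all these structures were themselves produced by descent along the same cover.

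The main obstacle is the finite-projectivity claim for $M^{[r,s]}=\Gamma(H_K,N^{[r,s]})$ when $A$ is \emph{not} an affinoid $\Qbb_p$-algebra, because $(\tilde{B}_{K_{\infty},A}^{[r,s]})_{\bs}$ is then not Fredholm and one cannot simply argue that a dualizable object is a perfect complex. The whole point of Theorem~\ref{thm:finite projective non-Fredholm}/Theorem~\ref{thm:descent of finite projective modules} is to circumvent exactly this, so the real work is checking its hypotheses apply: that $\tilde{B}_{K_{\infty}}^{[r,s]}$ is Jacobson-Tate (noted in the remark after the definition of the period rings), that $\tilde{B}_{K_{\infty},A}^{[r,s]}$ is relatively discrete and finitely generated over it (immediate from $A$ being relatively discrete finitely generated over an affinoid $\Qbb_p$-algebra, since $\tilde{B}_{K_{\infty}}^{[r,s]}$ is a noetherian Banach $\Qbb_p$-algebra so $\tilde{B}_{K_{\infty},A}^{[r,s]}=\tilde{B}_{K_{\infty}}^{[r,s]}\otimes_{\Qbb_{p,\bs}}A$ is relatively discrete over the Jacobson-Tate algebra $\tilde{B}_{K_{\infty}}^{[r,s]}\hotimes_{\Qbb_p}B$), and that $\tilde{B}_{\bar{K},A}^{[r,s]}$ is faithfully flat over it. A secondary subtlety, flagged in Remark~\ref{rem:restriction of closed intervals} and the non-Fredholm remark, is that finite projectivity on a cover of $(0,\infty)$ by intervals need not glue to finite projectivity on all intervals; but here one does not need gluing — one works interval by interval and the equivalence is defined at that level, with the $\tau$'s providing the compatibility, so this issue does not actually arise for the statement as phrased.
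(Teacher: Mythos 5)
Your plan is essentially the same as the paper's: reduce to a single interval $[r,s]=[p^m,p^n]$, establish faithfully flat $\ast$-descent along $(\tilde{B}_{K_{\infty},A}^{[r,s]})_{\bs}\to(\tilde{B}_{\bar K,A}^{[r,s]})_{\bs}$ (Proposition \ref{prop:ff descent} and Example \ref{ex:flat module}), identify the \v{C}ech nerve with $\{C(H_K^n,\tilde{B}_{\bar K,A}^{[r,s]})_{\bs}\}$ (Proposition \ref{prop: Cech nerve}), recognize the cosimplicial limit as $H_K$-cohomology (Lemma \ref{lem:stacky definition} and Corollary \ref{cor:stacky cohomology comparison}), and use Theorem \ref{thm:descent of finite projective modules} to see that the descended module is finite projective over the non-Fredholm base. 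This matches the paper's argument essentially step for step, and you correctly identify that Theorem \ref{thm:finite projective non-Fredholm} is the key ingredient that makes the non-affinoid case work.

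There is however one genuine error in your write-up: you twice invoke Lemma \ref{lem:fixed part and tensor products} with $G=H_K$ — once to compute $\Gamma(H_K, M^{[r,s]}\otimes\tilde{B}_{\bar K,A}^{[r,s]})$, and once to commute $R\Gamma(H_K,-)$ past base change along $\tau$. That lemma is stated (and its proof via the Lazard-Serre resolution relies on the fact) only for a compact $p$-adic Lie group $G$. But $H_K=\Gal(\bar K/K_\infty)$ is the absolute Galois group of $K_\infty$: it is a large profinite group, not a compact $p$-adic Lie group, and no Lazard-Serre resolution is available. So the lemma cannot be cited here. The step itself is not lost — since $M^{[r,s]}$ is finite projective over $\tilde{B}_{K_\infty,A}^{[r,s]}$ and hence a retract of a finite free module, $R\Gamma(H_K,-)$ trivially commutes with $-\otimes M^{[r,s]}$ by the retract argument; but the paper sidesteps the issue entirely by instead using the descent equivalence directly: $M\simeq R\Gamma(H_K, M\otimes\tilde{B}_{\bar K,A}^{[r,s]})$ is exactly the statement that the unit of $\ast$-descent is an equivalence, via Propositions \ref{prop:almost Galois descent}, \ref{prop: Cech nerve} and Corollary \ref{cor:stacky cohomology comparison}, so there is nothing to commute. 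I'd also note that your first claim, that $R\Gamma(H_K,N^{[r,s]})$ is concentrated in degree $0$ ``because $N^{[r,s]}$ is finite projective, hence no higher descent cohomology'', is phrased in an order that is a bit circular as written: concentration in degree $0$ is a consequence of first descending to get an object $M\in\Dcal((\tilde{B}_{K_\infty,A}^{[r,s]})_{\bs})$, identifying $M$ with the totalization of the bar complex (i.e. $R\Gamma(H_K,N)$), and only then applying Theorem \ref{thm:descent of finite projective modules} to conclude $M$ is static. You do invoke the right theorem one sentence later, so the content is there, but the sentence as written would not survive scrutiny.
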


We note that if $A$ is an affinoid $\Qbb_p$-algebra, then Theorem \ref{thm:FF curve descent} is well-known (cf.\ \cite[5.1]{EGH23}). 
To generalize the result, we translate it in terms of condensed mathematics.
Let $m\leq n$ be integers and we write $I=[p^m,p^n]$. 
It is enough to show that there is a natural categorical equivalence $\FP_{\tilde{B}_{K_{\infty},A}^{I}}^{\Gamma_K} \simeq \FP_{\tilde{B}_{\bar{K},A}^{I}}^{G_K}$, where $\FP_{\tilde{B}_{\bar{K},A}^{I}}^{G_K}$ (resp.\ $\FP_{\tilde{B}_{K_{\infty},A}^{I}}^{\Gamma_K}$) is the category of finite projective $\tilde{B}_{\bar{K},A}^{I}$-modules (resp.\ $\tilde{B}_{K_{\infty},A}^{I}$-modules) with a continuous semilinear action of $G_K$ (resp.\ $\Gamma_K$).
To prove this, we prove the following propositions.

\begin{proposition}\label{prop:almost Galois descent}
\begin{enumerate}
\item There is an isomorphism 
$\tilde{B}_{\bar{K}}^{I}\cong \hat{\bigoplus_{J}}\tilde{B}_{K_{\infty}}^{I}$ 
of $\tilde{B}_{K_{\infty}}^{I}$-modules for some set $J$ (for the definition of $\hat{\bigoplus_{J}}\tilde{B}_{K_{\infty}}^{I}$, see Example \ref{ex:flat module}).
In particular, $\tilde{B}_{\bar{K},A}^{I}$ is faithfully flat over $(\tilde{B}_{K_{\infty},A}^{I})_{\square}$.
\item The morphism $(\tilde{B}_{K_{\infty},A}^{I})_{\square}\to (\tilde{B}_{\bar{K},A}^{I})_{\square}$ of analytic $\Qbb_{p,\square}$-algebras satisfies the $\ast$-descent.
\end{enumerate}
\end{proposition}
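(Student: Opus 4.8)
The plan is to deduce the whole proposition from the ``almost étale'' nature of the tower $\bar{K}/K_{\infty}$ (with group $H_K$) together with the formal input on flatness and $\ast$-descent already available from Section 1.

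For (1), recall that for a finite subextension $L/K_{\infty}$ of $\bar{K}$ the completion $\hat{L}$ is a finite extension of the perfectoid field $\hat{K_{\infty}}$, so by almost purity $\Ocal_{\hat{L}}$ is almost finite étale over $\Ocal_{\hat{K_{\infty}}}$; after tilting, $\hat{L}^{\flat}/\hat{K_{\infty}}^{\flat}$ is finite étale and $\Ocal_{\hat{L}^{\flat}}$ is an almost orthonormalizable $\Ocal_{\hat{K_{\infty}}^{\flat}}$-module. Passing to the analytic loci $Y_{L}^{[r,s]}\to Y_{K_{\infty}}^{[r,s]}$, on which the almost mathematics becomes exact, $\tilde{B}_{L}^{[r,s]}$ becomes a genuinely orthonormalizable $\tilde{B}_{K_{\infty}}^{[r,s]}$-module of rank $[L:K_{\infty}]$, with $1$ as a member of an orthonormal basis. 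I would then fix an exhaustion $K_{\infty}=L_0\subset L_1\subset\cdots$ of $\bar{K}$ by finite extensions, pick orthonormal bases of $\tilde{B}_{L_{n+1}}^{[r,s]}$ over $\tilde{B}_{L_n}^{[r,s]}$ compatibly along the tower, and assemble them into an orthonormal basis $\{e_j\}_{j\in J}$ with $e_0=1$ of $\varinjlim_n\tilde{B}_{L_n}^{[r,s]}$ over $\tilde{B}_{K_{\infty}}^{[r,s]}$. Since $\bigcup_n\tilde{B}_{L_n}^{[r,s]}$ is dense in $\tilde{B}_{\bar{K}}^{[r,s]}$, this yields the isomorphism $\tilde{B}_{\bar{K}}^{[r,s]}\cong\hat{\bigoplus_{j\in J}}\tilde{B}_{K_{\infty}}^{[r,s]}e_j$ of Banach, hence condensed, $\tilde{B}_{K_{\infty}}^{[r,s]}$-modules. (I expect to cite the orthonormalizability statement rather than reprove it; cf.\ the discussion of $Y$ in \cite{SW20}.)

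For the ``in particular'': Example \ref{ex:flat module}(4) applied to the Banach $\Qbb_p$-algebra $\tilde{B}_{K_{\infty}}^{[r,s]}$ shows $\hat{\bigoplus_{j\in J}}\tilde{B}_{K_{\infty}}^{[r,s]}$ is flat over $(\tilde{B}_{K_{\infty}}^{[r,s]})_{\bs}$; as $\tilde{B}_{\bar{K},A}^{[r,s]}$ is the base change of $\tilde{B}_{\bar{K}}^{[r,s]}$ along $(\tilde{B}_{K_{\infty}}^{[r,s]})_{\bs}\to(\tilde{B}_{K_{\infty},A}^{[r,s]})_{\bs}$ and flatness of modules over analytic rings is stable under base change of the base, $\tilde{B}_{\bar{K},A}^{[r,s]}$ is flat over $(\tilde{B}_{K_{\infty},A}^{[r,s]})_{\bs}$. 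For faithfulness I would note that projection onto the $e_0=1$-coordinate is a $\tilde{B}_{K_{\infty},A}^{[r,s]}$-linear retraction of the structure map, so any $M\in\Dcal((\tilde{B}_{K_{\infty},A}^{[r,s]})_{\bs})$ is a retract of $M\otimes^{\Lbb}\tilde{B}_{\bar{K},A}^{[r,s]}$; hence the base-change functor is conservative, and $\tilde{B}_{\bar{K},A}^{[r,s]}$ is faithfully flat over $(\tilde{B}_{K_{\infty},A}^{[r,s]})_{\bs}$. (Equivalently, identify the cofiber of the structure map with $\hat{\bigoplus_{j\in J\setminus\{0\}}}\tilde{B}_{K_{\infty},A}^{[r,s]}$, which is flat by the same argument, and apply Lemma \ref{lem:faithfully flat quotient}.)

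Part (2) is then formal. The solid analytic structure on $(\tilde{B}_{\bar{K},A}^{[r,s]})_{\bs}=(\tilde{B}_{\bar{K},A}^{[r,s]},\Zbb)_{\bs}$ is induced from that of $(\tilde{B}_{K_{\infty},A}^{[r,s]})_{\bs}$, since induction of analytic ring structures is transitive; so $\tilde{B}_{\bar{K},A}^{[r,s]}$ with this structure is an animated faithfully flat $(\tilde{B}_{K_{\infty},A}^{[r,s]})_{\bs}$-algebra whose analytic ring structure is induced from the base, and Proposition \ref{prop:ff descent} gives $\ast$-descent. The one genuinely substantial point is the orthonormal-basis input in (1): producing a compatible family of Schauder bases along $\bar{K}/K_{\infty}$ out of almost purity, and hence the honest decomposition $\tilde{B}_{\bar{K}}^{[r,s]}\cong\hat{\bigoplus_{j\in J}}\tilde{B}_{K_{\infty}}^{[r,s]}e_j$; everything downstream is formal given this and the lemmas of Section 1.
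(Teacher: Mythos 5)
Your proposal is correct and takes essentially the same route as the paper, which simply cites \cite[Proposition 11.2.15]{FF18} for the orthonormal decomposition and then invokes Example \ref{ex:flat module} and Proposition \ref{prop:ff descent}. You have merely unpacked the FF18 citation (almost purity along the tower $\bar{K}/K_{\infty}$, tilting, compatible Schauder bases with $e_0=1$) and made explicit the passage from flatness to faithful flatness via the retraction onto the $e_0$-coordinate (equivalently Lemma \ref{lem:faithfully flat quotient}), both of which are the steps the paper leaves implicit.
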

\begin{proof}
The existence of the isomorphism can be proved in the same way as in \cite[Proposition 11.2.15]{FF18}.
The faithfully flatness of $\tilde{B}_{\bar{K},A}^{I}=\tilde{B}_{\bar{K}}^{I}\otimes_{\Qbb_{p,\square}}^{\Lbb}A$ over $(\tilde{B}_{K_{\infty},A}^{I})_{\square}=(\tilde{B}_{K_{\infty}}^{I})_{\square}\otimes_{\Qbb_{p,\square}}^{\Lbb}A_{\square}$ follows from Example \ref{ex:flat module}.
Part (2) follows from Proposition \ref{prop:ff descent}.
\end{proof}

There is a natural continuous $\tilde{B}_{K_{\infty}}^{I}$-linear action of $H_K$ on $\tilde{B}_{\bar{K}}^{I}$.
From this we obtain an augmented cosimplicial object $\{C(H_K^n,\tilde{B}_{\bar{K}}^{I})\}_{[n] \in \Delta} \to \tilde{B}_{\bar{K}}^{I}$.

\begin{proposition}\label{prop: Cech nerve}
The augmented cosimplicial object $$\{C(H_K^n,\tilde{B}_{\bar{K}}^{I})_{\square}\}_{[n] \in \Delta} \to (\tilde{B}_{\bar{K}}^{I})_{\square}$$ is the \v{C}ech nerve of the morphism $(\tilde{B}_{K_{\infty}}^{I})_{\square}\to (\tilde{B}_{\bar{K}}^{I})_{\square}$
\end{proposition}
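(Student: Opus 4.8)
The plan is to show, degree by degree, that the $n$-th term of the \v{C}ech nerve of $(\tilde{B}_{K_{\infty}}^{[r,s]})_{\bs}\to(\tilde{B}_{\bar{K}}^{[r,s]})_{\bs}$, namely the $(n+1)$-fold solid derived tensor power $(\tilde{B}_{\bar{K}}^{[r,s]})_{\bs}^{\otimes^{\Lbb}_{(\tilde{B}_{K_{\infty}}^{[r,s]})_{\bs}}(n+1)}$, is canonically $C(H_K^n,\tilde{B}_{\bar{K}}^{[r,s]})_{\bs}$, compatibly with all cosimplicial structure maps and the (co)augmentation. First I would write down the comparison morphism: the continuous $\tilde{B}_{K_{\infty}}^{[r,s]}$-linear $H_K$-action on $\tilde{B}_{\bar{K}}^{[r,s]}$ produces, for every $n$, a map of analytic rings, which in degree $1$ is
\[
(\tilde{B}_{\bar{K}}^{[r,s]})_{\bs}\otimes^{\Lbb}_{(\tilde{B}_{K_{\infty}}^{[r,s]})_{\bs}}(\tilde{B}_{\bar{K}}^{[r,s]})_{\bs}\longrightarrow C(H_K,\tilde{B}_{\bar{K}}^{[r,s]})_{\bs},\qquad b_0\otimes b_1\longmapsto\big(g\mapsto b_0\,g(b_1)\big),
\]
and analogously in higher degrees. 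One checks directly that these assemble into a morphism of cosimplicial analytic rings from the \v{C}ech nerve to $\{C(H_K^n,\tilde{B}_{\bar{K}}^{[r,s]})_{\bs}\}$ equipped with its standard ``cochains of a group action'' face and degeneracy maps, compatibly with the coaugmentation by $(\tilde{B}_{K_{\infty}}^{[r,s]})_{\bs}$; since a morphism of cosimplicial objects is an equivalence as soon as it is one in each degree, it remains to check that the displayed maps are isomorphisms.

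Next I would reduce to finite subextensions. Write $\bar{K}/K_{\infty}$ as the increasing union of its finite Galois subextensions $L$, so $H_K=\varprojlim_L\Gal(L/K_{\infty})$ with $\Gal(L/K_{\infty})=H_K/H_L$ for $H_L=\Gal(\bar{K}/L)$. For each such $L$, $\hat{L}$ is finite Galois over $\hat{K_{\infty}}$ with the same Galois group, hence by tilting $\hat{L}^{\flat}$ is finite Galois over $\hat{K_{\infty}}^{\flat}$ with group $\Gal(L/K_{\infty})$; setting $Y_L=\Spa(W(\Ocal_{\hat{L}^{\flat}}))\setminus\{p[\pi^{\flat}]=0\}$ and $\tilde{B}_L^{[r,s]}=\Ocal_{Y_L}(Y_L^{[r,s]})$, the map $Y_L^{[r,s]}\to Y_{K_{\infty}}^{[r,s]}$ is finite \'etale Galois with group $\Gal(L/K_{\infty})$ (this is the input used in the proof of \cite[Proposition 11.2.15]{FF18}). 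Classical finite Galois descent then gives $\tilde{B}_L^{[r,s]}\otimes_{\tilde{B}_{K_{\infty}}^{[r,s]}}\tilde{B}_L^{[r,s]}\overset{\sim}{\to}\prod_{\Gal(L/K_{\infty})}\tilde{B}_L^{[r,s]}$ and, iterating, that the level-$L$ comparison $(\tilde{B}_L^{[r,s]})^{\otimes_{\tilde{B}_{K_{\infty}}^{[r,s]}}(n+1)}\overset{\sim}{\to}C((H_K/H_L)^n,\tilde{B}_L^{[r,s]})=\prod_{(H_K/H_L)^n}\tilde{B}_L^{[r,s]}$ is an isomorphism; since $\tilde{B}_L^{[r,s]}$ is finite projective over $\tilde{B}_{K_{\infty}}^{[r,s]}$, replacing the rings by their solid versions and the tensor products by derived ones changes nothing at this finite level.

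Then I would pass to the colimit and the completion. The Banach algebra $\tilde{B}_{\bar{K}}^{[r,s]}$ is the completion of $\varinjlim_L\tilde{B}_L^{[r,s]}$, and $C(H_K^n,\tilde{B}_{\bar{K}}^{[r,s]})$ is the completion of $\varinjlim_L C((H_K/H_L)^n,\tilde{B}_L^{[r,s]})$ (locally constant functions on the profinite set $H_K^n$ are dense, and the coefficients are the completion of $\varinjlim_L\tilde{B}_L^{[r,s]}$). On the left-hand side, solid derived tensor products commute with filtered colimits, and by Proposition \ref{prop:almost Galois descent}(1) together with Example \ref{ex:flat module}(4) the tensor powers $(\tilde{B}_{\bar{K}}^{[r,s]})_{\bs}^{\otimes^{\Lbb}(n+1)}$ are static, flat over $(\tilde{B}_{K_{\infty}}^{[r,s]})_{\bs}$, and identified with completed direct sums $\hat{\bigoplus_{J^n}}\tilde{B}_{\bar{K}}^{[r,s]}$; this lets one compute the completed colimit of the level-$L$ left-hand sides and match it with the corresponding tensor power. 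Taking completed colimits over $L$ of the isomorphisms of the previous step therefore yields the required isomorphism in each degree, and the cosimplicial structure maps agree because they already agree at every finite level $L$.

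The one genuinely delicate point is the interchange of the filtered colimit over $L$ with the $p$-adic (Banach) completion: since $(\tilde{B}_{\bar{K}}^{[r,s]})_{\bs}$ is not the naive colimit $\varinjlim_L(\tilde{B}_L^{[r,s]})_{\bs}$ in $\Dcal(\Zbb_{\bs})$, one must verify that $(\tilde{B}_{\bar{K}}^{[r,s]})_{\bs}^{\otimes^{\Lbb}_{(\tilde{B}_{K_{\infty}}^{[r,s]})_{\bs}}(n+1)}$ is computed by the completed tensor power of Banach algebras --- in particular that it is static --- and this is exactly where Proposition \ref{prop:almost Galois descent}(1) and Example \ref{ex:flat module} are used; it is in essence the argument of \cite[Proposition 11.2.15]{FF18}, upgraded so as to keep track of the multiplicative and cosimplicial structure. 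The rest is bookkeeping with the standard cochain face and degeneracy maps.
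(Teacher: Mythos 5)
Your proposal hits the same mathematical core as the paper's argument --- finite Galois descent for finite subextensions $L/K_\infty$ via \cite[Proposition 11.2.15]{FF18}, then a pass to the (completed) colimit over $L$ --- but arranges the reduction differently in two places.

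First, you compute every cosimplicial degree directly and observe that a morphism of cosimplicial objects is an equivalence if it is one in each degree. The paper instead invokes \cite[Proposition 6.1.2.11]{HTT}: since $\{C(H_K^n,\tilde{B}_{\bar K}^{[r,s]})_{\bs}\}$ is already a (co)groupoid, it suffices to check the single Segal-type condition in degree $1$, i.e.\ that
$\tilde{B}_{\bar K}^{[r,s]}\otimes^{\Lbb}_{(\tilde{B}_{K_\infty}^{[r,s]})_\bs}\tilde{B}_{\bar K}^{[r,s]}\to C(H_K,\tilde{B}_{\bar K}^{[r,s]})$ is an isomorphism. This buys a real simplification: you never have to verify compatibility of the comparison maps across degrees, because the cosimplicial structure is forced once degree $1$ is fixed. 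Your degree-by-degree route is not wrong, but it is longer, and your phrase ``analogously in higher degrees'' hides the checking that the Lurie criterion makes unnecessary.

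Second --- and this is where your writeup has a genuine soft spot --- you work entirely at the rational (Banach) level and invoke classical finite \'etale Galois descent for $\tilde{B}_L^{[r,s]}/\tilde{B}_{K_\infty}^{[r,s]}$, then appeal to ``completed colimits''. The paper instead passes to the integral rings $\tilde{B}_L^{[r,s],+}$ and proves the degree-$1$ map is an \emph{almost} isomorphism of $[\pi^\flat]$-adically complete solid modules. That choice is not cosmetic: $[\pi^\flat]$-adic completion is what makes the interchange of colimit and completion rigorous, via \cite[Proposition 2.12.10]{Mann22} on the tensor side and a direct argument (both $[\pi^\flat]$-adically complete, agreeing mod $[\pi^\flat]$) on the $C(H_K,-)$ side. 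At the rational level, ``the Banach completion of $\varinjlim_L$'' is not an operation intrinsically available in $\Dcal(\Qbb_{p,\bs})$, and knowing abstractly that the tensor power is some $\hat\bigoplus_{J^n}\tilde{B}_{\bar K}^{[r,s]}$ does not by itself show that the natural comparison map is an isomorphism. You flag this as ``the one genuinely delicate point'' and point toward Proposition \ref{prop:almost Galois descent}(1), Example \ref{ex:flat module}, and FF18, which are indeed the right tools; but the actual mechanism that closes the gap is precisely the descent to the integral level and Mann's completion lemma, and that step should be stated explicitly rather than compressed into ``this lets one compute the completed colimit''. Once that is filled in, your argument and the paper's coincide; the difference is one of packaging (all degrees at the rational level versus degree $1$ at the integral level) rather than of substance.
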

\begin{proof}
By \cite[Proposition 6.1.2.11]{HTT}, it is enough to show that the natural morphism
$$\tilde{B}_{\bar{K}}^{I} \otimes_{(\tilde{B}_{K_{\infty}}^{I})_{\square}}^{\Lbb} \tilde{B}_{\bar{K}}^{I} \to C(H_K, \tilde{B}_{\bar{K}}^{I})$$
is an isomorphism.
We show that the morphism 
$$\tilde{B}_{\bar{K}}^{I,+} \otimes_{(\tilde{B}_{K_{\infty}}^{I,+})_{\square}}^{\Lbb} \tilde{B}_{\bar{K}}^{I,+} \to C(H_K, \tilde{B}_{\bar{K}}^{I,+})$$
is an almost isomorphism, that is, both the kernel and the cokernel of this morphism are annihilated by $[\pi^{\flat}]^{1/p^n}$ for every $n$.
Let $L$ be a finite extension of $K_{\infty}$.
We define $\tilde{B}_L^{I,+}$ as before.
Then $\tilde{B}_{\bar{K}}^{I,+}$ is the $[\pi^{\flat}]$-adic completion of $\varinjlim_{L}\tilde{B}_L^{I,+}$.
By \cite[Proposition 2.12.10]{Mann22}, $\tilde{B}_{\bar{K}}^{I,+} \otimes_{(\tilde{B}_{K_{\infty}}^{I,+})_{\square}}^{\Lbb} \tilde{B}_{\bar{K}}^{I,+}$ is the $[\pi^{\flat}]$-adic completion of $\varinjlim_{L}\tilde{B}_L^{I,+} \otimes_{(\tilde{B}_{K_{\infty}}^{I,+})_{\square}}^{\Lbb} \tilde{B}_L^{I,+}$.
On the other hand, $C(H_K, \tilde{B}_{\bar{K}}^{I,+})$ is the $[\pi^{\flat}]$-adic completion of $\varinjlim_{L} C(\Gal(L/K_{\infty}), \tilde{B}_L^{I,+})$, since both are $[\pi^{\flat}]$-adically complete and both coincide modulo $[\pi^{\flat}]$.
Therefore, it is enough to show that $$\tilde{B}_{L}^{I,+} \otimes_{(\tilde{B}_{K_{\infty}}^{I,+})_{\square}}^{\Lbb} \tilde{B}_{L}^{I,+} \to C(\Gal(L/K_{\infty}), \tilde{B}_L^{I,+})$$ is an almost isomorphism.
It follows from \cite[Proposition 11.2.15]{FF18}.
\end{proof}

\begin{proof}[Proof of Theorem \ref{thm:FF curve descent}]
It is enough to show that there is a natural categorical equivalence $\FP_{\tilde{B}_{K_{\infty},A}^{I}}^{\Gamma_K} \simeq \FP_{\tilde{B}_{\bar{K},A}^{I}}^{G_K}$ for $I=[p^m,p^n]$ with $m,n\in \Zbb$.
We prove the functors 
\begin{align*}
    &\alpha \colon \FP_{\tilde{B}_{K_{\infty},A}^{I}}^{\Gamma_K} \to \FP_{\tilde{B}_{\bar{K},A}^{I}}^{G_K} ;\;
    M \mapsto M\otimes_{(\tilde{B}_{K_{\infty},A}^{I})_{\square}} (\tilde{B}_{\bar{K},A}^{I})_{\square}, \\
    &\beta \colon \FP_{\tilde{B}_{\bar{K},A}^{I}}^{G_K} \to \FP_{\tilde{B}_{K_{\infty},A}^{I}}^{\Gamma_K} ;\; 
    N \mapsto \Gamma(H_K,N)
\end{align*}
are well-defined and quasi-inverse to each other.
The well-definedness of $\alpha$ is clear.
Next, we prove $\beta\circ\alpha\simeq\id$.
We have a cosimplicial object $$\{X_n\}_{[n]\in \Delta}=\{C(H_K^n, M\otimes_{(\tilde{B}_{K_{\infty},A}^{I})_{\square}} (\tilde{B}_{\bar{K},A}^{I})_{\square})\}_{[n]\in \Delta}$$ corresponding to the action of $H_K$ on $M\otimes_{(\tilde{B}_{K_{\infty},A}^{I})_{\square}} (\tilde{B}_{\bar{K},A}^{I})_{\square}$ by Lemma \ref{lem:stacky definition}, and we have an equivalence $$R\Gamma(H_K,M\otimes_{(\tilde{B}_{K_{\infty},A}^{I})_{\square}} (\tilde{B}_{\bar{K},A}^{I})_{\square})=\varprojlim_{[n]\in \Delta}X_n$$
in $\Dcal(A_{\square})$ by Corollary \ref{cor:stacky cohomology comparison}.
From the definition, the $0$-th cohomology of $$R\Gamma(H_K,M\otimes_{(\tilde{B}_{K_{\infty},A}^{I})_{\square}} (\tilde{B}_{\bar{K},A}^{I})_{\square})$$ is equal to $\Gamma(H_K,M\otimes_{(\tilde{B}_{K_{\infty},A}^{I})_{\square}} (\tilde{B}_{\bar{K},A}^{I})_{\square})$.
On the other hand, by Proposition \ref{prop:almost Galois descent} (2) and Proposition \ref{prop: Cech nerve}, we have an isomorphism $M\cong R\Gamma(H_K,M\otimes_{(\tilde{B}_{K_{\infty},A}^{I})_{\square}} (\tilde{B}_{\bar{K},A}^{I})_{\square})$.
Therefore, we obtain a natural equivalence $\beta\circ\alpha\simeq\id$.

Next, we prove that $\beta$ is well-defined and $\alpha\circ\beta\simeq\id$.
For any object $N\in \FP_{\tilde{B}_{\bar{K},A}^{I}}^{G_K}$, we want to show that $\Gamma(H_K,N)$ is a finite projective $\tilde{B}_{K_{\infty},A}^{I}$-module, and that the natural morphism 
$$\Gamma(H_K,N)\otimes_{(\tilde{B}_{K_{\infty},A}^{I})_{\square}} (\tilde{B}_{\bar{K},A}^{I})_{\square}\to N$$
is an isomorphism.
There is no harm in regarding $N$ as an object $\FP_{\tilde{B}_{\bar{K},A}^{I}}^{H_K}$.
We set $M=R\Gamma(H_K,N)\in \Dcal((\tilde{B}_{K_{\infty},A}^{I})_{\square})$.
Then, by Proposition \ref{prop:almost Galois descent} (2), we have $$M\otimes_{(\tilde{B}_{K_{\infty},A}^{I})_{\square}}^{\Lbb} (\tilde{B}_{\bar{K},A}^{I})_{\square}\simeq N.$$
Moreover, by Proposition \ref{prop:descent finite projective Fredholm} (when $A$ is a Banach $\Qbb_p$-algebra), Theorem \ref{thm:descent of finite projective modules}, Remark \ref{rem:nullstellensatz FF curve}, and Proposition \ref{prop:almost Galois descent} (when $A$ is an algebraic-affinoid $\Qbb_{p,\square}$-algebra), we find that $M$ is a finite projective $\tilde{B}_{K_{\infty},A}^{I}$-module, which proves the claim.
\end{proof}

Finally, we prove that the notion of $(\varphi,G_K)$-modules over $\tilde{B}_{\bar{K},A}$ (or equivalently $(\varphi,\Gamma_K)$-modules over $\tilde{B}_{K_{\infty},A}$) is a generalization of the notion of continuous $G_K$-representations on finite projective $A$-modules.
Let $\Rep_A(G_K)$ denote the category of continuous $G_K$-representations on finite projective $A$-modules.

\begin{theorem}\label{thm:Galois representation fully faithful functor}
    There exists a fully faithful functor 
    $$\Rep_A(G_K) \to \VB_{\tilde{\Bcal}_{\bar{K},\Acal}}^{\varphi,\Gamma_K} ;\; V \mapsto V\otimes_{\Acal}\tilde{\Bcal}_{\bar{K},\Acal}=\{V\otimes_{\Acal}\tilde{\Bcal}_{\bar{K},\Acal}^{I}\}_{I},$$
    where $G_K$ acts on $V\otimes_{\Acal}\tilde{\Bcal}_{\bar{K},\Acal}^{I}$ by the diagonal action.
\end{theorem}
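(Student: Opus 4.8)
The plan is to first check that the assignment is a well-defined functor with values in $(\varphi,G_K)$-modules over $\tilde{B}_{\bar{K},A}$ (equivalently, via Theorem \ref{thm:FF curve descent}, in $(\varphi,\Gamma_K)$-modules over $\tilde{B}_{K_{\infty},A}$), and then to reduce full faithfulness to the identification $H^0(\Xcal_{\bar{K},A},\Ocal)=A$, a relative form of the classical Fargues--Fontaine computation. For $V\in\Rep_A(G_K)$ put $\Mcal_V=\{V\otimes_{A_{\bs}}\tilde{B}_{\bar{K},A}^{[r,s]}\}_{r,s}$, with $\tau$ and $\Phi$ induced from those of $\tilde{B}_{\bar{K},A}$ and with the diagonal $G_K$-action $g\cdot(v\otimes b)=gv\otimes gb$. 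Since $G_K$ acts $A$-linearly on $\tilde{B}_{\bar{K},A}^{[r,s]}=\tilde{B}_{\bar{K}}^{[r,s]}\otimes_{\Qbb_{p,\bs}}A$, this diagonal action is $\tilde{B}_{\bar{K},A}^{[r,s]}$-semilinear, and it is continuous, being the $A_{\bs}$-tensor product of two continuous $G_K$-actions via the bialgebra structure $g\mapsto g\otimes g$ on $A_{\bs}[G_K]$. As $V$ is a direct summand of $A^n$ over $A$, each $V\otimes_{A_{\bs}}\tilde{B}_{\bar{K},A}^{[r,s]}$ is a direct summand of $(\tilde{B}_{\bar{K},A}^{[r,s]})^n$, hence finite projective; thus $\Mcal_V$ is a $(\varphi,G_K)$-module over $\tilde{B}_{\bar{K},A}$, and $f\mapsto f\otimes\id$ makes this functorial. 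Note that this already sidesteps the non-Fredholm difficulties of Definition \ref{def:Fredholm}, as $\Mcal_V^{[r,s]}$ is visibly a summand of a free module.

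Fix now $V,W\in\Rep_A(G_K)$ and set $U=\intHom_{A_{\bs}}(V,W)\cong V^{\vee}\otimes_{A_{\bs}}W$, a continuous $G_K$-representation on a finite projective $A$-module. Because $V$ is finite projective over $A$, one checks $\intHom(\Mcal_V,\Mcal_W)\cong\Mcal_U$ as $(\varphi,G_K)$-modules over $\tilde{B}_{\bar{K},A}$, so by Remark \ref{rem:tensor and int hom}
$$\Hom_{\varphi,G_K}(\Mcal_V,\Mcal_W)\cong\Hom_{\varphi,G_K}(\tilde{B}_{\bar{K},A},\Mcal_U)=\bigl(\Hom_{\varphi}(\tilde{B}_{\bar{K},A},\Mcal_U)\bigr)^{G_K},$$
where $\Hom_{\varphi}$ denotes morphisms of $\varphi$-modules and the $G_K$-action is by conjugation. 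On the other hand $\Hom_{\Rep_A(G_K)}(V,W)=\Hom_{A_{\bs}[G_K]}(V,W)=U^{G_K}$ by the tensor--hom adjunction in $\Mod_{A_{\bs}[G_K]}$. So it suffices to produce a $G_K$-equivariant isomorphism $\Hom_{\varphi}(\tilde{B}_{\bar{K},A},\Mcal_U)\cong U$ under which the morphism attached to $f\colon V\to W$ corresponds to $f\in U$; taking $G_K$-invariants then completes the argument.

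To identify $\Hom_{\varphi}(\tilde{B}_{\bar{K},A},\Mcal_U)$, note that a morphism of $\varphi$-modules is a family of $\tilde{B}_{\bar{K},A}^{[r,s]}$-linear maps compatible with $\tau$ and $\Phi$; by Remark \ref{rem:restriction of closed intervals} it suffices to give these on the intervals $[p^{m},p^{n}]$, and gluing along $\tau$ yields a morphism of $\Ocal$-modules on $Y_{\bar{K},A}$ which, being $\varphi$-equivariant, descends to $\Xcal_{\bar{K},A}=Y_{\bar{K},A}/\varphi^{\Zbb}$. Writing $U\otimes\Ocal$ for the bundle on $\Xcal_{\bar{K},A}$ attached to $\Mcal_U$, this gives a $G_K$-equivariant isomorphism $\Hom_{\varphi}(\tilde{B}_{\bar{K},A},\Mcal_U)\cong H^0(\Xcal_{\bar{K},A},U\otimes\Ocal_{\Xcal_{\bar{K},A}})$. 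Covering $\Xcal_{\bar{K}}$ by two rational opens which are $\Spa$ of $\varphi$-twists of rings $\tilde{B}_{\bar{K}}^{[r,s]}$, one computes $R\Gamma(\Xcal_{\bar{K}},\Ocal)$ by a \v{C}ech complex of such rings, and since $A$ is flat over $\Qbb_{p,\bs}$ (Example \ref{ex:flat module}) base change gives $R\Gamma(\Xcal_{\bar{K},A},\Ocal)\simeq R\Gamma(\Xcal_{\bar{K}},\Ocal)\otimes_{\Qbb_{p,\bs}}^{\Lbb}A$. The classical Fargues--Fontaine computation $R\Gamma(\Xcal_{\bar{K}},\Ocal)=\Qbb_p$, with trivial $G_K$-action, then yields $H^0(\Xcal_{\bar{K},A},\Ocal)=A$, and since $U$ is a summand of $A^n$ we get $H^0(\Xcal_{\bar{K},A},U\otimes\Ocal)=U\otimes_{A_{\bs}}A=U$ with its diagonal $G_K$-action. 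Unwinding the identifications, the morphism attached to $f$ corresponds to $f\otimes 1\in U\otimes_{A_{\bs}}A=U$, i.e. to $f$ itself, so the composite $\Hom_{\Rep_A(G_K)}(V,W)\to\Hom_{\varphi,G_K}(\Mcal_V,\Mcal_W)\xrightarrow{\sim}U^{G_K}$ is the identity. I expect the main obstacle to be precisely this cohomology base change along $\AnSpec A_{\bs}\to\AnSpec\Qbb_{p,\bs}$ together with the input $H^0(\Xcal_{\bar{K}},\Ocal)=\Qbb_p$ in the condensed/analytic framework: for $A$ a Banach $\Qbb_p$-algebra these are classical, while in general they must be obtained from the flatness of $A$ over $\Qbb_{p,\bs}$ and the \v{C}ech description above, which is the step requiring the most care.
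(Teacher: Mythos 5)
Your proof is correct and follows essentially the same approach as the paper: both arguments reduce, via the internal-hom isomorphism $\intHom(\Mcal_V,\Mcal_W)\cong\Mcal_U$ and taking $G_K$-invariants, to computing $\Hom_{\varphi}(\tilde{B}_{\bar{K},A},\Mcal_U)\cong U$, and both use flatness of $A$ over $\Qbb_{p,\bs}$ to base-change down to the classical Fargues--Fontaine computation over $\Qbb_p$. The only cosmetic difference is that you package the resulting two-term complex as \v{C}ech cohomology $H^0(\Xcal_{\bar{K},A},\Ocal)$, whereas the paper writes it directly as the kernel of $\Phi-1\colon V\otimes_{A_{\bs}}\tilde{B}_{\bar{K},A}^{[t,pt]}\to V\otimes_{A_{\bs}}\tilde{B}_{\bar{K},A}^{[t,t]}$ and cites \cite[Proposition 4.1.1]{FF18}.
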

\begin{proof}
It is enough to show that the functor
$$\FP_A \to \VB_{\tilde{B}_{\bar{K},A}}^{\varphi} ;\; V \mapsto \{V\otimes_{A_{\square}}\tilde{B}_{\bar{K},\Acal}^{I}\}_{I}$$
is fully faithful, where $\FP_A$ is the category of finite projective $A$-modules.
We note that for $V,W \in \FP_A$, there is a natural isomorphism 
$$\intHom_A(V,W)\otimes_{A_{\square}}\tilde{B}_{\bar{K},A}\cong \intHom(V\otimes_{A_{\square}}\tilde{B}_{\bar{K},A}, W\otimes_{A_{\square}}\tilde{B}_{\bar{K},A})$$ 
as $\varphi$-modules over $\tilde{B}_{\bar{K},A}$, so it is enough to show that for $V\in \FP_A$, the natural morphism $V\to \Hom_{\varphi}(\tilde{B}_{\bar{K},A}, V\otimes_{A_{\square}}\tilde{B}_{\bar{K},A})$ is an isomorphism (Remark \ref{rem:tensor and int hom}).
A morphism $f\in \Hom_{\varphi}(\tilde{B}_{\bar{K},A}, V\otimes_{A_{\square}}\tilde{B}_{\bar{K},A})$ corresponds bijectively to $(f(1))_{I}=(x_{I})\in \prod_{I}\left(V\otimes_{A_{\square}}\tilde{B}_{\bar{K},A}^{I}\right)(\ast)$ satisfying the following condition:
\begin{itemize}
    \item For $n\in \Zbb$ and $I'\subset I/p^n$, we have $\Phi^n(x_{I})=x_{I'}$.
\end{itemize}
Then $(x_{I})_{I}$ as above corresponds bijectively to $x=x_{[1,p]}\in \left(V\otimes_{A_{\square}}\tilde{B}_{\bar{K},A}^{[1,p]}\right)(\ast)$ such that $\Phi(x)=x$ in $\left(V\otimes_{A_{\square}}\tilde{B}_{\bar{K},A}^{[1,1]}\right)(\ast)$.
In fact, we can recover $x_{[p^n,p^{n+1}]}$ from $x=x_{[1,p]}$ by $x_{[p^n,p^{n+1}]}=\Phi^{-n}(x)$, and then we can recover all $x_{I}$ by gluing them and restricting.
Therefore, it is enough to show that 
$$0\to V \to V\otimes_{A_{\square}}\tilde{B}_{\bar{K},A}^{[1,p]} \overset{\varphi-1}{\longrightarrow} V\otimes_{A_{\square}}\tilde{B}_{\bar{K},A}^{[1,1]} $$ 
is an exact sequence.
We can reduce to the case $V=A$ since $V$ is a finite projective $A$-module.
Moreover, we can reduce to the case $A=\Qbb_p$ since $A$ is flat over $\Qbb_p$.
In this case, the claim follows from the same argument as in the proof of \cite[Proposition 4.1.1]{FF18}.
\end{proof}

\subsection{$(\varphi,\Gamma)$-modules over imperfect period rings}\label{subsection:imperfect}

Let $\Acal=(A,A^+)_{\square}$ be as in the previous subsection.
In this subsection, we discuss a ``deperfection'' of $(\varphi,\Gamma_K)$-modules over $\tilde{\Bcal}_{K_{\infty},\Acal}$ based on the ideas of \cite{Por24}.
First, we recall coefficient rings used in the theory of $(\varphi,\Gamma_K)$-modules. 

\begin{construction}\label{construction:coefficient rings}
Let $K_0$ (resp.\ $K_0^{\prime}$) be the maximal unramified extension of $\Qbb_p$ contained in $K$ (resp.\ $K_{\infty}$), and let $e$ be the ramification index of the extension $K_{\infty}/K_{0}(\zeta_{p^{\infty}})$.
For a closed interval $[r,s]\subset (0,\infty)$, let $\Bbb_{K_0^{\prime}}^{[r,s]}$ denote the rational localization of $\Spa({K_0^{\prime}}\langle T\rangle, \Ocal_{K_0^{\prime}}\langle T\rangle)$ defined by $\lvert p^s\rvert \leq \lvert T \rvert \leq \lvert p^r\rvert$, and let $\Bbb_{K_0^{\prime}}^{(0,s]}$ denote the union of $\Bbb_{K_0^{\prime}}^{[r,s]}$ for all $0<r\leq s$.
Then as the construction in \cite[Chapitre 1]{Ber08B-pair}, for some rational number $0<s^{\prime}<1$, we can construct the following:
\begin{itemize}
\item A continuous action of $\Gamma_K$ on $\Bbb_{K_0^{\prime}}^{(0,\frac{ps^{\prime}}{(p-1)e}]}$.
\item A Frobenius endomorphism $\varphi\colon \Bbb_{K_0^{\prime}}^{(0,\frac{s^{\prime}}{(p-1)e}]} \to \Bbb_{K_0^{\prime}}^{(0,\frac{ps^{\prime}}{(p-1)e}]}$.
\item A $\Gamma_K$-equivariant morphism $Y_{K_{\infty}}^{(0,s^{\prime}]} \to \Bbb_{K_0^{\prime}}^{(0,\frac{ps^{\prime}}{(p-1)e}]}$ which is compatible with the Frobenius endomorphisms.
Moreover it satisfies that for every closed interval $[r,s]\subset (0,s^{\prime}]$, the preimage of $\Bbb_{K_0^{\prime}}^{[\frac{pr}{(p-1)e},\frac{ps}{(p-1)e}]}$ in $Y_{K_{\infty}}^{(0,s^{\prime}]}$ is $Y_{K_{\infty}}^{[r,s]}$, and the induced morphism $\Ocal(\Bbb_{K_0^{\prime}}^{[\frac{pr}{(p-1)e},\frac{ps}{(p-1)e}]})\to \tilde{B}_{K_{\infty}}^{[r,s]}$ is a closed embedding.
\end{itemize}
For a closed interval $I=[r,s]\subset (0,\infty)$, let $B_K^{I}$ (resp.\ $B_K^{I+}$) denote the image of the closed embedding $\Ocal(\Bbb_{K_0^{\prime}}^{\frac{pI}{(p-1)e}})\to \tilde{B}_{K_{\infty}}^{I}$ (resp.\ $\Ocal^+(\Bbb_{K_0^{\prime}}^{\frac{pI}{(p-1)e}})\to \tilde{B}_{K_{\infty}}^{I}$).
From the above, $B_K^{I}, B_K^{I,+}\subset \tilde{B}_{K_{\infty}}^{I}$ are stable under the action of $\Gamma_K$, and $\varphi \colon \tilde{B}_{K_{\infty}}^{I} \to \tilde{B}_{K_{\infty}}^{I/p}$ induces the Frobenius endomorphisms $\varphi \colon B_K^{I} \to B_K^{I/p}$ and $\varphi \colon B_K^{I,+} \to B_K^{I/p,+}$.
Let $B_{K,n}^{I}$ (resp.\ $B_{K,n}^{I,+}$) be the preimage of $B_{K}^{I/p^n}$ (resp.\ $B_{K}^{I/p^n,+}$) under the morphism $\varphi^n \colon \tilde{B}_{K_{\infty}}^{I}\to \tilde{B}_{K_{\infty}}^{I/p^n}.$
We define $B_{K,\infty}^{I}$ (resp.\ $B_{K,\infty}^{I,+}$) as $\varinjlim_n B_{K,n}^{I}\subset \tilde{B}_{K_{\infty}}^{I}$ (resp.\ $\varinjlim_n B_{K,n}^{I,+}$), where we regard it as a colimit of $\Qbb_{p,\square}$-algebras (resp.\ $\Zbb_{p,\square}$-algebras).
\end{construction}

\begin{definition}\label{def:coefficient ring Robba}
For $n\geq 0$ or $n=\infty$ and a closed interval $I\subset (0,s']$, we define analytic $\Acal$-algebra
\begin{align*}
    \Bcal_{K,n,\Acal}^I=(B_{K,n}^{I},B_{K,n}^{I,+})_{\square}\otimes_{\Qbb_{p,\square}}^{\Lbb} \Acal.
\end{align*}
We also define $A$-algebra
\begin{align*}
B_{K,n,A}^I=B_{K,n}^{I}\otimes_{\Qbb_{p,\square}} A.
\end{align*}
When $n=0$, then we omit $n$ from the notation.
\end{definition}

In the same way as in Lemma \ref{lem:underlying algebra}, we get the following.

\begin{lemma}\label{lem:underlying algebra Robba}
    For $n\geq 0$ or $n=\infty$ and a closed interval $I\subset (0,s']$, the underlying (condensed) ring of $\Bcal_{K,n,\Acal}^I$ is $B_{K,n,A}^I$.
    In particular, it only depends on $A$ and does not depend on $A^+$.
\end{lemma}

\begin{example}
    If $K=\Qbb_p$ (or $K$ is an unramified extension of $\Qbb_p$), we can construct the embeddings explicitly.
    We explain the construction.
    First, we fix a system $\varepsilon=(\zeta_{p^n})_{n\geq0}$ such that $\zeta_{p^n}\in\bar{\Qbb_p}$ is a primitive $p^n$-th root of unity and that $\zeta_{p^{n+1}}^p=\zeta_{p^n}$.
    Then we can regard $\varepsilon$ as an element of $\Ocal_{\hat{K}_{\infty}^{\flat}}=\varprojlim_{x\mapsto x^p} \Ocal_{\hat{K}_{\infty}}$.
    We write $\varpi=[\varepsilon]-1 \in W(\Ocal_{\hat{K}_{\infty}^{\flat}})$, where $[\varepsilon]$ is the Teichm\"{u}ller lift of $\varepsilon$.
    Then for a closed interval $I\subset (0,1)$, a morphism $\Ocal_{\Bbb_{\Qbb_p}}(\Bbb_{\Qbb_p}^{\frac{pI}{p-1}}) \to \tilde{B}_{\Qbb_{p}(\zeta_{p^{\infty}})}^{I} ;\; T\mapsto \varpi$ is well-defined and it gives a desired embedding.
    The actions of $\varphi$ and $\gamma\in\Gamma_{\Qbb_p}$ are given by $\varphi(\varpi)=(1+\varpi)^p-1$ and $\gamma(\varpi)=(1+\varpi)^{\chi(\gamma)}-1$ , where $\chi$ is the $p$-adic cyclotomic character.
\end{example}

\begin{remark}\label{rem:deperfection}
	By the construction in \cite{Col08}, $B_{\Qbb_p}^{I}$ is contained in $B_K^{I}$, where we regard $\tilde{B}_{\Qbb_p(\zeta_{p^{\infty}})}^{I}$ as a subalgebra of $\tilde{B}_{K_{\infty}}^{I}$ in the natural way, and $B_K^{I}$ is a finite free $B_{\Qbb_p}^{I}$-module (of rank $[K_{\infty}\colon \Qbb_p(\zeta_{p^{\infty}})]$).
	Moreover, the natural morphism 
	$$B_K^{I} \otimes_{(B_{\Qbb_p}^{I})_{\square}}^{\Lbb}B_{\Qbb_p,n}^{I}\to B_{K,n}^{I}$$ 
	is an isomorphism for every $n\geq 0$ or $n=\infty$.
\end{remark}

\begin{remark}\label{rem:open subspace perfect}
	For closed intervals $I_1 \subset I_2 \subset (0,s^{\prime}]$, the natural morphism
	$$B_K^{I_1} \otimes_{(B_K^{I_2})_{\square}}^{\Lbb}\tilde{B}_{K_{\infty}}^{I_2}\to \tilde{B}_{K_{\infty}}^{I_1}$$
	is an isomorphism.
    In fact, since the preimage of $\Bbb_{K_0^{\prime}}^{\frac{pI_1}{(p-1)e}}$ in $Y_{K_{\infty}}^{\frac{pI_2}{(p-1)e}}$ is $Y_{K_{\infty}}^{I_1}$, the claim follows from \cite[Proposition 4.12]{And21}.
\end{remark}

\begin{remark}\label{rem:open subspace imperfect}
    For closed intervals $I_1\subset I_2\subset (0,s^{\prime}]$, the natural morphism 
    $$B_K^{I_1} \otimes_{(B_K^{I_2})_{\square}}^{\Lbb}B_{K,n}^{I_2}\to B_{K,n}^{I_1}$$
	is an isomorphism for $n\geq 1$ or $n=\infty$ by the construction in \cite{Col08}
\end{remark}

For a positive integer $m$, let $\Gamma_{K,m}$ denote the preimage of $1+p^m\Zbb_p$ under the $p$-adic cyclotomic character $\chi \colon \Gamma_K \to \Zbb_p^{\times}$.
If $m$ is sufficiently large, then $\Gamma_{K,m}$ is isomorphic to $1+p^m\Zbb_p$, and it admits a natural rigid analytic group structure.
For a static $\Qbb_{p,\square}$-module $M$ with a continuous $\Gamma_K$-action, let $M^{m\mathchar`-\an}\subset M$ denote the submodule of (non-derived) $\Gamma_{K,m}$-analytic vectors for $m$ sufficiently large.

\begin{proposition}[{\cite[Theorem 4.4]{Ber16}, \cite[Corollary 5.6]{Por24}}]\label{prop:la vector of perfect witt ring}
    We set $I=[p^{-k},p^{-l}]\subset (0,s^{\prime}]$ for positive integers $k$ and $l$.
    \begin{enumerate}
    \item The subring $\tilde{B}_{K_{\infty},A}^{I,\la}$ of $\Gamma_K$-locally analytic vectors of $\tilde{B}_{K_{\infty},A}^{I}$ is equal to $B_{K,\infty,A}^{I}$.
    Moreover, $\tilde{B}_{K_{\infty},A}^{I}$ has no derived locally analytic vectors, that is, the natural morphism $\tilde{B}_{K_{\infty},A}^{I,\la}\to \tilde{B}_{K_{\infty},A}^{I,R\la}$ is an isomorphism.
    \item 
    If $l$ is sufficiently large, then the subring $\tilde{B}_{K_{\infty},A}^{[p^{-k},p^{-l}],l\mathchar`-\an}$ of $\Gamma_{K,l}$-analytic vectors of $\tilde{B}_{K_{\infty},A}^{[p^{-k},p^{-l}]}$ is contained in $B_{K,A}^{[p^{-k},p^{-l}]}$ for $k=l,l+1$.
    \end{enumerate}
\end{proposition}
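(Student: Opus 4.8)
The plan is to reduce both assertions to the case of trivial coefficients $A=\Qbb_p$ --- which is exactly \cite[Theorem 4.4]{Ber16} (resp. \cite[Corollary 5.6]{Por24}) --- by commuting the (derived) analytic-vectors functors past the base change $-\otimes_{\Qbb_{p,\bs}}A$. The inputs are that $A$ is flat over $\Qbb_{p,\bs}$ (Example \ref{ex:flat module}), that the trivial $\Gamma_K$-action on $A$ is $\Gamma_{K,m}$-analytic for every $m$ (hence also $\Gamma_K$-locally analytic), and the compatibility of $(-)^{G_0-\an}$ and $(-)^{G-R\la}$ with tensor products, namely Proposition \ref{prop:analytic cohomology and tensor product} and Proposition \ref{prop:la functor and tensor product commute}.

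For (1), I would first recall from Construction \ref{construction:coefficient rings} that $B_{K,\infty}^{[r,s]}=\colim_n B_{K,n}^{[r,s]}$ is an increasing union of closed (hence Banach) subalgebras of $\tilde{B}_{K_{\infty}}^{[r,s]}$, and that $B_{K,\infty,A}^{[r,s]}=B_{K,\infty}^{[r,s]}\otimes_{\Qbb_{p,\bs}}A$. Applying Proposition \ref{prop:la functor and tensor product commute} over the base $\Qbb_{p,\bs}$ with $V=\tilde{B}_{K_{\infty}}^{[r,s]}$ (with its semilinear $\Gamma_K$-action) and with $W=A$ (with the trivial, hence locally analytic, $\Gamma_K$-action) gives
$$\tilde{B}_{K_{\infty},A}^{[r,s],R\la}=\bigl(\tilde{B}_{K_{\infty}}^{[r,s]}\otimes_{\Qbb_{p,\bs}}^{\Lbb}A\bigr)^{\Gamma_K-R\la}\simeq \tilde{B}_{K_{\infty}}^{[r,s],R\la}\otimes_{\Qbb_{p,\bs}}^{\Lbb}A,$$
naturally in the counit map to $\tilde{B}_{K_{\infty},A}^{[r,s]}$. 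By \cite[Theorem 4.4]{Ber16} (resp. \cite[Corollary 5.6]{Por24}), $\tilde{B}_{K_{\infty}}^{[r,s],R\la}=\tilde{B}_{K_{\infty}}^{[r,s],\la}=B_{K,\infty}^{[r,s]}$ is static; since $A$ is flat over $\Qbb_{p,\bs}$ the right-hand side is then static and equals $B_{K,\infty,A}^{[r,s]}$, which sits inside $\tilde{B}_{K_{\infty},A}^{[r,s]}$ because the inclusion $B_{K,\infty}^{[r,s]}\hookrightarrow \tilde{B}_{K_{\infty}}^{[r,s]}$ stays injective after the flat base change $-\otimes_{\Qbb_{p,\bs}}A$. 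Hence $\tilde{B}_{K_{\infty},A}^{[r,s]}$ has no derived locally analytic vectors and $\tilde{B}_{K_{\infty},A}^{[r,s],\la}=B_{K,\infty,A}^{[r,s]}$. Equivalently, one can argue level by level using Proposition \ref{prop:analytic cohomology and tensor product} with $G_0=\Gamma_{K,m}$ and then pass to the filtered colimit over $m$.

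For (2), fix $l$ large enough that \cite[Corollary 5.6]{Por24} gives $\tilde{B}_{K_{\infty}}^{[p^{-k},p^{-l}],l-\an}\subseteq B_K^{[p^{-k},p^{-l}]}$ inside $\tilde{B}_{K_{\infty}}^{[p^{-k},p^{-l}]}$ for $k=l,l+1$. Applying Proposition \ref{prop:analytic cohomology and tensor product} with $G_0=\Gamma_{K,l}$, $V=\tilde{B}_{K_{\infty}}^{[p^{-k},p^{-l}]}$, and $W=A$ (trivial, hence $\Gamma_{K,l}$-analytic action) identifies the natural morphism $\tilde{B}_{K_{\infty},A}^{[p^{-k},p^{-l}],l-\an}\to\tilde{B}_{K_{\infty},A}^{[p^{-k},p^{-l}]}$ with $\bigl(\tilde{B}_{K_{\infty}}^{[p^{-k},p^{-l}],l-\an}\to\tilde{B}_{K_{\infty}}^{[p^{-k},p^{-l}]}\bigr)\otimes_{\Qbb_{p,\bs}}A$; since $A$ is flat over $\Qbb_{p,\bs}$, this base change still factors through $B_K^{[p^{-k},p^{-l}]}\otimes_{\Qbb_{p,\bs}}A=B_{K,A}^{[p^{-k},p^{-l}]}$, which is the assertion.

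The argument is a formal reduction, so I do not expect a serious obstacle beyond bookkeeping. The main point requiring care is that the isomorphisms supplied by Proposition \ref{prop:analytic cohomology and tensor product} and Proposition \ref{prop:la functor and tensor product commute} are natural enough to be compatible with the transition maps in $\colim_m(-)^{\Gamma_{K,m}-\an}$ and with the ``inclusion of analytic vectors'' counit maps, so that injectivity is preserved after the flat base change to $A$; and that the identifications $B_{K,\infty,A}^{[r,s]}=B_{K,\infty}^{[r,s]}\otimes_{\Qbb_{p,\bs}}A$ and $B_{K,A}^{[r,s]}=B_K^{[r,s]}\otimes_{\Qbb_{p,\bs}}A$ are the ones built from Construction \ref{construction:coefficient rings}.
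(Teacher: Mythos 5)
Your proof of (1) is essentially the paper's proof: the chain of equivalences the paper writes down is exactly $(-)^{R\la}$ commuting with $-\otimes_{\Qbb_{p,\bs}}^{\Lbb}A$ (the second step cites \cite[Corollary 3.1.15(3)]{RJRC23}, which is precisely Proposition~\ref{prop:la functor and tensor product commute} when the base is $\Qbb_p$), followed by the known computation $\tilde{B}_{K_\infty}^{[r,s],R\la}=B_{K,\infty}^{[r,s]}$ and flatness of $A$. Nothing to add there.

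For (2), your reduction to $A=\Qbb_p$ by Proposition~\ref{prop:analytic cohomology and tensor product} (the paper cites the semilinear variant Proposition~\ref{prop:an functor and tensor product commute}, but the two are interchangeable here since the action on $A$ is trivial) is fine and agrees with the paper. The gap is in your base case: you assert that for $l$ sufficiently large, \cite[Corollary~5.6]{Por24} directly furnishes the calibrated inclusion $\tilde{B}_{K_\infty}^{[p^{-k},p^{-l}],l-\an}\subset B_K^{[p^{-k},p^{-l}]}$ for $k=l,l+1$, but that is precisely the delicate point. The paper does \emph{not} cite \cite{Por24} for this; it instead derives it from \cite[Theorem~4.4]{Ber16}, which only gives an inclusion into the deeper ring $B_{K,m}^{[p^{-l-1},p^{-l}]}$ at analytic level $m+l+1$, and then applies a Frobenius twist $\varphi^m$ to trade the $m$ in the depth $B_{K,m}$ for a shrinking of the interval to $[p^{-m-l-1},p^{-m-l}]$, landing in $B_K$; re-indexing $m+l\mapsto l$ then produces the matching of the analytic level with the interval index. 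Your proof is missing this twisting step. Unless you can check that \cite[Corollary~5.6]{Por24} literally yields the inclusion with the analytic level, the interval index, and the depth all tied to the same $l$ (rather than the uncalibrated form of \cite[Theorem~4.4]{Ber16}), you need to supply the $\varphi^m$-twist argument.
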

\begin{proof}
    We have an isomorphism
    \begin{align*}
    \tilde{B}_{K_{\infty},A}^{I,R\la} &\simeq (\tilde{B}_{K_{\infty}}^{I}\otimes^{\Lbb}_{\Qbb_{p,\square}} A)^{R\la}\\
    &\simeq \tilde{B}_{K_{\infty}}^{I,R\la}\otimes^{\Lbb}_{\Qbb_{p,\square}} A\\
    &\simeq B_{K,\infty}^{I}\otimes^{\Lbb}_{\Qbb_{p,\square}} A\\
    &\simeq B_{K,\infty,A}^{I},
    \end{align*}
    where the second equivalence follows from Proposition \ref{prop:la functor and tensor product commute}, and the third equivalence follows from \cite[Theorem 4.4 (2)]{Ber16} and \cite[Corollary 5.6]{Por24}.
    It proves (1).
    We prove (2).
    By Proposition \ref{prop:an functor and tensor product commute}, we may assume $A=\Qbb_p$.
    We fix $l\geq 0$ such that $[p^{-l-1},p^{-l}]\subset (0,s^{\prime}]$.
Then for $m$ sufficiently large, $B_{K}^{[p^{-l-1},p^{-l}]}$ is $\Gamma_{K,m}$-analytic.
By \cite[Theorem 4.4 (1)]{Ber16}, for $m$ sufficiently large, we have
\begin{align*}
	\tilde{B}_{\Qbb_{p,\infty}}^{[p^{-l-1},p^{-l}],m+l\mathchar`-\an}\subset \tilde{B}_{\Qbb_{p,\infty}}^{[p^{-l-1},p^{-l}],m+l+1\mathchar`-\an}\subset B_{\Qbb_p,m}^{[p^{-l-1},p^{-l}]}.
\end{align*}
Therefore, for $m$ sufficiently large, we get
\begin{align*}
	\tilde{B}_{K_{\infty}}^{[p^{-l-1},p^{-l}],m+l\mathchar`-\an}
	&=(\tilde{B}_{\Qbb_{p,\infty}}^{[p^{-l-1},p^{-l}]}\otimes_{(B_{\Qbb_p}^{[p^{-l-1},p^{-l}]})_{\square}}B_{K}^{[p^{-l-1},p^{-l}]})^{m+l\mathchar`-\an}\\
	&=\tilde{B}_{\Qbb_{p,\infty}}^{[p^{-l-1},p^{-l}],m+l\mathchar`-\an}\otimes_{(B_{\Qbb_p}^{[p^{-l-1},p^{-l}]})_{\square}}B_{K}^{[p^{-l-1},p^{-l}]} \\
	&\subset B_{\Qbb_p,m}^{[p^{-l-1},p^{-l}]}\otimes_{(B_{\Qbb_p}^{[p^{-l-1},p^{-l}]})_{\square}}B_{K}^{[p^{-l-1},p^{-l}]}\\
	&= B_{K,m}^{[p^{-l-1},p^{-l}]},
\end{align*}
where the second equality follows from Proposition \ref{prop:an functor and tensor product commute}.
By applying $\varphi^{-m}$, we get
\begin{align*}
	\tilde{B}_{K_{\infty}}^{[p^{-m-l-1},p^{-m-l}],m+l\mathchar`-\an}\subset B_{K}^{[p^{-m-l-1},p^{-m-l}]}
\end{align*}
for $m$ sufficiently large, which proves (2).
By the same argument, we can show the case $k=l$.
\end{proof}

\begin{corollary}\label{cor:la vector of perfect witt ring}
    Let $I\subset (0,s^{\prime}]$ be a closed interval.
    Then the subring $\tilde{B}_{K_{\infty},A}^{I,\la}$ of $\Gamma_K$-locally analytic vectors of $\tilde{B}_{K_{\infty},A}^{I}$ is equal to $B_{K,\infty,A}^{I}$.
    Moreover, $\tilde{B}_{K_{\infty},A}^{I}$ has no derived locally analytic vectors, that is, the natural morphism $\tilde{B}_{K_{\infty},A}^{I,\la}\to \tilde{B}_{K_{\infty},A}^{I,R\la}$ is an isomorphism.
\end{corollary}
\begin{proof}
    Since there are $\Gamma_K$-equivariant isomorphisms
    \begin{align*}
        \varphi\colon \tilde{B}_{K_{\infty},A}^{I} \to \tilde{B}_{K_{\infty},A}^{I/p}, \; \varphi\colon B_{K,\infty,A}^{I} \to B_{K,\infty,A}^{I/p},
    \end{align*}
    we may assume $s\leq p^{-1}s^{\prime}$.
    Then we can take integers $k,l$ such that $I\subset [p^{-k},p^{-l}]\subset (0,s^{\prime}]$.
    Then the claim follows from Proposition \ref{prop:la functor and tensor product commute} and isomorphisms
    \begin{align*}
      &B_{K,A}^{I} \otimes_{(B_{K,A}^{[p^{-k},p^{-l}]})_{\square}}^{\Lbb}\tilde{B}_{K_{\infty},A}^{[p^{-k},p^{-l}]}\cong \tilde{B}_{K_{\infty},A}^{I},\\
      &B_{K,A}^{I} \otimes_{(B_{K,A}^{[p^{-k},p^{-l}]})_{\square}}^{\Lbb}B_{K,\infty,A}^{[p^{-k},p^{-l}]}\cong B_{K,\infty,A}^{I}.
    \end{align*}
\end{proof}

\begin{definition}
    For a closed interval $I\subset (0,\infty)$, we define
    $$B_{K,\infty,A}^{I}=\widetilde{B}_{K_{\infty},A}^{I,\la}\cong \widetilde{B}_{K_{\infty},A}^{I,R\la}$$
    as the subring of $\Gamma_K$-locally analytic vectors of $\tilde{B}_{K_{\infty},A}^{I}$.
    By Corollary \ref{cor:la vector of perfect witt ring}, this definition coincides with the previous definition when $I\subset (0,s^{\prime}]$.
    There is the $\Gamma_K$-action on $B_{K,\infty,A}^{I}$ and the Frobenius automorphism 
    $$\varphi\colon B_{K,\infty,A}^{I}\overset{\sim}{\to} B_{K,\infty,A}^{I/p}.$$
    We take an integer $n>0$ such that $I/p^n\subset (0,s']$, and we define an analytic $\Acal$-algebra $\Bcal_{K,\infty,\Acal}^I$ whose underlying algebra is $B_{K,\infty,A}^{I}$ so that 
    $$\varphi^n\colon \Bcal_{K,\infty,\Acal}^{I}\overset{\sim}{\to} \Bcal_{K,\infty,\Acal}^{I/p^n}$$
    becomes an isomorphism of analytic rings, where the analytic ring $\Bcal_{K,\infty,\Acal}^{I/p^n}$ is already defined in Definition \ref{def:coefficient ring Robba}.
    We note that it does not depend on the choice of $n$.
\end{definition}
\begin{remark}
    The natural morphism $B_{K,\infty}^{I}\otimes_{\Qbb_{p,\square}}^{\Lbb}A\to B_{K,\infty,A}^{I}$ is an isomorphism by Proposition \ref{prop:la functor and tensor product commute}.
    Moreover, we have also $\Bcal_{K,\infty}^{I}\otimes_{\Qbb_{p,\square}}^{\Lbb}\Acal\cong \Bcal_{K,\infty,\Acal}^{I}$.
\end{remark}

\begin{definition}\label{def:family of phi Gamma module perfect la}
    We define $\varphi$-modules and $(\varphi,\Gamma_K)$-modules over $\Bcal_{K,\infty,\Acal}$ by replacing $\tilde{\Bcal}_{K_{\infty},A}^{I}$ with $\Bcal_{K,\infty,A}^{I}$ in Definition \ref{def:perfect module}.
    Let $\VB_{B_{K,\infty,\Acal}}^{\varphi}$ (resp.\ $\VB_{B_{K,\infty,\Acal}}^{\varphi,\Gamma_K}$) denote the category of $\varphi$-modules (resp.\ $(\varphi,\Gamma_K)$-modules) over $\Bcal_{K,\infty,\Acal}$.
\end{definition}

\begin{remark}
    When $A=\Qbb_p$, then the notion of $(\varphi,\Gamma_K)$-modules over $\Bcal_{K,\infty}$ coincides with the notion of locally analytic vector bundles defined in \cite[Definition 4.4]{Por24}.
\end{remark}

\begin{definition}\label{def:family of phi Gamma module}
\begin{enumerate}
\item
A \textit{$(\varphi,\Gamma_K)$-module} $M$ over $\Bcal_{K,\Acal}$ is a family of finite projective $\Bcal_{K,\Acal}^{I}$-modules $\{M^{I}\}_{I}$ for some $t$ and $I\subset (0,t]$ with a continuous semilinear $\Gamma_K$-action on $M^{I}$ and $\Gamma_K$-equivariant isomorphisms 
\begin{align*}
    &\tau_{I,I'}\colon M^{I}\otimes_{\Bcal_{K,\Acal}^{I}} \Bcal_{K,\Acal}^{I'} \overset{\sim}{\to} M^{I'}\\
&\Phi_{I}\colon M^{I}\otimes_{\Bcal_{K,\Acal}^{I},\varphi} \Bcal_{K,\Acal}^{I/p}\cong M^{I/p}
\end{align*}
for $I'\subset I$, satisfying the same condition as in Definition \ref{def:perfect module}.
In this case, $M$ is said to be defined over $(0,t]$.

\item
Let $M=\{M^{I}\}_{I}$ (resp.\ $N=\{N^{I}\}_{I}$) be a $(\varphi,\Gamma_K)$-module over $\Bcal_{K,\Acal}$ defined over $(0,t]$ (resp.\ $(0,t^{\prime}]$).
Then a morphism $f\colon M \to N$ of $(\varphi,\Gamma_K)$-modules over $\Bcal_{K,\Acal}$ is a family of $\Gamma_K$-equivariant morphisms $f^{I}\colon M^{I}\to N^{I}$ of $\Bcal_{K,\Acal}^{I}$-modules,
for some $t^{\prime\prime}\leq t,t^{\prime}$ and any $I\subset(0,t^{\prime\prime}]$, compatible with $\tau_{I,I'}$ and $\Phi_{I}$.
Let $\VB_{\Bcal_{K,\Acal}}^{\varphi,\Gamma_K}$ denote the category of $(\varphi,\Gamma_K)$-modules over $\Bcal_{K,\Acal}$.
\end{enumerate}
\end{definition}

\begin{remark}
    Let $M$ be a $(\varphi,\Gamma_K)$-module over $\Bcal_{K,\Acal}$ defined over $(0,t]$.
    Then $M$ defines an object of $\Dcal(\Bbb_{K_0^{\prime},A}^{(0,\frac{pt}{(p-1)e}]})$.
\end{remark}

\begin{remark}
    For the same reason as in Remark \ref{rem:A^+ independent}, the notions of $\varphi$-modules and $(\varphi,G_K)$-modules over $\Bcal_{K,\infty,\Acal}$ (resp.\ $(\varphi,G_K)$-modules over $\Bcal_{K,\Acal}$) only depends on $A$ and does not depend on $A^+$. 
\end{remark}

\begin{remark}
    For the same reason as in Remark \ref{rem:restriction of closed intervals}, the set of closed intervals used above can be restricted to a set of closed intervals of the form $[p^{m},p^{n}]$ for $m,n \in \Zbb$.
\end{remark}

\begin{remark}
    As in Remark \ref{rem:tensor and int hom}, we can define a tensor product and an internal hom of $(\varphi,\Gamma_K)$-modules over $\Bcal_{K,\infty,\Acal}$ (resp.\ $\Bcal_{K,\Acal}$).
\end{remark}

\begin{remark}\label{rem:the reason why family}
If $A$ is an affinoid $\Qbb_p$-algebra, then the category of $(\varphi,\Gamma_K)$-modules over $\Bcal_{K,\Acal}$ is equivalent to the category of $(\varphi,\Gamma_K)$-modules over the Robba ring $\Rcal_A=\varinjlim_{0<s} \varprojlim_{0<r\leq s} B_{K,A}^{[r,s]}$ (\cite[Proposition 2.2.7]{KPX14}).
This categorical equivalence relies on the fact that the natural morphism $\varprojlim_{0<r\leq s} B_{K,A}^{[r,s]}\to \Rvarprojlim_{0<r\leq s} B_{K,A}^{[r,s]}$ is an isomorphism.
However, if $A$ is not a Banach $\Qbb_p$-algebra, then the above morphism is not necessarily an isomorphism.
Therefore, we use families of modules instead of $(\varphi,\Gamma_K)$-modules over the Robba ring.
\end{remark}

The goal of this subsection is to prove the following theorem.
\begin{theorem}\label{thm:deperfection}
    The categories $\VB_{\tilde{\Bcal}_{K_{\infty},\Acal}}^{\varphi,\Gamma_K}$, $\VB_{\Bcal_{K,\infty,\Acal}}^{\varphi,\Gamma_K}$, and $\VB_{\Bcal_{K,\Acal}}^{\varphi,\Gamma_K}$ are canonically equivalent each other.
\end{theorem}
\begin{remark}
    When $A=\Qbb_p$, then the equivalence between $\VB_{\tilde{\Bcal}_{K_{\infty}}}^{\varphi,\Gamma_K}$ and $\VB_{\Bcal_{K,\infty}}^{\varphi,\Gamma_K}$ is proved by Porat in \cite[Theorem 6.1]{Por24}.
\end{remark}

To prove this, we construct a ``deperfection'' of a finite projective $\tilde{B}_{K_{\infty},A}^{I}$-module $M^{I}$ with a continuous semilinear $\Gamma_K$-action.
In the following, we assume that $I\subset (0,\infty)$ is of the form $[p^{-k},p^{-l}]$ for integers $k$ and $l$ sufficiently large.

For the proof of Theorem \ref{thm:deperfection}, the following theorem is essential.

\begin{theorem}\label{thm:locally analytic descent}
\begin{enumerate}
    \item Let $M^{I}$ be a finite projective $\tilde{B}_{K_{\infty},A}^{I}$-module with a continuous semilinear $\Gamma_K$-action.
    Then $M^{I,\la}$ is a finite projective $B_{K,\infty,A}^I$-module, and $M^{I}$ has no derived analytic vectors (that is, $H^i(M^{I,R\la})=0$ for any $i>0$).
    Moreover, the natural morphism
    $$M^{I,\la}\otimes_{(B_{K,\infty,A}^{I})_{\square}} \tilde{B}_{K_{\infty},A}^{I} \to M^{I}$$
    is an isomorphism.
    In fact, for $m$ sufficiently large, the natural morphism
    $$M^{I,m\mathchar`-\an}\otimes_{(B_{K,A}^{I,m\mathchar`-\an})_{\square}} \tilde{B}_{K_{\infty},A}^{I} \to M^{I}$$
    is an isomorphism.
    \item Let $N^{I}$ be a finite projective $B_{K,\infty,A}^{I}$-module with a continuous semilinear $\Gamma_K$-action.
    Then the $\Gamma_K$-action on $N^{I}$ is locally analytic, and the natural morphism
    $$N^{I} \to (N^{I}\otimes_{(B_{K,\infty,A}^{I})_{\square}} \tilde{B}_{K_{\infty},A}^{I})^{\la}$$
    is an isomorphism.
    Moreover, let $N_n^{I}$ be a finite projective $B_{K,n,A}^{I}$-module with a continuous semilinear $\Gamma_K$-action for some integer $n\geq 0$.
    Then $N_n^{I}$ is a $\Gamma_{K,m}$-analytic representation for $m$ sufficiently large.
\end{enumerate}
\end{theorem}

Let us prove Theorem \ref{thm:locally analytic descent}.
First, we assume that $A$ is a Banach $\Qbb_p$-algebra.
In this case, we can apply the Tate-Sen methods (\cite{BC08}, \cite{Por24}).
We regard $\tilde{B}_{K_{\infty},A}^{I}$, $B_{K,m,A}^{I}$ as (classical) Banach $\Qbb_p$-algebras.
We have normalized trace morphisms $R_m \colon \tilde{B}_{K_{\infty}}^{I} \to B_{K,m}^{I}$ for $m \geq0$ which satisfy the Tate-Sen axioms (TS1)-(TS4) in \cite[Section 5]{Por24} by \cite[Proposition 1.1.12]{Ber08B-pair}, \cite[Corollary 9.5]{Col08}, and \cite[Example 5.5 (2)]{Por24}.
Note that in this setting, $H=\Ker(\chi\colon G=\Gamma_K \to \Zbb_{p}^{\times})$ is trivial, so the Tate-Sen axioms become much simpler. 
For example, (TS1) is trivial.
By taking the completed tensor product $-\hat{\otimes}_{\Qbb_p} A$, we obtain $A$-linear continuous morphisms $R_{m,A} \colon \tilde{B}_{K_{\infty},A}^{I} \to B_{K,m,A}^{I}$ for $m \geq0$.
It is easy to show that they also satisfy the Tate-Sen axioms (TS1)-(TS4).
\begin{example}
    We explain the construction of $R_m$ in the case $K=\Qbb_p$.
    We put $S_{m}=\{r/p^m \mid 0 \leq r \leq p^m-1, \}$ and $S_{\infty}=\bigcup_{m} S_m$.
    Then we have $B_{\Qbb_p,m}^{I}=\bigoplus_{r\in S_m} B_{\Qbb_p}^{I}[\varepsilon^r]$ and $\tilde{B}_{\Qbb_{p}(\zeta_{p^{\infty}})}^{I}=\hat{\bigoplus_{r\in S_{\infty}}} B_{\Qbb_p}^{I}[\varepsilon^r]$.
    Therefore, we can define $R_m \colon \tilde{B}_{\Qbb_{p}(\zeta_{p^{\infty}})}^{I} \to B_{\Qbb_p,m}^{I}$ as the projection corresponding to the inclusion $S_m\subset S_{\infty}$.
    By computing the action of $\gamma\in \Gamma_{\Qbb_{p}}$ on $[\varepsilon]$ explicitly, we can check that these morphisms satisfy the Tate-Sen axioms.
\end{example}

By using the Tate-Sen methods, we can prove Theorem \ref{thm:locally analytic descent} when $A$ is a Banach $\Qbb_p$-algebra.

\begin{proof}[Proof of Theorem \ref{thm:locally analytic descent} when $A$ is a Banach $\Qbb_p$-algebra.]
    First, we prove (1).
    Let $M$ be a finite projective $\tilde{B}_{K_{\infty},A}^{I}$-module with a continuous semilinear $\Gamma_K$-action.
    Then there exists a finite extension $L/K$ in $K_{\infty}$ and a finite free $\tilde{B}_{K_{\infty},A}^{I}$-module with a continuous semilinear $\Gamma_L$-action which contains $M$ as a direct summand as $\Gamma_L$-representations by \cite[Lemma 5.4]{Por22}\footnote{In \cite[Lemma 5.4]{Por22}, it is claimed that one can take $L=K$, but there is a gap in the argument. However, the statement we are using here can be deduced from the discussion loc.\ cit.; see also \cite{Poraterrata}.}.
    Since we have $(-)^{\Gamma_K\mathchar`-R\la}=(-)^{\Gamma_L\mathchar`-R\la}$ and $(-)^{\Gamma_{K,m}\mathchar`-\an}=(-)^{\Gamma_{L,m}\mathchar`-\an}$ for $m$ sufficiently large, we may assume that $L=K$ and $M$ is a finite free $\tilde{B}_{K_{\infty},A}^{I}$-module.
    Then by \cite[Proposition 5.3]{Por24}, $M$ has no derived locally analytic vectors.
    We take a finite free $\tilde{B}_{K_{\infty},A}^{I,+}$-submodule $M^+$ of $M$ which generates $M$ as a $\tilde{B}_{K_{\infty},A}^{I}$-module.
    Then there is a compact open subgroup $H\subset \Gamma_K$ such that $M^+$ is $H$-stable.
    Moreover, we may assume that $H$ and $M^+$ satisfy the condition of \cite[Proposition 3.3.1]{BC08}.
    Then there exist $n\geq 0$ and a finite free $B_{K,n,A}^{I,+}$-submodule $D^+_n(M^+)$ of $M^+$ which is $H$-stable and satisfies the condition that the natural morphism 
    $$D^+_n(M^+) \otimes_{(B_{K,n,A}^{I,+})_{\square}} \tilde{B}_{K_{\infty},A}^{I,+} \to M^+$$
    is an isomorphism.
    We write $D_n(M)=D^+_n(M^+)[1/p]$, which is a $H$-stable finite free $B_{K,n,A}^{I}$-submodule of $M$.
    By \cite[Proposition 3.3.3]{RJRC23} and (TS4), we deduce that $D_n(M)$ and $B_{K,n,A}^{I}$ are $\Gamma_{K,m}$-analytic representations for $m$ sufficiently large.
    Therefore, by Proposition \ref{prop:an functor and tensor product commute}, we obtain isomorphisms 
    $$M^{m\mathchar`-\an}\cong(D_n(M) \otimes_{(B_{K,n,A}^{I})_{\square}} \tilde{B}_{K_{\infty},A}^{I})^{m\mathchar`-\an}\cong D_n(M) \otimes_{(B_{K,n,A}^{I})_{\square}}\tilde{B}_{K_{\infty},A}^{I,m\mathchar`-\an},$$
    which proves (1).

    Next, we prove (2). 
    Let $N$ be a finite projective $B_{K,\infty,A}^{I}$-module with a continuous semilinear $\Gamma_K$-action.
    By \cite[Lemma 2.7.4]{Mann22}, there exist a finite projective $B_{K,n,A}^{I}$-module $N_n$ with a continuous semilinear $\Gamma_K$-action and an isomorphism 
    $$N_n\otimes_{(B_{K,n,A}^{I})_{\square}}B_{K,\infty,A}^{I}\cong N$$ 
    for some $n\geq 0$, where we note $B_{K,\infty,A}^{I}=\varinjlim_{n} B_{K,n,A}^{I}$.
    It is enough to show that the $\Gamma_K$-action on $N_n$ is $\Gamma_{K,m}$-analytic for $m$ sufficiently large.
    By the same reason as above, we may assume that $N_n$ is a finite free $B_{K,n,A}^{I}$-module.
    We take a finite free $B_{K,n,A}^{I,+}$-submodule $N_n^+$ of $N_n$ which generates $N_n$ as a $B_{K,n,A}^{I}$-module.
    Then there is a compact open subgroup $H\subset \Gamma_K$ such that $N_n^+$ is $H$-stable and the $H$-action on $N_n^+/p$ is trivial.
    Therefore, the $\Gamma_K$-action on $N$ is $\Gamma_{K,m}$-analytic for $m$ sufficiently large by \cite[Proposition 3.4.3]{RJRC23}.
    By Proposition \ref{prop:la functor and tensor product commute} and Proposition \ref{prop:la vector of perfect witt ring}, we find that
    $$N^{I} \to (N^{I}\otimes_{(B_{K,\infty,A}^{I})_{\square}} \tilde{B}_{K_{\infty},A}^{I})^{\la}$$
    is an isomorphism.
\end{proof}

Next, we prove Theorem \ref{thm:locally analytic descent} for a general $A$.
We note the following lemma.
\begin{lemma}\label{lem:flatness}
    Let $A$ be a Banach $\Qbb_p$-algebra or an algebraic-affinoid $\Qbb_{p,\square}$-algebra.
\begin{enumerate}
\item
Let $m\geq 0$ be a non-negative integer.
Then there is an isomorphism $$\tilde{B}_{K_{\infty}}^{I}\cong\hat{\bigoplus_{J_m}} B_{K,m}^{I}$$ of $B_{K,m}^{I}$-modules for some set $J_m$.
In particular, $\tilde{B}_{K_{\infty},A}^{I}$ is a faithfully flat $(B_{K,m,A}^{I})_{\square}$-algebra.
\item The $(B_{K,\infty,A}^{I})_{\square}$-algebra $\tilde{B}_{K_{\infty},A}^{I}$ is faithfully flat.
\item For $m\in \Zbb_{\geq 0}\cup \{\infty\}$, the morphism $(B_{K,m,A}^{I})_{\square} \to (\tilde{B}_{K_{\infty},A}^{I})_{\square}$ of analytic rings satisfies the $\ast$-descent.
\end{enumerate}
\end{lemma}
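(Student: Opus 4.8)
The plan is to prove the three assertions in sequence: (1) from an explicit orthonormal decomposition of $\tilde{B}_{K_{\infty}}^{[r,s]}$ over $B_{K,m}^{[r,s]}$ together with the flatness statements of Example \ref{ex:flat module}; (2) from (1) by a filtered colimit argument; and (3) from faithfully flat $\ast$-descent.

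For (1), I would first record the decomposition $\tilde{B}_{K_{\infty}}^{[r,s]}\cong\hat{\bigoplus_{j\in J_m}}B_{K,m}^{[r,s]}e_j$ as topological $B_{K,m}^{[r,s]}$-modules. This is part of the standard structure theory of $p$-adic period rings: for $K=\Qbb_p$ it is the identity $\tilde{B}_{\Qbb_p(\zeta_{p^{\infty}})}^{[r,s]}=\hat{\bigoplus_{t\in S_{\infty}}}B_{\Qbb_p}^{[r,s]}[\varepsilon^t]$ from the Example above, regrouped along representatives of the cosets of $S_m$ in $S_{\infty}$ — one of which is the trivial coset, so the family $\{e_j\}$ may be chosen with $e_{j_0}=1$ for some $j_0$ (equivalently, the structure map admits a $B_{K,m}^{[r,s]}$-linear retraction); for general $K$ it follows in the same way from the construction recalled in Construction \ref{construction:coefficient rings} (cf.\ \cite[Chapitre 1]{Ber08B-pair}, and the argument of \cite[Proposition 11.2.15]{FF18}). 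Since $B_{K,m}^{[r,s]}$ is a Banach $\Qbb_p$-algebra, Example \ref{ex:flat module}(4) then shows that $\tilde{B}_{K_{\infty}}^{[r,s]}$ is flat over $(B_{K,m}^{[r,s]})_{\bs}$ and that $\cofib((B_{K,m}^{[r,s]})_{\bs}\to\tilde{B}_{K_{\infty}}^{[r,s]})\cong\hat{\bigoplus_{j\neq j_0}}B_{K,m}^{[r,s]}e_j$ is flat over $(B_{K,m}^{[r,s]})_{\bs}$ as well; hence $\tilde{B}_{K_{\infty}}^{[r,s]}$ is faithfully flat over $(B_{K,m}^{[r,s]})_{\bs}$ by Lemma \ref{lem:faithfully flat quotient}.

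To pass to $A$-coefficients, I would use $\tilde{B}_{K_{\infty},A}^{[r,s]}=\tilde{B}_{K_{\infty}}^{[r,s]}\otimes_{\Qbb_{p,\bs}}^{\Lbb}A$, $(B_{K,m,A}^{[r,s]})_{\bs}=(B_{K,m}^{[r,s]})_{\bs}\otimes_{\Qbb_{p,\bs}}^{\Lbb}A$, and the isomorphism $\hat{\bigoplus_{j\in J_m}}B_{K,m}^{[r,s]}\otimes_{\Qbb_{p,\bs}}^{\Lbb}A\cong(\hat{\bigoplus_{j\in J_m}}\Qbb_p)\otimes_{\Qbb_{p,\bs}}^{\Lbb}B_{K,m,A}^{[r,s]}$ (as in the proof of Example \ref{ex:flat module}(4)); since $\hat{\bigoplus_{j\in J_m}}\Qbb_p$ is flat over $\Qbb_{p,\bs}$, this module is flat over $(B_{K,m,A}^{[r,s]})_{\bs}$, and running the same computation for $\hat{\bigoplus_{j\neq j_0}}$ and invoking Lemma \ref{lem:faithfully flat quotient} gives that $\tilde{B}_{K_{\infty},A}^{[r,s]}$ is faithfully flat over $(B_{K,m,A}^{[r,s]})_{\bs}$, which is the ``in particular'' of (1). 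For (2), write $(B_{K,\infty,A}^{[r,s]})_{\bs}=\varinjlim_n(B_{K,n,A}^{[r,s]})_{\bs}$, a filtered colimit of analytic rings all of whose analytic structures are the $\Zbb_{\bs}$-induced ones and hence are induced from $(B_{K,0,A}^{[r,s]})_{\bs}$; since $\tilde{B}_{K_{\infty},A}^{[r,s]}$ is an algebra over this colimit and is faithfully flat over each $(B_{K,n,A}^{[r,s]})_{\bs}$ by (1), Lemma \ref{lem:faithfully flat colimit}(2) gives assertion (2). For (3), in each case $m\in\Zbb_{\geq 0}\cup\{\infty\}$ the analytic ring $(\tilde{B}_{K_{\infty},A}^{[r,s]})_{\bs}$ is an animated faithfully flat $(B_{K,m,A}^{[r,s]})_{\bs}$-algebra by (1) and (2) whose analytic structure is the $\Zbb_{\bs}$-induced one, hence induced from $(B_{K,m,A}^{[r,s]})_{\bs}$; so Proposition \ref{prop:ff descent} applies and yields the $\ast$-descent.

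The main obstacle is the first step of (1): making precise the orthonormal decomposition of $\tilde{B}_{K_{\infty}}^{[r,s]}$ over $B_{K,m}^{[r,s]}$ in the ramified case, in a way compatible with the Banach structures and with the trivial coset, after which everything reduces to formal manipulations with $\hat{\bigoplus}$, the functor $-\otimes_{\Qbb_{p,\bs}}^{\Lbb}A$, and the cited lemmas. One should also be mildly careful that the flatness statements of Example \ref{ex:flat module} are being applied over the Banach $\Qbb_p$-algebra $B_{K,m}^{[r,s]}$, not over $A$.
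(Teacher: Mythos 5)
Your proof is correct and takes essentially the same route as the paper: the same orthonormal decomposition, Example \ref{ex:flat module} for flatness, Lemma \ref{lem:faithfully flat colimit} for (2), and Proposition \ref{prop:ff descent} for (3). Two small points of divergence. For the decomposition of $\tilde{B}_{K_{\infty}}^{[r,s]}$ over $B_{K,m}^{[r,s]}$ the paper simply cites Colmez \cite[Proposition 8.5(i)(b)]{Col08}, which is the on-target reference for this imperfect-to-perfect extension; you reconstruct it for $K=\Qbb_p$ by regrouping $\hat{\bigoplus}_{S_{\infty}}$ along $S_m$-cosets (correct), but your appeal to \cite[Proposition 11.2.15]{FF18} for general $K$ is slightly off-target, as that statement concerns the extension $\tilde{B}_{K_{\infty}}\to\tilde{B}_{\bar{K}}$; the Colmez reference (or \cite[Chapitre 1]{Ber08B-pair} as you also mention) is the better pointer. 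Conversely, your treatment of the ``in particular'' clause is actually more careful than the paper's: the paper's proof literally only asserts that \emph{flatness} follows from Example \ref{ex:flat module}, whereas you supply the step it leaves implicit --- choosing the basis with $e_{j_0}=1$ so that $\cofib((B_{K,m}^{[r,s]})_{\bs}\to\tilde{B}_{K_{\infty}}^{[r,s]})$ is again of $\hat{\bigoplus}$ form (hence flat, and likewise after $\otimes_{\Qbb_{p,\bs}}^{\Lbb}A$), then invoking Lemma \ref{lem:faithfully flat quotient}.
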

\begin{proof}
    The existence of an isomorphism $\tilde{B}_{K_{\infty}}^{I}\cong\hat{\bigoplus_{J_m}} B_{K,m}^{I}$ follows from \cite[Proposition 8.5(i)(b)]{Col08}.
    The flatness of $\tilde{B}_{K_{\infty},A}^{I}=\tilde{B}_{K_{\infty}}^{I}\otimes_{\Qbb_{p,\square}}^{\Lbb}A_{\square}$ over $(B_{K,m}^{I})_{\square}\otimes_{\Qbb_{p,\square}}^{\Lbb}A_{\square}$ follows from Example \ref{ex:flat module}.
    Part (2) follows from Lemma \ref{lem:faithfully flat colimit}, and Part (3) follows from Proposition \ref{prop:ff descent}.
\end{proof}

Let $A$ be an algebraic-affinoid $\Qbb_{p,\square}$-algebra, and $B$ be an affinoid $\Qbb_p$-algebra of definition of $A$.
First, we assume that the underlying discrete ring $A(\ast)$ of $A$ is an integral domain.

\begin{lemma}\label{lem:injection to Banach}
    There exists a Banach $B$-algebra $A^{\prime}$ and an injection $A \to A^{\prime}$ of $B$-algebras.
\end{lemma}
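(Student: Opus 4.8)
The plan is to reduce the lemma to a purely algebraic statement about the discrete ring $R:=A(\ast)$ and then produce the required Banach algebra by completing $\Frac(R)$ with respect to a carefully chosen rank-$1$ valuation.

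Since $A$ is relatively discrete we may write $A=\Cond_B(R)$. The functor $\Cond_B$ is $t$-exact (Lemma \ref{lem:condensification functor}), hence preserves injections of $B(\ast)$-modules, and for any $B(\ast)$-module $M$ the natural map $\Cond_B(M)\to\underline{M}$ is injective on extremally disconnected sets by Lemma \ref{lem:condensification functor}(2) (an element vanishes as soon as all its pointwise values do). Therefore it suffices to exhibit a Banach $\Qbb_p$-algebra $A^{\prime}$, a bounded $\Qbb_p$-algebra homomorphism $B(\ast)\to A^{\prime}$, and an \emph{injective} $B(\ast)$-algebra homomorphism $R\hookrightarrow A^{\prime}$: then $\underline{A^{\prime}}$ is a Banach $B$-algebra and $A=\Cond_B(R)\to\Cond_B(A^{\prime})\to\underline{A^{\prime}}$ is the desired injection.

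To construct $A^{\prime}$, observe that $R$ is an integral domain, so $\pfrak=\ker(B(\ast)\to R)$ is prime and $D:=B(\ast)/\pfrak$ is an affinoid $\Qbb_p$-algebra which is a domain; moreover $R$ is a finitely generated $D$-algebra containing $D$, so with $E=\Frac(D)$ and $F=\Frac(R)$ the extension $F/E$ is a finitely generated field extension. By Noether normalization for affinoid algebras there is a finite injective $\Qbb_p$-algebra homomorphism $\Qbb_p\langle T_1,\dots,T_d\rangle\hookrightarrow D$; let $w$ be an extension (Chevalley's extension theorem) of the Gauss valuation on $\Frac\bigl(\Qbb_p\langle T_1,\dots,T_d\rangle\bigr)$ to the finite extension $E$, a rank-$1$ valuation restricting to $\lvert\cdot\rvert_p$ on $\Qbb_p$. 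Because $D$ is finite as a module over $\Qbb_p\langle T_1,\dots,T_d\rangle$ and the open mapping theorem identifies its Banach norm with the quotient norm attached to a finite generating set, $w|_D$ is a bounded seminorm on $D$. Finally extend $w$ to a rank-$1$ valuation $v$ on $F$ step by step along a finite generating set of $F$ over $E$, using a Gauss extension at each transcendental generator and Chevalley at each algebraic one; the rank stays $1$ throughout, and $v|_D=w|_D$ remains bounded.

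Now set $A^{\prime}$ equal to the completion of $F$ with respect to $v$: it is a complete valued field, in particular a Banach $\Qbb_p$-algebra; the composite $B(\ast)\twoheadrightarrow D\hookrightarrow R\hookrightarrow F\hookrightarrow A^{\prime}$ is a bounded $\Qbb_p$-algebra homomorphism because $v|_D$ is bounded and $B(\ast)\to D$ is continuous; and $R\hookrightarrow A^{\prime}$ is injective since $v$ is a genuine valuation on the field $F$. Combined with the reduction above, this yields the injection $A\hookrightarrow\underline{A^{\prime}}$ of $B$-algebras. The one genuinely delicate point is ensuring that the valuation one ends up with on $F$ restricts to a \emph{bounded} seminorm on the affinoid base $D$, so that the structure map from $B$ stays continuous; this is exactly what Noether normalization for affinoid algebras together with the open mapping theorem secures, and the remaining steps are standard valuation theory and the formal properties of $\Cond_B$ established earlier.
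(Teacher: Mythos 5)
Your proof is correct but takes a genuinely different route from the paper's. Both begin with the same reduction: it suffices to produce a Banach $\Qbb_p$-algebra $A'$, a bounded $\Qbb_p$-algebra homomorphism $B(\ast)\to A'$, and an injection $A(\ast)\hookrightarrow A'$ of $B(\ast)$-algebras, the condensed injection $A\hookrightarrow\underline{A'}$ then following from the $t$-exactness of $\Cond_B$ and Lemma \ref{lem:condensification functor}(2). From there the two constructions diverge. The paper picks a maximal ideal $\mfrak\subset A(\ast)$, uses the Jacobson property to identify $A(\ast)/\mfrak$ with a finite extension $F/\Qbb_p$, chooses a finite generating set of $A(\ast)$ over a ring of definition $B_0\subset B(\ast)$ whose images lie in $\Ocal_F$, and deduces that $p$ is not invertible in the resulting subring $A_0$; Krull's intersection theorem then makes $A_0\to A_0^{\wedge}$ injective, and $A'=A_0^{\wedge}[1/p]$ works. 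You instead build a rank-one valuation $v$ on $\Frac(A(\ast))$ via Noether normalization, the Gauss valuation, and Chevalley/Gauss extensions, check that $v$ is bounded on $D=B(\ast)/\ker(B(\ast)\to A(\ast))$ using the open mapping theorem, and take $A'$ to be the $v$-adic completion, a complete nonarchimedean field. The paper's construction is shorter and avoids valuation theory entirely, while yours buys the slightly stronger conclusion that $A'$ can be taken to be a field. Your boundedness step is correct but could be spelled out one notch more explicitly: for any representation $a=\sum_i c_i e_i$ with $c_i\in\Qbb_p\langle T_1,\dots,T_d\rangle$, multiplicativity of $w$ gives $w(a)\le(\max_j w(e_j))\max_i\lVert c_i\rVert_{\mathrm{Gauss}}$, and taking the infimum over representations bounds $w(a)$ by a constant times the $T_d$-module quotient norm, which the open mapping theorem identifies up to constants with the affinoid Banach norm.
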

\begin{proof}
    Let $\mfrak$ be a maximal ideal of $A(\ast)$.
    Since $B(\ast)$ is a Jacobson ring, the preimage $\nfrak$ of $\mfrak$ in $B(\ast)$ is also a maximal ideal.
    We write $F=A(\ast)/\mfrak$, which is a finite extension of $\Qbb_p$.
    We take a surjection $f\colon B(\ast)[X_1,\ldots,X_n]\to A(\ast)$.
    We can arrange that the images of $f(X_i)$ in $F$ lie in $\Ocal_F$.
    Let $B_0(\ast)$ be a ring of definition of $B(\ast)$, and $A_0(\ast)$ be the image of the morphism $$B_0(\ast)[X_1,\ldots,X_n] \subset B(\ast)[X_1,\ldots,X_n] \overset{f}{\to} A(\ast).$$
    Then the image of $B_0(\ast)[X_1,\ldots,X_n] \subset B(\ast)[X_1,\ldots,X_n] \overset{f}{\to} A(\ast) \to F$ is contained in $\Ocal_F$, so $p$ is not invertible in $A_0(\ast)$.
    Since $A_0(\ast)$ is a noetherian integral domain, the $p$-adic completion morphism  $A_0(\ast)\to A_0(\ast)^{\wedge}$ is injective.
    We write $A'=A_0(\ast)^{\wedge}[1/p]$, which is an affinoid $B$-algebra.
    We regard it as a condensed ring.
    Then, the injection $A(\ast)\to A'$ induces an injection $A=\Cond_{B}(A(\ast))\to A'$ by Lemma \ref{lem:condensification functor} (2).
\end{proof}

\begin{proposition}\label{prop:la descent inj}
    Let $M$ be a finite projective $\tilde{B}_{K_{\infty},A}^{I}$-module with a continuous semilinear $\Gamma_K$-action.
    Then the natural morphism
    $$M^{\la}\otimes_{(B_{K,\infty,A}^{I})_{\square}} \tilde{B}_{K_{\infty},A}^{I} \to M$$
    is injective.
\end{proposition}
\begin{proof}
    We take an injection $A\to A^{\prime}$ as in Lemma \ref{lem:injection to Banach}.
    We write $$M^{\prime}=M\otimes_{\tilde{B}_{K_{\infty},A}^{I}}\tilde{B}_{K_{\infty},A^{\prime}}^{I},$$ which is a finite projective $\tilde{B}_{K_{\infty},A^{\prime}}^{I}$-module with a continuous semilinear $\Gamma_K$-action.
    Then we have the following commutative diagram:
    $$
    \xymatrix{
        M^{\la}\otimes_{(B_{K,\infty,A}^{I})_{\square}} \tilde{B}_{K_{\infty},A}^{I}\ar[r]^-{\alpha}\ar[d]^-{\beta} & M\ar[d] \\
        M^{\prime,\la}\otimes_{(B_{K,\infty,A^{\prime}}^{I})_{\square}} \tilde{B}_{K_{\infty},A^{\prime}}^{I}\ar[r]^-{\gamma} & M^{\prime},
    }
    $$
    where $\gamma$ is an isomorphism.
    We prove that $\beta$ is injective, which implies $\alpha$ is also injective.
    Since $\tilde{B}_{K_{\infty},A}^{I}$ (resp.\ $\tilde{B}_{K_{\infty},A^{\prime}}^{I}$) is flat over $(B_{K,\infty,A}^{I})_{\square}$ (resp.\ $(B_{K,\infty,A^{\prime}}^{I})_{\square}$) by Lemma \ref{lem:flatness}, we have the following diagram:
    $$
    \xymatrix{
        M^{\la}\otimes_{(B_{K,\infty,A}^{I})_{\square}} \tilde{B}_{K_{\infty},A}^{I}\ar[r]\ar[d]^-{\beta} & M\otimes_{(B_{K,\infty,A}^{I})_{\square}} \tilde{B}_{K_{\infty},A}^{I}\ar[d]^-{\delta} \\
        M^{\prime,\la}\otimes_{(B_{K,\infty,A^{\prime}}^{I})_{\square}} \tilde{B}_{K_{\infty},A^{\prime}}^{I}\ar[r] & M^{\prime}\otimes_{(B_{K,\infty,A^{\prime}}^{I})_{\square}} \tilde{B}_{K_{\infty},A^{\prime}}^{I},
    }
    $$
    where the horizontal morphisms are injective.
    The morphism $\delta$ is equal to $$M\otimes_{(B_{K,\infty,A}^{I})_{\square}} \tilde{B}_{K_{\infty},A}^{I} \to M\otimes_{(B_{K,\infty,A}^{I})_{\square}} \tilde{B}_{K_{\infty},A^{\prime}}^{I}.$$
    Since $M$ is flat over $(\tilde{B}_{K_{\infty},A}^{I})_{\square}$ and $\tilde{B}_{K_{\infty},A}^{I}$ is flat over $(B_{K,\infty,A}^{I})_{\square}$, $M$ is flat over $(B_{K,\infty,A}^{I})_{\square}$.
    Moreover, the morphism
    $$\tilde{B}_{K_{\infty},A}^{I}=A\otimes_{\Qbb_{p,\square}}\tilde{B}_{K_{\infty}}^{I} \to \tilde{B}_{K_{\infty},A^{\prime}}^{I}=A^{\prime}\otimes_{\Qbb_{p,\square}}\tilde{B}_{K_{\infty}}^{I}$$
    is injective since $\tilde{B}_{K_{\infty}}^{I}$ is flat over $\Qbb_{p,\square}$ (Example \ref{ex:flat module}).
    Therefore, $\delta$ is injective, which shows that $\beta$ is also injective.
\end{proof}

Next, we prove the morphism in Proposition \ref{prop:la descent inj} is surjective.
\begin{proposition}\label{prop:la descent surj}
    Let $M$ be a finite projective $\tilde{B}_{K_{\infty},A}^{I}$-module with a continuous semilinear $\Gamma_K$-action.
    Then the natural morphism
    $$M^{\la}\otimes_{(B_{K,\infty,A}^{I})_{\square}} \tilde{B}_{K_{\infty},A}^{I} \to M$$
    is surjective.
\end{proposition}
\begin{proof}
By Proposition \ref{prop:relatively discrete action filtered colimit}, there exists a finitely presented $\tilde{B}_{K_{\infty},B}^{I}$-submodule $N\subset M$ which is stable under the $\Gamma_K$-action and contains a generator of $M$ as a $\tilde{B}_{K_{\infty},A}^{I}$-module.
By Corollary \ref{cor:free module lift of action}, there exists a finite free $\tilde{B}_{K_{\infty},B}^{I}$-module $L$ with a continuous semilinear $\Gamma_K$-action and a $\Gamma_K$-equivariant $\tilde{B}_{K_{\infty},B}^{I}$-linear surjection $L\to M$.
From these, we get a $\Gamma_K$-equivariant surjection
$$\beta\colon L\otimes_{(\tilde{B}_{K_{\infty},B}^{I})_{\square}}\tilde{B}_{K_{\infty},A}^{I}\to M.$$
Therefore, we obtain the following commutative diagram:
$$
\xymatrix{
    (L^{\la}\otimes_{(B_{K,\infty,B}^{I})_{\square}} \tilde{B}_{K_{\infty},B}^{I})\otimes_{(\tilde{B}_{K_{\infty},B}^{I})_{\square}}\tilde{B}_{K_{\infty},A}^{I}\ar[r]^-{\alpha}\ar[d] & L\otimes_{(\tilde{B}_{K_{\infty},B}^{I})_{\square}}\tilde{B}_{K_{\infty},A}^{I}\ar[d]^-{\beta} \\
    M^{\la}\otimes_{(B_{K,\infty,A}^{I})_{\square}} \tilde{B}_{K_{\infty},A}^{I}\ar[r]^-{\gamma} & M.
}
$$
Since both $\alpha$ and $\beta$ are surjective, $\gamma$ is also surjective.
\end{proof}

We prove Theorem \ref{thm:locally analytic descent} under the assumption that $A$ is an integral domain.
In the proof below, this assumption is used only to apply Proposition \ref{prop:la descent inj} and Proposition \ref{prop:la descent surj}. Conversely, the same argument applies to any $A$ for which both propositions hold.

\begin{proof}[Proof of Theorem \ref{thm:locally analytic descent} when $A$ is an integral domain]
    In order to simplify notation, we denote $M^{I}$ simply as $M$.
    By Proposition \ref{prop:la descent inj} and Proposition \ref{prop:la descent surj}, the natural morphism
    $$M^{\la}\otimes_{(B_{K,\infty,A}^{I})_{\square}} \tilde{B}_{K_{\infty},A}^{I} \to M$$
    is an isomorphism.
    By Proposition \ref{prop:la functor and tensor product commute} and Proposition \ref{prop:la vector of perfect witt ring}, we find that $M$ has no derived locally analytic vectors.
    By Lemma \ref{lem:flatness} and Theorem \ref{thm:descent of finite projective modules}, $M^{\la}$ is a finite projective $B_{K,\infty,A}^{I}$-module.
    By \cite[Lemma 2.7.4]{Mann22}, there exist a finite projective $B_{K,m,A}^{I}$-module $M_m$ with a continuous semilinear $\Gamma_K$-action and an isomorphism $$M_m\otimes_{(B_{K,m,A}^{I})_{\square}}B_{K,\infty,A}^{I}\cong M^{\la}$$ for some $m\geq 0$, where we note $B_{K,\infty,A}^{I}=\varinjlim_{m} B_{K,m,A}^{I}$.
    We prove that $M_m$ is $\Gamma_{K,n}$-analytic for $n$ sufficiently large, where we note that it leads to (2) by the same reason as in the proof when $A$ is a Banach $\Qbb_p$-algebra.
    We take $l\geq m$ such that $B_{K,m,B}^{I}$ and $B_{K,m,A}^{I}$ are $\Gamma_{K,l}$-analytic.
    By the same argument as in the proof of Proposition \ref{prop:la descent surj}, we can take a finite free $B_{K,m,B}^{I}$-module $L_m$ with a continuous semilinear $\Gamma_K$-action and a $\Gamma_K$-equivariant surjection 
    $$L_m\otimes_{(B_{K,m,B}^{I})_{\square}}B_{K,m,A}^{I}\to M_m.$$
    For $n\geq l$ sufficiently large, $L_m$ is $\Gamma_{K,n}$-analytic.
    Therefore, $L_m\otimes_{(B_{K,m,B}^{I})_{\square}}B_{K,m,A}^{I}$ is also $\Gamma_{K,n}$-analytic by Proposition \ref{prop:an functor and tensor product commute}, and we get a morphism 
    $$L_m\otimes_{(B_{K,m,B}^{I})_{\square}}B_{K,m,A}^{I}\to M_m^{n\mathchar`-\an} \subset M_m,$$
    where the inclusion $M_m^{n\mathchar`-\an} \subset M_m$ follows from Lemma \ref{lem:an is sub}.
    Since $$L_m\otimes_{(B_{K,m,B}^{I})_{\square}}B_{K,m,A}^{I}\to M_m$$ 
    is surjective, we get $M_m^{n\mathchar`-\an}=M_m$.
    From the above and Proposition \ref{prop:an functor and tensor product commute}, we obtain 
    $$M^{n\mathchar`-\an}\cong (M_m\otimes_{(B_{K,m,A}^{I})_{\square}}\tilde{B}_{K_{\infty},A}^{I})^{n\mathchar`-\an}\cong M_m\otimes_{(B_{K,m,A}^{I})_{\square}}\tilde{B}_{K_{\infty},A}^{I,n\mathchar`-\an},$$
    which completes the proof of (1).
\end{proof}

Next, we consider general general algebraic-affinoid $\Qbb_{p,\square}$-algebras.
\begin{lemma}\label{lem:reduction to red}
    Both Proposition \ref{prop:la descent inj} and Proposition \ref{prop:la descent surj} are true for a general algebraic-affinoid $\Qbb_{p,\square}$-algebra $A$.
\end{lemma}
\begin{proof}
    There exists a filtration of ideals of $A$ 
    $$0=I_0\subset I_1 \subset \cdots \subset I_n=A$$
    such that each quotient $I_i/I_{i-1}$ is isomorphic to $A/\pfrak_i$ for some prime ideal $\pfrak_i$ of $A$ by \cite[Theorem 10]{Mat80} or \cite[00L0]{stacks-project}.
    We proceed by induction on $n$.
    The case $n=1$ is already proved.
    Now suppose $n>1$.
    We write $\bar{A}=A/I_1$ and $A/\pfrak_1=A_1$.
    From the isomorphism $I_1\cong A_1$ of $A_{\square}$-modules, we get an exact sequence
    $$0\to \tilde{B}_{K_{\infty},A_1}^{I}\to \tilde{B}_{K_{\infty},A}^{I}\to \tilde{B}_{K_{\infty},\bar{A}}^{I}\to 0.$$
    Let $M$ be a finite projective $\tilde{B}_{K_{\infty},A}^{I}$-module with a continuous semilinear $\Gamma_K$-action.
    We write $\bar{M}=M\otimes_{(\tilde{B}_{K_{\infty},A}^{I})_{\square}}\tilde{B}_{K_{\infty},\bar{A}}^{I}$ and $M_1=M\otimes_{(\tilde{B}_{K_{\infty},A}^{I})_{\square}}\tilde{B}_{K_{\infty},A_1}^{I}$.
    Then we have an exact sequence
    $$0\to M_1 \to M \to \bar{M} \to 0.$$
    By Theorem \ref{thm:locally analytic descent} for integral domains, $M_1$ has no derived locally analytic vectors.
    Therefore, there exists an exact sequence
    $$0\to M_1^{\la} \to M^{\la} \to \bar{M}^{\la} \to 0.$$
    By applying $-\otimes_{(B_{K,\infty}^{I})_{\square}} \tilde{B}_{K_{\infty}}^{I}$, which is exact by Lemma \ref{lem:flatness} (2), to the above exact sequence, we obtain an exact sequence
    $$0\to M_1^{\la}\otimes_{(B_{K,\infty,A_1}^{I})_{\square}} \tilde{B}_{K_{\infty},A_1}^{I} \to M^{\la}\otimes_{(B_{K,\infty,A}^{I})_{\square}} \tilde{B}_{K_{\infty},A}^{I} \to \bar{M}^{\la}\otimes_{(B_{K,\infty,\bar{A}}^{I})_{\square}} \tilde{B}_{K_{\infty},\bar{A}}^{I} \to 0.$$
    Since the natural morphisms
    \begin{align*}
        M_1^{\la}\otimes_{(B_{K,\infty,A_1}^{I})_{\square}} \tilde{B}_{K_{\infty},A_1}^{I} &\to M_1,\\
        \bar{M}^{\la}\otimes_{(B_{K,\infty,\bar{A}}^{I})_{\square}} \tilde{B}_{K_{\infty},\bar{A}}^{I} &\to \bar{M}
    \end{align*}
    are isomorphisms by the induction hypothesis, we find that the natural morphism
    $$M^{\la}\otimes_{(B_{K,\infty,A}^{I})_{\square}} \tilde{B}_{K_{\infty},A}^{I} \to M$$
    is also an isomorphism.
\end{proof}

\begin{proof}[Proof of Theorem \ref{thm:locally analytic descent}]
The same argument as in the proof of Theorem \ref{thm:locally analytic descent} for integral domains, together with Lemma \ref{lem:reduction to red}, shows the theorem for a general algebraic-affinoid $\Qbb_{p,\square}$-algebra $A$.
\end{proof}

\begin{corollary}
    The functors
    $$\VB_{\tilde{\Bcal}_{K_{\infty},\Acal}}^{\varphi,\Gamma_K} \to \VB_{\Bcal_{K,\infty,\Acal}}^{\varphi,\Gamma_K} ;\; \{M^{I}\}_{I}\mapsto \{M^{I,\la}\}_{I}$$
    and 
    \begin{align*}
          \VB_{\Bcal_{K,\infty,\Acal}}^{\varphi,\Gamma_K}\to \VB_{\tilde{\Bcal}_{K_{\infty},\Acal}}^{\varphi,\Gamma_K} ;\; \{N^{I}\}_{I}&\mapsto \{N^{I}\otimes_{\Bcal_{K,\infty,\Acal}^{I}}\tilde{\Bcal}_{K_{\infty},\Acal}^{I}\}_{I}\\
          &=\{N^{I}\otimes_{(B_{K,\infty,A}^{I})_{\square}}\tilde{B}_{K_{\infty},A}^{I}\}_{I}
    \end{align*}
    are quasi-inverse to each other.
\end{corollary}
\begin{proof}
    It immediately follows from Theorem \ref{thm:locally analytic descent}.
\end{proof}

\begin{definition}
     We write the above categorical equivalences as
     $$\VB_{\Bcal_{K,\infty,\Acal}}^{\varphi,\Gamma_K}\ni N\mapsto N\otimes_{\Bcal_{K,\infty,A}}\tilde{\Bcal}_{K_{\infty},\Acal}\in\VB_{\tilde{\Bcal}_{K_{\infty},\Acal}}^{\varphi,\Gamma_K}$$
    and 
    $$\VB_{\tilde{\Bcal}_{K_{\infty},\Acal}}^{\varphi,\Gamma_K}\ni M=\{M^{I}\}\mapsto M^{\la}=\{M^{I,\la}\}\in \VB_{\Bcal_{K,\infty,\Acal}}^{\varphi,\Gamma_K}.$$
\end{definition}

It remains to show that the categories $\VB_{\tilde{\Bcal}_{K_{\infty},\Acal}}^{\varphi,\Gamma_K}$ and $\VB_{\Bcal_{K,\Acal}}^{\varphi,\Gamma_K}$ are canonically equivalent.

\begin{proof}[Proof of Theorem \ref{thm:deperfection}]
    There is a natural functor 
    \begin{align*}
    \alpha \colon \VB_{\Bcal_{K,\Acal}}^{\varphi,\Gamma_K} \to \VB_{\tilde{\Bcal}_{K_{\infty,\Acal}}}^{\varphi,\Gamma_K};\; \{N^{I}\}_{I} &\mapsto \{N^{I}\otimes_{\Bcal_{K,\Acal}^{I}} \tilde{\Bcal}_{K_{\infty},\Acal}^{I}\}_{I}\\
    &=\{N^{I}\otimes_{(B_{K,A}^{I})_{\square}} \tilde{B}_{K_{\infty},A}^{I}\}_{I}.
    \end{align*}
    We construct a quasi-inverse functor of $\alpha$.
    We take an object $M=\{M^{I}\}_{I}$ of $\VB_{\tilde{\Bcal}_{K_{\infty,\Acal}}}^{\varphi,\Gamma_K}$.
    We fix a sufficiently large integer $l\in \Zbb$.
    By Theorem \ref{thm:locally analytic descent}, there exists $m\in \Zbb$ such that the natural morphisms
    \begin{align*}
        &M^{[p^{-l-1},p^{-l}],m\mathchar`-\an}\otimes_{(\tilde{B}_{K_{\infty},A}^{[p^{-l-1},p^{-l}],m\mathchar`-\an})_{\square}} \tilde{B}_{K_{\infty},A}^{[p^{-l-1},p^{-l}]} \to M^{[p^{-l-1},p^{-l}]},\\
        &M^{[p^{-l},p^{-l}],m\mathchar`-\an}\otimes_{(\tilde{B}_{K_{\infty},A}^{[p^{-l},p^{-l}],m\mathchar`-\an})_{\square}} \tilde{B}_{K_{\infty},A}^{[p^{-l},p^{-l}]} \to M^{[p^{-l},p^{-l}]}
    \end{align*}
    are isomorphisms.
    Since we have a $\Gamma_K$-equivariant isomorphism 
    $$M^{[p^{-l-1},p^{-l}]}\otimes_{(\tilde{B}_{K_{\infty},A}^{[p^{-l-1},p^{-l}]})_{\square},\varphi} \tilde{B}_{K_{\infty},A}^{[p^{-l-2},p^{-l-1}]}\cong M^{[p^{-l-2},p^{-l-1}]}$$
    and $\varphi\colon \tilde{B}_{K_{\infty},A}^{[p^{-l-1},p^{-l}]}\to \tilde{B}_{K_{\infty},A}^{[p^{-l-2},p^{-l-1}]}$ is a $\Gamma_K$-equivariant isomorphism,
    we obtain an $\Gamma_K$-equivariant isomorphism
    $$M^{[p^{-l-1},p^{-l}],m\mathchar`-\an}\otimes_{(\tilde{B}_{K_{\infty},A}^{[p^{-l-1},p^{-l}],m\mathchar`-\an})_{\square,\varphi}} \tilde{B}_{K_{\infty},A}^{[p^{-l-2},p^{-l-1}],m\mathchar`-\an} \cong M^{[p^{-l-2},p^{-l-1}],m\mathchar`-\an}.$$
    Therefore, we find that the natural morphism
    \begin{align}
        M^{[p^{-l-2},p^{-l-1}],m\mathchar`-\an}\otimes_{(\tilde{B}_{K_{\infty},A}^{[p^{-l-2},p^{-l-1}],m\mathchar`-\an})_{\square}} \tilde{B}_{K_{\infty},A}^{[p^{-l-2},p^{-l-1}]} \to M^{[p^{-l-2},p^{-l-1}]} \label{eq1}
    \end{align} 
    is an isomorphism.
    By repeating this argument, for $k\geq l$, the natural morphism
    \begin{align}
    M^{[p^{-k-1},p^{-k}],m\mathchar`-\an}\otimes_{(\tilde{B}_{K_{\infty},A}^{[p^{-k-1},p^{-k}],m\mathchar`-\an})_{\square}} \tilde{B}_{K_{\infty},A}^{[p^{-k-1},p^{-k}]} \to M^{[p^{-k-1},p^{-k}]}\label{eq2}
    \end{align}
    is also an isomorphism.
    By the same argument, we find that for $k\geq l$, the natural morphism
    $$M^{[p^{-k},p^{-k}],m\mathchar`-\an}\otimes_{(\tilde{B}_{K_{\infty},A}^{[p^{-k},p^{-k}],m\mathchar`-\an})_{\square}} \tilde{B}_{K_{\infty},A}^{[p^{-k},p^{-k}]} \to M^{[p^{-k},p^{-k}]}$$
    is an isomorphism.
    Since, for $k\geq l,m$, we have 
    \begin{align*}
        &\tilde{B}_{K_{\infty},A}^{[p^{-k-1},p^{-k}],m\mathchar`-\an}\subset\tilde{B}_{K_{\infty},A}^{[p^{-k-1},p^{-k}],k\mathchar`-\an}\subset B_{K,A}^{[p^{-k-1},p^{-k}]}, \\
        &\tilde{B}_{K_{\infty},A}^{[p^{-k},p^{-k}],m\mathchar`-\an}\subset\tilde{B}_{K_{\infty},A}^{[p^{-k},p^{-k}],k\mathchar`-\an}\subset B_{K,A}^{[p^{-k},p^{-k}]}
    \end{align*}
    by Proposition \ref{prop:la vector of perfect witt ring}, 
    we can define 
    \begin{align*}
        &N^{[p^{-k-1},p^{-k}]}=(M^{[p^{-k-1},p^{-k}]})^{m\mathchar`-\an}\otimes_{(\tilde{B}_{K_{\infty},A}^{[p^{-k-1},p^{-k}],m\mathchar`-\an})_{\square}}B_{K,A}^{[p^{-k-1},p^{-k}]},\\
        &N^{[p^{-k},p^{-k}]}=(M^{[p^{-k},p^{-k}]})^{m\mathchar`-\an}\otimes_{(\tilde{B}_{K_{\infty},A}^{[p^{-k},p^{-k}],m\mathchar`-\an})_{\square}}B_{K,A}^{[p^{-k},p^{-k}]},
    \end{align*}
    which is a finite projective $B_{K,A}^{[p^{-k-1},p^{-k}]}$-module (resp.\ $B_{K,A}^{[p^{-k},p^{-k}]}$-module) with a continuous semilinear $\Gamma_K$-action by the following isomorphisms \eqref{eq6}, Proposition \ref{prop:descent finite projective Fredholm}, and Theorem \ref{thm:descent of finite projective modules}.
    By (\ref{eq1}) and (\ref{eq2}), we have isomorphisms
    \begin{equation}\label{eq6}
    \begin{aligned}
        &N^{[p^{-k-1},p^{-k}]}\otimes_{(B_{K,A}^{[p^{-k-1},p^{-k}]})_{\square}} \tilde{B}_{K_{\infty},A}^{[p^{-k-1},p^{-k}]} \cong M^{[p^{-k-1},p^{-k}]},\\
        &N^{[p^{-k},p^{-k}]}\otimes_{(B_{K,A}^{[p^{-k},p^{-k}]})_{\square}} \tilde{B}_{K_{\infty},A}^{[p^{-k},p^{-k}]} \cong M^{[p^{-k},p^{-k}]}
    \end{aligned}
    \end{equation}
    for $k \geq l,m$.
    We note that for $i=k,k+1$, the natural morphism
    $$B_{K,A}^{[p^{-i},p^{-i}]}\otimes_{(B_{K,A}^{[p^{-k-1},p^{-k}]})_{\square}} \tilde{B}_{K_{\infty},A}^{[p^{-k-1},p^{-k}]} \to \tilde{B}_{K_{\infty},A}^{[p^{-i},p^{-i}]}$$
    is an isomorphism by Remark \ref{rem:open subspace perfect}, so we find that the natural morphism
    $$N^{[p^{-k-1},p^{-k}]} \otimes_{(B_{K,A}^{[p^{-k-1},p^{-k}]})_{\square}} B_{K,A}^{[p^{-i},p^{-i}]} \to N^{[p^{-i},p^{-i}]}$$
    becomes an isomorphism after taking the tensor product $-\otimes_{(B_{K,A}^{[p^{-i},p^{-i}]})_{\square}}\tilde{B}_{K_{\infty},A}^{[p^{-i},p^{-i}]}$.
By Proposition \ref{prop:ff descent} and Lemma \ref{lem:flatness}, we find that $$N^{[p^{-k-1},p^{-k}]} \otimes_{\Bcal_{K,\Acal}^{[p^{-k-1},p^{-k}]}}^{\Lbb} \Bcal_{K,\Acal}^{[p^{-i},p^{-i}]}\cong N^{[p^{-k-1},p^{-k}]} \otimes_{(B_{K,A}^{[p^{-k-1},p^{-k}]})_{\square}} B_{K,A}^{[p^{-i},p^{-i}]} \to N^{[p^{-i},p^{-i}]}$$
is also an isomorphism for $i=k,k+1$.
By the same argument, we obtain isomorphisms
\begin{align*}
\Phi_{[p^{-k-1},p^{-k}]}\colon N^{[p^{-k-1},p^{-k}]}\otimes_{\Bcal_{K,\Acal}^{[p^{-k-1},p^{-k}]},\varphi}^{\Lbb} \Bcal_{K,\Acal}^{[p^{-k-2},p^{-k-1}]}\cong N^{[p^{-k-2},p^{-k-1}]},\\
\Phi_{[p^{-k},p^{-k}]}\colon N^{[p^{-k},p^{-k}]}\otimes_{\Bcal_{K,\Acal}^{[p^{-k},p^{-k}]},\varphi}^{\Lbb} \Bcal_{K,\Acal}^{[p^{-k-1},p^{-k-1}]}\cong N^{[p^{-k-1},p^{-k-1}]}.
\end{align*}
By gluing them, we obtain a family $\{N^{[p^{-k^{\prime}},p^{-k}]}\}_{k,k^{\prime}}$ of objects of $\Dcal(\Bcal_{K,\Acal}^{[p^{-k^{\prime}},p^{-k}]})$ for $k^{\prime}\geq k\geq l,m$ with a continuous semilinear $\Gamma_K$-action on $N^{[p^{-k^{\prime}},p^{-k}]}$ and $\Gamma_K$-equivariant isomorphisms 
\begin{align*}
\Phi_{p^{-k^{\prime}},p^{-k}}\colon N^{[p^{-k^{\prime}},p^{-k}]}\otimes_{\Bcal_{K,\Acal}^{[p^{-k^{\prime}},p^{-k}]},\varphi}^{\Lbb} \Bcal_{K,\Acal}^{[p^{-k^{\prime}-1},p^{-k-1}]}\cong N^{[p^{-k^{\prime}-1},p^{-k-1}]}
\end{align*}
which satisfy the same condition as in Definition \ref{def:perfect module}. 
By Proposition \ref{prop:descent finite projective Fredholm} (1), $N^{[p^{-k^{\prime}},p^{-k}]}$ is dualizable over $\Bcal_{K,\Acal}^{[p^{-k^{\prime}},p^{-k}]}$.
In particular, we find that $N^{[p^{-k^{\prime}},p^{-k}]}$ is nuclear in $\Dcal((B_{K,A}^{[p^{-k^{\prime}},p^{-k}]})_{\square})$ by Lemma \ref{lem:dualizable compact nuclear} and Lemma \ref{lem:A^+ nuclear}.
By construction, we have an isomorphism
$$N^{[p^{-k^{\prime}},p^{-k}]}\otimes_{\Bcal_{K,\Acal}^{[p^{-k^{\prime}},p^{-k}]}}^{\Lbb} \tilde{B}_{K_{\infty},A}^{[p^{-k^{\prime}},p^{-k}]} \to M^{[p^{-k^{\prime}},p^{-k}]}.$$
Since $N^{[p^{-k^{\prime}},p^{-k}]}\otimes_{(B_{K,A}^{[p^{-k^{\prime}},p^{-k}]})_{\square}}^{\Lbb} \tilde{B}_{K_{\infty},A}^{[p^{-k^{\prime}},p^{-k}]}$ is a nuclear object in $\Dcal((B_{K,A}^{[p^{-k^{\prime}},p^{-k}]})_{\square})$, it is $\Bcal_{K,\Acal}^{[p^{-k^{\prime}},p^{-k}]}$-complete by Lemma \ref{lem:A^+ nuclear}.
Therefore, the morphism 
$$N^{[p^{-k^{\prime}},p^{-k}]}\otimes_{(B_{K,A}^{[p^{-k^{\prime}},p^{-k}]})_{\square}}^{\Lbb} \tilde{B}_{K_{\infty},A}^{[p^{-k^{\prime}},p^{-k}]}\to N^{[p^{-k^{\prime}},p^{-k}]}\otimes_{\Bcal_{K,\Acal}^{[p^{-k^{\prime}},p^{-k}]}}^{\Lbb} \tilde{B}_{K_{\infty},A}^{[p^{-k^{\prime}},p^{-k}]}$$
is an isomorphism, and 
$$N^{[p^{-k^{\prime}},p^{-k}]}\otimes_{(B_{K,A}^{[p^{-k^{\prime}},p^{-k}]})_{\square}}^{\Lbb} \tilde{B}_{K_{\infty},A}^{[p^{-k^{\prime}},p^{-k}]} \to M^{[p^{-k^{\prime}},p^{-k}]}$$ 
is also an isomorphism.
By Proposition \ref{prop:descent finite projective Fredholm} and Theorem \ref{thm:descent of finite projective modules}, we find that $N^{[p^{-k^{\prime}},p^{-k}]}$ is a finite projective $\Bcal_{K,\Acal}^{[p^{-k^{\prime}},p^{-k}]}$-module, which shows that $N=\{N^{[p^{-k^{\prime}},p^{-k}]}\}$ is a $(\varphi,\Gamma_K)$-module over $B_{K,A}$.
We note that this construction does not depend on the choice of $m$.
In fact, it is enough to show that for integers $m^{\prime}\geq m\geq 0$, the natural morphism
\begin{align*}
&\phantom{{}\to{}}(M^{[p^{-k-i},p^{-k}]})^{m\mathchar`-\an}\otimes_{(\tilde{B}_{K_{\infty},A}^{[p^{-k-i},p^{-k}],m\mathchar`-\an})_{\square}}B_{K,A}^{[p^{-k-i},p^{-k}]}\\
&\to (M^{[p^{-k-i},p^{-k}]})^{m^{\prime}\mathchar`-\an}\otimes_{(\tilde{B}_{K_{\infty},A}^{[p^{-k-i},p^{-k}],m^{\prime}\mathchar`-\an})_{\square}}B_{K,A}^{[p^{-k-i},p^{-k}]}
\end{align*}
is an isomorphism for $k$ sufficiently large and $i=0,1$.
It becomes an isomorphism after taking the tensor product $-\otimes_{(B_{K,A}^{[p^{-k-i},p^{-k}]})_{\square}}\tilde{B}_{K_{\infty},A}^{[p^{-k-i},p^{-k}]}$, and by Proposition \ref{prop:ff descent} and Lemma \ref{lem:flatness}, it is already an isomorphism.
Therefore, we obtain the functor 
$$\beta \colon \VB_{\tilde{B}_{K_{\infty},A}}^{\varphi,\Gamma_K}\to \VB_{B_{K,A}}^{\varphi,\Gamma_K}.$$
By construction, we have $\alpha\circ \beta\simeq \id$. 

Let us prove $\beta\circ \alpha\simeq \id$.
We take a $(\varphi,\Gamma_K)$-module $N=\{N^{I}\}_{I}\in \VB_{B_{K,A}}^{\varphi,\Gamma_K}$ over $B_{K,A}$.
We set $\alpha(N)=\{N^{I}\otimes_{(B_{K,A}^{I})_{\square}} \tilde{B}_{K_{\infty},A}^{I}\}_{I}=\{M^{I}\}_I\in \VB_{\tilde{B}_{K_{\infty},A}}^{\varphi,\Gamma_K}$.
It is enough to show that there exists an integer $m\geq 0$ such that for $k$ sufficiently large and $i=0,1$, there exists a natural isomorphism
$$(M^{[p^{-k-i},p^{-k}]})^{m\mathchar`-\an}\otimes_{(\tilde{B}_{K_{\infty},A}^{[p^{-k-i},p^{-k}],m\mathchar`-\an})_{\square}}B_{K,A}^{[p^{-k-i},p^{-k}]}\to N^{[p^{-k-i},p^{-k}]}.$$
We fix an integer $l$.
Then, by Theorem \ref{thm:locally analytic descent} (2), there exists an integer $m\geq 0$ such that $N^{[p^{-l-i},p^{-l}]}$ and $B_{K,A}^{[p^{-l-i},p^{-l}]}$ are  $\Gamma_{K,m}$-analytic for $i=0,1$.
Then for $k\geq 0$, we have an isomorphism
\begin{align*}
    &(M^{[p^{-l-k-i},p^{-l-k}]})^{m\mathchar`-\an}\\
    \cong \phantom{}&(N^{[p^{-l-i},p^{-l}]}\otimes_{(B_{K,A}^{[p^{-l-i},p^{-l}]})_{\square},\varphi^k}\tilde{B}_{K_{\infty},A}^{[p^{-l-k-i},p^{-l-k}]})^{m\mathchar`-\an}\\
    \cong \phantom{}&N^{[p^{-l-i},p^{-l}]}\otimes_{(B_{K,A}^{[p^{-l-i},p^{-l}]})_{\square},\varphi^k}\tilde{B}_{K_{\infty},A}^{[p^{-l-k-i},p^{-l-k}],m\mathchar`-\an}.
\end{align*}
If $k$ is sufficiently large, then $\tilde{B}_{K_{\infty},A}^{[p^{-l-k-i},p^{-l-k}],m\mathchar`-\an}$ is contained in $B_{K,A}^{[p^{-l-k-i},p^{-l-k}]}$ by Proposition \ref{prop:la vector of perfect witt ring} (2).
Therefore, for $k$ sufficiently large, we get an isomorphism 
\begin{align*}
    &(M^{[p^{-l-k-i},p^{-l-k}]})^{m\mathchar`-\an}\otimes_{(\tilde{B}_{K_{\infty},A}^{[p^{-l-k-i},p^{-l-k}],m\mathchar`-\an})_{\square}}B_{K,A}^{[p^{-l-k-i},p^{-l-k}]}\\
    \cong \phantom{}&N^{[p^{-l-i},p^{-l}]}\otimes_{(B_{K,A}^{[p^{-l-i},p^{-l}]})_{\square},\varphi^k}B_{K,A}^{[p^{-l-k-i},p^{-l-k}]}\\
    \cong \phantom{}&N^{[p^{-l-k-i},p^{-l-k}]},
\end{align*}
which proves the claim.
\end{proof}

\begin{definition}\label{def:perfection and deperfection}
    We write the functors $$\alpha \colon \VB_{\Bcal_{K,\Acal}}^{\varphi,\Gamma_K} \to \VB_{\tilde{\Bcal}_{K_{\infty},\Acal}}^{\varphi,\Gamma_K}$$ and $$\beta \colon \VB_{\tilde{\Bcal}_{K_{\infty},\Acal}}^{\varphi,\Gamma_K}\to \VB_{\Bcal_{K,\Acal}}^{\varphi,\Gamma_K}$$ defined in the proof of Theorem \ref{thm:deperfection} as $$\VB_{\Bcal_{K,\Acal}}^{\varphi,\Gamma_K}\ni N\mapsto N\otimes_{\Bcal_{K,\Acal}}\tilde{\Bcal}_{K_{\infty},\Acal}\in\VB_{\tilde{\Bcal}_{K_{\infty},\Acal}}^{\varphi,\Gamma_K}$$
    and $$\VB_{\tilde{\Bcal}_{K_{\infty},\Acal}}^{\varphi,\Gamma_K}\ni M=\{M^{I}\}_I\mapsto M^{\dep}=\{M^{\dep,I}\}_I\in\VB_{\Bcal_{K,\Acal}}^{\varphi,\Gamma_K},$$
    where $\dep$ stands for deperfection.
\end{definition}

\section{Dualizability of $(\varphi,\Gamma)$-cohomology}
In this section, we prove the dualizability of the cohomology of $(\varphi,\Gamma_K)$-modules.
Our proof is similar to the proof in \cite{Bel24}.
In \cite{Bel24}, the dualizability is proved by constructing a quasi-isomorphism, which is completely continuous in every term of the complexes, between complexes which compute the cohomology of $(\varphi,\Gamma_K)$-modules.
A key point is that for closed intervals $I'\subsetneq I\subset (0,\infty)$, the restriction morphism $B_K^{I}\to B_K^{I'}$ is completely continuous.
In this paper, we interpret the above argument in terms of condensed mathematics, which makes it possible to treat the case of general coefficient rings which are not necessarily Banach.

Let $\Acal=(A,A^+)_{\square}$ be an algebraic-affinoid analytic $\Qbb_{p,\square}$-algebra or an analytic $\Qbb_{p,\square}$-algebra associated to a complete Tate affinoid pair $(A,A^+)$ over $\Qbb_p$.
First, we define a cohomology theory for $(\varphi,G_K)$-modules over $\tilde{\Bcal}_{\bar{K},\Acal}$ (resp.\ $(\varphi,\Gamma_K)$-modules over $\Bcal_{K,\infty,\Acal}$, $(\varphi,\Gamma_K)$-modules over $\tilde{\Bcal}_{K_{\infty},\Acal}$).

\begin{definition}\label{def:phi cohomology perfect}
    \begin{enumerate}
    \item Let $M=\{M^{I}\}_{I}$ be a $\varphi$-module over $\tilde{\Bcal}_{\bar{K},\Acal}$.
    Then we define the $\varphi$-cohomology of $M$ as 
    $$R\Gamma_{\varphi}(M)=\fib(\Phi-1\colon\varinjlim_{0<s}\Rvarprojlim_{0<r\leq s}M^{[r,s]}\to \varinjlim_{0<s}\Rvarprojlim_{0<r\leq s}M^{[r,s]})\in \Dcal(\Acal),$$
    where $\Phi$ is induced by the $\varphi$-semilinear morphisms
    $$\Phi\colon M^{[r,s]}\to M^{[r/p,s/p]}.$$
    \item Let $M=\{M^{I}\}_{I}$ be a $(\varphi,G_K)$-module over $\tilde{\Bcal}_{\bar{K},\Acal}$.
    Then the continuous $G_K$-action on $M$ induces a continuous $G_K$-action on $R\Gamma_{\varphi}(M)$.
    We define the $(\varphi,G_K)$-cohomology of $M$ as
    $$R\Gamma_{\varphi,G_K} (M)=R\Gamma(G_K,R\Gamma_{\varphi}(M)) \in \Dcal(\Acal).$$
    \end{enumerate}
    In the same way, we define $R\Gamma_{\varphi}(M)$ and $R\Gamma_{\varphi,\Gamma_K}(M)$ for any $\varphi$-module or $(\varphi,\Gamma_K)$-module $M$ over either $\tilde{\Bcal}_{K_{\infty},\Acal}$ or $\Bcal_{K,\infty,\Acal}$.
\end{definition}

The definition of $R\Gamma_{\varphi}(M)$ may seem to be complicated.
However, there is a simple presentation of $R\Gamma_{\varphi}(M)$ as follows:
\begin{lemma}\label{lem:perfect level cohomology simple presentation}
    Let $M=\{M^{I}\}_{I}$ be a $\varphi$-module over $\tilde{\Bcal}_{\bar{K},\Acal}$ (resp.\ $\tilde{\Bcal}_{K_{\infty},\Acal}$, $\Bcal_{K,\infty,\Acal}$), and $[r,s]\subset(0,\infty)$ be a closed interval such that $pr\leq s$.
    Let $1 \colon M^{[r,s]}\to M^{[r,s/p]}$ denote the restriction morphism.
    Then the natural morphism
    $$\fib(\Phi-1\colon M^{[r,s]}\to M^{[r,s/p]}) \to R\Gamma_{\varphi}(M)$$
    is an equivalence.
\end{lemma}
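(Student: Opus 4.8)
The plan is to identify both complexes with the $\Phi$-invariants of the sections of $\Mcal$ over the half-open annulus $(0,s]$, exploiting that $\Phi$ translates annuli and is invertible in the relevant sense. Write $\Mcal_{(0,t]}:=\Rvarprojlim_{0<r'<t}M^{[r',t]}\in\Dcal(A_{\bs})$ for the derived sections of $\Mcal$ over $(0,t]$, so that, with $\Mcal_{\rig}:=\varinjlim_{0<t}\Mcal_{(0,t]}$, we have $R\Gamma_{\varphi}(\Mcal)=\fib(\Phi-1\colon\Mcal_{\rig}\to\Mcal_{\rig})$. Since every $\Phi_{r',t}$ is an isomorphism and $\varphi\colon\tilde{B}_{K_{\infty}}^{[r',t]}\to\tilde{B}_{K_{\infty}}^{[r'/p,t/p]}$ is a ring isomorphism, $\Phi$ induces an equivalence $\Mcal_{(0,t]}\overset{\sim}{\to}\Mcal_{(0,t/p]}$ for every $t$. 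As $\Phi-1$ is then a morphism of filtered diagrams (indexed by $t$, with the restriction maps as transitions) and fibers commute with filtered colimits in $\Dcal(A_{\bs})$, I would first rewrite
$$R\Gamma_{\varphi}(\Mcal)\simeq\varinjlim_{0<t}C_{(0,t]},\qquad C_{(0,t]}:=\fib\bigl(\Phi-1\colon\Mcal_{(0,t]}\to\Mcal_{(0,t/p]}\bigr).$$

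Next I would collapse this colimit. The map $\Phi^{k}\colon\Mcal_{(0,t]}\overset{\sim}{\to}\Mcal_{(0,t/p^{k}]}$ commutes with $\Phi-1$, so it induces an equivalence $C_{(0,t]}\overset{\sim}{\to}C_{(0,t/p^{k}]}$; on the other hand, since $C_{(0,t]}$ is the fiber of $\Phi-1$, the operator $\Phi$ and the restriction map $\Mcal_{(0,t]}\to\Mcal_{(0,t/p]}$ agree on $C_{(0,t]}$ (by the defining property of the fiber), so this equivalence is exactly the transition morphism $C_{(0,t]}\to C_{(0,t/p^{k}]}$ of the diagram. As $\{s/p^{k}\}_{k\geq 0}$ is cofinal among the indices $t$, it follows that the canonical morphism $C_{(0,s]}\to\varinjlim_{0<t}C_{(0,t]}\simeq R\Gamma_{\varphi}(\Mcal)$ is an equivalence.

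It then remains to construct an equivalence $\fib(\Phi-1\colon M^{[r,s]}\to M^{[r,s/p]})\overset{\sim}{\to}C_{(0,s]}$ realizing the morphism of the statement. This is where the hypothesis $pr\leq s$ enters: the closed annuli $[r/p^{j},s/p^{j}]$ for $j\geq 0$ satisfy $r/p^{j}\leq s/p^{j+1}$, hence consecutive ones overlap and their union is $(0,s]$; via the isomorphisms $\Phi\colon M^{[r/p^{j},s/p^{j}]}\overset{\sim}{\to}M^{[r/p^{j+1},s/p^{j+1}]}$ one identifies $\Mcal_{(0,s]}$ with the totalization of the \v{C}ech complex of $\Mcal$ for this cover. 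Running a direct computation with that \v{C}ech complex, the restriction $\Mcal_{(0,s]}\to M^{[r,s]}$ induces a morphism $C_{(0,s]}\to\fib(\Phi-1\colon M^{[r,s]}\to M^{[r,s/p]})$, and an inverse is furnished by the ``$\Phi$-spreading'' morphism, which sends a class represented by $x\in M^{[r,s]}$ with $\Phi x=x$ on $[r,s/p]$ to the section over $(0,s]$ glued from the family $(\Phi^{j}x)_{j\geq 0}$ along the overlaps above; unwinding the definitions shows the two composites are the identity. Composing the three equivalences and tracking through the constructions then gives the lemma.

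The hard part is this last step: making the \v{C}ech/gluing identification precise at the derived level in the condensed setting, i.e.\ checking that the ``restriction'' and ``$\Phi$-spreading'' maps are genuine morphisms in $\Dcal(A_{\bs})$ and are mutually inverse there, not merely on cohomology groups. In the Banach-coefficient situation this is the explicit completely continuous quasi-isomorphism of complexes used in \cite{Bel24}; here it has to be phrased invariantly, which is the technical core of the statement.
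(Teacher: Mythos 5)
Your step (2) --- collapsing the colimit over $t$ by observing that the nullhomotopy of $(\Phi-1)\circ\iota$ built into the fiber sequence for $C_{(0,t]}$ forces $\Phi$ and restriction to induce homotopic maps $C_{(0,t]}\to C_{(0,t/p]}$, so the transition maps are equivalences --- is a correct idea and genuinely different from the paper's, which instead postcomposes $\Phi-1$ with the equivalence $\Phi^{-1}$ to convert the $s$-shrink into an $r$-shrink. (One caveat: ``they agree on $C_{(0,t]}$'' as stated is a homotopy of maps into $\Mcal_{(0,t/p]}$; lifting it to a homotopy of the induced maps $C_{(0,t]}\to C_{(0,t/p]}$ needs an extra argument, e.g.\ that the commuting square with $\Phi-1$ on all four sides induces a nullhomotopic self-map of fibers.)

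Step (3), however, is a genuine gap --- and it is precisely where the lemma has content. Since $C_{(0,s]}\simeq\Rvarprojlim_{r'}\fib(\Phi-1\colon M^{[r',s]}\to M^{[r',s/p]})$, identifying $C_{(0,s]}$ with a single term amounts to proving that the restriction transitions $\fib(\Phi-1\colon M^{[r',s]}\to M^{[r',s/p]})\to\fib(\Phi-1\colon M^{[r,s]}\to M^{[r,s/p]})$ are equivalences for $r'<r$. The $\Phi=1$ trick of step (2) does not help here, since $\Phi$ shifts both endpoints and hence is not a self-map of the $r$-indexed pro-system at fixed $s$; and your ``$\Phi$-spreading'' gluing is only described on elements. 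The paper closes this without any \v{C}ech machinery: assuming $r/p\leq r'$, the gluing square for $[r',s]=[r',s/p]\cup[r,s]$ is (co)cartesian, while the factorization $\Phi\colon M^{[r,s]}\to M^{[r',s/p]}\to M^{[r,s/p]}$ shows that $\Phi$ induces zero on $\cofib(M^{[r',s]}\to M^{[r,s]})\to\cofib(M^{[r',s/p]}\to M^{[r,s/p]})$, so $\Phi-1$ acts there as $-1$, an equivalence --- exactly the needed cartesianness. You would need an argument of this form to complete the proof.
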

\begin{proof}
   Let $\Phi \colon \Rvarprojlim_{0<r\leq s}M^{[r,s]}\to \Rvarprojlim_{0<r\leq s/p}M^{[r,s/p]}$ denote the morphism induced by the $\varphi$-semilinear morphisms
    $\Phi \colon M^{[r,s]}\to M^{[r/p,s/p]},$
    and $$1 \colon \Rvarprojlim_{0<r\leq s}M^{[r,s]}\to \Rvarprojlim_{0<r\leq s/p}M^{[r,s/p]}$$ denote the morphism induced by the restriction morphisms.
    Then we have an equivalence $$R\Gamma_{\varphi}(M)\simeq\varinjlim_{0<s}\fib(\Phi-1\colon \Rvarprojlim_{0<r\leq s}M^{[r,s]}\to \Rvarprojlim_{0<r\leq s/p}M^{[r,s/p]}).$$
    Moreover, we have an equivalence
    \begin{align*}
    &\fib(\Phi-1\colon \Rvarprojlim_{0<r\leq s}M^{[r,s]}\to \Rvarprojlim_{0<r\leq s/p}M^{[r,s/p]}) \\
    \simeq &\Rvarprojlim_{0<r\leq s}\fib(\Phi-1\colon M^{[r,s]}\to M^{[r,s/p]}).
    \end{align*}
    Therefore, it is enough to show that the morphisms
    \begin{align*}
        &\fib(\Phi-1\colon M^{[r^{\prime},s]}\to M^{[r^{\prime},s/p]}) \to\fib(\Phi-1\colon M^{[r,s]}\to M^{[r,s/p]}),\\
        &\fib(\Phi-1\colon M^{[r,s]}\to M^{[r,s/p]}) \to\fib(\Phi-1\colon M^{[r,s^{\prime}]}\to M^{[r,s^{\prime}/p]})
    \end{align*}
    are equivalences for $r^{\prime}\leq r \leq pr\leq s^{\prime} \leq s$.
    We prove the first morphism is an equivalence.
   We may assume $r/p\leq r^{\prime}$.
   Then by the gluing condition, there is a cartesian and cocartesian diagram
    $$
    \xymatrix{
        M^{[r^{\prime},s]}\ar[r]\ar[d] & M^{[r^{\prime},s/p]}\ar[d]\\
        M^{[r,s]}\ar[r] & M^{[r,s/p]},\\
    }
    $$
    where the morphisms are the restriction morphisms.
    Moreover, we have the following commutative diagram from the assumption $r/p\leq r^{\prime}$:
    $$
    \xymatrix{
        M^{[r^{\prime},s]}\ar[r]^-{\Phi}\ar[d] & M^{[r^{\prime},s/p]}\ar[d]\\
        M^{[r,s]}\ar[r]^-{\Phi}\ar[ru]^-{\Phi} & M^{[r,s/p]},\\
    }
    $$
    where the vertical morphisms are the restriction morphisms.
    From these diagrams, we can show that 
    $$\fib(\Phi-1\colon M^{[r^{\prime},s]}\to M^{[r^{\prime},s/p]}) \to\fib(\Phi-1\colon M^{[r,s]}\to M^{[r,s/p]})$$
    is an equivalence by diagram chasing.
    Next, we show that 
    $$\fib(\Phi-1\colon M^{[r,s]}\to M^{[r,s/p]}) \to\fib(\Phi-1\colon M^{[r,s^{\prime}]}\to M^{[r,s^{\prime}/p]})$$ 
    is an equivalence.
    It is equivalent to show that 
    $$\fib(M^{[r,s]}\overset{\Phi-1}{\longrightarrow} M^{[r,s/p]}\overset{\Phi^{-1}}{\longrightarrow}M^{[pr,s]}) \to\fib(M^{[r,s^{\prime}]}\overset{\Phi-1}{\longrightarrow} M^{[r,s^{\prime}/p]}\overset{\Phi^{-1}}{\longrightarrow}M^{[pr,s^{\prime}]})$$ 
    is an equivalence, which can be proved in the same way as above.
\end{proof}
\begin{remark}
     Let $M=\{M^{I}\}_{I}$ be a $\varphi$-module over $\tilde{\Bcal}_{\bar{K},\Acal}$.
     Then we can regard $M$ as an object of $\Dcal(X_{\bar{K},\Acal})$, and we can identify $\fib(\Phi-1\colon M^{[r,s]}\to M^{[r,s/p]})$ with the cohomology $R\Gamma(X_{\bar{K},\Acal},M)$.
     By using this, we can simplify the proof of the above lemma.
\end{remark}

\begin{remark}\label{rem:variant of phi cohomology}
    Let $\Ccal$ be the category whose objects are closed intervals $[r,s]\subset (0,\infty)$ such that $r,s\in \Qbb$ and whose morphisms are defined by $$\Hom_{\Ccal}([r,s],[r',s'])=\{n\in \Zbb_{\geq 0}\mid [p^{n}r,p^{n}s]\subset [r',s']\},$$
    where the composition is defined by the usual addition of $\Zbb_{\geq 0}$.
    Then for a $\varphi$-module $M=\{M^{I}\}_{I}$ over $\tilde{\Bcal}_{\bar{K},\Acal}$, we can define a functor $\Ccal^{\op}\to \Dcal(\Acal);\; I\to M^{I}$ where we associates a morphism $n\colon I\to I'$ in $\Ccal$ to $\Phi^n \colon M^{I'}\to M^{I}$.
    It is also natural to define a $\varphi$-cohomology of $M$ as a limit $R\varprojlim_{\Ccal^{\op}} M^{I}$ in $\Dcal(\Acal)$.
    In fact, it is also equivalent to $R\Gamma(X_{\bar{K},A},M)$, where we regard $M$ as an object of $\Dcal(X_{\bar{K},A})$.
    Therefore, there is a natural equivalence $R\Gamma_{\varphi}(M)\simeq R\varprojlim_{\Ccal^{\op}} M^{I}$ in $\Dcal(\Acal)$.
\end{remark}

\begin{proposition}\label{prop:comparison of phi gamma cohomology}
    Let $M=\{M^{I}\}_{I}$ be a $(\varphi,G_K)$-module over $\tilde{\Bcal}_{\bar{K},\Acal}$, and $M^{H_K}$ (resp.\ $M^{H_K,\la}$) be the $(\varphi,\Gamma_K)$-module over $\tilde{\Bcal}_{K_{\infty},\Acal}$ (resp.\ $\Bcal_{K,\infty,\Acal}$) associated to $M$.
    Then the natural morphisms
    \begin{align*}
        R\Gamma_{\varphi,\Gamma_K}(M^{H_K})&\to R\Gamma_{\varphi,G_K}(M),\\
        R\Gamma_{\varphi,\Gamma_K}(M^{H_K,\la})&\to R\Gamma_{\varphi,\Gamma_K}(M^{H_K})
    \end{align*}
    are equivalences in $\Dcal(\Acal)$.
\end{proposition}
\begin{proof}
    It follows from the description in Lemma \ref{lem:perfect level cohomology simple presentation}, the proof of Theorem \ref{thm:FF curve descent}, and \cite[Theorem 5.3]{RJRC22}.
\end{proof}

Next, we define a cohomology theory for $(\varphi,\Gamma_K)$-modules over $\Bcal_{K,\Acal}$.
\begin{definition}\label{def:phi cohomology imperfect}
    Let $N=\{N^{I}\}_{I}$ be a $(\varphi,\Gamma_K)$-module over $\Bcal_{K,\Acal}$ defined over $(0,t]$.
    We define the $\varphi$-cohomology of $N$ as 
    $$R\Gamma_{\varphi}(N)=\fib(\Phi-1\colon\varinjlim_{0<s\leq t}\Rvarprojlim_{0<r\leq s}N^{[r,s]}\to \varinjlim_{0<s\leq t}\Rvarprojlim_{0<r\leq s}N^{[r,s]})\in \Dcal(\Acal),$$
    where $\Phi$ is induced by the $\varphi$-semilinear morphisms
    $$\Phi\colon N^{[r,s]}\to N^{[r/p,s/p]}.$$
    It does not depend on the choice of $t$.
    Moreover, the continuous $\Gamma_K$-action on $N$ induces a continuous $\Gamma_K$-action on $R\Gamma_{\varphi}(N)$.
    We define the $(\varphi,\Gamma_K)$-cohomology of $N$ as
    $$R\Gamma_{\varphi,\Gamma_K}(N)=R\Gamma(\Gamma_K,R\Gamma_{\varphi}(N)) \in \Dcal(\Acal).$$
\end{definition}
\begin{remark}
    Let $N=\{N^{I}\}_{I}$ be a $(\varphi,\Gamma_K)$-module over $B_{K,A}$.
    If $A$ is an affinoid $\Qbb_p$-algebra, then the derived limit $\Rvarprojlim_{0<r\leq s}N^{[r,s]}$ is static (i.e., $R^1\varprojlim_{0<r\leq s}N^{[r,s]}=0$), and $\Ncal=\varinjlim_{0<s\leq t}\varprojlim_{0<r\leq s}N^{[r,s]}$ is the corresponding $(\varphi,\Gamma_K)$-module over the Robba ring $\Rcal_A$.
    By the definition, we can identify $R\Gamma_{\varphi,\Gamma_K}(N)$ with the cohomology of the $(\varphi,\Gamma_K)$-module $\Ncal$ over $\Rcal_A$ defined via the Herr complex.
\end{remark}

We want a simple presentation of $R\Gamma_{\varphi,\Gamma_K}(N)$ for a $(\varphi,\Gamma_K)$-module $N=\{N^{I}\}_{I}$ over $\Bcal_{K,\Acal}$ as in Lemma \ref{lem:perfect level cohomology simple presentation}.
However, in this setting, $\Phi\colon N^{[r,s]}\to N^{[r/p,s/p]}$ is not an isomorphism since the morphism $\varphi\colon B_{K,A}^{[r,s]}\to B_{K,A}^{[r/p,s/p]}$ is not an isomorphism.
Therefore, the same argument as in the proof of Lemma \ref{lem:perfect level cohomology simple presentation} does not work.
Hence, we first compare the cohomology $R\Gamma_{\varphi,\Gamma_K}(N)$ and $R\Gamma_{\varphi,\Gamma_K}(N\otimes_{\Bcal_{K,\Acal}} \Bcal_{K,\infty,\Acal})$, and then use Lemma \ref{lem:perfect level cohomology simple presentation} to obtain a simple presentation of $R\Gamma_{\varphi,\Gamma_K}(N)$.
\begin{theorem}\label{thm:deperf and cohomology}
    Let $N=\{N^J\}_J$ be a $(\varphi,\Gamma_K)$-module over $B_{K,A}$.
    Let $I=[p^{-k},p^{-l}]$ be a closed interval for $k,l\in\Zbb$.
    Then for $l$ sufficiently large, the natural morphism
    $$R\Gamma(\Gamma_K,N^{I}) \to R\Gamma(\Gamma_K,N^{I}\otimes_{(B_{K,A}^{I})_{\square}} B_{K,\infty,A}^{I})$$
    is an equivalence.
\end{theorem}
Let $N=\{N^J\}_J$ be a $(\varphi,\Gamma_K)$-module over $\Bcal_{K,\Acal}$.
We set $$N_n^{I}=N^{I}\otimes_{(B_{K,A}^{I})_{\square}} \left(B_{K,n+1,A}^{I}/B_{K,n,A}^{I}\right).$$
Since the inclusion $$N^{I}\otimes_{(B_{K,A}^{I})_{\square}} B_{K,n,A}^{I}\to N^{I}\otimes_{(B_{K,A}^{I})_{\square}} B_{K,n+1,A}^{I}$$ 
has the $\Gamma_K$-equivariant retraction 
$$R_n \colon N^{I}\otimes_{(B_{K,A}^{I})_{\square}} B_{K,n+1,A}^{I}\to N^{I}\otimes_{(B_{K,A}^{I})_{\square}} B_{K,n,A}^{I},$$
where $R_n$ is induced by the normalized trace map $B_{K,n+1}^{I}\to B_{K,n}^{I}$,
we get a $\Gamma_K$-equivariant isomorphism 
\begin{align*}
    N_n^{I}
    \cong\Ker(R_n \colon N^{I}\otimes_{(B_{K,A}^{I})_{\square}} B_{K,n+1,A}^{I}\to N^{I}\otimes_{(B_{K,A}^{I})_{\square}} B_{K,n,A}^{I}).
\end{align*}
Therefore, we get a $\Gamma_K$-equivariant isomorphism $$N^{I}\oplus \left(\bigoplus_{n\geq 0}N_n^{I}\right)\cong N^{I}\otimes_{(B_{K,A}^{I})_{\square}} B_{K,\infty,A}^{I}.$$
Let $\gamma\in \Gamma_{K}$ denote a topological generator of the largest procyclic subgroup $\Gamma^{\prime}_K$ of $\Gamma_K$.
To prove Theorem \ref{thm:deperf and cohomology}, it is enough to show the following theorem:
\begin{theorem}\label{thm:deperf and cohomology 2}
We fix a sufficiently large integer $l\in \Zbb$. 
Then there exists $n^{\prime}$ such that for any $n\geq n^{\prime}$, 
\begin{align*}
    \gamma-1 \colon N_n^{[p^{-l-1},p^{-l}]} \to N_n^{[p^{-l-1},p^{-l}]},\\
    \gamma-1 \colon N_n^{[p^{-l},p^{-l}]} \to N_n^{[p^{-l},p^{-l}]}
\end{align*}
are isomorphisms.
\end{theorem}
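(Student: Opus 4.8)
The plan is to reduce to the case where $A$ is a Banach $\Qbb_p$-algebra, exactly as in the proof of Theorem~\ref{thm:locally analytic descent}, and then to read off the statement from the Tate--Sen machinery.

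First I would set up the geometric picture. Write $\tilde\Ncal=\{\tilde N^{[r,s]}\}$ for the perfection $\alpha(\Ncal)$. The normalized trace $R_{n,A}\colon \tilde B_{K_\infty,A}^{[r,s]}\to B_{K,n,A}^{[r,s]}$ is $B_{K,A}^{[r,s]}$-linear and $\Gamma_K$-equivariant, so it induces a $\Gamma_K$-equivariant idempotent retraction on $\tilde N^{[r,s]}=N^{[r,s]}\otimes_{(B_{K,A}^{[r,s]})_{\bs}}\tilde B_{K_\infty,A}^{[r,s]}$; combined with the identification $N^{[r,s]}\otimes_{(B_{K,A}^{[r,s]})_{\bs}}B_{K,\infty,A}^{[r,s]}\cong \tilde N^{[r,s],\la}$ coming from Theorem~\ref{thm:locally analytic descent} and Proposition~\ref{prop:la vector of perfect witt ring}, this exhibits $N_n^{[r,s]}$ as the $\Gamma_K$-equivariant quotient $(1-R_{n,A})\tilde N^{[r,s]}\big/(1-R_{n+1,A})\tilde N^{[r,s]}$, both for $[r,s]=[p^{-l-1},p^{-l}]$ and for the edge interval $[r,s]=[p^{-l},p^{-l}]$ (here one uses $\ker R_{n+1,A}\subseteq\ker R_{n,A}$, coming from $R_{n,A}=R^{(n+1)}_{n,A}\circ R_{n+1,A}$). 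Consequently it suffices to prove that $\gamma-1$ is invertible on $(1-R_{m,A})\tilde N^{[r,s]}$ for all $m$ sufficiently large: then the claim follows by the snake lemma applied to $0\to (1-R_{n+1,A})\tilde N^{[r,s]}\to (1-R_{n,A})\tilde N^{[r,s]}\to N_n^{[r,s]}\to 0$.

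Over a Banach coefficient ring this is the classical content of the Tate--Sen axiom (TS3) for modules: the $R_{m,A}$ satisfy (TS1)--(TS4) (as recalled in the excerpt, via \cite{Ber08B-pair}, \cite{Col08}, \cite{Por24}), and after enlarging $\tilde N^{[r,s]}$ to a finite free module (\cite[Lemma 5.4]{Por22}) and performing Tate--Sen descent to a finite free $B_{K,n',A}^{[r,s]}$-module $D$ with $D\otimes_{(B_{K,n',A}^{[r,s]})_{\bs}}\tilde B_{K_\infty,A}^{[r,s]}\cong\tilde N^{[r,s]}$, one gets $(1-R_{m,A})\tilde N^{[r,s]}\cong D\otimes_{(B_{K,n',A}^{[r,s]})_{\bs}}(1-R_{m})B_{K,\infty,A}^{[r,s]}$ for $m\ge n'$, on which $\gamma-1$ is bijective with inverse bounded uniformly in $m$ by \cite[\S3]{BC08} (see also \cite{Por24}). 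For a general $A$ I would reduce to the Banach case along the lines of Theorem~\ref{thm:locally analytic descent}: by the filtration argument of Lemma~\ref{lem:reduction to red} reduce to $A(\ast)$ an integral domain, fix an injection $A\hookrightarrow A'$ into a Banach $\Qbb_p$-algebra (Lemma~\ref{lem:injection to Banach}); injectivity of $\gamma-1$ on $N_n^{[r,s]}$ then follows from the Banach case together with flatness of $N_n^{[r,s]}$ over $A$ (so that $N_n^{[r,s]}\to N_n^{[r,s]}\otimes_A A'$ is injective), while surjectivity is deduced as in Proposition~\ref{prop:la descent surj}, writing $\tilde N^{[r,s]}$ as a filtered colimit of finitely presented $\Gamma_K$-stable submodules defined over the Banach coefficient ring (using that the twisted group ring is a compact projective module, Lemma~\ref{lem:twisted group ring is compact projective}) and passing to the colimit.

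The main obstacle is this last reduction, i.e.\ surjectivity of $\gamma-1$ for non-Banach $A$, which as in Proposition~\ref{prop:la descent surj} forces one through the bookkeeping with compact projective modules over $(\tilde B_{K_\infty,B}^{[r,s]})_{\bs}$ and with twisted group rings, and requires checking that the Tate--Sen level decomposition is compatible with the locally analytic vectors and with both the completed and uncompleted condensed tensor products; the Banach-coefficient core of the argument is essentially a citation to classical Tate--Sen theory.
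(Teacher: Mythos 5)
Your reduction from general relatively discrete $A$ to Banach $A$ — filtering $A(\ast)$ to reduce to an integral domain as in Lemma~\ref{lem:reduction to red}, embedding $A\hookrightarrow A'$ into a Banach algebra (Lemma~\ref{lem:injection to Banach}) and using flatness of $N_n^{[r,s]}$ over $A_\bs$ for injectivity, then running the compact-projective/twisted-group-ring argument of Proposition~\ref{prop:la descent surj} for surjectivity — is exactly what the paper does. The one place you go a different route is the Banach core: the paper simply invokes \cite[Lemma 3.7]{Bel24}, while you unwind it to the classical Tate--Sen statement that $\gamma-1$ is invertible with a uniformly bounded inverse on $(1-R_m)$ of a Tate--Sen descended module $D$, and then recover the graded pieces $N_n^{[r,s]}\cong (1-R_{n,A})\tilde N^{[r,s]}/(1-R_{n+1,A})\tilde N^{[r,s]}$ by the snake lemma. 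That elaboration is correct and is essentially the content of the cited lemma. Two minor remarks: in the surjectivity step the paper resolves $N^{[r,s]}$ as a quotient of a finite free $B_{K,B}^{[r,s]}$-module with $\Gamma_K$-action, i.e.\ it works over the \emph{imperfect} ring rather than over $\tilde B_{K_\infty,B}^{[r,s]}$ as you suggest — this avoids passing through $\tilde N^{[r,s]}$ and is slightly cleaner, though your version also works since the base change is faithfully flat; and the conclusion for a fixed $n$ is reached by choosing $\lambda$ (hence $L$) large enough, not literally by ``passing to the colimit.'' Neither affects correctness.
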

\begin{proof}[Proof of Theorem \ref{thm:deperf and cohomology 2} $\implies$ Theorem \ref{thm:deperf and cohomology}]
    Since we have a $\Gamma_K$-equivariant isomorphism
    $$\Phi \colon N_n^{[p^{-l-1},p^{-l}]} \overset{\sim}{\to} N_{n-1}^{[p^{-l-2},p^{-l-1}]}$$
    for $n\geq 1$, we may assume $n^{\prime}=0$ by replacing $l$ with $l+n^{\prime}$.
    There is a resolution of the trivial $\Qbb_{p,\square}[\Gamma_{K}^{\prime}]$-module $\Qbb_p$
    $$0\to \Qbb_{p,\square}[\Gamma_{K}^{\prime}] \overset{\gamma-1}{\longrightarrow} \Qbb_{p,\square}[\Gamma_{K}^{\prime}] \to \Qbb_{p} \to 0,$$
    so we obtain an isomorphism $R\Gamma(\Gamma^{\prime}_{K},N) \simeq \fib (\gamma-1 \colon N \to N)$ for a $\Qbb_{p,\square}[\Gamma^{\prime}_K]$-module $N$.
    In particular, for $n\geq 0$, we have $R\Gamma(\Gamma_{K}^{\prime},N_n^{[p^{-l-1},p^{-l}]})=0.$
    Since we have an equivalence 
    $$R\Gamma(\Gamma_K/\Gamma_{K}^{\prime},R\Gamma(\Gamma_{K}^{\prime},N_n^{[p^{-l-1},p^{-l}]}))\simeq R\Gamma(\Gamma_{K},N_n^{[p^{-l-1},p^{-l}]}),$$ we obtain $R\Gamma(\Gamma_{K},N_n^{[p^{-l-1},p^{-l}]})=0$ for any $n\geq 0$.
    Therefore, the natural morphism $R\Gamma(\Gamma_K,N^{[p^{-l-1},p^{-l}]}) \to R\Gamma(\Gamma_K,N^{[p^{-l-1},p^{-l}]}\otimes_{(B_{K,A}^{[p^{-l-1},p^{-l}]})_{\square}} B_{K,\infty,A}^{[p^{-l-1},p^{-l}]})$ is an equivalence.
    By applying $\varphi$, we find that $$R\Gamma(\Gamma_K,N^{[p^{-l^{\prime}-1},p^{-l^{\prime}}]}) \to R\Gamma(\Gamma_K,N^{[p^{-l^{\prime}-1},p^{-l^{\prime}}]}\otimes_{(B_{K,A}^{[p^{-l^{\prime}-1},p^{-l^{\prime}}]})_{\square}} B_{K,\infty,A}^{[p^{-l^{\prime}-1},p^{-l^{\prime}}]})$$ is also an equivalence for $l^{\prime} \geq l$.
    Similarly, we can show that 
    $$R\Gamma(\Gamma_K,N^{[p^{-l^{\prime}},p^{-l^{\prime}}]}) \to R\Gamma(\Gamma_K,N^{[p^{-l^{\prime}},p^{-l^{\prime}}]}\otimes_{(B_{K,A}^{[p^{-l^{\prime}},p^{-l^{\prime}}]})_{\square}} B_{K,\infty,A}^{[p^{-l^{\prime}},p^{-l^{\prime}}]})$$ is also an equivalence for $l^{\prime} \geq l$.
    Therefore, by a gluing argument, we find that $$R\Gamma(\Gamma_K,N^{[p^{-k},p^{-l^{\prime}}]}) \to R\Gamma(\Gamma_K,N^{[p^{-k},p^{-l^{\prime}}]}\otimes_{(B_{K,A}^{[p^{-k},p^{-l^{\prime}}]})_{\square}} B_{K,\infty,A}^{[p^{-k},p^{-l^{\prime}}]})$$ is also an equivalence for $k\geq l^{\prime} \geq l$, which proves Theorem \ref{thm:deperf and cohomology}.
\end{proof}

The proof of Theorem \ref{thm:deperf and cohomology 2} is similar to the proof of Theorem \ref{thm:locally analytic descent}.
First, we prove Theorem \ref{thm:deperf and cohomology 2} when $A$ is a Banach $\Qbb_p$-algebra, then we prove it for general $A$ by using analogs of Proposition \ref{prop:la descent inj} and Proposition \ref{prop:la descent surj}.

\begin{proof}[Proof of Theorem \ref{thm:deperf and cohomology 2}]
    In the following proof, we consider only $$\gamma-1 \colon N_n^{[p^{-l-1},p^{-l}]} \to N_n^{[p^{-l-1},p^{-l}]}.$$
    The other case can be proved similarly. 
    If $A$ is a Banach $\Qbb_p$-algebra, then the theorem can be proved in the same way as in \cite[Lemma 3.7]{Bel24}.
    Let $A$ be an algebraic-affinoid $\Qbb_{p,\square}$-algebra, and $B$ be an affinoid $\Qbb_p$-algebra of definition of $A$.
    For the same reason as in Lemma \ref{lem:reduction to red}, we may assume that $A$ is an integral domain.
    Then we can take an injection to a Banach $\Qbb_p$-algebra $A\to A^{\prime}$ as in Lemma \ref{lem:injection to Banach}.
    For $n$ sufficiently large, 
    $$\gamma-1 \colon N_n^{[p^{-l-1},p^{-l}]}\otimes_{A_{\square}}A^{\prime} \to N_n^{[p^{-l-1},p^{-l}]}\otimes_{A_{\square}}A^{\prime}$$ is an isomorphism.
    Moreover, we have the following commutative diagram:
    $$
    \xymatrix{
        N_n^{[p^{-l-1},p^{-l}]}\ar[r]^-{\gamma-1}\ar[d] &N_n^{[p^{-l-1},p^{-l}]}\ar[d]\\
        N_n^{[p^{-l-1},p^{-l}]}\otimes_{A_{\square}}A^{\prime}\ar[r]^-{\gamma-1} &N_n^{[p^{-l-1},p^{-l}]}\otimes_{A_{\square}}A^{\prime},
    }
    $$
    where the vertical morphisms are injective since $N_n^{[p^{-l-1},p^{-l}]}$ is flat over $A_{\square}$.
    Therefore, we find that for $n$ sufficiently large, $\gamma-1 \colon N_n^{[p^{-l-1},p^{-l}]}\to N_n^{[p^{-l-1},p^{-l}]}$ is injective.
    Next, we prove that the above morphism $\gamma-1$ is surjective.
    By the same argument as in the proof of Proposition \ref{prop:la descent surj}, we can take a finite free $B_{K,B}^{[p^{-l-1},p^{-l}]}$-module $L$ with a continuous semilinear $\Gamma_K$-action and a $\Gamma_K$-equivariant surjection $$L\otimes_{B_{\square}}A=L\otimes_{(B_{K,B}^{[p^{-l-1},p^{-l}]})_{\square}} B_{K,A}^{[p^{-l-1},p^{-l}]}\to N^{[p^{-l-1},p^{-l}]}.$$
    Then for $n$ sufficiently large, $$\gamma-1 \colon L_n^{[p^{-l-1},p^{-l}]} \to L_n^{[p^{-l-1},p^{-l}]}$$ is an isomorphism.
    Moreover, we have the following commutative diagram:
    $$
    \xymatrix{
        L_n^{[p^{-l-1},p^{-l}]}\otimes_{B_{\square}}A \ar[r]^-{\gamma-1}\ar[d] &L_n^{[p^{-l-1},p^{-l}]}\otimes_{B_{\square}}A\ar[d]\\
        N_n^{[p^{-l-1},p^{-l}]}\ar[r]^-{\gamma-1} &N_n^{[p^{-l-1},p^{-l}]},
    }
    $$
    where the vertical morphisms are surjective.
    Therefore, we find that for $n$ sufficiently large, $\gamma-1 \colon N_n^{[p^{-l-1},p^{-l}]}\to N_n^{[p^{-l-1},p^{-l}]}$ is surjective.
\end{proof}

By using Theorem \ref{thm:deperf and cohomology}, we can prove the following theorem:
\begin{theorem}\label{thm:comparison of cohomology}
    Let $N=\{N^{I}\}_I$ be a $(\varphi,\Gamma_K)$-module over $\Bcal_{K,\Acal}$, and 
    \begin{align*}
        N\otimes_{\Bcal_{K,\Acal}}\Bcal_{K,\infty,\Acal}=&\{N^{I}\otimes_{\Bcal_{K,\Acal}^{I}} \Bcal_{K,\infty,\Acal}^{I}\}_I\\
        =&\{N^{I}\otimes_{(B_{K,A}^{I})_{\square}} B_{K,\infty,A}^{I}\}_I
    \end{align*}
    be a $(\varphi,\Gamma_K)$-module over $\Bcal_{K,\infty,\Acal}$ associated to $N$.
    Then the natural morphism 
    $$R\Gamma_{\varphi,\Gamma_K}(N) \to R\Gamma_{\varphi,\Gamma_K}(N\otimes_{\Bcal_{K,\Acal}}\Bcal_{K,\infty,\Acal})$$
    is an equivalence in $\Dcal(\Acal)$.
\end{theorem}
\begin{proof}
    We have a natural equivalence $$R\Gamma_{\varphi}(N)\simeq \varinjlim_{0<l}\Rvarprojlim_{0<l\leq k}\fib(\Phi-1\colon N^{[p^{-k},p^{-l}]}\to N^{[p^{-k},p^{-l-1}]}).$$
    Since the functor $R\Gamma(\Gamma_K,-)$ commutes with all small limits and colimits, it is enough to show that
    \begin{align*}
        R\Gamma(\Gamma_K,\fib(\Phi-1\colon &N^{[p^{-k},p^{-l}]}\to N^{[p^{-k},p^{-l-1}]}))\\
        \to R\Gamma(\Gamma_K,\fib(\Phi-1\colon &N^{[p^{-k},p^{-l}]}\otimes_{(B_{K,A}^{[p^{-k},p^{-l}]})_{\square}} B_{K,\infty,A}^{[p^{-k},p^{-l}]} \\
        \to &N^{[p^{-k},p^{-l-1}]}\otimes_{(B_{K,A}^{[p^{-k},p^{-l-1}]})_{\square}} B_{K,\infty,A}^{[p^{-k},p^{-l-1}]}))
    \end{align*}
    is an equivalence for $l$ sufficiently large.
    This follows from Theorem \ref{thm:deperf and cohomology}.
\end{proof}

As a corollary of the proof, we obtain a simple presentation of the cohomology of $(\varphi,\Gamma_K)$-modules over $\Bcal_{K,\Acal}$.
\begin{corollary}\label{cor:phi Gamma cohomology simple presentation}
     Let $N=\{N^{I}\}_I$ be a $(\varphi,\Gamma_K)$-module over $\Bcal_{K,A}$.
     Then for $l$ sufficiently large and $k>l$, we have the natural equivalence
     $$R\Gamma(\Gamma_K,\fib(\Phi-1\colon N^{[p^{-k},p^{-l}]}\to N^{[p^{-k},p^{-l-1}]})) \simeq R\Gamma_{\varphi,\Gamma_K}(N).$$
\end{corollary}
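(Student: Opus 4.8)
The plan is to obtain this as a formal consequence of the three preceding results: Theorem \ref{thm:comparison of cohomology}, Lemma \ref{lem:perfect level cohomology simple presentation}, and Theorem \ref{thm:deperf and cohomology}. Since all the substantive work is already contained in those statements, I do not expect a genuine obstacle here; the only thing requiring attention is the bookkeeping of radii and of the meaning of ``$l$ sufficiently large''.

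First I would set $\Mcal=\Ncal\otimes_{B_{K,A}}\tilde{B}_{K_{\infty},A}=\{M^{[r,s]}\}$, so that $M^{[p^{-k},p^{-l}]}=N^{[p^{-k},p^{-l}]}\otimes_{(B_{K,A}^{[p^{-k},p^{-l}]})_{\bs}}\tilde{B}_{K_{\infty},A}^{[p^{-k},p^{-l}]}$, and invoke Theorem \ref{thm:comparison of cohomology} to get $R\Gamma_{\varphi,\Gamma_K}(\Ncal)\simeq R\Gamma_{\varphi,\Gamma_K}(\Mcal)=R\Gamma(\Gamma_K,R\Gamma_{\varphi}(\Mcal))$. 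Next, since $k>l$ forces $p\cdot p^{-k}\leq p^{-l}$, Lemma \ref{lem:perfect level cohomology simple presentation} applies to the interval $[p^{-k},p^{-l}]$ (with $s/p=p^{-l-1}$) and identifies $R\Gamma_{\varphi}(\Mcal)$ with $\fib(\Phi-1\colon M^{[p^{-k},p^{-l}]}\to M^{[p^{-k},p^{-l-1}]})$. This equivalence is $\Gamma_K$-equivariant, because both $\Phi$ (a composite of the $\Gamma_K$-equivariant $\varphi$-semilinear map and a restriction) and the restriction map $1$ are $\Gamma_K$-equivariant. Applying $R\Gamma(\Gamma_K,-)$, which commutes with the fiber, yields
\[
R\Gamma_{\varphi,\Gamma_K}(\Ncal)\simeq R\Gamma(\Gamma_K,\fib(\Phi-1\colon M^{[p^{-k},p^{-l}]}\to M^{[p^{-k},p^{-l-1}]})).
\]

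It then remains to replace the $M^{[\bullet]}$ by the $N^{[\bullet]}$ on the right. The canonical morphism $\Ncal\to\Mcal$ induces a $\Gamma_K$-equivariant morphism of fiber sequences, hence a $\Gamma_K$-equivariant map
\[
\fib(\Phi-1\colon N^{[p^{-k},p^{-l}]}\to N^{[p^{-k},p^{-l-1}]})\to\fib(\Phi-1\colon M^{[p^{-k},p^{-l}]}\to M^{[p^{-k},p^{-l-1}]}).
\]
Applying $R\Gamma(\Gamma_K,-)$ and using again that it commutes with fibers, I reduce to checking that $R\Gamma(\Gamma_K,N^{[p^{-k},p^{-l}]})\to R\Gamma(\Gamma_K,M^{[p^{-k},p^{-l}]})$ and $R\Gamma(\Gamma_K,N^{[p^{-k},p^{-l-1}]})\to R\Gamma(\Gamma_K,M^{[p^{-k},p^{-l-1}]})$ are both equivalences; these hold by Theorem \ref{thm:deperf and cohomology}, applied at the radii $p^{-l}$ and $p^{-l-1}$ respectively, provided $l$ is taken beyond the larger of the two resulting thresholds. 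Chaining the three equivalences gives the corollary.

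The only delicate point, and the ``hard part'' such as it is, is making the quantifier ``$l$ sufficiently large'' uniform across the two invocations of Theorem \ref{thm:deperf and cohomology} and the constraint $k>l$ coming from Lemma \ref{lem:perfect level cohomology simple presentation}; once $l$ is fixed large enough (and $k>l$ arbitrary), everything is forced. I emphasize that no new analytic input is needed beyond Theorem \ref{thm:comparison of cohomology} and Theorem \ref{thm:deperf and cohomology}.
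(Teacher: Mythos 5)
Your proof is correct and takes essentially the same approach as the paper's, which is simply the one-line remark ``it follows from the proof of Theorem \ref{thm:comparison of cohomology}''; unpacking that remark yields exactly your chain of equivalences through Theorem \ref{thm:comparison of cohomology}, the $\Gamma_K$-equivariant form of Lemma \ref{lem:perfect level cohomology simple presentation}, and two applications of Theorem \ref{thm:deperf and cohomology}. Your explicit attention to the $\Gamma_K$-equivariance of the Lemma \ref{lem:perfect level cohomology simple presentation} identification and to the uniformity of ``$l$ sufficiently large'' is a worthwhile clarification of details the paper leaves implicit.
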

\begin{proof}
    It follows from the proof of Theorem \ref{thm:comparison of cohomology} and Lemma \ref{lem:perfect level cohomology simple presentation}.
\end{proof}
\begin{remark}
    If $A$ is an affinoid $\Qbb_p$-algebra, the above corollary was proved in \cite[Proposition 3.8]{Bel24}.
\end{remark}

\begin{corollary}\label{cor:basechange of phi Gamma cohomology}
Let $\Acal'=(A',A^{'+})_{\square}$ be an algebraic-affinoid analytic $\Qbb_{p,\square}$-algebra or an analytic $\Qbb_{p,\square}$-algebra associated to a complete Tate affinoid pair $(A',A^{'+})$ over $\Qbb_p$, and $\Acal \to \Acal'$ be a morphism of analytic $\Qbb_{p,\square}$-algebras.
Let $N=\{N^{I}\}_I$ be a $(\varphi,\Gamma_K)$-module over $\Bcal_{K,\Acal}$, and $N\otimes_{\Acal}\Acal'=\{N^{I}\otimes_{\Acal} \Acal' \}_I$ be a $(\varphi,\Gamma_K)$-module over $\Bcal_{K,\Acal'}$ obtained from $N$.
Then we have an equivalence 
$$R\Gamma_{\varphi,\Gamma_K}(N)\otimes_{\Acal}^{\Lbb}\Acal'\simeq R\Gamma_{\varphi,\Gamma_K}(N\otimes_{\Acal}\Acal').$$
\end{corollary}
\begin{proof}
    It follows from Lemma \ref{lem:fixed part and tensor products} and Corollary \ref{cor:phi Gamma cohomology simple presentation}.
\end{proof}

Let us prove the dualizability of the cohomology of $(\varphi,\Gamma_K)$-modules over $\Bcal_{K,\Acal}$.
For the proof, we introduce the following ring.
\begin{construction}
    Let $K_0^{\prime}$ be as in Construction \ref{construction:coefficient rings}.
    Let $[r,s]\subset (0,\infty)$ be a closed interval.
    Then we define $K_0^{\prime}$-algebras $\Ocal_{\Bbb_{K_0^{\prime}}}(\Bbb_{K_0^{\prime}}^{(r,s)})^{+}$ and $\Ocal_{\Bbb_{K_0^{\prime}}}(\Bbb_{K_0^{\prime}}^{(r,s)})$ as
    \begin{align*}
    &\Ocal_{\Bbb_{K_0^{\prime}}}(\Bbb_{K_0^{\prime}}^{(r,s)})^{+}=\{\sum_{n\in \Zbb}a_nT^n\in K_0^{\prime}[[T]] \mid \mbox{$a_n p^{[rn]},a_n p^{[sn]}\in \Ocal_{K_0^{\prime}}$ for any $n\in\Zbb$}\},\\
    &\Ocal_{\Bbb_{K_0^{\prime}}}(\Bbb_{K_0^{\prime}}^{(r,s)})
    =\{\sum_{n\in \Zbb}a_nT^n\in K_0^{\prime}[[T]] \mid 
    \mbox{There exists $l\in\Zbb$ such that}\\
    &\mbox{$a_n p^{[rn]},a_n p^{[sn]}\in p^l\Ocal_{K_0^{\prime}}$ for any $n\in \Zbb$}\},
    \end{align*}
    where $[x]$ stands for the largest integer less than or equal to $x$.
    We note $\Ocal_{\Bbb_{K_0^{\prime}}}(\Bbb_{K_0^{\prime}}^{(r,s)})=\Ocal_{\Bbb_{K_0^{\prime}}}(\Bbb_{K_0^{\prime}}^{(r,s)})^{+}[1/p]$.
    We have an isomorphism of $K_0^{\prime}$-modules 
    $$\prod_{n\in\Zbb} \Ocal_{K_0^{\prime}} \to \Ocal_{\Bbb_{K_0^{\prime}}}(\Bbb_{K_0^{\prime}}^{(r,s)})^{+} ;\; (a_n) \mapsto \sum_{n\in \Zbb}a_np^{-\min\{[rn],[sn]\}}T^n.$$
    By using this isomorphism, we endow $\Ocal_{\Bbb_{K_0^{\prime}}}(\Bbb_{K_0^{\prime}}^{(r,s)})^{+}$ with the product topology, and equip $\Ocal_{\Bbb_{K_0^{\prime}}}(\Bbb_{K_0^{\prime}}^{(r,s)})=\Ocal_{\Bbb_{K_0^{\prime}}}(\Bbb_{K_0^{\prime}}^{(r,s)})^{+}[1/p]$ with the topology for which $\Ocal_{\Bbb_{K_0^{\prime}}}(\Bbb_{K_0^{\prime}}^{(r,s)})^{+}\subset \Ocal_{\Bbb_{K_0^{\prime}}}(\Bbb_{K_0^{\prime}}^{(r,s)})$ is an open subset.
    For $0<r<r^{\prime}\leq s^{\prime}<s$, we have natural continuous injections
    $$\Ocal_{\Bbb_{K_0^{\prime}}}(\Bbb_{K_0^{\prime}}^{[r,s]})\hookrightarrow \Ocal_{\Bbb_{K_0^{\prime}}}(\Bbb_{K_0^{\prime}}^{(r,s)}) \hookrightarrow \Ocal_{\Bbb_{K_0^{\prime}}}(\Bbb_{K_0^{\prime}}^{[r',s']}).$$
    For $s$ sufficiently small, the image of the continuous injection 
    \begin{align}
    \Ocal_{\Bbb_{K_0^{\prime}}}(\Bbb_{K_0^{\prime}}^{(\frac{pr}{(p-1)e},\frac{ps}{(p-1)e})})\to \Ocal_{\Bbb_{K_0^{\prime}}}(\Bbb_{K_0^{\prime}}^{[\frac{pr^{\prime}}{(p-1)e},\frac{ps^{\prime}}{(p-1)e}]}) \cong B_K^{[r',s']}\label{eq3}
    \end{align}
    is stable under the $\Gamma_K$-action.
   Moreover, there exists a unique dotted arrow $\varphi$ rendering the following diagram commutative:
   $$
   \xymatrix{
    \Ocal_{\Bbb_{K_0^{\prime}}}(\Bbb_{K_0^{\prime}}^{(\frac{pr}{(p-1)e},\frac{ps}{(p-1)e})})\ar[r]\ar@{.>}[d]^-{\varphi} &\Ocal_{\Bbb_{K_0^{\prime}}}(\Bbb_{K_0^{\prime}}^{[\frac{pr^{\prime}}{(p-1)e},\frac{ps^{\prime}}{(p-1)e}]})\ar[r]^-{\cong} &B_{K}^{[r',s']}\ar[d]^-{\varphi}\\
    \Ocal_{\Bbb_{K_0^{\prime}}}(\Bbb_{K_0^{\prime}}^{(\frac{r}{(p-1)e},\frac{s}{(p-1)e})})\ar[r] &\Ocal_{\Bbb_{K_0^{\prime}}}(\Bbb_{K_0^{\prime}}^{[\frac{r^{\prime}}{(p-1)e},\frac{s^{\prime}}{(p-1)e}]})\ar[r]^-{\cong} &B_{K}^{[r^{\prime}/p,s^{\prime}/p]}.
   }
   $$ 
   Let $B_K^{(r,s)}$ denote $\Ocal_{\Bbb_{K_0^{\prime}}}(\Bbb_{K_0^{\prime}}^{(\frac{pr}{(p-1)e},\frac{ps}{(p-1)e})})$ equipped with the $\Gamma_K$-action induced by (\ref{eq3}) and $\varphi\colon \Ocal_{\Bbb_{K_0^{\prime}}}(\Bbb_{K_0^{\prime}}^{(\frac{pr}{(p-1)e},\frac{ps}{(p-1)e})}) \to \Ocal_{\Bbb_{K_0^{\prime}}}(\Bbb_{K_0^{\prime}}^{(\frac{r}{(p-1)e},\frac{s}{(p-1)e})})$.
   This definition does not depend on the choice of $r^{\prime},s^{\prime}$.
\end{construction}

\begin{theorem}\label{thm:dualizability}
    Let $N=\{N^{I}\}_I$ be a $(\varphi,\Gamma_K)$-module over $\Bcal_{K,\Acal}$.
    Then the $(\varphi,\Gamma_K)$-cohomology $R\Gamma_{\varphi,\Gamma_K}(N)$ of $N$ is a dualizable object in $\Dcal(\Acal)$. 
\end{theorem}
\begin{proof}
By Corollary \ref{cor:dualizable indep of A+}, it suffices to show that $R\Gamma_{\varphi,\Gamma_K}(N)$ is a dualizable object in $\Dcal(A_{\square})$.
We put $N^{(p^{-k},p^{-l})}=N^{[p^{-k},p^{-l}]}\otimes_{(B_{K,A}^{[p^{-k},p^{-l}]})_{\square}} B_{K,A}^{(p^{-k},p^{-l})}.$  
Since $B_K^{(p^{-k},p^{-l})}$ is isomorphic to $\left(\prod \Ocal_{K_0^{\prime}}\right)[1/p]$,
it is compact as an object of $\Dcal(\Qbb_{p,\square})$ (\cite[Theorem 3.27]{And21}).
Therefore, $B_{K,A}^{(p^{-k},p^{-l})}$ is a compact object of $\Dcal(A_{\square})$.
Since $N^{(p^{-k},p^{-l})}$ is a finite projective $B_{K,A}^{(p^{-k},p^{-l})}$-module, $N^{(p^{-k},p^{-l})}$ is also a compact object of $\Dcal(A_{\square})$.
We have morphisms 
\begin{align*}
&\fib(\Phi-1\colon N^{[p^{-k-1},p^{-k+2}]}\to N^{[p^{-k-1},p^{-k+1}]}) \\
\to &\fib(\Phi-1\colon N^{(p^{-k-1},p^{-k+2})}\to N^{(p^{-k-1},p^{-k+1})}) \\
\to &\fib(\Phi-1\colon N^{[p^{-k},p^{-k+1}]}\to N^{[p^{-k},p^{-k}]}).
\end{align*}
By taking the $\Gamma_K$-cohomology, we obtain morphisms 
\begin{align*}
    &R\Gamma(\Gamma_K,\fib(\Phi-1\colon N^{[p^{-k-1},p^{-k+2}]}\to N^{[p^{-k-1},p^{-k+1}]})) \\
    \to &R\Gamma(\Gamma_K,\fib(\Phi-1\colon N^{(p^{-k-1},p^{-k+2})}\to N^{(p^{-k-1},p^{-k+1})})) \\
    \to &R\Gamma(\Gamma_K,\fib(\Phi-1\colon N^{[p^{-k},p^{-k+1}]}\to N^{[p^{-k},p^{-k}]})).
    \end{align*}
By Corollary \ref{cor:phi Gamma cohomology simple presentation}, the first and third objects in the above morphisms are isomorphic to $R\Gamma_{\varphi,\Gamma_K}(N)$, and the composition of the above morphisms is an isomorphism.
Therefore, $R\Gamma_{\varphi,\Gamma_K}(N)$ is a direct summand of $$R\Gamma(\Gamma_K,\fib(\Phi-1\colon N^{(p^{-k-1},p^{-k+2})}\to N^{(p^{-k-1},p^{-k+1})})).$$
Since $\fib(\Phi-1\colon N^{(p^{-k-1},p^{-k+2})}\to N^{(p^{-k-1},p^{-k+1})})$ is a compact object of $\Dcal(A_{\square})$, $R\Gamma(\Gamma_K,\fib(\Phi-1\colon N^{(p^{-k-1},p^{-k+2})}\to N^{(p^{-k-1},p^{-k+1})}))$ is also a compact object of $\Dcal(A_{\square})$ by Proposition \ref{prop:compact nuclear group cohomology}.
Therefore, $R\Gamma_{\varphi,\Gamma_K}(N)$ is also a compact object of $\Dcal(A_{\square})$.
On the other hand, $\fib(\Phi-1\colon N^{[p^{-k},p^{-k+1}]}\to N^{[p^{-k},p^{-k}]})$ is a nuclear object of $\Dcal(A_{\square})$, so $R\Gamma_{\varphi,\Gamma_K}(N) \simeq R\Gamma(\Gamma_K,\fib(\Phi-1\colon N^{[p^{-k},p^{-l}]}\to N^{[p^{-k},p^{-l-1}]}))$ is also a nuclear object of $\Dcal(A_{\square})$ by Proposition \ref{prop:compact nuclear group cohomology}.
Therefore, $R\Gamma_{\varphi,\Gamma_K}(N)$ is a compact and nuclear object of $\Dcal(A_{\square})$, which proves the theorem by Lemma \ref{lem:dualizable compact nuclear}.
\end{proof}


\section{Classification of $(\varphi,\Gamma)$-modules of rank $1$}
In this section, we classify $(\varphi,\Gamma)$-modules of rank $1$.
\subsection{Continuous characters of $K^{\times}$}

Let $L$ be a complete non-archimedean field extension of $\Qbb_p$.
Let $A$ be an algebraic-affinoid $L_{\square}$-algebra, and $A^{\times}$ denote its (condensed) multiplicative group of $A$, defined by $A^{\times}(S)=A(S)^{\times}$ for any extremally disconnected set $S$.
For a locally profinite group $G$, we call a homomorphism of condensed groups $G \to A^{\times}$, where we regard a topological group $G$ as a condensed group, a \textit{continuous character}.
It corresponds bijectively to a continuous $A$-linear action of $G$ on $A$.
\begin{remark}
    If $A$ is an affinoid $\Qbb_p$-algebra, then the above notion of continuous characters agrees with the usual one of continuous characters.
\end{remark}

\begin{proposition}\label{prop:continuous character of profinite group}
   Assume that $G$ is compact.
   Let $\delta\colon G\to A^{\times}$ be a continuous character. 
   Then the exists an affinoid $L$-algebra of definition $A'\subset A$ such that $\delta$ factors through $A'^{\times}\subset A^{\times}$.
\end{proposition}
\begin{proof}
    We take an affinoid $L$-algebra of definition $A'\subset A$.
    We regard $\delta$ as an continuous $A'$-linear action of $G$ on $A$.
    By Proposition \ref{prop:relatively discrete action filtered colimit}, there exists a relatively discrete and finitely generated $A'$-submodule $1\in M\subset A$ stable under the action of $G$.
    Then, for any $g\in G$ and any integer $n\geq 0$, we have $g(1)^n=g^n(1)\in M$, so $g(1)\in A$ is integral over $A'$. 
    Therefore, by Lemma \ref{lem:A^b is integrally closed} and Proposition \ref{prop:A^b integral}, we get $g(1)\in A^b$ for any $g\in G$.
    By replacing $A'$ with a finite $A'$-algebra generated by $A^b\cap M$, we may assume that $g(1)\in A'$ for any $g\in G$.
    Then $A'\subset A$ is stable under the action of $G$, which implies that $\delta$ factors through $A'^{\times}\subset A^{\times}$.
\end{proof}

We construct a moduli space of continuous characters of $K^{\times}$ in $\AlgAnSp_{L_{\square}}$.
Let $\varpi$ be a uniformizer of $K$.
Then we have a decomposition $K^{\times}=\varpi^{\Zbb}\times \Ocal_{K}^{\times}$.
Hence, continuous characters of $K^{\times}$ correspond bijectively to pairs consisting of continuous characters of $\varpi^{\Zbb}$ and of $\Ocal_K^{\times}$.
Since continuous characters $\varpi^{\Zbb}\to A^{\times}$ corresponds bijectively to elements of $A^{\times}$, it suffices to consider continuous characters of $\Ocal_K^{\times}$.

Let $\Wcal$ denote the rigid analytic generic fiber of $\Spf \Ocal_L[[\Ocal_K^{\times}]]$.
Then, for any affinoid adic space $\Spa(A,A^+)$ over $L$, there is a natural bijection 
$$\Wcal(\Spa(A,A^+))\cong\{\mbox{continuous characters $\Ocal_K^{\times}\to A^{\times}$}\}.$$

\begin{proposition}\label{prop:continuous characters moduli}
    For an algebraic-affinoid pair $(A,A^+)$ over $L_{\square}$, there is a natural bijection
    $$\Wcal_{\square}(\AnSpec(A,A^+)_{\square})\cong\{\mbox{continuous characters $\Ocal_K^{\times}\to A^{\times}$}\},$$
    where $\Wcal_{\square}$ is obtained from $\Wcal$ via the functor in Proposition \ref{prop:adic space to alg-aff space}.
\end{proposition}
\begin{proof}
    We note that $\Wcal$ is a Stein space.
    In other words, there exist affinoid open subspaces $\Spa(R_1,R_1^+)\subset \Spa(R_2,R_2^+)\subset \cdots \subset \Wcal$ such that $\Wcal=\bigcup_{i\geq 1} \Spa(R_i,R_i^+)$.
    By the construction of the functor in Proposition \ref{prop:adic space to alg-aff space}, we have
    $$\Wcal_{\square}=\varinjlim_{i\geq 1} \AnSpec(R_i,R_i^+)_{\square}.$$
    Since $\AnSpec(A,A^+)_{\square}$ is compact in the category of analytic sheaves on $\AlgAff_{L_{\square}}^{\op}$, we get 
    $$\Wcal_{\square}(\AnSpec(A,A^+)_{\square})=\varinjlim_{i\geq 1} \Hom_{L_{\square}}((R_i,R_i^+)_{\square},(A,A^+)_{\square}).$$
    By Lemma \ref{lem:basic property of aff pair of def} (2), there is an isomorphism 
    $$\Hom_{L_{\square}}((R_i,R_i^+)_{\square},(A,A^+)_{\square})=\varinjlim_{(A',A'^+)}\Hom_{L}(\Spa(A',A'^+),\Spa(R_i,R_i^+)),$$ 
    where the colimit is taken over all affinoid pairs of definition $(A',A'^+)$ of $(A,A^+)$.
    Then, we also get
    \begin{align*}
        \Wcal_{\square}(\AnSpec(A,A^+)_{\square})=&\varinjlim_{(A',A'^+)}\varinjlim_{i\geq 1}\Hom_{L}(\Spa(A',A'^+),\Spa(R_i,R_i^+))\\
        =&\varinjlim_{(A',A'^+)}\Wcal(\Spa(A',A'^+)).
    \end{align*}
    Now, the claim follows from Proposition \ref{prop:continuous character of profinite group}.
\end{proof}

\begin{corollary}\label{cor:moduli of continuous charcaters of K^{times}}
    For an algebraic-affinoid pair $(A,A^+)$ over $L_{\square}$, there is a bijection
    $$(\Wcal_{\square}\times \Gbb_m^{\alg})(\AnSpec(A,A^+)_{\square})\cong\{\mbox{continuous characters $K^{\times}\to A^{\times}$}\},$$
    where $\Gbb_m^{\alg}=\AnSpec(L[T^{\pm 1}],\Zbb)_{\square}$.
    We note that this bijection depends on the choice of $\varpi\in K$.
\end{corollary}
\begin{proof}
    There is a natural bijection
    $$\Gbb_m^{\alg}(\AnSpec(A,A^+)_{\square})\cong A^{\times}(\ast),$$
    so we get the claim.
\end{proof}

For later use, we prove the following lemma.
\begin{lemma}\label{lem:character lift}
Let $A$ be an algebraic-affinoid $L_{\square}$-algebra, and $I\subset A$ be a nilpotent ideal.
Let $\bar{\delta}\colon K^{\times}\to (A/I)^{\times}$ be a continuous character.
Then, there exists a continuous character $\delta\colon K^{\times}\to A^{\times}$ that is a lift of $\bar{\delta}$.
\end{lemma}
\begin{proof}
    Since $K^{\times}=\varpi^{\Zbb}\times \Ocal_K^{\times}$, it suffices to show the claim for continuous characters of $\Ocal_K^{\times}$ and $\Zbb$.
    For $\Zbb$, it follows from the fact that $A^{\times}(\ast)\to (A/I)^{\times}(\ast)$ is surjective.
    Let $\bar{\delta}\colon \Ocal_K^{\times}\to (A/I)^{\times}$ be a continuous character.
    By Proposition \ref{prop:continuous character of profinite group}, there exists an affinoid $L$-algebra of definition $\bar{A}'\subset A/I$ such that $\bar{\delta}$ factors through $(\bar{A}')^{\times}\subset (A/I)^{\times}$.
    By the same argument as in the proof of Lemma \ref{lem:basic property of aff pair of def}, we take an affinoid $L$-algebra of definition $A'\subset A$ such that the image of $A'$ in $A/I$ is $\bar{A}'$.
    The continuous character $\bar{\delta}$ corresponds to a morphism
    $\Spa (\bar{A}',\bar{A}'^{\circ})\to \Wcal$
    by Proposition \ref{prop:continuous characters moduli}.
    Since $\Wcal$ is smooth over $\Spa L$, there exists a lift $\Spa (A',A'^{\circ})\to \Wcal$ of $\Spa (\bar{A}',\bar{A}'^{\circ})\to \Wcal$, which gives a lift $\delta \colon \Ocal_K^{\times}\to A'^{\times}\subset A^{\times}$ of $\bar{\delta}$.
\end{proof}

\subsection{Classification of $(\varphi,\Gamma)$-modules of rank $1$}
First, we recall the construction of $(\varphi,\Gamma_K)$-modules from continuous characters of $K^{\times}$ introduced in \cite{Nak13, KPX14}.

Let $K_0$ be the maximal unramified extension of $\Qbb_p$ in $K$, and we write $f=[K_0\colon \Qbb_p]$.
We fix a morphism $K_0\to W(\Ocal_{C^{\flat}})[1/p]$, which induces a morphism $K_0\to \tilde{B}^{I}_{\overline{K}}$ for each closed interval $I\subset (0,\infty)$.
Let $\Acal=(A,A^+)_{\square}$ be an algebraic-affinoid analytic $K_{0,\square}$-algebra.
We define an analytic $\Acal$-algebra
\begin{align*}
    \tilde{\Bcal}_{\bar{K},K_0,\Acal}^{I}=(\tilde{B}_{\bar{K}}^{I}, \tilde{B}_{\bar{K}}^{I,+})_{\square}\otimes_{K_{0,\square}}^{\Lbb}\Acal,
\end{align*}
which has a natural continuous action of $G_K$ and the Frobenius isomorphism 
$$\varphi^f\colon \tilde{\Bcal}_{\bar{K},K_0,\Acal}^{I} \to \tilde{\Bcal}_{\bar{K},K_0,\Acal}^{I/p^f}.$$
We also define an $A$-algebra
\begin{align*}
\tilde{B}_{\bar{K},K_0,A}^{I}=\tilde{B}_{\bar{K}}^{I}\otimes_{K_{0,\square}}A.
\end{align*}
As in Lemma \ref{lem:underlying algebra}, we can show that the underlying (condensed) ring of $\tilde{\Bcal}_{\bar{K},K_0,\Acal}^{I}$ is $\tilde{B}_{\bar{K},K_0,A}^{I}$. 
As in Definition \ref{def:perfect module}, we define $(\varphi^f,G_K)$-modules over $\tilde{\Bcal}_{\overline{K},K_0,\Acal}$.
Let $\VB_{\tilde{\Bcal}_{\bar{K},K_0,\Acal}}^{\varphi^f,G_K}$ denote the category of $(\varphi^f,G_K)$-modules over $\tilde{\Bcal}_{\bar{K},K_0,\Acal}$.

\begin{lemma}\label{lem:cat equiv}
There is a natural categorical equivalence 
$$\VB_{\tilde{\Bcal}_{\bar{K},\Acal}}^{\varphi,G_K}\simeq \VB_{\tilde{\Bcal}_{\bar{K},K_0,\Acal}}^{\varphi^f,G_K}.$$
\end{lemma}
\begin{proof}
    The natural morphism $\tilde{\Bcal}_{\bar{K},\Acal}^{I}\to \tilde{\Bcal}_{\bar{K},K_0,\Acal}^{I}$ is $(\varphi^f,G_K)$-equivariant.
    Therefore, by applying $-\otimes_{\tilde{\Bcal}_{\bar{K},\Acal}^{I}}\tilde{\Bcal}_{\bar{K},K_0,\Acal}^{I}$, we get a functor $\VB_{\tilde{\Bcal}_{\bar{K},\Acal}}^{\varphi,G_K}\to \VB_{\tilde{\Bcal}_{\bar{K},K_0,\Acal}}^{\varphi^f,G_K}$.
    Let us construct a quasi-inverse functor.
    Let $\varphi$ denote the element of the Galois group $\Gal(K_0/\Qbb_p)$ corresponding to the $p$-th power Frobenius.
    The morphism $K_0\to \tilde{B}_{\bar{K}}^{I}$ induces an isomorphism
    \begin{align*}
        \tilde{B}_{\bar{K}}^{I}\otimes_{\Qbb_{p,\square}} K_0 \cong \prod_{i=0}^{f-1} \varphi^{i}_*\tilde{B}_{\bar{K}}^{I},
    \end{align*}
    where $\varphi^{i}_*\tilde{B}_{\bar{K}}^{I}$ is the $K_0$-algebra whose structure morphism is 
    $$K_0\overset{\varphi^i}{\lra}K_0\to \tilde{B}_{\bar{K}}^{I}.$$
    Therefore, we get an isomorphism
    \begin{align*}
        \tilde{B}_{\bar{K},A}^{I}&\cong (\tilde{B}_{\bar{K}}^{I}\otimes_{\Qbb_{p,\square}} K_0)\otimes_{K_{0,\square}}A\\
        &\cong \prod_{i=0}^{f-1} \varphi^{i}_*\tilde{B}_{\bar{K},K_0,A}^{I}.
    \end{align*}
    Under this isomorphism, the Frobenius morphism 
    $$\varphi \colon \tilde{B}_{\bar{K},A}^{I} \to \tilde{B}_{\bar{K},A}^{I/p}$$ 
    corresponds to 
    $$
    \begin{array}{ccl}
        \prod_{i=0}^{f-1} \varphi^{i}_*\tilde{B}_{\bar{K},K_0,A}^{I} &\lra &\prod_{i=0}^{f-1} \varphi^{i}_*\tilde{B}_{\bar{K},K_0,A}^{I/p} \\
        \rotatebox{90}{$\in$}       &                &  \quad\quad\rotatebox{90}{$\in$}\\
        (x_0,\ldots,x_{f-2},x_{f-1})&\longmapsto &(\varphi(x_{f-1}),\varphi(x_0),\ldots,\varphi(x_{f-2})).
    \end{array}$$            
    Therefore, we get a functor $\VB_{\tilde{B}_{\bar{K},K_0,A}}^{\varphi^f,G_K}\to \VB_{\tilde{B}_{\bar{K},A}}^{\varphi,G_K}$ defined via
    $$\left\{M^{I}\right\}_{I}\mapsto \left\{\prod_{i=0}^{f-1} \varphi^i_*M^{I}\right\}_{I},$$
    which is a quasi-inverse functor.
\end{proof}

\begin{definition}\label{def:character type}
Let $\varpi$ be a uniformizer of $K$.
For a continuous character $\delta \colon K^{\times}\to A^{\times}$, we define $\tilde{B}_{\bar{K},K_0,A}(\delta,\varpi)\in\VB_{\tilde{\Bcal}_{\bar{K},K_0,\Acal}}^{\varphi^f,G_K}$ as follows:
The choice of $\varpi$ gives the decomposition $K^{\times}=\varpi^{\Zbb}\times \Ocal_K^{\times}$, and the local class field theory provides a continuous character
$$\delta_{\varpi}\colon G_K\to G_K^{\ab}\to K^{\times}/\varpi^{\Zbb}=\Ocal_K^{\times}\overset{\delta}{\lra}A^{\times},$$
where we note that this character depends on the choice of $\varpi$.
We define $$\tilde{B}_{\bar{K},K_0,A}(\delta,\varpi)=\{\tilde{B}_{\bar{K},K_0,A}^{I}e\}_{I}$$
where the Frobenius $\Phi$ acts on $e$ by $\Phi(e)=\delta(\varpi)e$, and $G_K$ acts on $e$ via $\delta_{\varpi}$.
Via categorical equivalences in Lemma \ref{lem:cat equiv}, Theorem \ref{thm:FF curve descent}, and Theorem \ref{thm:deperfection}, we define:
\begin{itemize}
    \item $\tilde{B}_{\bar{K},A}(\delta,\varpi)\in\VB_{\tilde{\Bcal}_{\bar{K},\Acal}}^{\varphi,G_K}$,
    \item $\tilde{B}_{K_{\infty},A}(\delta,\varpi)\in\VB_{\tilde{\Bcal}_{K_{\infty},\Acal}}^{\varphi,\Gamma_K}$,
    \item $B_{K,\infty,A}(\delta,\varpi)\in \VB_{\Bcal_{K,\infty,\Acal}}^{\varphi,\Gamma_K}$, 
    \item $B_{K,A}(\delta,\varpi)\in\VB_{\Bcal_{K,\Acal}}^{\varphi,\Gamma_K}$.
\end{itemize}
\end{definition}

The following lemma immediately follows from the definition.
\begin{lemma}\label{lem:character tensor product}
    \begin{enumerate}
    \item Let $\delta_1,\delta_2 \colon K^{\times}\to A^{\times}$ be continuous characters.
    Then there is a natural isomorphism
    $$\tilde{B}_{\bar{K},K_0,A}(\delta_1,\varpi)\otimes \tilde{B}_{\bar{K},K_0,A}(\delta_2,\varpi)\cong \tilde{B}_{\bar{K},K_0,A}(\delta_1\delta_2,\varpi).$$
    \item Let $\Acal'=(A',A'^{+})_{\square}$ be an algebraic-affinoid analytic $K_{0,\square}$-algebra, and $\Acal \to \Acal'$ be a morphism of analytic $K_{0,\square}$-algebras.
    Then, for a continuous character $\delta\colon K^{\times}\to A^{\times}$, there is a natural isomorphism
    $$\tilde{B}_{\bar{K},K_0,A}(\delta,\varpi)\otimes_{\Acal}\Acal' \cong \tilde{B}_{\bar{K},K_0,A'}(\delta,\varpi).$$
    \end{enumerate}
\end{lemma}

\begin{theorem}\label{thm:indep of uniformizer}
Let $\varpi,\varpi'$ be uniformizers of $K$.
Then, for a continuous character $\delta \colon K^{\times}\to A^{\times}$, there is an isomorphism
$$\tilde{B}_{\bar{K},K_0,A}(\delta,\varpi) \cong \tilde{B}_{\bar{K},K_0,A}(\delta,\varpi').$$
\end{theorem}
\begin{proof}
    If $\delta$ is an unramified character (that is, if $\delta|_{\Ocal_K^{\times}}$ is trivial), then the desired isomorphism immediately follows from the definition.
    By Lemma \ref{lem:character tensor product} (1) and twisting by an unramified character, we may assume $\delta(\varpi')=1$.
    Then, the continuous character $\delta\colon W_K^{\ab}\cong K^{\times}\overset{\delta}{\lra}A^{\times}$ uniquely extends to a continuous character
    $\delta \colon G_K^{\ab} \to A^{\times}$. 
    We write $a=\delta(\varpi)$.
    By construction, it suffices to show that there exists $b\in (A\otimes_{K_{0,\square}}\breve{\Qbb}_p)^{\times}$ such that $\varphi_f(b)/b=a$, where $\breve{\Qbb}_p$ is the completion of the maximal unramified extension of $\Qbb_p$, and $\varphi_f$ is induced by the $p^f$-th power Frobenius on $\breve{\Qbb}_p$. 
    In fact, the choice of $b$ provides the desired isomorphism
    $$\tilde{B}_{\bar{K},K_0,A}(\delta,\varpi)=\{\tilde{B}_{\bar{K},K_0,A}^{I}e\}\to \tilde{B}_{\bar{K},K_0,A}(\delta,\varpi')=\{\tilde{B}_{\bar{K},K_0,A}^{I}e'\} ;\; e\mapsto be'.$$
    Since the restriction $\delta|_{\Ocal_K^{\times}}$ is irrelevant for proving this claim, we will henceforth regard $\delta$ as a character of $\varpi^{\hat{\Zbb}} \cong \hat{\Zbb}$.
    
    By Proposition \ref{prop:continuous character of profinite group}, there exists an affinoid $K_{0}$-algebra of definition $A'\subset A$ such that $\delta \colon \hat{\Zbb}\to A^{\times}$ factors through $A'^{\times}\subset A^{\times}$.
    Therefore, after replacing $A$ with $A'$, we may assume that $A$ is an affinoid $K_0$-algebra.

    Let $A_0$ be a ring of definition of $A$.
    Since $\delta$ is continuous, there exists an integer $m\geq 1$ such that $a^m=\delta(m) \in 1+pA_0$.
    Therefore, $\delta$ factors as 
    $$\delta\colon \hat{\Zbb} \to \Zbb/n\Zbb\times \Zbb_p \to A^{\times},$$
    where $n$ is the prime to $p$ part of $m$.
    Therefore, we get a morphism
    $$K_{0,\square}[\Zbb/n\Zbb\times \Zbb_p]\cong K_0[[T]][X]/(X^n-1) \to A.$$
    Moreover, for $r>0$ sufficiently small, the above morphism factors as
    $$\begin{array}{ccccl}
    K_0[[T]][X]/(X^n-1) &\lra &K_0\langle T/p^r\rangle[X]/(X^n-1) &\lra &A \\
    & &\rotatebox{90}{$\in$}       &                &  \rotatebox{90}{$\in$}\\
    & & (T+1)X &\longmapsto &a.
    \end{array}$$
    Therefore, we may assume $A=K_0\langle T/p^r\rangle[X]/(X^n-1)$ and $a=TX+X$.
    Moreover, $K_0\langle T/p^r\rangle[X]/(X^n-1)$ is a product of algebras of the form $K_{0,d}\langle T/p^r\rangle$ where $K_{0,d}$ is the unramified extension of $K_0$ of degree $d$, so we may assume that $A=K_{0,d}\langle T/p^r\rangle$ and $a=(T+1)x$ for some $d$-th root of unity $x\in K_{0,d}$.
    Let $\sigma$ be the $p^f$-th power Frobenius on $K_{0,d}$.
    Then, there is an isomorphism
    \begin{align*}
        K_{0,d}\langle T/p^r\rangle\hotimes_{K_0}\breve{\Qbb}_p \to \prod_{i=0}^{d-1}\breve{\Qbb}_p\langle T/p^r \rangle ;\; \alpha\otimes\beta \mapsto (\sigma^i(\alpha)\beta)_i,
    \end{align*}
    and under this isomorphism, $(T+1)x$ corresponds to $((T+1)\sigma^i(x))_i$.
    Moreover, under this isomorphism, $\id\otimes\varphi_f$ on $K_{0,d}\langle t/p^r\rangle\hotimes_{K_0}\breve{\Qbb}_p$ corresponds to the morphism
    \begin{align*}
        &\prod_{i=0}^{d-1}\breve{\Qbb}_p\langle T/p^r \rangle  \to \prod_{i=0}^{d-1}\breve{\Qbb}_p\langle T/p^r \rangle ;\; \\
        &(a_0,\ldots,a_{d-2},a_{d-1})\mapsto (\varphi_f(a_{d-1}),\varphi_f(a_0),\ldots,\varphi_f(a_{d-2})).
    \end{align*}
    Therefore, it is enough to show that there exists $b\in \breve{\Qbb}_p\langle T/p^r \rangle$ such that 
    $$\varphi_f(b)/b=\prod_{i=0}^{d-1}(T+1)\sigma^i(x).$$
    The right-hand side can be written as $\zeta+c$ where $\zeta \in \breve{\Qbb}_p$ is a root of unity and $c$ is a topologically nilpotent element in $T\breve{\Qbb}_p\langle T/p^r \rangle$.
    There exists $b'\in \breve{\Zbb}_p^{\times}$ such that $\varphi_f(b')/b'=\zeta$, so we may assume $\zeta=1$.
    Noting that $\varphi_f-\id\colon \breve{\Zbb}\to \breve{\Zbb}$ is surjective, we can construct $b\in T\breve{\Qbb}_p\langle T/p^r \rangle$ inductively so that $\varphi_f(1+b)/(1+b)=1+c$.
\end{proof}

From now on, we omit $\varpi$ from the notation.
\begin{definition}
    A $(\varphi,\Gamma_K)$-module $M$ over $\Bcal_{K,\infty,\Acal}$ of rank $1$ is said to be \textit{of character type} if there exists a continuous character $\delta\colon K^{\times}\to A^{\times}$ and a finite projective $\Acal$-module $V$ of rank $1$ such that $M\cong B_{K,\infty,A}(\delta)\otimes_{\Acal}V$.
\end{definition}

We want to prove that every $(\varphi,\Gamma_K)$-module $M$ over $\Bcal_{K,\infty,\Acal}$ of rank $1$ is of character type (under some mild assumption).
The following case has already been proved by Kedlaya-Pottharst-Xiao.
\begin{theorem}[{\cite[Theorem 6.2.14]{KPX14}}]\label{thm:classfication of line bundle affinoid}
    If $A$ is an affinoid $K_0$-algebra, then every $(\varphi,\Gamma_K)$-module over $\Bcal_{K,\infty,\Acal}$ of rank $1$ is of character type.
    More precisely, for a $(\varphi,\Gamma_K)$-module $M$ over $\Bcal_{K,\infty,\Acal}$ of rank $1$, there exist a unique continuous character $\delta\colon K^{\times} \to A^{\times}$ and a unique finite projective $\Acal$-module $V$ of rank $1$ such that $$M\cong B_{K,\infty,A}(\delta)\otimes_{\Acal} V.$$
\end{theorem}

Our strategy is to reduce to the affinoid case. 
Before giving the proof, we present some preliminary results.
First, let us prove the uniqueness.

\begin{lemma}\label{lem:uniqueness of character}
    Let $\delta_1,\delta_2\colon K^{\times}\to A^{\times}$ be continuous characters, and $V_1,V_2$ be finite projective $\Acal$-modules of rank $1$.
    If $B_{K,\infty,A}(\delta_1)\otimes_{\Acal} V_1\cong B_{K,\infty,A}(\delta_2)\otimes_{\Acal} V_2$, then $\delta_1=\delta_2$ and $V_1 \cong V_2$.
\end{lemma}
\begin{proof}
    This claim holds when $A$ is an affinoid $K_0$-algebra by \cite[Theorem 6.2.14]{KPX14}.
    We set $\delta=\delta_1\delta_2^{-1}$ and $V=V_1\otimes V_2^{-1}$.
    We want to show $\delta=1$ and $V\cong A$.
    By the assumption, $B_{K,\infty,A}(\delta)\otimes_{\Acal} V$ is trivial. 
    Let $\mfrak \subset A$ be a maximal ideal.
    Then, $A/\mfrak^n$ is a finite $K_0$-algebra for any $n\geq 1$.
    In particular, $A/\mfrak^n$ is an affinoid $K_0$-algebra.
    Therefore, the character $K^{\times} \overset{\delta}{\lra} A^{\times}\to (A/\mfrak^n)^{\times}$ is trivial by the affinoid case.
    Since $A(\ast)$ is noetherian, the morphism $A(\ast)\to \prod_{\mfrak,n} A/\mfrak^n(\ast)$ is injective.
    By Lemma \ref{lem:condensification functor}, the morphism $A\to \prod_{\mfrak,n} A/\mfrak^n$ is also injective, so $\delta\colon K^{\times}\to A^{\times}$ is trivial.
    Moreover, there is an isomorphism 
    $$A\cong H^0_{\varphi,\Gamma_K}(B_{K,\infty,A})\cong H^0_{\varphi,\Gamma_K}(B_{K,\infty,A}\otimes_{\Acal}V)\cong V,$$
    which proves the claim.
\end{proof}

\begin{lemma}\label{lem:charcater type analytic local}
    Let $M=\{M^I\}_I$ be a $(\varphi,G_K)$-module over $\tilde{\Bcal}_{\bar{K},\Acal}$ of rank $1$.
    If there exists an affinoid covering $\{\Acal=(A,A^+)_{\square}\to \Bcal_i=(B_i,B_i^+)_{\square}\}_{i=1}^n$ such that, for all $i$, $M_i=M \otimes_{\Acal}\Bcal_i$ is of character type, then $M$ is also of character type.
\end{lemma}
\begin{proof}
    We take a unique continuous character $\delta_i \colon K^{\times}\to B_i^{\times}$, a finite projective $\Bcal_i$-module $V_i$ of rank $1$, and an isomorphism $M_i\cong \tilde{B}_{\bar{K},B_i}(\delta_i)\otimes_{\Bcal_i} V_i.$
    By gluing $\delta_i$, we get a continuous character $\delta\colon K^{\times}\to A^{\times}.$
    By replacing $M$ with $M\otimes \tilde{B}_{\bar{K},A}(\delta^{-1})$, we may assume $\delta=1$.
    We write $V=R\Gamma_{\varphi}(M)\in \Dcal(\Acal)$.
    We note that there is an isomorphism
    $$V\otimes_{\Acal}^{\Lbb} \Bcal_i \cong R\Gamma_{\varphi}(M_i),$$
    which is a finite projective $\Bcal_i$-module by the proof of Theorem \ref{thm:Galois representation fully faithful functor}.
    Therefore, by Proposition \ref{prop:descent finite projective Fredholm} (1), $V$ is dualizable over $\Acal$.
    In particular, $V$ is nuclear in $\Dcal(A_{\square})$ by Lemma \ref{lem:dualizable compact nuclear} and Lemma \ref{lem:A^+ nuclear}.
    
    Let $I\subset (0,\infty)$ be a closed interval.
    Then, the morphism $V\otimes_{\Acal}^{\Lbb} \tilde{B}_{\bar{K},A}^I\to M^I$ becomes an isomorphism after applying $-\otimes_{\Acal}^{\Lbb}\Bcal_i$.
    Therefore, $V\otimes_{\Acal}^{\Lbb} \tilde{B}_{\bar{K},A}^I\to M^I$ is already an isomorphism.
    Since $V\otimes_{A_{\square}}^{\Lbb} \tilde{B}_{\bar{K},A}^I$ is a nuclear object in $\Dcal(A_{\square})$, it is $\Acal$-complete by Lemma \ref{lem:A^+ nuclear}.
    Therefore, the morphism $V\otimes_{A_{\square}}^{\Lbb} \tilde{B}_{\bar{K},A}^I\to V\otimes_{\Acal}^{\Lbb} \tilde{B}_{\bar{K},A}^I$ is an isomorphism, and $V\otimes_{A_{\square}}^{\Lbb} \tilde{B}_{\bar{K},A}^I\to M^I$ is also an isomorphism.
    Since $\tilde{B}_{\bar{K},A}^I$ is faithfully flat over $A_{\square}$, $V$ is a finite projective $A_{\square}$-module of rank $1$ by Theorem \ref{thm:descent of finite projective modules}. 
    We note that $V=R\Gamma_{\varphi}(M)$ admits a natural $A$-linear continuous $G_K$-action induced by that on $M$.
    Since $V$ is a finite projective $A_{\square}$-module of rank $1$, this $G_K$-action defines a continuous character $\tau \colon G_K\to A^{\times}$.
    The continuous $G_K$-action on $V\otimes_{\Acal} \Bcal_i\cong V_i$ is trivial, so the composition of $\tau$ and the natural map $A^{\times}\to B_i^{\times}$ is trivial for all $i$.
    Since the morphism $A^{\times}\to \prod_{i=1}^n B_i^{\times}$ is injective, $\tau$ is also trivial. 
    Hence, we get an isomorphism $M\cong V\otimes_{\Acal}\tilde{B}_{\bar{K},A}$ of $(\varphi,G_K)$-modules, which proves that $M$ is of character type.
\end{proof}

In the same way as above, we can also show the following.
\begin{lemma}\label{lem:character type ff local}
    Let $M$ be a $(\varphi,G_K)$-module over $\tilde{\Bcal}_{\bar{K},\Acal}$ of rank $1$.
    Let $B$ be a relatively discrete and finitely generated $\Acal$-algebra, and we define an analytic $\Acal$-algebra $\Bcal$ whose underlying condensed ring is $B$ and whose analytic ring structure is induced from $\Acal$.
    Assume that $B$ is faithfully flat as an $\Acal$-module.
    If $M \otimes_{\Acal}\Bcal$ is of character type, then $M$ is also of character type.
\end{lemma}

\begin{lemma}\label{lem:etale bundle character type}
    Let $M$ be a $(\varphi,G_K)$-module over $\tilde{\Bcal}_{\bar{K},\Acal}$ of rank $1$.
    If $M$ is isomorphic to $\tilde{B}_{\bar{K},A}$ as $\varphi$-modules, then $M$ is of character type.
\end{lemma}
\begin{proof}
    It easily follows from the proof of Theorem \ref{thm:Galois representation fully faithful functor}.
\end{proof}

\begin{lemma}\label{lem:restriction character type}
    Let $L$ be a finite extension of $K$, and $L_0$ be the maximal unramified extension of $\Qbb_p$ in $L$.
    Assume that $\Acal=(A,A^+)_{\square}$ is an algebraic-affinoid analytic $L_{0,\square}$-algebra.
    Let $M$ be a $(\varphi,G_K)$-module over $\tilde{\Bcal}_{\bar{K},\Acal}$ of rank $1$.
    \begin{enumerate}
        \item We assume that $M$ is of character type, and we take a character $\delta \colon K^{\times}\to A^{\times}$ and a finite projective $\Acal$-module $V$ such that $M\cong\tilde{B}_{\bar{K},A}(\delta)\otimes_{\Acal} V$.
        Let $\delta_L\colon L^{\times}\to K^{\times} \overset{\delta}{\to} A^{\times}$ be the composition of $\delta$ and the norm map $L^{\times}\to K^{\times}$.
        Then there is an isomorphism
        $$M\cong\tilde{B}_{\bar{L},A}(\delta_L)\otimes_{\Acal} V$$
        as $(\varphi,G_L)$-modules over $\tilde{\Bcal}_{\bar{K},\Acal}=\tilde{\Bcal}_{\bar{L},\Acal}$.
        \item If $M$ is of character type as a $(\varphi,G_L)$-module, then it is also of character type as a $(\varphi,G_K)$-module.
    \end{enumerate}
\end{lemma}
\begin{proof}
    The claim (1) easily follows from the construction. 
    Let us prove (2).
    We take a character $\delta \colon L^{\times}\to A^{\times}$ and a finite projective $\Acal$-module $V$ such that $M\cong\tilde{B}_{\bar{L},A}(\delta)\otimes_{\Acal} V$. 
    By replacing $M$ with $M\otimes_{\Acal} V^{-1}$, we may assume that $V$ is trivial.
    Let $d=[L_0: K_0]$, and $\pi_L$ be a uniformizer of $L$.
    We set $a=\delta(\pi_L)$.
    By Lemma \ref{lem:character type ff local}, after replacing $A$ with $A[X]/(X^d-a)$, we may assume that there exists $b\in A$ such that $b^d=a$.
    We take the unramified character $\tau \colon K^{\times}\to A^{\times}$ such that $\tau(\pi_K)=b$, where $\pi_K$ is a uniformizer of $K$.
    We replace $M$ with $M\otimes \tilde{B}_{\bar{K},A}(\tau^{-1})$.
    Then as a $(\varphi,G_L)$-module, we have $M\cong \tilde{B}_{\bar{L},A}(\delta\cdot \tau_L^{-1})$, where $\tau_L$ is as in (1).
    By construction, the continuous character $\delta\cdot \tau_L^{-1}\colon W_L^{\ab}\cong L^{\times}\to A^{\times}$ extends to a continuous character $G_L^{\ab}\to A^{\times}$. 
    Hence, by the proof of Theorem \ref{thm:indep of uniformizer}, $M$ is trivial as a $\varphi$-module. 
    Therefore, we get the claim by Lemma \ref{lem:etale bundle character type}.
\end{proof}

\begin{lemma}\label{lem:regular}
    Let $R\to S$ be a smooth morphism of affinoid $\Qbb_p$-algebras. 
    If $R$ is normal, then $S$ is also normal.
\end{lemma}
\begin{proof}
    By \cite[Satz 3.3.3]{BKKN67}, $R$ is excellent.
    By  \cite[Th\'{e}or\`{e}me]{Andre74} (or \cite[0H7U]{stacks-project}) and \cite[Proposition 2.9]{FRG3}, we find that $R\to S$ is regular.
    Then, the claim follows from \cite[Corollary 21.E]{Mat80}.
\end{proof}

\begin{theorem}\label{thm:classification rank 1}
    Let $M=\{M^I\}_I$ be a $(\varphi,\Gamma_K)$-module over $\Bcal_{K,\infty,\Acal}$ of rank $1$.
    Let $r$ be a sufficiently large integer such that we can define $B_{K,A}^{[p^{-r},p^{-r+1}]}$, and we write $I=[p^{-r},p^{-r+1}]$.
    Assume that there exists an affinoid covering $\{\Acal\to \Bcal_i\}_{i=1}^n$ such that $M^{I}\otimes_{\Acal}\Bcal_i$ is a free $\Bcal_{K,\infty,\Bcal_i}^I$-module of rank $1$.
    Then $M$ is of character type.
\end{theorem}

\begin{remark}
    We expect that for any $(\varphi,\Gamma_K)$-module $M$ over $\Bcal_{K,\infty,\Acal}$, there exists an affinoid covering $\{\Acal\to \Bcal_i\}_{i=1}^n$ such that $M^{I}\otimes_{\Acal}\Bcal_i$ is a finite free $\Bcal_{K,\infty,\Bcal_i}$-module.
    If $A$ is an affinoid $K_0$-algebra, then we can show this by a standard argument.
\end{remark}

\begin{proof}
    By Lemma \ref{lem:character type ff local}, after replacing $\Acal$ with $\Acal\otimes_{K_{0,\square}} K'_{0,\square}$, where $K'_0$ is the maximal unramified extension of $\Qbb_p$ in $K_{\infty}$, we may assume that $\Acal$ is a $K'_{0,\square}$-algebra.
    Moreover, by Lemma \ref{lem:restriction character type}, we may replace $K$ with $K\cdot K'_0$.
    By replacing $K$ with $K_n=K(\zeta_{p^n})$ for $n$ sufficiently large, we may assume that $\Gamma_K=\Gamma_{K_n}$ acts on the set of connected components of $\Spec B_{K,n,K_0}^I$ trivially.
    Since $B_{K,n,K_0}^I$ is isomorphic to $K_0\otimes_{\Qbb_{p}} K_0\langle T/p^a, p^b/T\rangle$ for some $a,b$, for an affinoid $K'_0=K_0$-algebra $R$ which is an integral domain, the set of connected components of $\Spec B_{K,n,R}^I$ is isomorphic to that of $\Spec B_{K,n,K_0}^I$.
    Therefore, for $R$ as above, $\Gamma_K$ acts on the set of connected components of $\Spec B_{K,n,R}^I$ trivially.

    By Lemma \ref{lem:charcater type analytic local}, we may assume that $M^{I}$ is a free $\Bcal_{K,\infty,\Acal}^I$-module of rank $1$.
    Moreover, by Proposition \ref{prop:Zariski open}, we may assume that $\Acal$ is of the form $(R,R^+)_{\square}[1/f]$ where $(R,R^+)$ is an affinoid pair over $(K_0,\Ocal_{K_0})$ of weakly finite type and $f\in R$.
    
    First, we prove the case where $R$ is reduced.
    By replacing $R$ with the integral closure of $R$ in $R_f$, we may assume that $R\to R_f$ is injective and integrally closed.
    By the proof of Theorem \ref{thm:deperfection}, for $n$ sufficiently large, we obtain a $\Gamma_K$-stable free $B_{K,n,R_f}^I$-submodule $M_n^I \subset M^I$ of rank $1$ such that $M_n^I\otimes_{(B_{K,n,R_f}^I)_{\square}}B_{K,\infty,R_f}^I \cong M^I$.
    We take $e \in M_n^I$ such that $M_n^I=B_{K,n,R_f}^I\cdot e.$
    For $\gamma \in \Gamma_K$, we define $x_{\gamma}\in (B_{K,n,R_f}^I)^{\times}$ so that $\gamma (e)=x_{\gamma} e$.

    \begin{claim}\label{claim1}
    For any $\gamma\in \Gamma_K$, $x_{\gamma}\in (B_{K,n,R}^I)^{\times}$.
    In particular, $B_{K,n,R}^I\cdot e \subset B_{K,n,R_f}^I=M_n^I$ is stable under the action of $\Gamma_K$.
    \end{claim}
	\begin{proof}[Proof of Claim \ref{claim1}]
        Since $x_{\gamma}\cdot\gamma(x_{\gamma^{-1}})=1$, it suffices to show $x_{\gamma}\in B_{K,n,R}^I$.
        First, we assume that $R$ is normal.
        Since $\Gamma_K$ is compact, by the proof of Proposition \ref{prop:relatively discrete action filtered colimit}, there exists $m>0$ such that $\Gamma_K(B_{K,n,R}^I\cdot e)\subset f^{-m} B_{K,n,R}^I\cdot e$.
        Then, for any $j>0$, we get
        \begin{align}\label{eq:positivity}
            x_{\gamma^j}=\gamma^{j-1}(x_{\gamma})\cdot \gamma^{j-2}(x_{\gamma})\cdot \cdots \cdot x_{\gamma}\in f^{-m}B_{K,n,R}^I.
        \end{align}
        By Lemma \ref{lem:regular}, $B_{K,n,R}^I$ is also normal.
        Let $\Lambda$ be the set of prime divisors of $\Spec B_{K,n,R}^I$ that occur as irreducible components of the preimage of prime divisors of $\Spec R$.
        We note that $\Gamma_K$ acts on $\Lambda$ trivially.
        We set $V(x_{\gamma})=a_1D_1+\cdots+a_sD_s+b_1E_1+\cdots+b_tE_t$ where $D_1,\ldots,D_s,E_1,\ldots, E_t$ are pairwise distinct prime divisors of $\Spec B_{K,n,R}^I$ such that $D_i$ (resp.\ $E_j$) is contained (resp.\ not contained) in $\Lambda$. 
        We need to show $$a_1,\ldots,a_s,b_1,\ldots,b_t \geq 0.$$
        We set $V(f^m)=c_1D'_1+\cdots+c_uD'_u$ where $D'_1,\ldots,D'_u$ are pairwise distinct prime divisors of $\Spec B_{K,n,R}^I$, where we note that $D'_i$ lies in $\Lambda$. 
        Then, we have $$a_1D_1+\cdots+a_sD_s+c_1D'_1+\cdots+c_uD'_u+b_1E_1+\cdots+b_tE_t\geq 0,$$
        so we get $b_1,\ldots,b_t \geq 0.$
        Moreover, by \eqref{eq:positivity}, we get
        \begin{align*}
        &j(a_1D_1+\cdots+a_sD_s)+c_1D'_1+\cdots+c_uD'_u\\
        +&(\mbox{a linear combination of prime divisors outside $\Lambda$})\geq 0
    \end{align*}
        for any $j>0$.
        Therefore, we obtain $a_1,\ldots,a_s\geq 0$.

        Next, we prove the general case.
        Let $S$ be the integral closure of $R$ in its total ring of fractions.
        Since $S$ is finite over $R$, $S$ is also an affinoid $K_0$-algebra.
        Since $R\to R_f$ is integrally closed, there is an exact sequence 
        $$0\to R \to R_f\times S \to S_f$$
        of $R_{\square}$-modules.
        Since $B_{K,n,R}^I$ is flat over $R_{\square}$, we get an exact sequence
        $$0\to B_{K,n,R}^I\to B_{K,n,R_f}^I\times B_{K,n,S}^I\to B_{K,n,S_f}^I.$$
        By the previous paragraph, we have $x_{\gamma}\in B_{K,n,S}^I$.
        Therefore, we get 
        $$x_{\gamma}\in B_{K,n,R_f}^I\cap B_{K,n,S}^I=B_{K,n,R}^I.$$
    \end{proof}
    We write $I_i=[p^{-r+i},p^{-r+i}]$ ($i=0,1$).
    Then we get 
    $$\Phi\colon M^{I_1}=B_{K,n,R_f}^{I_1}\cdot e \to M^{I_0}=B_{K,n}^{I_0}\cdot e.$$
    Let $a\in (B_{K,n,R_f}^{I_0})^{\times}$ such that $\Phi(e)=a\cdot e$.
    We write $t=\log[\varepsilon] \in B_{K,n,R_f}^{I_0}$ and $L=B_{K,n}^{I_0}/t$, which is a finite extension of $K$ contained in $K_{\infty}$.
    By replacing $R$ with $R\otimes_{K_{0,\square}} L$, we may assume that $R$ is an $L$-algebra.

    \begin{claim}\label{claim2}
    There exist $b\in (R_f)^{\times}$ and $c\in (B_{K,n,R}^{I_0})^{\times}$ such that $a=bc$.
    \end{claim}
	\begin{proof}[Proof of Claim \ref{claim2}]
        To simplify the notation, we write $B=B_{K,n,R}^{I_0}(\ast)$.
        We note $B_{K,n,R_f}^{I_0}(\ast)=B_f=B[1/f].$
        By the commutativity of the Frobenius $\varphi$ and the $\Gamma_K$-action, we obtain
        \begin{align}\label{eq:claim2}
        \gamma(a)/a=\varphi(x_{\gamma})/x_{\gamma}\in B^{\times}.
        \end{align}
        We note that the proof of Claim \ref{claim2} relies only on the above.
        Let $\alpha\colon B_f \to R_f$ be the composition of the projection morphism $B\to B_{K,n,R_f}^{I_0}/t=R_f\otimes_{\Qbb_p}L$ and $R_f\otimes_{\Qbb_p}L\to R_f$ induced from the $L$-algebra structure on $R_f$.
        By replacing $a$ with $a\alpha(a)^{-1}$, we may assume $\alpha(a)=1$.
        It is enough to show $a\in B$ since we can show $a^{-1}\in B$ by the same argument.
        We take $t_L\in B_{K,n,L}^{I_0}$ such that $\Ker(B_{K,n,L}^{I_0}\to L)=(t_L)$.
        Let $B^{\wedge}$ (resp.\ $(B_f)^{\wedge}$) be the $t_L$-adic completion of $B$ (resp.\ $B_f$).
        Since $t_L,f$ form a regular sequence in $B$, there is an exact sequence
        $$0\to B \to B^{\wedge}\times B_f \to B_f^{\wedge},$$
        where we note that the natural morphism 
        $B^{\wedge}[1/f]\to (B_f)^{\wedge}$ is injective.
        Therefore, it suffices to show that the image of $a$ in $B_f^{\wedge}$ is contained in $B^{\wedge}$.
        We prove that the image of $a$ in $B_f/t_L^m$ is contained in $B/t_L^m$ by induction on $m$.
        The case $m=1$ is clear since $\alpha(a)=1$.
        Assume that the statement holds for $m$.
        We take $a_1,\ldots,a_{m-1}\in R$ such that $a\equiv (1+a_1t_L)\cdots(1+a_{m-1}t_L^{m-1}) \mod t_L^{m}$.
        We set 
        $$a'=a(1+a_1t_L)^{-1}\cdots(1+a_{m-1}t_L^{m-1})^{-1}\in (B_f^{\wedge})^{\times}.$$
        By \eqref{eq:claim2}, for any $\gamma\in \Gamma_L$, we have $\gamma(a')/a'\in (B^{\wedge})^{\times}.$
        We write $a'=1+a_mt_L^m$ where $a_m\in B_f^{\wedge}$.
        Then, for $\gamma\in \Gamma_L$, we have 
        \begin{align*}
            \gamma(a')/a'\equiv 1+t_L^m(\chi(\gamma)a_m-a_m) \mod t_L^{m+1},
        \end{align*}
        where $\chi$ is the $p$-adic cyclotomic character.
        Therefore, we get 
        $$((\chi(\gamma)-1)a_m \mod t_L)\in R$$ 
        for $\gamma\in \Gamma_L$.
        Since there is $\gamma\in \Gamma_L$ such that $\chi(\gamma)-1\in \Qbb_p^{\times}\subset R^{\times}$, we find that the image of $a_m$ in $R_f$ is contained in $R$, which proves the case $m+1$.
    \end{proof}
    Let $\delta \colon K^{\times}\to R_f^{\times}$ be the unramified continuous character such that $\delta(\varpi)=b$ where $\varpi$ is a uniformizer of $K$.
    By construction, $M\otimes B_{K,\infty,R_f}(\delta^{-1})$ can be obtained by the scalar extension of a $(\varphi,\Gamma_K)$-module over $B_{K,\infty,R}$ of rank $1$, and therefore, it is of character type by Theorem \ref{thm:classfication of line bundle affinoid}.
    Hence, $M$ is also of character type.

    Finally, we prove the case where $R$ is not necessarily reduced.
    We write $N=M\otimes_{B_{K,\infty,R_f}}\tilde{B}_{\bar{K},R_f}$.
    As in the proof of Lemma \ref{lem:reduction to red}, take a filtration of ideals of $R$
    $$0=J_0\subset J_1 \subset \cdots \subset J_r =\sqrt{(0)}$$
    such that for each $i$, there exists a prime ideal $\pfrak_i \subset R$ with $J_i/J_{i-1}\cong R/\pfrak_i.$
    We proceed by induction on $r$.
    The case $r=0$ is the reduced case. 
    Let $r>0$.
    We write $\bar{R}=R/J_1$.
    We take a continuous character $\bar{\delta}\colon K^{\times}\to \bar{R}_f^{\times}$ and a finite projective $\bar{R}_f$-module $\bar{V}$ of rank $1$ such that $N\otimes_{(R_f)_{\square}}\bar{R}_f \cong \tilde{B}_{\bar{K},\bar{R}_f}(\bar{\delta})\otimes \bar{V}$.
    Since $J_1$ is a nilpotent ideal, we can lift $\bar{\delta}$ (resp.\ $\bar{V}$) to a continuous character $\delta \colon K^{\times} \to R_f^{\times}$ (resp.\ a finite projective $R_f$-module $V$ of rank $1$).
    By replacing $N$ with $N\otimes \tilde{B}_{\bar{K},R_f}(\delta^{-1})\otimes V^{-1}$, we may assume that $N\otimes_{(R_f)_{\square}}\bar{R}_f$ is a trivial $(\varphi,G_K)$-module.
    We take a lift $e \in N^I$ of $1\in \tilde{B}_{\bar{K},\bar{R}_f}^I\cong N^I\otimes_{(R_f)_{\square}}\bar{R}_f$. 
    Then we have $N^I=\tilde{B}_{\bar{K},R_f}^I\cdot e$.
    Let $a\in (B_{K,n,R_f}^{I_0})^{\times}$ such that $\Phi(e)=a\cdot e$ where $\Phi$ is the Frobenius morphism
    $$\Phi\colon N^{I_1}=\tilde{B}_{\bar{K},R_f}^{I_1}\cdot e \to N^{I_0}=\tilde{B}_{\bar{K},R_f}^{I_0}\cdot e.$$
    Then, we have $a\equiv 1 \mod J_1$.
    We fix an isomorphism $R/\pfrak_1\cong J_1;\; 1\mapsto x$.
    We write $a=1+xb$ where $b\in \tilde{B}_{\bar{K},(R/\pfrak_1)_f}^{I_1}.$
    By \cite[Proposition II.2.5]{FS24}, we have
    \begin{align*}
        \fib(\tilde{B}_{\bar{K}}^{I}\overset{\varphi-1}{\lra} \tilde{B}_{\bar{K}}^{I_1})\cong \Qbb_p.
    \end{align*}
    Therefore, we get 
    \begin{align*}
        \fib(\tilde{B}_{\bar{K},(R/\pfrak_1)_f}^{I}\overset{\varphi-1}{\lra} \tilde{B}_{\bar{K},(R/\pfrak_1)_f}^{I_1})\cong (R/\pfrak_1)_f.
    \end{align*}
    Hence, we can take $b'\in \tilde{B}_{\bar{K},(R/\pfrak_1)_f}^{I}$ such that $b+\varphi(b')-b'=0$.
    By replacing $e$ with $(1+xb')e$, we may assume $a=1$, where we note $x^2=0$.
    (Indeed, there is an integer $s>1$ such that $x^s=0$.
    Then $x^{s-1}=0$ in $R/\pfrak_1$, hence $x=0$ in $R/\pfrak_1$.
    In other words, $x^2=0$ in $R$.)
    Then, $N$ is trivial as a $\varphi$-module, so we get the claim by Lemma \ref{lem:etale bundle character type}.
\end{proof}

\begin{theorem}
Let $\Xfrak_{1,K}^{\mathrm{mod}}$ be the sheafification (with respect to the analytic topology) of the presheaf of anima on $\AlgAff_{K_{0,\square}}^{\op}$ that assigns to each $\Acal=(A,A^+)_{\square}\in \AlgAff_{K_{0,\square}}$ the groupoid of $(\varphi,\Gamma_K)$-modules over $\Bcal_{K,\infty,\Acal}$ of rank $1$ satisfying the condition in Theorem \ref{thm:classification rank 1}.
We write $\Gbb_m^{\alg}=\AnSpec (K_0[T^{\pm 1}],\Ocal_{K_0})_{\square}$.
Let $\Wcal$ denote the rigid analytic generic fiber of $\Spf \Ocal_{K_0}[[\Ocal_K^{\times}]]$, and $\Wcal_{\square}$ denote the algebraic-analytic space over $K_{0,\square}$ associated to $\Wcal$ via the functor in Proposition \ref{prop:adic space to alg-aff space}.
Then there is an equivalence
$$\Xfrak_{1,K}^{\mathrm{mod}}\cong [(\Wcal_{\square}\times \Gbb_m^{\alg})/\Gbb_m^{\alg}],$$
where the action of $\Gbb_m^{\alg}$ is trivial.
\end{theorem}
\begin{remark}
    It is likely that there exists a more suitable topology, such as the \'{e}tale or flat topology, for defining the stack, but we will not pursue this direction in this paper.
\end{remark}
\begin{proof}
    By Corollary \ref{cor:moduli of continuous charcaters of K^{times}} and Theorem \ref{thm:classification rank 1}, there is an epimorphism
    \begin{align}\label{eq:epi}
        \Wcal_{\square}\times \Gbb_m^{\alg}\to \Xfrak_{1,K}^{\mathrm{mod}}.
    \end{align}
    For a $(\varphi,\Gamma_K)$-module $M$ over $\Bcal_{K,\infty,\Acal}$ of rank $1$, we have
    $$\Aut_{\varphi,\Gamma_K}(M)\cong \Aut_{\varphi,\Gamma_K}(B_{K,\infty,A})=A^{\times}=\Gbb_m^{\alg}(\Acal).$$
    Hence, the epimorphism \eqref{eq:epi} induces the desired equivalence
    $$[(\Wcal_{\square}\times \Gbb_m^{\alg})/\Gbb_m^{\alg}]\cong \Xfrak_{1,K}^{\mathrm{mod}},$$
    where we also use Lemma \ref{lem:uniqueness of character} implicitly.
\end{proof}


\appendix
\section{The difference between dualizable complexes and perfect complexes}\label{appendix:nonFredholm}
In this appendix, we provide examples illustrating the difference between dualizable complexes and perfect complexes over algebraic-affinoid $\Qbb_{p,\square}$-algebras.
\begin{example}\label{example:counter example phi Gamma cohomology perfect}
    For simplicity, we assume $K=\Qbb_p$ and $p>2$.
    We put $A=\Qbb_p\langle T^{\pm 1}\rangle[S^{\pm 1}]$.
    Then we have a continuous character 
    $$\delta\colon \Qbb_p^{\times}=p^{\Zbb}\times \mu_{p-1} \times (1+p\Zbb_p)\to A^{\times}$$
    defined by $\delta(p)=S$, $\delta\vert_{\mu_{p-1}}=1$, and $\delta(\exp(p))=T$, where $\exp \colon p\Zbb_p \overset{\sim}{\to} 1+p\Zbb_p$ is the $p$-adic exponential map. 
    We show that $R\Gamma_{\varphi,\Gamma_{\Qbb_p}}(B_{\Qbb_{p},A}(\delta))$ is not a perfect complex.
    For a maximal ideal $\mfrak\subset A$, let $\delta_{\mfrak} \colon \Qbb_p^{\times} \to (A/\mfrak)^{\times}$ denote the continuous character defined by the composition of $\delta$ and $A^{\times}\to (A/\mfrak)^{\times}$.
    We note that $A/\mfrak$ is a finite extension of $\Qbb_p$.
    Then we have 
    \begin{align*}
    R\Gamma_{\varphi,\Gamma_{\Qbb_p}}(B_{\Qbb_{p},A}(\delta))\otimes_{A_{\square}}^{\Lbb}A/\mfrak &\simeq R\Gamma_{\varphi,\Gamma_{\Qbb_p}}(B_{\Qbb_{p},A/\mfrak}(\delta_{\mfrak})),\\
    H^2_{\varphi,\Gamma_{\Qbb_p}}(B_{\Qbb_{p},A/\mfrak}(\delta))\otimes_{A_{\square}}A/\mfrak &\simeq H^2_{\varphi,\Gamma_{\Qbb_p}}(B_{\Qbb_{p},A/\mfrak}(\delta_{\mfrak})).
    \end{align*}
    By \cite[Proposition 6.2.8]{KPX14}, $H^2_{\varphi,\Gamma_{\Qbb_p}}(B_{\Qbb_{p},A/\mfrak}(\delta_{\mfrak}))$ is non-trivial if and only if the continuous character $\delta\colon \Qbb_p^{\times}\to (A/\mfrak)^{\times}$ is equal to $x\mapsto |x|x^n$ for some $n\in \Zbb_{> 0}$.
    If $R\Gamma_{\varphi,\Gamma_{\Qbb_p}}(B_{\Qbb_{p},A}(\delta))$ is a perfect complex, then $H^2_{\varphi,\Gamma_{\Qbb_p}}(B_{\Qbb_{p},A}(\delta))$ is relatively discrete and finitely presented over $A$.
    Therefore, the support of $H^2_{\varphi,\Gamma_{\Qbb_p}}(B_{\Qbb_{p},A}(\delta))$ defines a Zariski closed subset of $\Spec A(\ast)$.
    Since a Zariski closed subset of $\Spec A(\ast)$ contains only finitely many closed points or contains uncountably many closed points, it is a contradiction.
\end{example}

\begin{example}\label{ex:nonzero pointwise zero}
    We construct a non-zero dualizable object $M\in \Dcal(\Qbb_p[T^{\pm 1}]_{\square})$ such that for every maximal ideal $\mfrak\subset \Qbb_p[T^{\pm 1}]$, $M\otimes_{\Qbb_p[T^{\pm 1}]_{\square}}^{\Lbb}\Qbb_p[T^{\pm 1}]/\mfrak=0$.
    We write $L=\left(\prod_{n\geq 0}\Zbb_p\right)[1/p]$, which is a compact object of $\Dcal(\Qbb_{p,\square})$.
    We define an endomorphism $f\colon L \to L$ as $$(a_0,a_1,a_2,a_3\ldots) \mapsto (0,a_0,pa_1,p^2a_2,\ldots),$$
    which is a trace-class morphism.
    Therefore, $M=\varinjlim(L\overset{f}{\to}L\overset{f}{\to}\cdots)$ is a nuclear object of $\Dcal(\Qbb_{p,\square})$.
    We regard $M$ as an object of $\Dcal(\Qbb_p[T^{\pm 1}]_{\square})$ by defining the action of $T$ via $f$.
    Then $M$ is also nuclear as an object of $\Dcal(\Qbb_p[T^{\pm 1}]_{\square})$ since $\Qbb_p[T^{\pm 1}]$ is nuclear over $\Qbb_{p,\square}$ (\cite[Lemma 2.9]{Mikami23}).
    Since $f^n(1,0,0,\ldots)\neq 0$ for every $n\geq 0$, $M$ is not zero.
    Moreover, we have a resolution of $\Qbb_p[T^{\pm 1}]_{\square}$-modules
    \begin{align}\label{eq5}
        0\mapsto L\otimes_{\Qbb_{p,\square}}^{\Lbb}\Qbb_p[T^{\pm 1}] \overset{\alpha}{\to} L\otimes_{\Qbb_{p,\square}}^{\Lbb}\Qbb_p[T^{\pm 1}] \to M\to 0,
    \end{align}
    where $\alpha$ is defined as $a\otimes x \mapsto a\otimes x -f(a)\otimes T^{-1}x$, so $M$ is a compact object of $\Dcal(\Qbb_p[T^{\pm 1}]_{\square})$.
    Therefore, $M$ is  a non-zero dualizable object of $\Dcal(\Qbb_p[T^{\pm 1}]_{\square})$ by Lemma \ref{lem:dualizable compact nuclear}.
    Let us prove for every maximal ideal $\mfrak\subset \Qbb_p[T^{\pm 1}]$, $M\otimes_{\Qbb_p[T^{\pm 1}]_{\square}}^{\Lbb}\Qbb_p[T^{\pm 1}]/\mfrak=0$.
    For simplicity, we prove in the case where $\mfrak=(T^{-1}-p^au)$ for $a\in \Zbb$ and $u\in \Zbb_p^{\times}$.
    By the resolution (\ref{eq5}), it is enough to show that $\id-p^auf \colon L \to L$ is an isomorphism.
    For simplicity, we assume $a=-1$.
    Then $\id-p^auf$ induces an endomorphism $$\prod_{n\geq 2}\Zbb_p \to \prod_{n\geq 2}\Zbb_p;\; (a_2,a_3,a_4,\ldots)\mapsto (a_2, a_3-pua_2,a_4-p^2ua_3,\ldots),$$
    which is an isomorphism because it becomes an isomorphism modulo $p$.
    Therefore we get the following commutative diagram:
    $$
    \xymatrix{
        0 \ar[r] & \left(\prod_{n\geq 2}\Zbb_p\right)[1/p]\ar[r]\ar[d]^-{\id-p^{-1}uf}& \left(\prod_{n\geq 0}\Zbb_p\right)[1/p]\ar[r]\ar[d]^-{\id-p^{-1}uf} & \Qbb_p^2\ar[r]\ar[d] &0\\
        0 \ar[r] & \left(\prod_{n\geq 2}\Zbb_p\right)[1/p]\ar[r]& \left(\prod_{n\geq 0}\Zbb_p\right)[1/p]\ar[r] & \Qbb_p^2\ar[r] &0,
    }
    $$
    where the left vertical morphism is an isomorphism and the right vertical morphism is given by the invertible matrix
    $\begin{pmatrix}
       1 & 0 \\
       p^{-1}u & 1 
    \end{pmatrix}.$
    Therefore, $\id-p^{-1}uf \colon L \to L$ is an isomorphism.
\end{example}


\bibliography{phi-Gamma-cohomology}

\providecommand{\bysame}{\leavevmode\hbox to3em{\hrulefill}\thinspace}
\providecommand{\MR}{\relax\ifhmode\unskip\space\fi MR }
\providecommand{\MRhref}[2]{%
  \href{http://www.ams.org/mathscinet-getitem?mr=#1}{#2}
}
\providecommand{\href}[2]{#2}
\begin{thebibliography}{BKKN67}

\bibitem[And74]{Andre74}
Michel Andr\'e, \emph{Localisation de la lissit\'e{} formelle}, Manuscripta
  Math. \textbf{13} (1974), 297--307.

\bibitem[And21]{And21}
Grigory Andreychev, \emph{Pseudocoherent and perfect complexes and vector
  bundles on analytic adic spaces}, arXiv preprint arXiv:
  \href{https://arxiv.org/abs/2105.12591}{2105.12591} (2021).

\bibitem[And23]{And23}
\bysame, \emph{K-theorie adischer r\"aume}, Ph.D. thesis, Rheinische
  Friedrich-Wilhelms-Universit{\"a}t Bonn, September 2023,
  \url{https://hdl.handle.net/20.500.11811/11040}.

\bibitem[BC08]{BC08}
Laurent Berger and Pierre Colmez, \emph{Familles de repr\'{e}sentations de de
  {R}ham et monodromie {$p$}-adique}, no. 319, 2008, Repr\'{e}sentations
  $p$-adiques de groupes $p$-adiques. I. Repr\'{e}sentations galoisiennes et
  $(\phi,\Gamma)$-modules, pp.~303--337.

\bibitem[Bel24]{Bel24}
Rebecca Bellovin, \emph{Cohomology of ({$\varphi,\; \Gamma$})-modules over
  pseudorigid spaces}, Int. Math. Res. Not. IMRN (2024), no.~4, 2999--3051.

\bibitem[Ber02]{Ber02}
Laurent Berger, \emph{Repr\'esentations {$p$}-adiques et \'equations
  diff\'erentielles}, Invent. Math. \textbf{148} (2002), no.~2, 219--284.

\bibitem[Ber08]{Ber08B-pair}
\bysame, \emph{Construction de {$(\phi,\Gamma)$}-modules: repr\'{e}sentations
  {$p$}-adiques et {$B$}-paires}, Algebra Number Theory \textbf{2} (2008),
  no.~1, 91--120.

\bibitem[Ber16]{Ber16}
\bysame, \emph{Multivariable {$(\varphi,\Gamma)$}-modules and locally analytic
  vectors}, Duke Math. J. \textbf{165} (2016), no.~18, 3567--3595.

\bibitem[BKKN67]{BKKN67}
R.~Berger, R.~Kiehl, E.~Kunz, and Hans-Joachim Nastold,
  \emph{Differentialrechnung in der analytischen {G}eometrie}, Lecture Notes in
  Mathematics, vol. No. 38, Springer-Verlag, Berlin-New York, 1967.

\bibitem[BLR95]{FRG3}
Siegfried Bosch, Werner L\"utkebohmert, and Michel Raynaud, \emph{Formal and
  rigid geometry. {III}. {T}he relative maximum principle}, Math. Ann.
  \textbf{302} (1995), no.~1, 1--29.

\bibitem[CC98]{CC98}
Fr\'ed\'eric Cherbonnier and Pierre Colmez, \emph{Repr\'esentations
  {$p$}-adiques surconvergentes}, Invent. Math. \textbf{133} (1998), no.~3,
  581--611.

\bibitem[Col08]{Col08}
Pierre Colmez, \emph{Espaces vectoriels de dimension finie et
  repr\'{e}sentations de de {R}ham}, no. 319, 2008, Repr\'{e}sentations
  $p$-adiques de groupes $p$-adiques. I. Repr\'{e}sentations galoisiennes et
  $(\phi,\Gamma)$-modules, pp.~117--186.

\bibitem[Col10]{Col10}
\bysame, \emph{Repr\'esentations de {${\rm GL}_2(\mathbf Q_p)$} et
  {$(\phi,\Gamma)$}-modules}, Ast\'erisque (2010), no.~330, 281--509.

\bibitem[Col16]{Col16}
\bysame, \emph{Repr\'esentations localement analytiques de {${\bf GL}_2({\bf
  Q}_p)$} et {$(\varphi,\Gamma)$}-modules}, Represent. Theory \textbf{20}
  (2016), 187--248.

\bibitem[CS22]{CC}
Dustin Clausen and Peter Scholze, \emph{Condensed mathematics and complex
  geometry}, \url{https://people.mpim-bonn.mpg.de/scholze/Complex.pdf}, 2022.

\bibitem[EGH25]{EGH23}
Matthew Emerton, Toby Gee, and Eugen Hellmann, \emph{An introduction to the
  categorical {$p$}-adic {L}anglands program}, arXiv preprint arXiv:
  \href{https://arxiv.org/abs/2210.01404}{22210.01404} (2025).

\bibitem[FF18]{FF18}
Laurent Fargues and Jean-Marc Fontaine, \emph{Courbes et fibr\'{e}s vectoriels
  en th\'{e}orie de {H}odge {$p$}-adique}, Ast\'{e}risque (2018), no.~406,
  xiii+382, With a preface by Pierre Colmez.

\bibitem[Fon90]{Fontaine90}
Jean-Marc Fontaine, \emph{Repr\'esentations {$p$}-adiques des corps locaux.
  {I}}, The {G}rothendieck {F}estschrift, {V}ol. {II}, Progr. Math., vol.~87,
  Birkh\"auser Boston, Boston, MA, 1990, pp.~249--309.

\bibitem[FS24]{FS24}
Laurent Fargues and Peter Scholze, \emph{Geometrization of the local
  {L}anglands correspondence}, arXiv preprint arXiv:
  \href{https://arxiv.org/abs/2102.13459}{2102.13459} (2024).

\bibitem[Her98]{Herr98}
Laurent Herr, \emph{Sur la cohomologie galoisienne des corps {$p$}-adiques},
  Bull. Soc. Math. France \textbf{126} (1998), no.~4, 563--600.

\bibitem[Ked04]{Ked04}
Kiran~S. Kedlaya, \emph{A {$p$}-adic local monodromy theorem}, Ann. of Math.
  (2) \textbf{160} (2004), no.~1, 93--184.

\bibitem[Ked19]{AWS17Ked}
\bysame, \emph{Sheaves, stacks, and shtukas}, Mathematical Surveys and
  Monographs, vol. 242, American Mathematical Society, Providence, RI, 2019, In
  Perfectoid spaces: Lectures from the 2017 Arizona Winter School, held in
  Tucson, AZ, March 11--17, Edited and with a preface by Bryden Cais, With an
  introduction by Peter Scholze.

\bibitem[KM25]{KM25}
Kiran~S. Kedlaya and Yutaro Mikami, \emph{On the relative nullstellensatz in
  nonarchimedean geometry}, arXiv preprint arXiv:
  \href{https://arxiv.org/abs/2503.18183}{2503.18183} (2025).

\bibitem[KPX14]{KPX14}
Kiran~S. Kedlaya, Jonathan Pottharst, and Liang Xiao, \emph{Cohomology of
  arithmetic families of {$(\varphi,\Gamma)$}-modules}, J. Amer. Math. Soc.
  \textbf{27} (2014), no.~4, 1043--1115.

\bibitem[Lou17]{Lou17}
Jo{\~a}o N.~P. Louren{\c c}o, \emph{The riemannian hebbarkeitss\"atze for
  pseudorigid spaces}, arXiv preprint arXiv:
  \href{https://arxiv.org/abs/1711.06903}{1711.06903} (2017).

\bibitem[Lur09]{HTT}
Jacob Lurie, \emph{Higher topos theory}, Annals of Mathematics Studies, vol.
  170, Princeton University Press, Princeton, NJ, 2009.

\bibitem[Lur11]{DAG8}
\bysame, \emph{Quasi-coherent sheaves and {T}annaka duality theorems},
  \url{https://www.math.ias.edu/~lurie/papers/DAG-VIII.pdf}, 2011.

\bibitem[Lur17]{HA}
\bysame, \emph{Higher algebra},
  \url{https://www.math.ias.edu/~lurie/papers/HA.pdf}, 2017.

\bibitem[Man22a]{Mann22b}
Lucas Mann, \emph{The 6-functor formalism for {$\mathbb{Z}_\ell$}- and
  {$\mathbb{Q}_\ell$}-sheaves on diamonds}, arXiv preprint arXiv:
  \href{https://arxiv.org/abs/2209.08135}{2209.08135} (2022).

\bibitem[Man22b]{Mann22}
\bysame, \emph{A p-adic 6-functor formalism in rigid-analytic geometry}, Ph.D.
  thesis, Rheinische Friedrich-Wilhelms-Universit{\"a}t Bonn, August 2022,
  \url{https://hdl.handle.net/20.500.11811/10125}.

\bibitem[Mat80]{Mat80}
Hideyuki Matsumura, \emph{Commutative algebra}, second ed., Mathematics Lecture
  Note Series, vol.~56, Benjamin/Cummings Publishing Co., Inc., Reading, MA,
  1980.

\bibitem[Mik]{Mikami25m}
Yutaro Mikami, \emph{$p$-adic monodromy theorem for families over relatively
  discrete algebras}, In preparation.

\bibitem[Mik23]{Mikami23}
\bysame, \emph{Fppf-descent for condensed animated rings}, arXiv preprint
  arXiv: \href{https://arxiv.org/abs/2311.13408}{2311.13408} (2023).

\bibitem[Mik25]{Mikami25}
\bysame, \emph{Finiteness and duality of cohomology of
  {$(\varphi,\Gamma)$}-modules and the 6-functor formalism of locally analytic
  representations}, arXiv preprint arXiv:
  \href{https://arxiv.org/abs/2504.01780}{2504.01780} (2025).

\bibitem[Nak13]{Nak13}
Kentaro Nakamura, \emph{Deformations of trianguline {$B$}-pairs and {Z}ariski
  density of two dimensional crystalline representations}, J. Math. Sci. Univ.
  Tokyo \textbf{20} (2013), no.~4, 461--568.

\bibitem[Por]{Poraterrata}
Gal Porat, \emph{Errata}, Available at
  \url{https://sites.google.com/view/galporatmath/home/}.

\bibitem[Por24]{Por24}
\bysame, \emph{Locally analytic vector bundles on the {F}argues--{F}ontaine
  curve}, Algebra Number Theory \textbf{18} (2024), no.~5, 899--946.

\bibitem[Por25]{Por22}
\bysame, \emph{Overconvergence of \'etale {$(\varphi,\Gamma)$}-modules in
  families}, Compos. Math. \textbf{161} (2025), no.~3, 555--593.

\bibitem[RC24]{RC24}
Juan~Esteban Rodr\'{\i}guez~Camargo, \emph{The analytic de {R}ham stack in
  rigid geometry}, arXiv preprint arXiv:
  \href{https://arxiv.org/abs/2401.07738}{2401.07738} (2024).

\bibitem[RJRC22]{RJRC22}
Joaqu\'{\i}n Rodrigues~Jacinto and Juan~Esteban Rodr\'{\i}guez~Camargo,
  \emph{Solid locally analytic representations of {$p$}-adic {L}ie groups},
  Represent. Theory \textbf{26} (2022), 962--1024.

\bibitem[RJRC25]{RJRC23}
\bysame, \emph{Solid locally analytic representations}, arXiv preprint arXiv:
  \href{https://arxiv.org/abs/2305.03162}{2305.03162} (2025).

\bibitem[Sch19]{CM}
Peter Scholze, \emph{Lectures on condensed mathematics},
  \url{https://people.mpim-bonn.mpg.de/scholze/Condensed.pdf}, 2019.

\bibitem[Sch20]{AG}
\bysame, \emph{Lectures on analytic geometry},
  \url{https://people.mpim-bonn.mpg.de/scholze/Analytic.pdf}, 2020.

\bibitem[Sch24]{GRLLC24}
\bysame, \emph{Geometrization of the real local {L}anglands correspondence},
  \url{https://people.mpim-bonn.mpg.de/scholze/RealLocalLanglands.pdf}, 2024.

\bibitem[SW20]{SW20}
Peter Scholze and Jared Weinstein, \emph{Berkeley lectures on {$p$}-adic
  geometry}, Annals of Mathematics Studies, vol. 207, Princeton University
  Press, Princeton, NJ, 2020.

\bibitem[SP]{stacks-project}
The Stacks~Project Authors, \emph{Stacks project},
  \url{https://stacks.math.columbia.edu}.

\end{thebibliography}
\end{document}